\definecolor{darkred}{rgb}{0.8,0.1,0.1}
\theoremstyle{plain}
\newtheorem{theo}{Theorem}[section]
\newtheorem{lem}[theo]{Lemma}
\newtheorem{propo}[theo]{Proposition}
\newtheorem{cor}[theo]{Corollary}
\theoremstyle{definition}
\newtheorem{defi}[theo]{Definition}
\newtheorem{ex}[theo]{Example}
\newtheorem{rem}[theo]{Remark}
\numberwithin{equation}{section}
\def\nn{\nonumber}
\def\Hom{\mathrm{Hom}}
\def\hom{\mathrm{hom}}
\def\id{\mathrm{id}}
\def\ev{\mathrm{ev}}
\def\MMM{\mathscr{M}}
\def\AAA{\mathscr{A}}
\def\FF{\mathcal{F}}
\def\ra{\triangleright}
\newcommand{\obultimes}{\mathbin{\ooalign{$\otimes$\cr\hidewidth\raise0.17ex\hbox{$\scriptstyle\bullet\mkern4.48mu$}}}}
\newcommand{\bol}[1]{#1}
\newcommand{\udl}[1]{\underline{#1}}
\def\sk{\vspace{2mm}}
\title{%
\Large Nonassociative geometry in quasi-Hopf representation categories I:\\[2mm]
Bimodules and their internal homomorphisms
}
\author{%
Gwendolyn E. Barnes$^{a}$, Alexander Schenkel$^{b}$ \ and \ Richard J.\ Szabo$^{c}$\vspace{4mm}\\
{\small Department of Mathematics, Heriot-Watt University,}\\
{\small Colin Maclaurin Building, Riccarton, Edinburgh EH14 4AS, United Kingdom.}\vspace{1mm}\\
{\small Maxwell Institute for Mathematical Sciences, Edinburgh, United Kingdom.}\vspace{1mm}\\
{\small The Tait Institute, Edinburgh, United Kingdom.}\vspace{4mm}\\
 {\footnotesize \texttt{email:} $^a$ \texttt{geb31@hw.ac.uk} ~,~~$^b$  \texttt{as880@hw.ac.uk} ~,~~$^c$ \texttt{R.J.Szabo@hw.ac.uk} }
 }
\date{December 2014}
\begin{document}

\maketitle

\begin{abstract}
We systematically study noncommutative and nonassociative algebras $A$ and their bimodules as algebras and bimodules internal to the representation category of a quasitriangular quasi-Hopf algebra. We enlarge the morphisms of the monoidal category of $A$-bimodules by internal homomorphisms, and describe explicitly their evaluation and composition morphisms. For braided commutative algebras $A$ the full subcategory of symmetric $A$-bimodule objects is a braided closed monoidal category, from which we obtain an internal tensor product operation on internal homomorphisms. We describe how these structures deform under cochain twisting of the quasi-Hopf algebra, and apply the formalism to the example of deformation quantization of equivariant vector bundles over a smooth manifold. Our constructions set up the basic ingredients for the systematic development of differential geometry internal to the quasi-Hopf representation category, which will be tackled in the sequels to this paper, together with applications to models of noncommutative and nonassociative gravity such as those anticipated from non-geometric string theory.
\end{abstract}

\paragraph*{Report no.:} EMPG--14--{17}
\paragraph*{Keywords:} Noncommutative/nonassociative differential geometry, quasi-Hopf algebras, \break braided monoidal categories, internal homomorphisms, cochain twist quantization
\paragraph*{MSC 2010:} 16T05, 17B37, 46L87, 53D55

\newpage

\tableofcontents

\bigskip

\section{\label{sec:intro}Introduction and summary}

This paper is the first part of a series of articles in which we shall
systematically develop a formalism of noncommutative and
nonassociative differential geometry within the framework of the
theory of representation categories of quasi-Hopf algebras. Our main
examples of interest come as deformations of classical differential
geometry, hence we also develop twist deformation quantization of all
structures involved. There are two main driving motivations behind
this work: First, we wish to address some internal technical issues in 
noncommutative geometry involving constructions of connections and other 
geometric entities; second, we wish to systematically develop recent observations 
in string theory which suggest that stringy quantum geometry involves more 
complicated noncommutative structures than those previously encountered. Let 
us begin by briefly summarizing some of the mathematical and physical 
background behind these problems.

\subsection{\label{subsec:introbimodules}Noncommutative connections on bimodules}

Given a noncommutative algebra $A$ one is interested in various kinds of modules (left, right or bimodules) 
over $A$. In the spirit of noncommutative geometry
these modules are interpreted as noncommutative 
vector bundles over the noncommutative space
$A$. Given further a differential calculus over $A$, one can develop in a purely algebraic fashion
a theory of connections on left {\em or} right $A$-modules, see e.g.\ 
\cite{Landi} for an introduction. Associated to a connection on a right (resp.\ left) 
$A$-module $V$ is its curvature, which is a morphism of right (resp.\ left) $A$-modules.
On $A$-bimodules the theory of connections becomes much richer but also considerably more complicated.
Given an $A$-bimodule $V$ we may ``forget'' about its left $A$-module structure
and introduce connections on $V$ as if it were just a right $A$-module. 
Then the curvatures of these connections are right $A$-module morphisms,
which however will not generally respect the left $A$-module structure of $V$.
The problem with taking right $A$-module connections
on $A$-bimodules is that there is in general no procedure to
construct from a pair of such connections on $A$-bimodules $V,W$
a connection on the tensor product $A$-bimodule $V\otimes_A^{}W$.
The possibility to induce connections to tensor products of $A$-bimodules
is of course an inevitable construction in noncommutative differential geometry,
which finds many physical applications such as to constructions of tensor fields 
in noncommutative gravity.
The problem concerning tensor 
products of right $A$-module connections on $A$-bimodules has an analogue in
the theory of module homomorphisms: Given two $A$-bimodule 
morphisms $f:V\to X$ and $g:W\to Y$, one can take their tensor
product and induce an $A$-bimodule 
morphism $f\otimes_A^{}g : V\otimes_A^{} W\to X\otimes_A^{}Y$. 
However if $f$ and $g$ are only right $A$-module morphisms,
then there is in general no procedure to construct from this data 
a right $A$-module morphism $V\otimes_A^{}W\to X\otimes_A^{} Y$.
\sk

To overcome this problem, the notion of {\em bimodule connections} was developed by \cite{Mourad,DuViMasson,DuViLecture}. 
To define bimodule connections on an $A$-bimodule
$V$ one needs the additional datum of an $A$-bimodule morphism
$\Omega^1 \otimes_{A}^{} V \to V\otimes_A^{}\Omega^1 $, 
where $\Omega^1$ is the $A$-bimodule of 1-forms.
Given two $A$-bimodules $V,W$ together with bimodule connections
one can construct a bimodule connection on $V\otimes_A^{}W$.
From this construction one obtains a bimodule connection
on arbitrary tensor products $V_1\otimes_A^{}\dots\otimes_A^{} V_n$ 
of $A$-bimodules from the choice of a bimodule connection 
on each component $V_i$. The curvature of a bimodule connection is an $A$-bimodule morphism.
Although bimodule connections are by now regarded as the standard choice
in most treatments of noncommutative differential geometry
(see e.g.\  \cite{BeggsMajid1,BeggsMajid2}),
there are some serious drawbacks with this concept. 
For this, let us notice that the set of all bimodule connections on an $A$-bimodule $V$ 
forms an affine space over the linear space of $A$-bimodule morphisms
$V\to V\otimes_A^{}\Omega^1$; this linear space is
very small for many standard examples of noncommutative spaces $A$,
so that generally there 
are not many bimodule connections. For instance, if $V=A^n$ and $\Omega^1 = A^m$ 
are free $A$-bimodules, then $V\otimes_A^{}\Omega^1 \simeq A^{n\,m}$
and the $A$-bimodule morphisms $V\to V\otimes_A^{} \Omega^1$ are in one-to-one
correspondence with $n\times (n\,m)$-matrices with entries valued in the center of $A$.
Taking the specific example where $A$ is the polynomial algebra of the Moyal-Weyl space $\mathbb{R}^{2k}_{\Theta}$,
then the bimodule connections on $V=A^n$ are parameterized by the {\em finite-dimensional}
linear space $\mathbb{C}^{2n^2 \, k}$ because the center of $A$ is isomorphic to $\mathbb{C}$.
As a consequence, noncommutative gauge and gravity theories which are based on
the concept of bimodule connections will in general be rather trivial and physically
 uninteresting as the space of field configurations in this case is too small. 
\sk

At this point one can ask if the conditions on bimodule connections
can be weakened in such a way that one can still induce connections to tensor products.
A negative answer to this question was given in \cite[Appendix A]{Bresser:1995gk},
where it was shown that in a generic situation
the existence of the tensor product connection is equivalent to
requiring that the individual 
connections are bimodule connections.
As a consequence, there seems to be no good replacement for the concept of bimodule
connections in the case where the algebra $A$ and the bimodules $V$ are generic.
However, a suitable substitute for bimodule connections
and their tensor products can be developed when we restrict ourselves
to certain classes of algebras and bimodules, namely those which are commutative up to a braiding; these techniques have been developed
in \cite{AschieriSchenkel} (see also \cite{SchenkelProceedings,AschieriProceedings}
for brief summaries). Given any quasitriangular Hopf algebra $H$, one considers
algebras and bimodules on which there is an action of the Hopf algebra. 
As $H$ is quasitriangular, i.e.\ it has an $R$-matrix, we can restrict
ourselves to those algebras $A$ for which the product is compatible with the
braiding determined by the $R$-matrix; we call these algebras {\em braided commutative}.
Similarly, we can restrict ourselves to 
those $A$-bimodules for which the left and right $A$-actions are identified via the braiding; we call such bimodules \emph{symmetric}.
In this setting one can prove that {\em any} pair of right module connections
on $V,W$ induces a right module connection on $V\otimes_A^{} W$.
One can also show that any pair of right $A$-module
morphisms $f: V\to X$ and $g: W\to Y$ induces a right $A$-module
morphism $V\otimes_A^{} W \to X\otimes_A^{} Y$. Of course, the essential
ingredient in these constructions is the braiding determined by the $R$-matrix, but there are many examples which fit into the
formalism developed in \cite{AschieriSchenkel}: First of all any ordinary
manifold $M$ and natural vector bundle $E \to M$ give rise to the algebra $A=C^\infty(M)$
and the $A$-bimodule $V = \Gamma^\infty(E\to M)$, which satisfy the
requirements of braided commutativity and symmetry with trivial $R$-matrix. (The Hopf algebra $H$ here 
can be taken to be the universal enveloping algebra of the Lie algebra of vector fields on $M$.)
Furthermore, deformations by {\em Drinfeld twists}
based on $H$ preserve the braided commutativity and symmetry properties, and hence give rise
to noncommutative algebras and bimodules which fit into this
framework; the Moyal-Weyl space together with its bimodules
of vector fields and one-forms are explicit examples of this \cite{AschieriSchenkel}.

\subsection{Nonassociative geometry in non-geometric string theory}

In this series of papers we will generalize the above constructions, and also extend them
into a more general framework of \emph{nonassociative} geometry. Let us give some background 
motivation for this extension from string theory, in order to clarify the physical origins of the 
problems we study in this largely purely mathematical paper; see 
\cite{Lust:2012fp,Mylonas:2014aga,Blumenhagen:2014sba} for brief reviews of the 
aspects of non-geometric string theory discussed below.
\sk

String theory is widely believed to provide a consistent quantization
of general relativity, yet its precise connection to other target space
approaches to quantum gravity, such as noncommutative geometry, has
remained somewhat elusive.
Noncommutative geometry has a well-known and concrete realization in
\emph{open} string theory, wherein the conformal field theory of open
strings ending on D-branes supporting a non-zero magnetic flux probes a
noncommutative deformation of the worldvolume; the low-energy dynamics
of such a system is then described by a noncommutative gauge theory on
the D-brane (see \cite{Douglas:2001ba,Szabo:2001kg} for
reviews). Recently it has been noticed that both similar and more
complicated structures are realized in flux compactifications of \emph{closed} string theory,
and lead to deformations of the geometry of spacetime itself: Starting from a
geometric frame wherein closed strings propagate in an
$\mathsf{H}$-flux background, after successive T-duality
transformations one is inevitably led into a non-geometric frame in
the sense that coordinates and their duals, together with momentum and
winding modes, become entangled. It was found by
\cite{Blumenhagen:2010hj,Lust:2010iy} through explicit string theory
calculations that closed strings which wind and propagate in these
non-geometric backgrounds probe a noncommutative and nonassociative
deformation of the spacetime geometry, with deformation parameter
determined by the non-geometric flux that arises as the T-duality
transform of the geometric 3-form $\mathsf{H}$-flux. This property of string geometry 
was subsequently confirmed by conformal field theory calculations 
\cite{Blumenhagen:2011ph,Condeescu:2012sp} where the non-geometry finds a 
concrete interpretation: In string theory a geometric spacetime emerges from the 
left-right symmetric conformal field theory on the closed string worldsheet, whereas 
T-duality is a left-right asymmetric transformation leading to asymmetric conformal 
field theories which do not correspond to any geometric target space. In this non-geometric 
regime of closed string theory the low-energy dynamics is then expected to be governed 
by a noncommutative and nonassociative theory of gravity, which could provide an 
enlightening new target space approach to quantum gravity. In this series of articles we 
aim at a better understanding of the mathematical structures behind these deformations of gravity.
\sk

The physical origins underlying this nonassociative deformation have
been elucidated in various ways: by regarding closed strings as boundary excitations of more
fundamental membrane degrees of freedom in the non-geometric frame
\cite{Mylonas:2012pg}, in terms of matrix theory compactifications
\cite{Chatzistavrakidis:2012qj}, and in double field theory
\cite{Blumenhagen:2013zpa}; they may be connected to the Abelian
gerbes underlying the generalized manifolds in double geometry
\cite{Berman:2014jba,Hull:2014mxa}. Explicit star product realizations
of the nonassociative geometry were obtained via Kontsevich's
deformation quantization of twisted Poisson manifolds
\cite{Mylonas:2012pg} and by integrating higher Lie algebra structures
\cite{Mylonas:2012pg,Bakas:2013jwa}. In \cite{Mylonas:2013jha} it was
observed that these nonassociative star products can be alternatively obtained via a
particular cochain twisting of the universal enveloping algebra of a
certain Lie algebra (regarded as the Lie algebra of symmetries of the
non-geometric background) to a quasi-Hopf algebra; besides its appeal
as a more general encompassing mathematical framework, this is our
driving physical reason for passing from Hopf algebras to the (bigger)
world of quasi-Hopf algebras. This generalization is particularly
important for the extensions of the flat space nonassociative
deformations to the curved space flux deformations that arise in
double field theory \cite{Blumenhagen:2013zpa}.

\subsection{Overview}

In this series of papers we develop the theory of internal module homomorphisms and connections
on bimodules, together with their tensor product structure, 
for a large class of noncommutative and nonassociative spaces.
For this, we shall take an approach based on category theory; we
assume that the reader is familiar with the basics of braided monoidal
categories, see e.g.\ \cite{BKLect} for an introduction. The language
of category
theory is the appropriate framework in this regard as it systematically highlights
the general structures involved in a model-independent way. An analogous approach was
taken in \cite{Bouwknegt:2007sk} to develop the applications of nonassociative 
algebras to non-geometric string theory which were discussed in \cite{Bouwknegt:2004ap}. 
However, their categories are completely different from ours, and moreover their algebras 
have the physically undesirable feature that the classical limit only coincides with the algebra 
of functions on a manifold up to Morita equivalence; instead, our constructions always reduce 
exactly to the classical algebras of functions. We will also consider physical applications 
to noncommutative and nonassociative gravity theories, particularly within the context 
of non-geometric string theory and double field theory.
\sk

From a more technical point of view, we will consider the representation category ${}^H_{}\MMM$ of a 
quasitriangular quasi-Hopf algebra $H$ and develop some elements of differential 
geometry {\em internal} to this category. It is well known that the representation category 
of a quasi-Hopf algebra is a (weak) monoidal category, which for quasitriangular quasi-Hopf algebras
carries the additional structure of a braided monoidal category. 
We consider algebra objects in the category ${}^H_{}\MMM$,
which due to the generally non-trivial associator are nonassociative algebras
(that are however weakly associative). Given any algebra object $A$ in ${}^H_{}\MMM$
 we will then consider $A$-bimodule objects in ${}^H_{}\MMM$,
 the collection of which forms a monoidal category ${}^H_{}{}^{}_{A}\MMM^{}_{A}$.
The monoidal category ${}^H_{}{}^{}_{A}\MMM^{}_{A}$ can be geometrically interpreted
as the category of all noncommutative and nonassociative $H$-equivariant vector bundles
over the noncommutative and nonassociative space $A$.
The morphisms in this category are the morphisms preserving {\em all} structures in sight, i.e.\ both
the $H$-module and $A$-bimodule structures. In the approach to nonassociative geometry by
\cite{BeggsMajid1}, these morphisms are used to describe
geometric quantities such as (Riemannian) metrics and curvatures. 
As we have already pointed out in Subsection \ref{subsec:introbimodules} this choice is in general far too
restrictive and leads to physically rather uninteresting versions of nonassociative gravity and gauge theory.
We shall elaborate more on this in the next paragraph.
In our approach we enlarge the class of morphisms by considering {\em internal homomorphisms}
of the monoidal category ${}^H_{}{}^{}_{A}\MMM^{}_{A}$. We shall give an explicit description
of the internal $\hom$-functor on ${}^H_{}{}^{}_{A}\MMM^{}_{A}$ in terms of the internal $\hom$-functor
on the category ${}^H_{}\MMM$ and an equalizer which formalizes a weak ``right $A$-linearity condition''.
This internal homomorphism point of view clarifies and generalizes the constructions in \cite{AschieriSchenkel}
and \cite{Kulish:2010mr}.
For internal homomorphisms there are evaluation and composition morphisms which we shall explicitly describe 
in detail. To describe (internal) tensor products of internal homomorphisms we make use of the braiding
determined by the quasitriangular structure on the quasi-Hopf algebra $H$. Assuming the algebra object
$A$ in ${}^H_{}\MMM$ to be braided commutative (i.e.\ that the product is preserved by the braiding in ${}^H_{}\MMM$)
we can define the full monoidal subcategory ${}^H_{}{}^{}_{A}\MMM^{\mathrm{sym}}_{A}$ of symmetric
$A$-bimodule objects in ${}^H_{}\MMM$ (the condition here is that the left and right $A$-module
structures are identified by the braiding). This category is a braided monoidal category
from which we obtain a tensor product operation on the internal homomorphisms.
We explicitly work out properties of the internal tensor product operation.
\sk

In order provide a more explicit motivation for our internal point of view on noncommutative and nonassociative
geometry let us consider again the simple example given by the Moyal-Weyl space $\mathbb{R}^{2k}_{\Theta}$.
The noncommutative algebra $A = (C^\infty(\mathbb{R}^{2k}),\star_{\Theta})$ 
corresponding to  $\mathbb{R}^{2k}_{\Theta}$  can be considered as an algebra object in 
the representation category of the universal enveloping algebra $H$
of the $2k$-dimensional Abelian Lie algebra describing infinitesimal translations on $\mathbb{R}^{2k}$.
In a more elementary language, there is an action of the infinitesimal translations on $A$, which is given by
the (Lie) derivative. The noncommutative one-forms and vector fields on $\mathbb{R}^{2k}_{\Theta}$ 
are $A$-bimodules which we denote by $\Omega^1$ and $\Xi$, respectively.
The infinitesimal translations act in terms of the Lie derivative on $\Omega^1$ and $\Xi$, which thereby
 become objects in ${}^H_{}{}^{}_{A}\MMM^{}_{A}$. In physical applications one studies further 
geometric structures on $\mathbb{R}^{2k}_{\Theta}$, for example (Riemannian) metrics $g: \Xi \to \Omega^1$. 
At this point it differs drastically if we regard $g$ as a morphism in ${}^H_{}{}^{}_{A}\MMM^{}_{A}$ 
or as an internal homomorphism. In the first case the map $g$ has to be compatible with the left and right $A$-actions as
well as the left $H$-action describing infinitesimal translations. If we express $g$ in terms of the 
coordinate bases $\{\partial_\mu\}$ of $\Xi$ and $\{\mathrm{d} x^\mu\}$ of $\Omega^1$, 
its coefficients $g_{\mu\nu}\in A$ which are defined by 
$g(\partial_\mu) = g_{\mu\nu}\,\mathrm{d} x^\nu$ (summation over $\nu$ understood) have to be constant as a consequence
of translation invariance. Therefore, describing metrics by morphisms in the category 
${}^H_{}{}^{}_{A}\MMM^{}_{A}$ leads to a very rigid framework which only allows for flat noncommutative
geometries on $\mathbb{R}_{\Theta}^{2k}$. On the other hand, if we allow $g$ to be an
internal homomorphism, which in the present case
just means that $g$ is a right $A$-linear map which is not necessarily compatible with the left $A$-action
and the left $H$-action, the coefficients $g_{\mu\nu}$ are not constrained, leading to a much richer framework
for describing noncommutative geometries on $\mathbb{R}_{\Theta}^{2k}$.
\sk

In the sequel to this paper we shall study connections
on objects in ${}^H_{}{}^{}_{A}\MMM^{}_{A}$ from an internal point of view.
We will describe internal connections in terms of the internal $\hom$-functor for
the monoidal category ${}^H_{}\MMM$ together with an equalizer which formalizes the Leibniz rule.
In particular, our class of connections is much larger than the one used in \cite{BeggsMajid1},
which are assumed to be bimodule connections equivariant with respect to the $H$-action.
In the braided commutative setting, i.e.\ for the category ${}^H_{}{}^{}_{A}\MMM^{\mathrm{sym}}_{A}$,
we shall describe how to induce connections on tensor products of bimodules.
These techniques will then be used to develop a more general and richer theory
of noncommutative and nonassociative Riemannian geometry than the one presented in \cite{BeggsMajid1}.
\sk

Throughout this series of papers, after each step
in our constructions we will study how all the structures involved deform under cochain twisting.
This will eventually allow us to obtain a large class of examples
of noncommutative and nonassociative geometries by cochain twisting
the example of classical differential geometry. 
In this case, by fixing any Lie group $G$ and any $G$-manifold $M$,
there is the braided monoidal category of $G$-equivariant vector bundles over $M$.
We shall construct a braided monoidal functor from this category to the category
${}^H_{}{}^{}_{A}\MMM^{\mathrm{sym}}_{A}$, where $A = C^\infty(M)$
is the algebra of functions on $M$ and $H=U\mathfrak{g}$ is the universal enveloping algebra
of the Lie algebra $\mathfrak{g}$ of $G$ (with trivial $R$-matrix).
Then choosing \emph{any} cochain twist based on $H$ (there are many!) 
we can twist the braided monoidal category ${}^H_{}{}^{}_{A}\MMM^{\mathrm{sym}}_{A}$ 
into a braided monoidal category which describes 
noncommutative and nonassociative vector bundles
over a noncommutative and nonassociative space.
Our general developments will in particular
give us a theory of internal module 
homomorphisms and connections together with their tensor product structure
for these examples, and especially a theory of noncommutative and nonassociative gravity.
\sk

We shall show in our work that cochain twisting can be understood as a categorical equivalence
between the undeformed and deformed categories. This equivalence includes 
the internal homomorphisms, which in our physical interpretation implies that
the deformed geometric quantities (e.g.\ metrics and connections) are in bijective correspondence
with the undeformed ones. It is worth emphasizing that this does not mean that
the deformation of theories of physics (e.g.\ gravity and Yang-Mills theory) by 
cochain twists is trivial: Even though the configuration spaces of the deformed and undeformed
theory are in bijective correspondence, the choice of natural Lagrangians
differs in these cases. Loosely speaking, the deformed Lagrangians should be 
constructed out of $\star$-products, while the undeformed ones out of ordinary products.
As a consequence, the critical points of the associated actions are in general different
for the deformed and the undeformed theory, and so is their physical behavior. In summary, the philosophy
in our cochain twisting approach is that any deformed geometric quantity
arises by twisting of a corresponding undeformed quantity.
However, the selection criteria for which of those quantities is realized in nature 
(e.g.\ as a critical point of an action) differs in the deformed and the undeformed case.

\subsection{Outline}

Let us now give a brief outline of the contents of the present paper. 
Throughout we shall make all of our constructions and calculations explicit, even when 
they follow easily from abstract arguments of category theory, in order to set up a concrete 
computational framework that will be needed later on in this series. In particular, in 
contrast to what is sometimes done in the literature, we pay careful attention to 
associator insertions: Although by the coherence theorems there is no loss of generality in imposing 
the strictness property on a monoidal category (i.e.\ strong associativity of the monoidal structure), 
for our computational purposes we will have to be careful not to mix up equality and isomorphy of objects.
\sk

In Section \ref{sec:HM} we recall the definition of a quasi-Hopf algebra $H$ and its 
associated monoidal category of (left) $H$-modules ${}^H_{}\MMM$. By explicitly 
constructing an internal hom-functor for this category, we show that ${}^H_{}\MMM$ 
is a closed monoidal category, and describe explicitly the canonical evaluation and 
composition morphisms for the internal hom-objects. We also show that this closed 
monoidal category behaves nicely under cochain twisting. In Section \ref{sec:HAMA} we 
introduce algebras $A$ and their bimodules as objects in the category ${}^H_{}\MMM$, 
and explicitly construct a monoidal category ${}^H_{}{}^{}_{A}\MMM^{}_{A}$ of 
$A$-bimodules in ${}^H_{}\MMM$; again we show that all of these structures behave 
well under cochain twisting. Section \ref{sec:internalhom} is devoted to the explicit 
construction of an internal hom-functor for the monoidal category ${}^H_{}{}^{}_{A}\MMM^{}_{A}$, 
and we prove that the internal homomorphisms behave well under cochain twisting. 
In Section \ref{sec:quasitriangular} we show that, by restricting to quasi-Hopf algebras $H$ 
which are quasitriangular, the representation category ${}^H_{}\MMM$ can be endowed with 
the additional structure of a braided closed monoidal category. By restricting 
to braided commutative algebra objects $A$ in 
${}^H_{}\MMM$, we endow the full monoidal subcategory ${}^H_{}{}^{}_{A}\MMM^{\mathrm{sym}}_{A}$ of symmetric
$A$-bimodule objects in ${}^H_{}\MMM$ with the structure of a braided closed 
monoidal category. We explicitly describe the canonical tensor product morphisms 
for the internal hom-objects, and show that once again all of these structures are 
preserved by cochain twisting. Finally, in Section \ref{sec:defquant} we apply our 
constructions to the concrete examples of deformation quantization of 
$G$-equivariant vector bundles over $G$-manifolds.


\section{\label{sec:HM}Quasi-Hopf algebras and representation categories}

In this paper we study the representation category  ${}^H_{}\MMM_{}^{}$ of a quasi-Hopf algebra $H$.
The category ${}^H_{}\MMM_{}^{}$ is shown to be a closed monoidal category, i.e.\ it admits
a monoidal structure as well as an internal hom-functor.
Given any cochain twist $F$ based on $H$, we deform the quasi-Hopf algebra
$H$ into a new quasi-Hopf algebra $H_F^{}$, and we show that ${}^H_{}\MMM_{}^{}$ and ${}^{H_F^{}}_{}\MMM_{}^{}$
are equivalent as closed monoidal categories.

\subsection{Quasi-Hopf algebras}

We fix once and for all a commutative ring $k$ with unit $1\in k$.
Let $H$ be an algebra over the ring $k$ with strictly associative product $\mu:H\otimes H\to H$ and unit $\eta: k\to H$. We say that $H$ is a {\em quasi-bialgebra} if it is further equipped with two algebra homomorphisms
$\Delta: H\to H\otimes H$ (coproduct) and $\epsilon : H\to k$ (counit), 
and an invertible element $\phi\in H\otimes H\otimes H$ (associator), such that
\begin{subequations}\label{eqn:quasibialgebraaxioms}
\begin{flalign}
\label{eqn:quasibialgebraaxioms1}(\epsilon \otimes \id^{}_H)\, \Delta(h) &= h = (\id^{}_H\otimes \epsilon)\, \Delta(h)~,\\[4pt]
\label{eqn:quasibialgebraaxioms2}(\id^{}_H\otimes \Delta)\, \Delta(h)\cdot\phi &= \phi\cdot (\Delta\otimes \id^{}_H)\, \Delta(h)~,\\[4pt]
\label{eqn:quasibialgebraaxioms3}(\id^{}_H\otimes \id^{}_H \otimes \Delta)(\phi) \cdot (\Delta\otimes \id^{}_H\otimes\id^{}_H)(\phi) &= (1\otimes \phi)\cdot (\id^{}_H\otimes\Delta\otimes\id^{}_H)(\phi)\cdot (\phi\otimes 1)~,\\[4pt]
\label{eqn:quasibialgebraaxioms4}(\id^{}_H\otimes \epsilon\otimes \id^{}_H)(\phi) & =1\otimes 1~,
\end{flalign}
\end{subequations}
for all $h\in H$. In order to simplify the notation, we denote the unit element in $H$ (given by $\eta(1)\in H$) simply by
$1$ and write for the product $\mu(h\otimes h^\prime\, ) = h\cdot h^\prime$ or simply $h\,h^\prime$.
We further use Sweedler notation for the coproduct $\Delta(h) = h_{(1)}\otimes h_{(2)}$ for $h\in H$,
and write for the associator $\phi = \phi^{(1)}\otimes \phi^{(2)}\otimes\phi^{(3)}$ and its inverse
$\phi^{-1} = \phi^{(-1)}\otimes \phi^{(-2)}\otimes\phi^{(-3)}$ (with summations understood).
If we need a second copy of the associator we will decorate its components with a tilde, e.g.\ 
$\phi = \widetilde{\phi}^{(1)}\otimes \widetilde{\phi}^{(2)}\otimes\widetilde{\phi}^{(3)}$.
From (\ref{eqn:quasibialgebraaxioms1},\ref{eqn:quasibialgebraaxioms3},\ref{eqn:quasibialgebraaxioms4}) it follows that 
\begin{flalign}\label{eqn:quasibialgebraaxioms5}
(\epsilon\otimes\id^{}_H\otimes\id^{}_H)(\phi) = 1\otimes 1 = (\id^{}_H\otimes\id^{}_H\otimes \epsilon)(\phi)~.
\end{flalign}
Whenever $\phi = 1\otimes 1\otimes 1$ the axioms for a quasi-bialgebra reduce to those for a bialgebra.
\sk

A {\em quasi-antipode} for a quasi-bialgebra $H$ is a triple $(S, \alpha, \beta)$ consisting of
an algebra anti-automorphism $S:H\to H$ and two elements $\alpha,\beta\in H$ such that
\begin{subequations}\label{eqn:quasiantipodeproperties}
\begin{flalign}
S(h_{(1)})\,\alpha\,h_{(2)} &= \epsilon(h)\,\alpha~,\label{eqn:quasiantipodeproperties1}\\[4pt]
h_{(1)}\,\beta\,S(h_{(2)}) &= \epsilon(h)\,\beta~,\label{eqn:quasiantipodeproperties2}\\[4pt]
\phi^{(1)}\,\beta\,S(\phi^{(2)}) \,\alpha \,\phi^{(3)} &= 1~,\label{eqn:quasiantipodeproperties3}\\[4pt]
S(\phi^{(-1)})\,\alpha\,\phi^{(-2)}\,\beta\,S(\phi^{(-3)})& =1~,\label{eqn:quasiantipodeproperties4}
\end{flalign}
\end{subequations}
for all $h\in H$. A {\em quasi-Hopf algebra} is a quasi-bialgebra with quasi-antipode.
If $(S,\alpha,\beta)$ is a quasi-antipode for a quasi-bialgebra $H$ and $u\in H$ is any invertible element, then
\begin{flalign}\label{eqn:quasiantipodetransformations}
S^\prime(\,\cdot\,) := u\,S(\,\cdot\,)\,u^{-1}~~,\quad \alpha^\prime := u\,\alpha~~,\quad \beta^\prime :=  \beta\,u^{-1}~
\end{flalign}
defines another quasi-antipode $(S^\prime,\alpha^\prime,\beta^\prime\, )$ for $H$.
In the case where $\phi = 1\otimes 1\otimes 1$ the conditions (\ref{eqn:quasiantipodeproperties3},\ref{eqn:quasiantipodeproperties4})
imply that $\alpha = \beta^{-1}$. Setting $u=\beta$ in (\ref{eqn:quasiantipodetransformations})
we can define an algebra anti-automorphism $S^\prime : H\to H$, which by the conditions
(\ref{eqn:quasiantipodeproperties1},\ref{eqn:quasiantipodeproperties2}) satisfies the axioms of an antipode for the bialgebra $H$.
Hence for $\phi=1\otimes 1\otimes 1$ the axioms for a quasi-Hopf algebra reduce
 to those for a Hopf algebra (up to the transformations
 (\ref{eqn:quasiantipodetransformations}) which fix $\alpha=1=\beta$).

\subsection{Representation categories}

Our constructions are based on the category of $k$-modules $\MMM:= \mathrm{Mod}_k$, whose 
objects are $k$-modules and whose morphisms are $k$-linear maps.
The (strict) monoidal structure (tensor product of $k$-modules) is simply denoted by $\otimes : \MMM\times \MMM\to \MMM$ 
without any subscript.
The unit object in the monoidal category $\MMM$ is the one-dimensional $k$-module $k$, the associator 
$\otimes \circ \big(\otimes\times \id^{}_{\MMM}\big) \Rightarrow \otimes\circ \big(\id^{}_{\MMM}\times \otimes\big)$
is given by the identity maps and the unitors are denoted by 
$\lambda: k\otimes \text{--} \Rightarrow \id_{\MMM}^{}$ and $\rho : \text{--}\otimes k \Rightarrow \id_{\MMM}^{}$.
Here $\id_{\MMM}^{} : \MMM\to \MMM$ is the identity functor
 and $k\otimes \text{--} : \MMM\to \MMM$ is the functor
 assigning to an object $\udl{V}$ in $\MMM$ the object $k\otimes \udl{V}$
 and to an $\MMM$-morphism $f: \udl{V} \to \udl{W}$ the morphism $\id_k^{} \otimes f :
  k\otimes \udl{V}\to k\otimes \udl{W}\,,~c\otimes v\mapsto c \otimes f(v)$. 
The functor $\text{--}\otimes k : \MMM\to\MMM$ is defined similarly.
Explicitly, the unitors are given by $\lambda_{\udl{V}}^{} : k\otimes \udl{V}\to \udl{V} \,,~ c \otimes v\mapsto c \,v$
and $\rho_{\udl{V}}^{} : \udl{V}\otimes k \to \udl{V}\,,~v\otimes c \mapsto c\,v$ for any object $\udl{V}$ in $\MMM$.
\sk

Given a quasi-Hopf algebra $H$ one is typically interested in its representations, which are described by the category of
 {\em left $H$-modules} ${}^{H}_{}\MMM$. The choice of left $H$-modules instead of right $H$-modules is purely conventional.
The objects in ${}^{H}_{}\MMM$ are pairs $\bol{V}= (\udl{V},\ra_{\bol{V}}^{})$ consisting of a $k$-module $\udl{V}$ and a $k$-linear
map $\ra_{\bol{V}}^{} : H\otimes \udl{V}\to \udl{V}\,,~h\otimes v\mapsto h\ra_{\bol{V}}^{} v$, called the {\em $H$-action}, satisfying
\begin{flalign}\label{eqn:leftHmodule}
h\ra_{\bol{V}}^{} \big(h^\prime\ra_{\bol{V}}^{} v\big) = \big(h\,h^\prime\, \big)\ra_{\bol{V}}^{} v\quad,\qquad 1 \ra_{\bol{V}}^{} v=v~,
\end{flalign}
for all $h,h^\prime\in H$ and $v\in \udl{V}$. A morphism $f: \bol{V}\to \bol{W}$ in ${}^{H}_{}\MMM$ is a $k$-linear
map $f:\udl{V}\to \udl{W}$ of the underlying $k$-modules which is compatible with 
the left $H$-module structure, i.e.\ the diagram
\begin{flalign}
\xymatrix{
\ar[d]_-{\ra_{\bol{V}}^{} }H\otimes \udl{V} \ar[rr]^-{\id^{}_H\otimes f} && H\otimes \udl{W}\ar[d]^-{\ra_{\bol{W}}^{}}\\
\udl{V} \ar[rr]_-{f} && \udl{W}
}
\end{flalign}
commutes. We shall call this property {\em $H$-equivariance} of the $k$-linear map $f$ and note that on elements it reads as
\begin{flalign}\label{eqn:Hequivariance}
f\big(h\ra_{\bol{V}}^{}  v\big) = h\ra_{\bol{W}}^{} f(v)~,
\end{flalign}
for all $h\in H$ and $v\in \udl{V}\, $.
There is a forgetful functor from ${}^{H}_{}\MMM$ to $\MMM$ assigning to an object
$\bol{V}$ in ${}^{H}_{}\MMM$ its underlying $k$-module $\udl{V}$ and to an ${}^{H}_{}\MMM$-morphism 
$f : \bol{V} \to \bol{W}$ its underlying $k$-linear map $f: \udl{V} \to \udl{W}$.

\subsection{\label{subsec:monoidalHM}Monoidal structure}

Let us briefly review the construction of a monoidal structure on ${}^{H}_{}\MMM$, see e.g.\
\cite{Drinfeld}.
We define a functor $\otimes : {}^{H}_{}\MMM\times {}^{H}_{}\MMM\to {}^{H}_{}\MMM$
(denoted with abuse of notation by the same symbol as the monoidal functor on the category $\MMM$)
as follows:
For any object $(\bol{V},\bol{W})$ in  ${}^{H}_{}\MMM\times {}^{H}_{}\MMM$
we set 
\begin{flalign}
\bol{V}\otimes \bol{W}  := \big(\udl{V}\otimes \udl{W} ,\ra_{\bol{V}\otimes\bol{W}}^{} \big)~,
\end{flalign}
where $\udl{V}\otimes \udl{W}$ is the tensor product of the underlying $k$-modules 
and the $H$-action is defined via the coproduct in $H$ as
\begin{flalign}
\ra_{\bol{V}\otimes\bol{W}}^{} : H\otimes \udl{V}\otimes \udl{W}  \longrightarrow \udl{V}\otimes \udl{W}~,~~
h\otimes v\otimes w \longmapsto 
 (h_{(1)}\ra_{\bol{V}}^{}  v)\otimes (h_{(2)} \ra_{\bol{W}}^{} w)~.
\end{flalign}
For a morphism $\big(f:\bol{V}\to\bol{X} , g:\bol{W}\to\bol{Y}\big)$ in ${}^{H}_{}\MMM\times {}^{H}_{}\MMM$
we set
\begin{flalign}\label{eqn:productmorphiHM}
f\otimes g : \bol{V}\otimes \bol{W}\longrightarrow \bol{X}\otimes\bol{Y} ~,
\end{flalign}
where $f\otimes g$ is the tensor product of the underlying $k$-linear maps  given by the monoidal structure on $\MMM$;
explicitly, $f\otimes g(v\otimes w) =f(v)\otimes g(w) $ for all $v\in \udl{V}$ and $w\in \udl{W}$. 
It is easy to check that (\ref{eqn:productmorphiHM}) is $H$-equivariant and hence a morphism in ${}^H_{}\MMM$.
 The trivial left $H$-module $\bol{I} :=(k,\ra_{\bol{I}}^{})$
 with $\ra_{\bol{I}}^{} : H\otimes k\to k\,,~h\otimes c\mapsto \epsilon(h)\,c$ is the unit object
 in ${}^H_{}\MMM$. The associator $\Phi$ in ${}^H_{}\MMM$ is given in terms of the associator $\phi$ in the quasi-Hopf algebra
 $H$ by
 \begin{flalign}
 \nn \Phi^{}_{\bol{V},\bol{W},\bol{X}} : (\bol{V}\otimes\bol{W})\otimes\bol{X}  &\longrightarrow \bol{V}\otimes(\bol{W}\otimes\bol{X})~,\\
  (v\otimes w) \otimes x& \longmapsto (\phi^{(1)}\ra_{\bol{V}}^{} v)\otimes \big((\phi^{(2)}\ra_{\bol{W}}^{} w)\otimes (\phi^{(3)}\ra_{\bol{X}}^{} x)\big)~,
 \end{flalign}
 for any three objects $\bol{V},\bol{W},\bol{X}$ in ${}^H_{}\MMM$.
By the quasi-coassociativity condition (\ref{eqn:quasibialgebraaxioms2}) the components of $\Phi$ are ${}^H_{}\MMM$-morphisms,
 while the pentagon relations for $\Phi$ follow from the 3-cocycle condition (\ref{eqn:quasibialgebraaxioms3}).
 The unitors $\lambda$ and $\rho$ in the monoidal category $\MMM$ canonically induce 
 unitors in ${}^H_{}\MMM$, since by (\ref{eqn:quasibialgebraaxioms1}) their components are ${}^H_{}\MMM$-morphisms. 
 With an abuse of notation we denote these induced natural transformations by the same 
symbols $\lambda$ and $\rho$; the triangle relations for $\lambda$ and $\rho$ follow from the 
counital condition (\ref{eqn:quasibialgebraaxioms4}). In summary, we have obtained 
 \begin{propo}
 For any quasi-Hopf algebra $H$ the category ${}^H_{}\MMM$ of left $H$-modules is a monoidal category.
 \end{propo}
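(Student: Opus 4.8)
The plan is to lift the strict monoidal structure of $\MMM=\mathrm{Mod}_k$ to ${}^H_{}\MMM$, using the algebra maps $\Delta$, $\epsilon$ and the associator $\phi$ of $H$ to supply the non-strict data, and then to show that the monoidal coherence axioms are encoded precisely by the quasi-bialgebra axioms (\ref{eqn:quasibialgebraaxioms}). First I would confirm that $\otimes$ is a well-defined bifunctor. On objects one must verify that $\ra_{\bol{V}\otimes\bol{W}}^{}$ satisfies the two conditions in (\ref{eqn:leftHmodule}): associativity of the action holds because $\Delta$ is an algebra homomorphism, so $\Delta(h\,h^\prime\,)=\Delta(h)\,\Delta(h^\prime\,)$, and unitality holds because $\Delta(1)=1\otimes1$. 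On morphisms, the $H$-equivariance of $f\otimes g$ is immediate from equivariance of $f$ and $g$ together with the definition of the action through $\Delta$, and the functor axioms (preservation of identities and of composites) are inherited from $\MMM$ since the underlying $k$-linear maps are unchanged. I would likewise record that $\bol{I}=(k,\ra_{\bol{I}}^{})$ is a genuine object of ${}^H_{}\MMM$, the action axioms (\ref{eqn:leftHmodule}) being exactly the statements that $\epsilon$ is a unital algebra homomorphism.

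Next I would treat the associator. For fixed objects $\bol{V},\bol{W},\bol{X}$ the map $\Phi^{}_{\bol{V},\bol{W},\bol{X}}$ is $k$-linear and invertible, its inverse being built from $\phi^{-1}$, which exists since $\phi$ is invertible in $H\otimes H\otimes H$; it remains to check it is a morphism in ${}^H_{}\MMM$, i.e.\ $H$-equivariant. Writing out the two sides of the equivariance condition for $\Phi^{}_{\bol{V},\bol{W},\bol{X}}$ on a generic element $(v\otimes w)\otimes x$ and stripping off the arbitrary vectors, the two expressions agree exactly when $(\id^{}_H\otimes\Delta)\,\Delta(h)\cdot\phi=\phi\cdot(\Delta\otimes\id^{}_H)\,\Delta(h)$, which is the quasi-coassociativity axiom (\ref{eqn:quasibialgebraaxioms2}). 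Naturality of $\Phi$ in each of its three slots is then automatic, since any morphism acts on the underlying $k$-modules by the componentwise tensor product and thus commutes with the action of the fixed element $\phi$.

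The coherence conditions are where the real content sits, and I expect the pentagon to be the main obstacle. I would spell out the two composites of associators around the pentagon for four objects $\bol{V},\bol{W},\bol{X},\bol{Y}$, evaluate both on a generic element, and collect the resulting element of $H^{\otimes 4}$ acting tensor factor by tensor factor; the equality of the two composites then reduces precisely to the $3$-cocycle condition (\ref{eqn:quasibialgebraaxioms3}). For the unitors $\lambda,\rho$, induced from those of $\MMM$, I would check that their components are $H$-equivariant using the counit axiom (\ref{eqn:quasibialgebraaxioms1}), which identifies $(\epsilon\otimes\id^{}_H)\,\Delta(h)$ and $(\id^{}_H\otimes\epsilon)\,\Delta(h)$ with $h$, and finally verify the triangle identity, which collapses to the normalization (\ref{eqn:quasibialgebraaxioms4}) (equivalently (\ref{eqn:quasibialgebraaxioms5})).

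Assembling the bifunctor $\otimes$, the unit object $\bol{I}$, the natural isomorphisms $\Phi,\lambda,\rho$, and the verified pentagon and triangle axioms then exhibits ${}^H_{}\MMM$ as a monoidal category. Everything apart from the pentagon is essentially bookkeeping; the pentagon is the only step where the full strength of the $3$-cocycle relation is used, and keeping track of which copy of $\phi$ acts on which tensor factor is the one place where genuine care is required.
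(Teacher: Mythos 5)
Your proposal is correct and follows essentially the same route as the paper: the tensor product is defined through $\Delta$, the unit through $\epsilon$, the associator through $\phi$, with equivariance of $\Phi$ coming from the quasi-coassociativity axiom (\ref{eqn:quasibialgebraaxioms2}), the pentagon from the $3$-cocycle condition (\ref{eqn:quasibialgebraaxioms3}), and the unitors and triangle from (\ref{eqn:quasibialgebraaxioms1}) and (\ref{eqn:quasibialgebraaxioms4}). The only difference is that you spell out the bifunctor and naturality checks that the paper leaves implicit, which is harmless.
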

\begin{rem}
If $H$ is a Hopf algebra, i.e.\ $\phi=1\otimes1\otimes1$, then the components
 of $\Phi$ are identity maps and ${}^H_{}\MMM$ is a \emph{strict} monoidal category.
\end{rem}

\subsection{Cochain twisting}

The monoidal category ${}^H_{}\MMM$ behaves nicely under cochain twisting, see e.g.\ \cite{Drinfeld}. Before we extend
the previous results to include an internal hom-functor on ${}^H_{}\MMM$,
we shall briefly review some standard results on the cochain twisting of the monoidal category ${}^H_{}\MMM$.
\begin{defi}
A {\em cochain twist} based on a quasi-Hopf algebra $H$ is an invertible element $F\in H\otimes H$ satisfying
\begin{flalign}
\label{eqn:twistcondition}
\big(\epsilon \otimes \id^{}_H\big)(F) = 1 = \big(\id^{}_H\otimes \epsilon\big)(F)~.
\end{flalign}
\end{defi}

It will be convenient to introduce the following notation: We denote a cochain twist by 
$F = F^{(1)} \otimes F^{(2)} \in H\otimes H$ and its inverse by
$F^{-1} = F^{(-1)} \otimes F^{(-2)} \in H\otimes H$ (with summations understood).
Notice that $F^{(1)}$, $F^{(2)}$, $ F^{(-1)}$ and $ F^{(-2)}$ are elements in $H$.
 Then the counital condition (\ref{eqn:twistcondition}) reads as
 \begin{flalign} \label{eqn:twistcondition1}
 \epsilon(F^{(1)})\,F^{(2)} = 1 = \epsilon(F^{(2)})\, F^{(1)} ~
 \end{flalign}
and its inverse reads as
 \begin{flalign} \label{eqn:twistcondition2}
 \epsilon(F^{(-1)}) \, F^{(-2)} &= 1 = \epsilon(F^{(-2)})\, F^{(-1)} ~.
 \end{flalign}
 
The next result is standard, see e.g.\ \cite[Proposition XV.3.2 and Exercise XV.6.4]{Kassel}.
\begin{theo}\label{theo:twistingofhopfalgebras}
Given any cochain twist $F\in H\otimes H$ based on a quasi-Hopf algebra $H$ there is a new quasi-Hopf algebra
$H_F^{}$. As an algebra, $H_F^{}$ equals $H$, and they also have the same counit $\epsilon_F^{} := \epsilon$.
The coproduct in $H_F^{}$ is given by
\begin{flalign}\label{eqn:twistedcoproduct}
\Delta_F^{}(\,\cdot\,) := F\, \Delta(\,\cdot\,)\,F^{-1}
\end{flalign}
and the associator in $H_F^{}$ reads as
\begin{flalign}\label{eqn:twistedassociator}
\phi_F^{} := (1\otimes F)\cdot (\id^{}_H\otimes \Delta)(F)\cdot\phi \cdot (\Delta\otimes \id^{}_H)(F^{-1})\cdot (F^{-1}\otimes 1)~.
\end{flalign}
The quasi-antipode $(S_F^{},\alpha_F^{},\beta_F^{})$ in $H_F^{}$ is given by
$S_F^{} := S$ and
\begin{flalign}\label{eqn:twistedalphabeta}
\alpha_F^{} :=S(F^{(-1)})\,\alpha\,F^{(-2)}~~,\quad \beta_F^{} := F^{(1)}\,\beta\,S(F^{(2)})~.  
\end{flalign}
\end{theo}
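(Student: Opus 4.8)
The plan is to verify directly that the twisted data $(H_F^{},\Delta_F^{},\epsilon,\phi_F^{},S,\alpha_F^{},\beta_F^{})$ of \eqref{eqn:twistedcoproduct}, \eqref{eqn:twistedassociator} and \eqref{eqn:twistedalphabeta} satisfies every axiom in \eqref{eqn:quasibialgebraaxioms1}--\eqref{eqn:quasibialgebraaxioms4} and \eqref{eqn:quasiantipodeproperties1}--\eqref{eqn:quasiantipodeproperties4}, using only the corresponding axioms for $H$ together with the cochain conditions \eqref{eqn:twistcondition1}--\eqref{eqn:twistcondition2} and the identity $F\,F^{-1}=1\otimes 1$. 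I would organise this as the quasi-bialgebra axioms first and the quasi-antipode axioms second.

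The structural points are immediate. Since $H_F^{}$ and $H$ share the same (strictly associative) product and $F$ is invertible, the map $a\mapsto F\,a\,F^{-1}$ is an algebra automorphism of $H\otimes H$; precomposing with the algebra homomorphism $\Delta$ shows that $\Delta_F^{}$ is an algebra homomorphism, while $\epsilon_F^{}=\epsilon$ is unchanged. For the counit axiom \eqref{eqn:quasibialgebraaxioms1} I use that $\epsilon\otimes\id_H^{}$ is an algebra homomorphism $H\otimes H\to H$; applied to $\Delta_F^{}(h)=F\,\Delta(h)\,F^{-1}$ it factors as $(\epsilon\otimes\id_H^{})(F)\cdot(\epsilon\otimes\id_H^{})\Delta(h)\cdot(\epsilon\otimes\id_H^{})(F^{-1})=1\cdot h\cdot 1=h$ by \eqref{eqn:twistcondition1}, \eqref{eqn:twistcondition2} and \eqref{eqn:quasibialgebraaxioms1}, and symmetrically for $\id_H^{}\otimes\epsilon$. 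The counital normalisation \eqref{eqn:quasibialgebraaxioms4} of $\phi_F^{}$ is similar: applying the algebra homomorphism $\id_H^{}\otimes\epsilon\otimes\id_H^{}$ to the five factors of \eqref{eqn:twistedassociator}, the counit axiom collapses the outer two twist factors to $1\otimes 1$ and the two inner twist factors to $F$ and $F^{-1}$, while $\phi$ becomes $1\otimes 1$ by \eqref{eqn:quasibialgebraaxioms4} for $H$, leaving $F\,F^{-1}=1\otimes 1$.

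The two associator-level identities are the computational heart. For quasi-coassociativity \eqref{eqn:quasibialgebraaxioms2} I would use $(\id_H^{}\otimes\Delta_F^{})(x)=(1\otimes F)\,(\id_H^{}\otimes\Delta)(x)\,(1\otimes F^{-1})$ and $(\Delta_F^{}\otimes\id_H^{})(x)=(F\otimes 1)\,(\Delta\otimes\id_H^{})(x)\,(F^{-1}\otimes 1)$ to expand both $(\id_H^{}\otimes\Delta_F^{})\Delta_F^{}(h)\cdot\phi_F^{}$ and $\phi_F^{}\cdot(\Delta_F^{}\otimes\id_H^{})\Delta_F^{}(h)$. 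The twist factors flanking $\phi$ cancel telescopically through $(\id_H^{}\otimes\Delta)(F^{-1}F)=1\otimes 1\otimes 1$ and $(\Delta\otimes\id_H^{})(F^{-1}F)=1\otimes 1\otimes 1$, after which \eqref{eqn:quasibialgebraaxioms2} for $H$, moving $\phi$ past $(\id_H^{}\otimes\Delta)\Delta(h)$, identifies the two sides. The $3$-cocycle condition \eqref{eqn:quasibialgebraaxioms3} for $\phi_F^{}$ is the main obstacle: substituting \eqref{eqn:twistedassociator} into both sides generates long products of twist insertions under the iterated coproducts appearing in \eqref{eqn:quasibialgebraaxioms3}, and the strategy is to cancel adjacent inverse pairs using $F\,F^{-1}=1\otimes 1$ (and its images under $\id_H^{}\otimes\Delta$, $\Delta\otimes\id_H^{}$ and their iterates), together with \eqref{eqn:quasibialgebraaxioms2} to reorder, so that the only surviving content is the $3$-cocycle condition \eqref{eqn:quasibialgebraaxioms3} for $\phi$ itself. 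This is where the careful tracking of associator insertions emphasised in the Outline is indispensable.

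Finally, keeping $S_F^{}=S$, I verify the quasi-antipode axioms for $\alpha_F^{},\beta_F^{}$ as in \eqref{eqn:twistedalphabeta}. Axioms \eqref{eqn:quasiantipodeproperties1}--\eqref{eqn:quasiantipodeproperties2} are the cleanest: expanding $S\big(\Delta_F^{}(h)_{(1)}\big)\,\alpha_F^{}\,\Delta_F^{}(h)_{(2)}$ and using that $S$ is an anti-automorphism, the innermost twist factors of $\Delta_F^{}$ together with those in $\alpha_F^{}=S(F^{(-1)})\,\alpha\,F^{(-2)}$ combine into the map $a\otimes b\mapsto S(a)\,\alpha\,b$ evaluated on $F^{-1}F=1\otimes 1$, which returns $\alpha$; the residual $S(h_{(1)})\,\alpha\,h_{(2)}=\epsilon(h)\,\alpha$ from \eqref{eqn:quasiantipodeproperties1} for $H$ then reassembles the remaining twist factors into $\epsilon(h)\,\alpha_F^{}$, and \eqref{eqn:quasiantipodeproperties2} follows by the mirror computation with $\beta_F^{}=F^{(1)}\,\beta\,S(F^{(2)})$ and the pattern $a\otimes b\mapsto a\,\beta\,S(b)$. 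The two consistency conditions \eqref{eqn:quasiantipodeproperties3}--\eqref{eqn:quasiantipodeproperties4}, which couple $\phi_F^{}$ with $\alpha_F^{}$ and $\beta_F^{}$, are the hardest: after substituting \eqref{eqn:twistedassociator} and \eqref{eqn:twistedalphabeta} and using anti-multiplicativity of $S$ to reverse every factor, one must arrange the cancellations so that the outer twist factors of $\phi_F^{}$ pair against those hidden in $\alpha_F^{}$ and $\beta_F^{}$ through the same $S(a)\,\alpha\,b$ and $a\,\beta\,S(b)$ patterns, collapsing onto \eqref{eqn:quasiantipodeproperties3}--\eqref{eqn:quasiantipodeproperties4} for $H$. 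I expect \eqref{eqn:quasibialgebraaxioms3} and \eqref{eqn:quasiantipodeproperties3}--\eqref{eqn:quasiantipodeproperties4} to be the principal obstacles, both because of the sheer number of twist insertions and because the antipode reverses all orderings, so the bookkeeping that produces the cancellations must be set up with particular care.
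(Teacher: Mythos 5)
Your proposal is correct in outline, and it is the standard direct verification; note that the paper itself does not prove this theorem but defers to Kassel (Proposition~XV.3.2 for the quasi-bialgebra part and Exercise~XV.6.4 for the quasi-antipode part), so your argument is essentially a reconstruction of the cited proof rather than an alternative to anything in the text. The easy verifications you spell out are all accurate: the counit computations via \eqref{eqn:twistcondition1}--\eqref{eqn:twistcondition2}, the telescoping cancellation $(\id_H^{}\otimes\Delta)(F^{-1}\,F)=1\otimes1\otimes1$ in the quasi-coassociativity check, and the reduction of \eqref{eqn:quasiantipodeproperties1}--\eqref{eqn:quasiantipodeproperties2} to the pattern $a\otimes b\mapsto S(a)\,\alpha\,b$ evaluated on $F^{-1}\,F=1\otimes1$ all go through exactly as you describe. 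The one caveat is that the two genuinely laborious steps --- the $3$-cocycle condition \eqref{eqn:quasibialgebraaxioms3} for $\phi_F^{}$ and the coupled conditions \eqref{eqn:quasiantipodeproperties3}--\eqref{eqn:quasiantipodeproperties4} --- are presented only as a cancellation strategy rather than carried out; the strategy is the right one (and $\phi_F^{}$ is built precisely so that the twist contributions form a coboundary that drops out of the cocycle identity), but a complete write-up would need those computations displayed, since they are exactly where sign and ordering errors typically hide once $S$ reverses all products.
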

\begin{rem}\label{rem:twistinverse}
If $F$ is any cochain twist based on $ H$, then its inverse $F^{-1}$ is a cochain twist based on the 
quasi-Hopf algebra $H_F^{}$. By twisting $H_F^{}$ with the cochain twist $F^{-1}$ we get back the original quasi-Hopf
algebra $H$, i.e.\ $(H_F^{})_{F^{-1}}^{} = H$. More generally,
if $F$ is any cochain twist based on $H$ and $G$ is any cochain twist 
based on $H_F^{}$, then the product $G\,F$ is a cochain twist based on $H$ and 
$H_{G\,F}^{} = (H_{F}^{})_G^{}$.
\end{rem}
\begin{rem}
If $H$ is a Hopf algebra, i.e.\ $\phi=1\otimes1 \otimes1$, and $F$ is a cochain 
twist based on $H$, then in general $H^{}_F$ is a quasi-Hopf algebra since $\phi^{}_F$ 
need not be trivial. The condition that $H^{}_F$ is again a Hopf algebra, i.e.\ that also 
$\phi^{}_F=1\otimes1\otimes 1$, is equivalent to the 2-cocycle condition on $F$ and in
 this case $F$ is called a \emph{cocycle twist} based on $H$.
\end{rem}

A natural question arising in this context is the following: Is there an equivalence between the representation categories
of $H$ and $H_F^{}$ for any cochain twist $F\in H\otimes H$? This is indeed the case by the following construction:
First, notice that any left $H$-module $\bol{V}$ is also a left $H_F^{}$-module 
as it is only sensitive to the algebra structure
underlying $H$, which agrees with that of $H_F^{}$. For
any object $\bol{V}$ in ${}^H_{}\MMM$ we shall write $\FF(\bol{V})$ when considered as an object in the category
${}^{H_F^{}}_{}\MMM$. Similarly, any ${}^H_{}\MMM$-morphism $f:\bol{V}\to\bol{W}$
canonically induces an ${}^{H_F^{}}_{}\MMM$-morphism which we denote by $\FF(f) : \FF(\bol{V})\to\FF(\bol{W})$.
This defines a functor $\FF: {}^H_{}\MMM \to {}^{H_F^{}}_{}\MMM $ between the representation categories of
$H$ and $H_F^{}$, which is invertible (simply twist $H_F^{}$ by its cochain twist $F^{-1}$, cf.\ Remark 
\ref{rem:twistinverse}).
Hence we have an equivalence between the categories $ {}^H_{}\MMM$ and ${}^{H_F^{}}_{}\MMM $. Even better, $\FF$ is a monoidal functor and
we have an equivalence between the monoidal categories
${}^H_{}\MMM$  and ${}^{H_F^{}}_{}\MMM$ (we shall denote the monoidal functor on ${}^{H_F^{}}_{}\MMM$
by $\otimes_F^{}$, the unit object by $\bol{I}_F^{}$, the associator by
$\Phi^F_{}$, and the unitors by $\lambda^F_{}$ and $\rho^F_{}\, $);
the coherence maps are given by the ${}^{H_F^{}}_{}\MMM$-isomorphisms
\begin{subequations}\label{eqn:coherencemapstensor}
\begin{flalign}
\nn \varphi_{\bol{V},\bol{W}}^{}  :  \FF(\bol{V})\otimes_F^{} \FF(\bol{W}) &\longrightarrow \FF(\bol{V}\otimes \bol{W}) ~,\\
v\otimes_F^{}  w &\longmapsto (F^{(-1)}\ra_{\bol{V}}^{} v)\otimes (F^{(-2)}\ra_{\bol{W}}^{} w)~,
\end{flalign}
for any two objects $\bol{V},\bol{W}$ in ${}^H_{}\MMM$, and
\begin{flalign}
\psi : \bol{I}_F^{} \longrightarrow \FF(\bol{I})  ~,~~c \longmapsto c~.
\end{flalign}
\end{subequations}
It is a straightforward check using (\ref{eqn:twistcondition1},\ref{eqn:twistcondition2}) that the coherence diagrams
\begin{subequations}\label{eqn:coherencediagrams}
\begin{flalign}
\xymatrix{
\ar[d]_-{\rho^F_{\FF(\bol{V})}} \FF(\bol{V})\otimes_F^{} \bol{I}_F^{} \ar[rr]^-{\id_{\FF(\bol{V})}^{}\otimes_F^{} \psi} && \FF(\bol{V})\otimes_F^{} \FF(\bol{I})\ar[d]^-{\varphi^{}_{\bol{V},\bol{I}}}\\
\FF(\bol{V})&&\ar[ll]^-{\FF(\rho^{}_{\bol{V}})} \FF(\bol{V}\otimes\bol{I})
}
\end{flalign}
\begin{flalign}
\xymatrix{
\ar[d]_-{\lambda^F_{\FF(\bol{V})}}\bol{I}_F^{}\otimes_F^{}\FF(\bol{V}) \ar[rr]^-{\psi\otimes_F^{} \id^{}_{\FF(\bol{V})}} &&\FF(\bol{I}) \otimes_F^{} \FF(\bol{V})\ar[d]^-{\varphi^{}_{\bol{I},\bol{V}}}\\
\FF(\bol{V}) &&\ar[ll]^-{\FF(\lambda^{}_{\bol{V}})}\FF(\bol{I}\otimes\bol{V})
}
\end{flalign}
in ${}^{H_F^{}}_{}\MMM$ commute for any object $\bol{V}$ in ${}^H_{}\MMM$.
Furthermore, by (\ref{eqn:twistedassociator}) the coherence diagram
\begin{flalign}
\xymatrix{
\ar[d]_-{\varphi^{}_{\bol{V},\bol{W}}\otimes_F^{}\id^{}_{\FF(\bol{X})}}\big(\FF(\bol{V})\otimes_F^{} \FF(\bol{W})\big)\otimes_F^{} \FF(\bol{X})\ar[rrr]^-{\Phi^F_{\FF(\bol{V}),\FF(\bol{W}),\FF(\bol{X})}}&&&\FF(\bol{V})\otimes_F^{} \big(\FF(\bol{W})\otimes_F^{} \FF(\bol{X})\big)\ar[d]^-{\id^{}_{\FF(\bol{V})}\otimes_F^{}\varphi^{}_{\bol{W},\bol{X}}}\\
\ar[d]_-{\varphi^{}_{\bol{V}\otimes\bol{W},\bol{X}}}\FF(\bol{V}\otimes\bol{W})\otimes_F^{}\FF(\bol{X})&&&\FF(\bol{V})\otimes_F^{}\FF(\bol{W}\otimes\bol{X})\ar[d]^-{\varphi^{}_{\bol{V},\bol{W}\otimes\bol{X}}}\\
\FF\big((\bol{V}\otimes\bol{W})\otimes\bol{X}\big)\ar[rrr]_-{\FF(\Phi^{}_{\bol{V},\bol{W},\bol{X}})}&&&\FF\big(\bol{V}\otimes (\bol{W}\otimes\bol{X})\big)
}
\end{flalign}
\end{subequations}
in ${}^{H_F^{}}_{}\MMM$ commutes for any three objects $\bol{V},\bol{W},\bol{X}$ in ${}^H_{}\MMM$.
This proves
\begin{theo}\label{theo:defmonoidcatHM}
If $H$ is a quasi-Hopf algebra and $F\in H\otimes H$ is any cochain twist, then ${}^H_{}\MMM$ and 
${}^{H_F^{}}_{}\MMM$ are equivalent as monoidal categories.
\end{theo}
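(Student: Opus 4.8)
The plan is to promote the already-constructed functor $\FF:{}^H_{}\MMM\to{}^{H_F^{}}_{}\MMM$, together with the coherence isomorphisms $\varphi$ and $\psi$ of (\ref{eqn:coherencemapstensor}), to a monoidal equivalence. First I would record that $\FF$ is an equivalence of categories; in fact it is an isomorphism of categories. Since $H$ and $H_F^{}$ share the same underlying algebra and counit (Theorem \ref{theo:twistingofhopfalgebras}), a left $H$-module and its image $\FF(\bol{V})$ have the identical underlying $k$-module $\udl{V}$ and the identical action $\ra_{\bol{V}}^{}$, and the same holds for morphisms; thus $\FF$ is the identity on all underlying data. A strict inverse is provided by the analogous functor attached to the cochain twist $F^{-1}$ based on $H_F^{}$, which by Remark \ref{rem:twistinverse} recovers $H$ since $(H_F^{})_{F^{-1}}^{}=H$. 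Hence $\FF$ is invertible, and in particular an equivalence.

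Next I would confirm that the candidate coherence maps are honest ${}^{H_F^{}}_{}\MMM$-isomorphisms and natural in their arguments. The $H_F^{}$-equivariance of each $\varphi_{\bol{V},\bol{W}}^{}$ is a one-line computation: the action on the source is built from the twisted coproduct $\Delta_F^{}$, and conjugating by the legs $F^{(-1)}\otimes F^{(-2)}$ turns $\Delta_F^{}(\,\cdot\,)=F\,\Delta(\,\cdot\,)\,F^{-1}$ from (\ref{eqn:twistedcoproduct}) into the untwisted coproduct governing $\FF(\bol{V}\otimes\bol{W})$. Invertibility of $\varphi_{\bol{V},\bol{W}}^{}$ follows from invertibility of $F$, the inverse being given by acting with $F$ rather than $F^{-1}$. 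Naturality of $\varphi$ reduces to the fact that morphisms in ${}^H_{}\MMM$ are $H$-equivariant and therefore commute with the action of $F^{(-1)}\otimes F^{(-2)}$. The map $\psi$ is a $k$-linear isomorphism whose equivariance is immediate, as both unit objects carry the counit action.

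The heart of the argument is the verification of the three coherence diagrams in (\ref{eqn:coherencediagrams}). The two unitor squares I would settle by tracing an element around each and collapsing the resulting factors $\epsilon(F^{(1)})\,F^{(2)}$, $\epsilon(F^{(2)})\,F^{(1)}$ and their inverse-twist counterparts to the unit by means of the counital conditions (\ref{eqn:twistcondition1}) and (\ref{eqn:twistcondition2}); what remains is exactly the statement that $\lambda$ and $\rho$ are transported correctly. The main obstacle is the large associativity pentagon. To close it I would expand both composite paths from $\big(\FF(\bol{V})\otimes_F^{}\FF(\bol{W})\big)\otimes_F^{}\FF(\bol{X})$ to $\FF\big(\bol{V}\otimes(\bol{W}\otimes\bol{X})\big)$ on a general element, carefully tracking every insertion of the associator $\phi$, of $F$, and of the mixed legs of $(\id^{}_H\otimes\Delta)(F)$ and $(\Delta\otimes\id^{}_H)(F^{-1})$. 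The two sides are then forced to coincide precisely by the defining formula (\ref{eqn:twistedassociator}) for $\phi_F^{}$; here the genuine labour is the bookkeeping of associator insertions rather than any conceptual subtlety, which is why this step is where care must be taken not to conflate equality and isomorphy of objects. Once these diagrams commute, $(\FF,\varphi,\psi)$ is a monoidal functor, and being an isomorphism of the underlying categories it is a monoidal equivalence, as claimed.
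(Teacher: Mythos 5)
Your proposal is correct and follows essentially the same route as the paper: the functor $\FF$ (which is the identity on underlying $k$-modules and actions, with inverse given by twisting back via $F^{-1}$ as in Remark \ref{rem:twistinverse}) is equipped with the coherence maps (\ref{eqn:coherencemapstensor}), the unitor diagrams are closed by the counital conditions (\ref{eqn:twistcondition1},\ref{eqn:twistcondition2}), and the associativity hexagon is closed by the defining formula (\ref{eqn:twistedassociator}) for $\phi_F^{}$. No substantive differences from the paper's argument.
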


\subsection{Internal homomorphisms}

We shall show that for any quasi-Hopf algebra $H$ the monoidal category ${}^H_{}\MMM$ has an internal hom-functor
$\hom: \big({}^H_{} \MMM\big)^\mathrm{op}\times{}^H_{} \MMM\to {}^H_{} \MMM $,
hence it is a closed monoidal category. 
Here $\big({}^H_{} \MMM\big)^\mathrm{op}$ denotes the opposite
category of ${}^H_{} \MMM$, i.e.\ the objects in $\big({}^H_{} \MMM\big)^\mathrm{op}$ are the same as the
 objects in ${}^H_{} \MMM$  and the morphisms in $\big({}^H_{} \MMM\big)^\mathrm{op}$ are the morphism
 in ${}^H_{} \MMM$ with reversed arrows; explicitly, an $\big({}^H_{} \MMM\big)^\mathrm{op}$-morphism
 $f^{\mathrm{op}}: \bol{V}\to \bol{W}$ is an ${}^H_{} \MMM$-morphism $f: \bol{W}\to \bol{V}$ and
 the composition with another $\big({}^H_{} \MMM\big)^\mathrm{op}$-morphism
 $g^{\mathrm{op}}: \bol{W}\to \bol{X}$ is
 $g^\mathrm{op}\circ^{\mathrm{op}} f^{\mathrm{op}} = (f\circ
 g)^\mathrm{op} :\bol{V}\to\bol{X} $. For an object $(V,W)$ in  $
 \big({}^H_{} \MMM\big)^\mathrm{op}\times{}^H_{} \MMM$, the internal
 homomorphism $\hom(V,W)$ is an object in ${}^H_{} \MMM$ representing
 the exact functor $\Hom(\text{--}\otimes V,W)$ from $\big({}^H_{} \MMM\big)^\mathrm{op}$ to
 the category ${\sf Sets}$ of sets; here the upper case $\Hom$ is used to denote
 the morphism sets in ${}^{H}_{}\MMM$ and the lower case
$\hom$ to denote the internal hom-objects.
\sk

We now give an explicit description of the internal $\hom$-objects.
For any object $(\bol{V},\bol{W})$ in $ \big({}^H_{} \MMM\big)^\mathrm{op}\times{}^H_{} \MMM$ we set 
\begin{flalign}
\hom(\bol{V},\bol{W}) := \big(\Hom_k(\udl{V},\udl{W}) , \ra_{\hom(\bol{V},\bol{W})}^{} \big)~,
\end{flalign}
where $\Hom_k(\udl{V},\udl{W})$ is the $k$-module of $k$-linear maps between the underlying $k$-modules and
the $H$-action is given by the adjoint action
\begin{flalign}
\nn\ra_{\hom(\bol{V},\bol{W}) }^{} : H\otimes \Hom_k(\udl{V},\udl{W}) &\longrightarrow \Hom_k(\udl{V},\udl{W})~,\\
h\otimes L &\longmapsto (h_{(1)}\ra_{\bol{W}}^{}\,\cdot\,)\circ L \circ (S(h_{(2)})\ra_{\bol{V}}^{} \,\cdot\,)~.\label{eqn:Hadjointaction}
\end{flalign}
\begin{lem}
If $(\bol{V},\bol{W})$ is an object in $ \big({}^H_{}
\MMM\big)^\mathrm{op}\times{}^H_{} \MMM$ then
$\hom(\bol{V},\bol{W}) $ is an object in $ {}^H_{} \MMM$.
\end{lem}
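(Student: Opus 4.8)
The plan is to verify directly that the adjoint action \eqref{eqn:Hadjointaction} endows the $k$-module $\Hom_k(\udl{V},\udl{W})$ with the structure of a left $H$-module, i.e.\ that it satisfies the two axioms in \eqref{eqn:leftHmodule}. First I would observe that the assignment is well defined: for fixed $h\in H$ and $L\in\Hom_k(\udl V,\udl W)$ the map $h\ra^{}_{\hom(\bol V,\bol W)}L$ is again $k$-linear, being the composite of the $k$-linear maps $L$, $(h_{(1)}\ra^{}_{\bol W}\,\cdot\,)$ and $(S(h_{(2)})\ra^{}_{\bol V}\,\cdot\,)$; since $\Delta$, $S$ and the actions $\ra^{}_{\bol V},\ra^{}_{\bol W}$ are all $k$-linear, the map $(h,L)\mapsto h\ra^{}_{\hom(\bol V,\bol W)}L$ is $k$-bilinear, so \eqref{eqn:Hadjointaction} is a genuine $k$-linear map $H\otimes\Hom_k(\udl V,\udl W)\to\Hom_k(\udl V,\udl W)$.

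For the unit axiom I would use that $\Delta$ is an algebra homomorphism, whence $\Delta(1)=1\otimes1$, together with $S(1)=1$ (as $S$ is an algebra anti-automorphism) and the unitality of $\ra^{}_{\bol V}$ and $\ra^{}_{\bol W}$. Evaluating $\big(1\ra^{}_{\hom(\bol V,\bol W)}L\big)$ on any $v\in\udl V$ then collapses immediately to $L(v)$, giving $1\ra^{}_{\hom(\bol V,\bol W)}L=L$.

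The substantive step is compatibility with the product. Evaluating $\big(h\ra^{}_{\hom(\bol V,\bol W)}(h^\prime\ra^{}_{\hom(\bol V,\bol W)}L)\big)$ on an element $v\in\udl V$ and unfolding the definition \eqref{eqn:Hadjointaction} twice gives
\[
h_{(1)}\ra^{}_{\bol W}\Big(h^\prime_{(1)}\ra^{}_{\bol W}L\big(S(h^\prime_{(2)})\ra^{}_{\bol V}\big(S(h_{(2)})\ra^{}_{\bol V}v\big)\big)\Big)~.
\]
I would then apply the module axiom \eqref{eqn:leftHmodule} for $\ra^{}_{\bol W}$ on the outside and for $\ra^{}_{\bol V}$ on the inside to merge the nested actions, and use that $S$ is an anti-automorphism to rewrite $S(h^\prime_{(2)})\,S(h_{(2)})=S(h_{(2)}\,h^\prime_{(2)})$. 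This yields
\[
\big(h_{(1)}\,h^\prime_{(1)}\big)\ra^{}_{\bol W}L\big(S(h_{(2)}\,h^\prime_{(2)})\ra^{}_{\bol V}v\big)~.
\]
Finally, since $\Delta$ is an algebra homomorphism one has $(h\,h^\prime)_{(1)}\otimes(h\,h^\prime)_{(2)}=h_{(1)}\,h^\prime_{(1)}\otimes h_{(2)}\,h^\prime_{(2)}$ in Sweedler notation, which identifies the last expression with $\big((h\,h^\prime)\ra^{}_{\hom(\bol V,\bol W)}L\big)(v)$, as required.

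The main thing to be careful about---rather than a genuine obstacle---is the order reversal introduced by the anti-automorphism $S$: the inner $\ra^{}_{\bol V}$-actions combine as $S(h^\prime_{(2)})\,S(h_{(2)})$ and must be recombined in the \emph{opposite} order $S(h_{(2)}\,h^\prime_{(2)})$ in order to match the second Sweedler leg of $\Delta(h\,h^\prime)$. It is worth emphasising that the (generally non-trivial) associator $\phi$ plays no role in this lemma: only the facts that $\Delta$ is an algebra homomorphism and $S$ an algebra anti-automorphism, together with the module axioms \eqref{eqn:leftHmodule}, are used. The quasi-coassociativity and $3$-cocycle conditions \eqref{eqn:quasibialgebraaxioms2}--\eqref{eqn:quasibialgebraaxioms3} will only enter later, when establishing $H$-equivariance of the evaluation and composition morphisms and the compatibility of $\hom$ with the monoidal structure.
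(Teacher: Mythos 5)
Your proof is correct and follows essentially the same route as the paper's: the unit axiom from $\Delta(1)=1\otimes 1$ and $S(1)=1$, and the multiplicativity axiom by unfolding the adjoint action twice and recombining via the module axioms, the homomorphism property of $\Delta$, and the anti-automorphism property of $S$ (including the order reversal you flag). Your closing observation that the associator plays no role here is accurate and consistent with where the paper first invokes quasi-coassociativity.
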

\begin{proof}
We have to prove that (\ref{eqn:Hadjointaction}) defines a left $H$-action, i.e.\ that
the conditions in (\ref{eqn:leftHmodule}) are satisfied. The second condition
is a simple consequence of $\Delta(1) = 1\otimes 1$ and $S(1) = 1$.
The first condition follows from the short calculation
\begin{flalign}
\nn h\ra_{\hom(\bol{V},\bol{W}) }^{} \big(h^\prime\ra_{\hom(\bol{V},\bol{W}) }^{}  L\big) &= (h_{(1)}\ra_{\bol{W}}^{}\,\cdot\,)\circ (h^\prime_{(1)}\ra_{\bol{W}}^{} \,\cdot\,) \circ L \circ
(S(h^\prime_{(2)})\ra_{\bol{V}}^{} \,\cdot\, )\circ (S(h_{(2)})\ra_{\bol{V}}^{} \,\cdot\, )\\[4pt]
\nn &=\big((h\,h^\prime\, )_{(1)} \ra_{\bol{W}}^{} \,\cdot\,\big)\circ
L \circ \big(S((h\,h^\prime\, )_{(2)})\ra_{\bol{V}}^{} \,\cdot\,\big)\\[4pt]
&= (h\,h^\prime\, )\ra_{\hom(\bol{V},\bol{W}) }^{}  L~,
\end{flalign}
for all $h,h^\prime\in H$ and $L\in \Hom_k(\udl{V},\udl{W})$. In the
second equality we have used the fact that $\ra_{\bol{V}}^{}$ and
$\ra_{\bol{W}}^{}$ are left $H$-actions (hence they satisfy (\ref{eqn:leftHmodule})),
that $\Delta$ is an algebra homomorphism and that $S$ is an algebra anti-automorphism.
\end{proof}

Given now any morphism $\big(f^{\mathrm{op}} : \bol{V} \to \bol{X}, g: \bol{W}\to \bol{Y} \big)$ in 
$ \big({}^H_{} \MMM\big)^\mathrm{op}\times{}^H_{} \MMM$, we define
a $k$-linear map 
\begin{flalign}\label{eqn:homfunctormorphi}
\hom(f^\mathrm{op},g) : \hom(\bol{V},\bol{W}) \longrightarrow \hom(\bol{X},\bol{Y})~~,~~~
L \longmapsto g \circ  L\circ f~.
\end{flalign}
\begin{lem}
If $\big(f^{\mathrm{op}} : \bol{V} \to \bol{X}, g: \bol{W}\to \bol{Y}
\big)$ is a morphism in 
$ \big({}^H_{} \MMM\big)^\mathrm{op}\times{}^H_{} \MMM$ then
$\hom(f^\mathrm{op},g) : \hom(\bol{V},\bol{W})\to \hom(\bol{X},\bol{Y})$ is an ${}^H_{}\MMM$-morphism.
Moreover, $\hom :  \big({}^H_{} \MMM\big)^\mathrm{op}\times{}^H_{} \MMM\to {}^H_{} \MMM$ is a functor.
\end{lem}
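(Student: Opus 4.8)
The plan is to verify the two claims in turn: first that the $k$-linear map $\hom(f^\mathrm{op},g)$ of (\ref{eqn:homfunctormorphi}) is $H$-equivariant, hence an ${}^H_{}\MMM$-morphism, and then that $\hom$ respects identities and composition. Before either step, note that $f^{\mathrm{op}}:\bol{V}\to\bol{X}$ is by definition an ${}^H_{}\MMM$-morphism $f:\bol{X}\to\bol{V}$, while $g:\bol{W}\to\bol{Y}$. Thus for $L\in\Hom_k(\udl V,\udl W)$ the composite $g\circ L\circ f$ runs $\udl X\to\udl V\to\udl W\to\udl Y$, so it lies in $\Hom_k(\udl X,\udl Y)$ and $\hom(f^\mathrm{op},g)$ is well defined as a map of the underlying $k$-modules.

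For $H$-equivariance I would expand both $\hom(f^\mathrm{op},g)\big(h\ra_{\hom(\bol V,\bol W)}^{}L\big)$ and $h\ra_{\hom(\bol X,\bol Y)}^{}\hom(f^\mathrm{op},g)(L)$ using the adjoint action (\ref{eqn:Hadjointaction}), so that each becomes $L$ sandwiched between actions by $h_{(1)}$ and $S(h_{(2)})$, pre- and post-composed with $f$ and $g$. The two expressions are then matched by sliding the $H$-actions through $f$ and $g$: equivariance (\ref{eqn:Hequivariance}) of $g$ gives $g\circ(h_{(1)}\ra_{\bol W}^{}\,\cdot\,)=(h_{(1)}\ra_{\bol Y}^{}\,\cdot\,)\circ g$, and equivariance of $f$, applied to the algebra element $S(h_{(2)})$, gives $(S(h_{(2)})\ra_{\bol V}^{}\,\cdot\,)\circ f=f\circ(S(h_{(2)})\ra_{\bol X}^{}\,\cdot\,)$. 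Substituting these two identities turns the left-hand side into the right-hand side. The only point requiring care is that equivariance of $f$ must be invoked for $S(h_{(2)})$ rather than for $h$ itself; since (\ref{eqn:Hequivariance}) holds for \emph{all} algebra elements this is automatic, and in particular no antipode axioms are needed for this lemma.

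Functoriality is then immediate from the definition (\ref{eqn:homfunctormorphi}). Identities are preserved since $\hom(\id_{\bol V}^{\mathrm{op}},\id_{\bol W})(L)=\id\circ L\circ\id=L$. For composition I would take composable morphisms $(f_1^{\mathrm op},g_1)$ and $(f_2^{\mathrm op},g_2)$ in $\big({}^H_{}\MMM\big)^{\mathrm{op}}\times{}^H_{}\MMM$ and use that composition there is componentwise, together with the rule $g^\mathrm{op}\circ^{\mathrm{op}}f^{\mathrm{op}}=(f\circ g)^\mathrm{op}$ in the opposite factor. Evaluating both sides on $L$ reduces to comparing $(g_2\circ g_1)\circ L\circ(f_1\circ f_2)$ with $g_2\circ(g_1\circ L\circ f_1)\circ f_2$, which agree by associativity of composition of $k$-linear maps; the order-reversal of the $f_i$ is exactly what the opposite-category convention supplies.

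There is no genuine obstacle here: the statement is a direct diagram chase, and essentially all of its content sits in the equivariance step, where the correct bookkeeping of $h_{(1)}$, $S(h_{(2)})$, and the reversed arrow of $f$ is the only thing to watch.
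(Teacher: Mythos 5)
Your proof is correct and follows essentially the same route as the paper: $H$-equivariance is obtained by expanding the adjoint action and sliding $h_{(1)}$ and $S(h_{(2)})$ through $g$ and $f$ via their equivariance, and functoriality reduces to associativity of composition together with the opposite-category convention for the first argument. Your remark that equivariance of $f$ is applied to the element $S(h_{(2)})$ (so no antipode axioms are needed) is a correct and worthwhile clarification, but does not change the argument.
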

\begin{proof}
We have to show that $\hom(f^\mathrm{op},g)$ is $H$-equivariant, i.e.\ that it satisfies the property 
(\ref{eqn:Hequivariance}). Using (\ref{eqn:homfunctormorphi}) and
the explicit expression for the adjoint $H$-action (\ref{eqn:Hadjointaction}) we obtain
\begin{flalign}
\nn \hom(f^\mathrm{op},g)\big(h\ra_{\hom(\bol{V},\bol{W}) }^{}   L\big) 
&=  g\circ (h_{(1)}\ra_{\bol{W}}^{} \,\cdot\,)\circ L \circ (S(h_{(2)})\ra_{\bol{V}}^{} \,\cdot\,)\circ f\\[4pt]
\nn &=(h_{(1)}\ra_{\bol{Y}}^{} \,\cdot\,) \circ g\circ L \circ  f\circ (S(h_{(2)})\ra_{\bol{X}}^{} \,\cdot\,) \\[4pt]
&= h\ra_{\hom(\bol{X},\bol{Y}) }^{}  \hom(f^\mathrm{op},g)\big(L\big)~,
\end{flalign}
for all $L\in \Hom_k(\udl{V},\udl{W})$ and $h\in H$.
 $\hom$ is a functor, since it clearly preserves the identity morphisms
$\hom(\id_{\bol{V}}^{\mathrm{op}},\id^{}_{\bol{W}}) = \id^{}_{\hom(\bol{V},\bol{W})}$ and it also preserves compositions
\begin{flalign}
\hom(f^\mathrm{op}\circ^\mathrm{op} \tilde f^\mathrm{op}, g\circ \tilde g) (\,\cdot\,)= 
g \circ \tilde g\circ (\,\cdot\,) \circ  \tilde f \circ f= \big(\hom(f^\mathrm{op},g)\circ \hom(\tilde f^\mathrm{op},\tilde g)\big) (\,\cdot\,)~,
\end{flalign}
for any two composable morphisms $(f^\mathrm{op},g)$ and $(\tilde f^\mathrm{op},\tilde g)$ in 
 $\big({}^H_{} \MMM\big)^\mathrm{op}\times{}^H_{} \MMM$.
\end{proof}

With these preparations we can now show that ${}^H_{}\MMM$ is a closed monoidal category.
\begin{theo}\label{theo:curryingHM}
For any quasi-Hopf algebra $H$ the representation category ${}^H_{}\MMM$ is a closed monoidal category
with internal hom-functor $\hom: \big({}^H_{} \MMM\big)^\mathrm{op}\times{}^H_{} \MMM\to {}^H_{} \MMM $ described above.
\end{theo}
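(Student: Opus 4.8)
The plan is to establish that ${}^H_{}\MMM$ is closed monoidal by exhibiting, for each pair of objects $\bol{V},\bol{W}$, a natural bijection
\begin{flalign*}
\Hom\big(\bol{U}\otimes\bol{V},\bol{W}\big)\;\cong\;\Hom\big(\bol{U},\hom(\bol{V},\bol{W})\big)~,
\end{flalign*}
natural in all three arguments $\bol{U},\bol{V},\bol{W}$. The two lemmas already proved give us that $\hom(\bol{V},\bol{W})$ is an object in ${}^H_{}\MMM$ and that $\hom$ is a functor of the required variance, so what remains is precisely to produce this currying isomorphism and verify its naturality. First I would write down the candidate maps in both directions at the level of underlying $k$-linear maps. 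Given an ${}^H_{}\MMM$-morphism $\varphi:\bol{U}\otimes\bol{V}\to\bol{W}$, the associated map should send $u\in\udl{U}$ to the $k$-linear map $v\mapsto \varphi(u\otimes v)$, and conversely, given $\psi:\bol{U}\to\hom(\bol{V},\bol{W})$, the associated map should send $u\otimes v$ to $\psi(u)(v)$, i.e.\ it is the composite of $\psi\otimes\id$ with the evaluation morphism $\ev:\hom(\bol{V},\bol{W})\otimes\bol{V}\to\bol{W}$.

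The main obstacle, and the step demanding genuine care, is $H$-equivariance in both directions, since this is where the nontrivial interplay between the coproduct, the antipode, and the adjoint action (\ref{eqn:Hadjointaction}) enters. For the forward direction I would take $\varphi$ satisfying (\ref{eqn:Hequivariance}) for the tensor-product action $\ra_{\bol{U}\otimes\bol{V}}^{}$, apply the candidate curried map, and check that the result is $H$-equivariant into $\hom(\bol{V},\bol{W})$; the key computation unpacks $h\ra_{\hom(\bol{V},\bol{W})}^{}(\,\cdot\,)$ using (\ref{eqn:Hadjointaction}) and then uses the antipode axiom (\ref{eqn:quasiantipodeproperties1}) together with coassociativity to reabsorb the $S(h_{(2)})$ factor against $h_{(1)}$. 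Concretely, the cancellation hinges on the identity $h_{(1)(1)}\otimes S(h_{(1)(2)})\,h_{(2)}$ collapsing via $S(h_{(1)})\,\alpha\,h_{(2)}=\epsilon(h)\,\alpha$-type manipulations; one must track the Sweedler indices carefully because $\Delta$ is only quasi-coassociative, so associator factors $\phi$ can appear. The reverse direction is analogous but proceeds by verifying that the evaluation morphism $\ev$ is itself an ${}^H_{}\MMM$-morphism, which again reduces to the antipode axioms.

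After establishing the bijection on morphism sets, I would verify that the two constructions are mutually inverse as $k$-linear bijections (this part is a short direct computation, $u\mapsto(v\mapsto\varphi(u\otimes v))\mapsto(u\otimes v\mapsto\varphi(u\otimes v))$ and back), and then check naturality. Naturality in $\bol{U}$ and $\bol{W}$ is essentially formal given the functoriality of $\hom$ and $\otimes$ already in hand, while naturality in $\bol{V}$ requires chasing a square involving $\hom(f^{\mathrm{op}},\id)$ against $\id\otimes f$, which follows from the definitions (\ref{eqn:homfunctormorphi}) and the functoriality of the monoidal product. I expect the equivariance verifications to be the only substantive content; once those are in place, the representability of $\Hom(\text{--}\otimes\bol{V},\bol{W})$ by $\hom(\bol{V},\bol{W})$ is immediate, and closedness of the monoidal category ${}^H_{}\MMM$ follows.
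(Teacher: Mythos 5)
There is a genuine gap, and it lies precisely in the step you flag as the ``main obstacle''. Your candidate currying maps are the naive Hopf-algebra ones: $u\mapsto\big(v\mapsto\varphi(u\otimes v)\big)$ and $u\otimes v\mapsto\psi(u)(v)$. In the quasi-Hopf setting these maps do \emph{not} restrict to the morphism sets, and no amount of careful Sweedler-index tracking in the verification will repair this: the definitions themselves must be modified. The reason is that the quasi-antipode axioms are $h_{(1)}\,\beta\,S(h_{(2)})=\epsilon(h)\,\beta$ and $S(h_{(1)})\,\alpha\,h_{(2)}=\epsilon(h)\,\alpha$ rather than $h_{(1)}\,S(h_{(2)})=\epsilon(h)\,1$; so when you unpack $h\ra_{\hom(\bol{V},\bol{W})}(\,\cdot\,)$ via (\ref{eqn:Hadjointaction}) and try to reabsorb $S(h_{(2)})$ against $h_{(1)}$, the cancellation only goes through if a factor of $\beta$ (resp.\ $\alpha$) has been inserted between them, and the quasi-coassociativity (\ref{eqn:quasibialgebraaxioms2}) forces accompanying associator legs. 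This is why the paper's currying map (\ref{eqn:rightcurrying}) sends $v$ to $f\big((\phi^{(-1)}\ra_{\bol{V}}v)\otimes((\phi^{(-2)}\,\beta\,S(\phi^{(-3)}))\ra_{\bol{W}}(\,\cdot\,))\big)$ and its inverse (\ref{eqn:inversecurrying}) carries the insertion $S(\phi^{(2)})\,\alpha\,\phi^{(3)}$ together with an outer $\phi^{(1)}$-action; correspondingly the evaluation morphism is $L\otimes v\mapsto\phi^{(1)}\ra_{\bol{W}}L(S(\phi^{(2)})\,\alpha\,\phi^{(3)}\ra_{\bol{V}}v)$ as in (\ref{eqn:evaluationexplicit}), not $L\otimes v\mapsto L(v)$, which is not an ${}^H_{}\MMM$-morphism.

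A secondary consequence: with the correct maps, the claim that the two directions are ``mutually inverse by a short direct computation'' is no longer immediate. Verifying $\zeta^{-1}\circ\zeta=\id$ and $\zeta\circ\zeta^{-1}=\id$ requires the compatibility relations $\phi^{(1)}\,\beta\,S(\phi^{(2)})\,\alpha\,\phi^{(3)}=1$ and $S(\phi^{(-1)})\,\alpha\,\phi^{(-2)}\,\beta\,S(\phi^{(-3)})=1$ from (\ref{eqn:quasiantipodeproperties3},\ref{eqn:quasiantipodeproperties4}), used in conjunction with the 3-cocycle condition (\ref{eqn:quasibialgebraaxioms3}). Your overall architecture --- establish the natural bijection, check equivariance, check naturality --- matches the paper's, and your remark that naturality is essentially formal given functoriality of $\hom$ and $\otimes$ is correct. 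But as written the proof would fail at the equivariance check, and the fix is not a refinement of the computation but a redefinition of the currying and evaluation maps with the $\phi$, $\alpha$, $\beta$ insertions built in from the start.
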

\begin{proof}
We have to prove that there is a natural bijection (called `currying')
\begin{flalign}
\Hom\big(\bol{V}\otimes \bol{W},\bol{X}\big) \simeq \Hom\big(\bol{V},\hom(\bol{W},\bol{X})\big)~,
\end{flalign}
for any three objects $\bol{V},\bol{W},\bol{X}$ in ${}^H_{}\MMM$.
Let us define the maps
\begin{subequations}\label{eqn:rightcurrying}
\begin{flalign}
\zeta^{}_{\bol{V},\bol{W},\bol{X}} : \Hom\big(\bol{V}\otimes \bol{W},\bol{X}\big) \longrightarrow \Hom\big(\bol{V},\hom(\bol{W},\bol{X})\big)
\end{flalign}
by setting
\begin{flalign}
\nn \zeta^{}_{\bol{V},\bol{W},\bol{X}}(f) : \bol{V} &\longrightarrow \hom(\bol{W},\bol{X})~,\\
v&\longmapsto f\Big(\big(\phi^{(-1)}\ra_{\bol{V}}^{}v\big)\otimes \big(\big(\phi^{(-2)}\,\beta\,S(\phi^{(-3)})\big)\ra_{\bol{W}}^{}(\,\cdot\,)\big)\Big)~,
\end{flalign}
\end{subequations}
for any ${}^{H}_{}\MMM$-morphism $f : \bol{V}\otimes \bol{W} \to \bol{X}$. The map $\zeta^{}_{\bol{V},\bol{W},\bol{X}}(f)$ is obviously $k$-linear and it is also $H$-equivariant
 (hence an ${}^{H}_{}\MMM$-morphism) since
\begin{flalign}
\nn &h \ra_{\hom(\bol{W},\bol{X})}^{} \big(\zeta^{}_{\bol{V},\bol{W},\bol{X}}(f)(v)\big)\\
\nn&~\quad~= (h_{(1)}\ra_{\bol{X}}^{}\,\cdot\,)\circ f\Big(\big(\phi^{(-1)}\ra_{\bol{V}}^{}v\big)\otimes \big(\big(\phi^{(-2)}\,\beta\,S(\phi^{(-3)})\big)\ra_{\bol{W}}^{}(\,\cdot\,)\big)\Big)  \circ \big(S(h_{(2)})\ra_{\bol{W}}^{}\,\cdot\,\big)\\[4pt]
\nn&~\quad~=f\Big(\big(\big(h_{{(1)}_{(1)}}\,\phi^{(-1)}\big)\ra_{\bol{V}}^{}v\big)\otimes \big(\big(h_{{(1)}_{(2)}}\,\phi^{(-2)}\,\beta\,S(h_{(2)}\,\phi^{(-3)})\big)\ra_{\bol{W}}^{}(\,\cdot\,)\big)\Big)  \\[4pt]
\nn&~\quad~=f\Big(\big(\big(\phi^{(-1)}\,h_{(1)}\big)\ra_{\bol{V}}^{}v\big)\otimes \big(\big(\phi^{(-2)}\,h_{{(2)}_{(1)}}\,\beta\,S(\phi^{(-3)}\,h_{{(2)}_{(2)}})\big)\ra_{\bol{W}}^{}(\,\cdot\,)\big)\Big)  \\[4pt]
\nn&~\quad~=f\Big(\big(\big(\phi^{(-1)}\,h_{(1)}\big)\ra_{\bol{V}}^{}v\big)\otimes \big(\big(\phi^{(-2)}\,h_{{(2)}_{(1)}}\,\beta\,S(h_{{(2)}_{(2)}})\,S(\phi^{(-3)})\big)\ra_{\bol{W}}^{}(\,\cdot\,)\big)\Big)  \\[4pt]
\nn&~\quad~=f\Big(\big(\phi^{(-1)}\ra_{\bol{V}}^{}\big(h \ra_{\bol{V}}^{}v\big)\big)\otimes \big(\big(\phi^{(-2)}\,\beta\,S(\phi^{(-3)})\big)\ra_{\bol{W}}^{}(\,\cdot\,)\big)\Big)  \\[4pt] \label{eqn:currycalc}
&~\quad~ = \zeta^{}_{\bol{V},\bol{W},\bol{X}}(f)\big(h\ra_{\bol{V}}^{}v\big)~,
\end{flalign}
for all $h\in H$ and $v\in V$.
In the second equality we used the property that $f: \bol{V}\otimes\bol{W}\to\bol{X}$ is $H$-equivariant
and that $S$ is an algebra anti-automorphism. In the third equality we used (\ref{eqn:quasibialgebraaxioms2})
and the fourth equality follows by using again the property that $S$ is an algebra anti-automorphism. In the fifth equality we
used the property (\ref{eqn:quasiantipodeproperties2}) of the quasi-antipode together with (\ref{eqn:quasibialgebraaxioms1}).
\sk

The inverse  $\zeta_{\bol{V},\bol{W},\bol{X}}^{-1} : \Hom(\bol{V},\hom(\bol{W},\bol{X})) \to
\Hom(\bol{V}\otimes\bol{W},\bol{X})$ is given by setting
\begin{flalign}
\nn \zeta_{\bol{V},\bol{W},\bol{X}}^{-1}(g) : \bol{V}\otimes \bol{W} & \longrightarrow \bol{X}~,\\
 v\otimes w  & \longmapsto \phi^{(1)}\ra_{\bol{X}}^{}\Big(g(v)\big(\big(S(\phi^{(2)})\,\alpha\,\phi^{(3)}\big)\ra_{\bol{W}}^{}w\big)\Big)~,\label{eqn:inversecurrying}
\end{flalign}
for all ${}^H_{}\MMM$-morphisms 
$g : \bol{V}\to\hom(\bol{W},\bol{X})$. A straightforward calculation similar to (\ref{eqn:currycalc}) shows that $\zeta_{\bol{V},\bol{W},\bol{X}}^{-1}(g)$ is an ${}^{H}_{}\MMM$-morphism
and that $\zeta_{\bol{V},\bol{W},\bol{X}}^{-1}$ is the inverse of the map $\zeta^{}_{\bol{V},\bol{W},\bol{X}}$.
\sk

It remains to prove naturality, which means
that $\zeta^{}_{\bol{V},\bol{W},\bol{X}}$ are the components of a natural isomorphism $\zeta$ between the 
two functors $\Hom(\text{--}\otimes\text{--},\text{--}) $ and $\Hom(\text{--},\hom(\text{--},\text{--}))$
from $\big({}^H_{} \MMM\big)^\mathrm{op}\times \big({}^H_{} \MMM\big)^\mathrm{op} \times{}^H_{} \MMM$
to the category of sets. Explicitly, given any morphism $\big( f_{\bol{V}}^\mathrm{op} : \bol{V}\to\bol{V}^\prime,
 f_{\bol{W}}^{\mathrm{op}}: \bol{W}\to\bol{W}^\prime,
f_{\bol{X}}^{} : \bol{X}\to\bol{X}^\prime\, \big)$ in $\big({}^H_{} \MMM\big)^\mathrm{op}\times 
\big({}^H_{} \MMM\big)^\mathrm{op} \times{}^H_{} \MMM$
we have to show that the diagram (in the category ${\sf Sets}$)
\begin{flalign}
\xymatrix{
\ar[d]_-{\Hom(f_{\bol{V}}^\mathrm{op}\otimes f_{\bol{W}}^{\mathrm{op}},f_{\bol{X}}^{})}\Hom\big(\bol{V}\otimes \bol{W},\bol{X}\big) \ar[rr]^-{\zeta^{}_{\bol{V},\bol{W},\bol{X}}} && \Hom\big(\bol{V},\hom(\bol{W},\bol{X})\big) \ar[d]^-{\Hom(f_{\bol{V}}^\mathrm{op}, \hom(f_{\bol{W}}^{\mathrm{op}},f_{\bol{X}}^{}))}\\
\Hom\big(\bol{V}^\prime\otimes \bol{W}^\prime,\bol{X}^\prime\, \big) \ar[rr]_-{\zeta^{}_{\bol{V}^\prime,\bol{W}^\prime,\bol{X}^\prime}} && \Hom\big(\bol{V}^\prime,\hom(\bol{W}^\prime,\bol{X}^\prime\, )\big)
}
\end{flalign}
commutes. For any ${}^H_{}\MMM$-morphism 
$f : \bol{V}\otimes \bol{W}\to\bol{X}$ and any $v^\prime\in \bol{V}^\prime$ we obtain
\begin{flalign}
\nn &\Hom(f_{\bol{V}}^{\mathrm{op}},\hom(f_{\bol{W}}^{\mathrm{op}},f_{\bol{X}}^{}))\big(\zeta^{}_{\bol{V},\bol{W},\bol{X}}(f)\big)(v^\prime\, )\\
\nn &~\qquad~ = f_{\bol{X}}^{}\circ f\Big(\big(\phi^{(-1)}\ra_{\bol{V}}^{}f_{\bol{V}}^{}(v^\prime\, )\big)\otimes \big(\big(\phi^{(-2)}\,\beta\,S(\phi^{(-3)})\big)\ra_{\bol{W}}^{}(\,\cdot\,)\big)\Big) \circ f_{\bol{W}}^{}\\[4pt]
\nn&~\qquad~ = f_{\bol{X}}^{}\circ f\Big(f_{\bol{V}}^{}\otimes f_{\bol{W}}^{} \Big(\big(\phi^{(-1)}\ra_{\bol{V}^\prime}^{}v^\prime\, \big)\otimes \big(\big(\phi^{(-2)}\,\beta\,S(\phi^{(-3)})\big)\ra_{\bol{W}^\prime}^{}(\,\cdot\,)\big) \Big)\Big) \\[4pt]
&~\qquad~ = \zeta^{}_{\bol{V}^\prime,\bol{W}^\prime,\bol{X}^\prime}\big(\Hom(f_{\bol{V}}^{\mathrm{op}}\otimes f_{\bol{W}}^{\mathrm{op}}, f_{\bol{X}}^{})(f)\big)(v^\prime\, )~,
\end{flalign}
where in the second equality we have used $H$-equivariance of both $f_{\bol{V}}^{}$ and $f_{\bol{W}}^{}$.
\end{proof}

\subsection{Evaluation and composition}

For any closed monoidal category $\mathscr{C}$ there exist canonical evaluation and composition morphisms for the internal hom-objects.
For later use we shall review this construction following \cite[Proposition 9.3.13]{Majidbook}.
\begin{propo}\label{propo:thetaevcirc}
Let $\mathscr{C}$ be any monoidal category with internal hom-functor
$\hom: \mathscr{C}^\mathrm{op}\times \mathscr{C}\to \mathscr{C}$. Then there are $\mathscr{C}$-morphisms
\begin{subequations}\label{eqn:internalhomoperations}
\begin{flalign}
\ev_{\bol{V},\bol{W}}^{} \, &:\,  \hom(\bol{V},\bol{W})\otimes \bol{V} \longrightarrow \bol{W}~,\\[4pt]
\bullet_{\bol{V},\bol{W},\bol{X}}^{}  \, &: \, \hom(\bol{W},\bol{X})\otimes\hom(\bol{V},\bol{W})\longrightarrow \hom(\bol{V},\bol{X})~,
\end{flalign}
\end{subequations}
for all objects $\bol{V},\bol{W},\bol{X}$ in $\mathscr{C}$.
\end{propo}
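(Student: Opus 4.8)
**

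The plan is to construct the evaluation and composition morphisms by exploiting the currying bijection $\zeta$ established in Theorem \ref{theo:curryingHM}, rather than writing down formulas by hand and verifying $H$-equivariance directly. This is the standard categorical strategy: in any closed monoidal category, evaluation and composition arise as the images under currying of suitable canonical morphisms, so the work reduces to identifying the right ``source'' morphisms and applying the (inverse) currying isomorphism, whose naturality and $H$-equivariance we may now assume.

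For the evaluation morphism, I would define
\begin{flalign*}
\ev_{\bol{V},\bol{W}}^{} := \zeta^{-1}_{\hom(\bol{V},\bol{W}),\bol{V},\bol{W}}\big(\id^{}_{\hom(\bol{V},\bol{W})}\big) : \hom(\bol{V},\bol{W})\otimes\bol{V} \longrightarrow \bol{W}~.
\end{flalign*}
That is, $\ev$ is obtained by un-currying the identity morphism on the internal hom-object. Since $\zeta^{-1}_{\bol{V},\bol{W},\bol{X}}$ sends ${}^H_{}\MMM$-morphisms to ${}^H_{}\MMM$-morphisms (as shown in the proof of Theorem \ref{theo:curryingHM}), and $\id^{}_{\hom(\bol{V},\bol{W})}$ is such a morphism, the result is automatically a morphism in ${}^H_{}\MMM$. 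Using the explicit formula \eqref{eqn:inversecurrying} one could read off that on elements $\ev_{\bol{V},\bol{W}}^{}(L\otimes v) = \phi^{(1)}\ra_{\bol{W}}^{}\big(L\big((S(\phi^{(2)})\,\alpha\,\phi^{(3)})\ra_{\bol{V}}^{}v\big)\big)$, but for merely asserting existence this unwinding is optional.

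For the composition morphism, I would proceed in two steps. First, build the ${}^H_{}\MMM$-morphism that ``applies evaluation twice'': starting from $\big(\hom(\bol{W},\bol{X})\otimes\hom(\bol{V},\bol{W})\big)\otimes\bol{V}$, use the associator $\Phi$ to reassociate, then apply $\id\otimes\ev_{\bol{V},\bol{W}}$ to land in $\hom(\bol{W},\bol{X})\otimes\bol{W}$, and finally apply $\ev_{\bol{W},\bol{X}}$ to reach $\bol{X}$. This composite is an ${}^H_{}\MMM$-morphism into $\bol{X}$ with source $\big(\hom(\bol{W},\bol{X})\otimes\hom(\bol{V},\bol{W})\big)\otimes\bol{V}$. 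Currying it via $\zeta_{\hom(\bol{W},\bol{X})\otimes\hom(\bol{V},\bol{W}),\bol{V},\bol{X}}^{}$ then produces the desired ${}^H_{}\MMM$-morphism
\begin{flalign*}
\bullet_{\bol{V},\bol{W},\bol{X}}^{} : \hom(\bol{W},\bol{X})\otimes\hom(\bol{V},\bol{W}) \longrightarrow \hom(\bol{V},\bol{X})~.
\end{flalign*}

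The main obstacle I anticipate is purely bookkeeping rather than conceptual: because the associator $\Phi$ is nontrivial, the composite defining $\bullet$ must have its associator insertions placed correctly so that the source and target objects match up exactly (not merely up to isomorphism), and the careful distinction between equality and isomorphy of objects emphasized in the Outline becomes essential here. One must also check that $\zeta$ is being applied to the correct triple of objects, since the currying bijection is indexed by three objects and a misplacement would yield a morphism with the wrong domain. None of this requires new ideas beyond Theorem \ref{theo:curryingHM} and the monoidal structure of Proposition 2.4, so the existence claim follows, with the explicit element-wise formulas obtainable by substituting \eqref{eqn:rightcurrying} and \eqref{eqn:inversecurrying} and simplifying with the quasi-antipode axioms \eqref{eqn:quasiantipodeproperties}.
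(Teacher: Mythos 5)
Your construction coincides with the paper's own proof of Proposition \ref{propo:thetaevcirc}: the evaluation is defined there as $\zeta^{-1}_{\hom(\bol{V},\bol{W}),\bol{V},\bol{W}}(\id_{\hom(\bol{V},\bol{W})})$ and the composition as the currying of $\ev_{\bol{W},\bol{X}}\circ(\id\otimes\ev_{\bol{V},\bol{W}})\circ\Phi_{\hom(\bol{W},\bol{X}),\hom(\bol{V},\bol{W}),\bol{V}}$, exactly as you propose. Your remarks on associator placement and on indexing the currying bijection by the correct triple are apt but raise no gap.
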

\begin{proof}
To construct the $\mathscr{C}$-morphism $\ev_{\bol{V},\bol{W}}^{}$ let us notice that, due to the currying, 
there is a bijection of morphism sets
\begin{flalign}
\xymatrix{
\Hom\big(\hom(\bol{V},\bol{W}),\hom(\bol{V},\bol{W})\big) \ar[rrr]^-{\zeta_{\hom(\bol{V},\bol{W}),\bol{V},\bol{W}}^{-1}} &&& \Hom\big(\hom(\bol{V},\bol{W})\otimes \bol{V},\bol{W}\big) 
}~,
\end{flalign}
for all objects $\bol{V},\bol{W}$ in $\mathscr{C}$. Choosing the identity $\id_{\hom(\bol{V},\bol{W})}$ in the morphism set on the 
left-hand side, we obtain via this bijection the 
$\mathscr{C}$-morphism 
\begin{flalign}\label{eqn:evaluationgeneral}
\ev_{\bol{V},\bol{W}}^{}:=  \zeta_{\hom(\bol{V},\bol{W}),\bol{V},\bol{W}}^{-1}\big(\id_{\hom(\bol{V},\bol{W})}\big) : \hom(\bol{V},\bol{W})\otimes\bol{V}\longrightarrow \bol{W}~,
\end{flalign}
for all objects $\bol{V},\bol{W}$ in $\mathscr{C}$.
The $\mathscr{C}$-morphism $\bullet_{\bol{V},\bol{W},\bol{X}}^{}$ is given by
\begin{flalign}\label{eqn:compositiongeneral}
\bullet_{\bol{V},\bol{W},\bol{X}}^{} :=\zeta_{\hom(\bol{W},\bol{X})\otimes\hom(\bol{V},\bol{W}),\bol{V},\bol{X}}^{}\Big( 
\ev^{}_{\bol{W},\bol{X}} \circ \big( \id^{}_{\hom(\bol{W},\bol{X})}\otimes \ev^{}_{\bol{V},\bol{W}} \big)
\circ  \Phi^{}_{\hom(\bol{W},\bol{X}),\hom(\bol{V},\bol{W}),\bol{V}}\Big)~,
\end{flalign}
for all objects $\bol{V},\bol{W},\bol{X}$ in $\mathscr{C}$.
\end{proof}

For our closed monoidal category ${}^{H}_{}\MMM$ we can find explicit expressions for 
the evaluation and composition morphisms in Proposition \ref{propo:thetaevcirc}. We obtain for the evaluation
\begin{flalign}\label{eqn:evaluationexplicit}
\ev_{\bol{V},\bol{W}}^{} : \hom(\bol{V},\bol{W})\otimes \bol{V} \longrightarrow \bol{W}~,\quad
L\otimes v  \longmapsto \phi^{(1)}\ra_{\bol{W}}^{} L\big(S(\phi^{(2)})\,\alpha\,\phi^{(3)}\ra_{\bol{V}}^{} v\big)~,
\end{flalign}
and for the composition
\begin{multline}\label{eqn:compositionexplicit}
\bullet_{\bol{V},\bol{W},\bol{X}}^{} : \hom(\bol{W},\bol{X})\otimes\hom(\bol{V},\bol{W})\longrightarrow \hom(\bol{V},\bol{X})~,\\
 L\otimes L^\prime  \longmapsto  \ev_{\bol{W},\bol{X}}^{} \left(\big(\phi^{(-1)}\ra_{\hom(\bol{W},\bol{X})}^{} L\big) 
 \otimes \phi^{(-2)}\ra_{\bol{W}}^{} L^\prime \big( S\big(\phi^{(-3)}\big)\ra_{\bol{V}}^{} (\,\cdot\,)\big)\right)~,
 \end{multline} 
for any three objects $\bol{V},\bol{W},\bol{X}$ in ${}^H_{}\MMM$.
\sk

We collect some well-known properties of the evaluation and composition morphisms 
which shall be needed in this paper.
\begin{propo}\label{propo:evcompproperties}
Let $H$ be a quasi-Hopf algebra.
\begin{itemize}
\item[(i)] For any three objects $V,W,X$ in ${}^H_{}\MMM$ and 
 any ${}^H_{}\MMM$-morphism $g: \bol{V}\to \hom(\bol{W},\bol{X})$ the diagram
\begin{flalign}
\xymatrix{
\ar[drr]_-{\zeta_{\bol{V},\bol{W},\bol{X}}^{-1}(g)~~} \bol{V} \otimes \bol{W} \ar[rr]^-{g\otimes \id_{\bol{W}}^{}}&& \hom(\bol{W},\bol{X})\otimes \bol{W}\ar[d]^-{\ev_{\bol{W},\bol{X}}^{}}\\
&&\bol{X} 
}
\end{flalign}
in ${}^H_{}\MMM$ commutes.
\item[(ii)] For any three objects $\bol{V},\bol{W},\bol{X}$ in ${}^H_{}\MMM$
 the diagram
\begin{flalign}
\xymatrix{
\ar[d]_-{\Phi^{}_{\hom(\bol{W},\bol{X}),\hom(\bol{V},\bol{W}),\bol{V}}}\big(\hom(\bol{W},\bol{X})\otimes \hom(\bol{V},\bol{W})\big)\otimes \bol{V}\ar[rrr]^-{\bullet_{\bol{V},\bol{W},\bol{X}}^{}\otimes \id_{\bol{V}}^{}} &&& \hom(\bol{V},\bol{X})\otimes \bol{V}\ar[dd]^-{\ev_{\bol{V},\bol{X}}^{}}\\
\ar[d]_-{\id_{\hom(\bol{W},\bol{X})}^{}\otimes \ev_{\bol{V},\bol{W}}^{}}\hom(\bol{W},\bol{X})\otimes\big(\hom(\bol{V},\bol{W})\otimes \bol{V}\big)&&&\\
\hom(\bol{W},\bol{X})\otimes\bol{W} \ar[rrr]_-{\ev_{\bol{W},\bol{X}}^{}} &&& \bol{X}
}
\end{flalign}
in ${}^H_{}\MMM$ commutes. Explicitly
\begin{multline}\label{eqn:evcompcompatibility}
\ev_{\bol{V},\bol{X}}^{} \big(\big(L\bullet_{\bol{V},\bol{W},\bol{X}}^{} L^\prime\, \big)\otimes v\big) = \\
\ev_{\bol{W},\bol{X}}^{}\Big(\big(\phi^{(1)}\ra_{\hom(\bol{W},\bol{X})}^{} L\big)\otimes \ev_{\bol{V},\bol{W}}^{}\Big(\big(\phi^{(2)}\ra_{\hom(\bol{V},\bol{W})}^{} L^\prime\, \big) \otimes \big(\phi^{(3)}\ra_{\bol{V}}^{} v\big)\Big)\Big)~,
\end{multline}
for all $L\in\hom(\bol{W},\bol{X})$, $L^\prime\in\hom(\bol{V},\bol{W})$ and $v\in\bol{V}$.

\item[(iii)] The composition morphisms are weakly associative, i.e.\ for any four objects 
$\bol{V},\bol{W},\bol{X},\bol{Y}$ in ${}^H_{}\MMM$ the
diagram
\begin{flalign}
\hspace{-1cm}\xymatrix{
\ar[d]_-{\Phi^{}_{\hom(\bol{X},\bol{Y}), \hom(\bol{W},\bol{X}),\hom(\bol{V},\bol{W})}}
\big(\hom(\bol{X},\bol{Y})\otimes\hom(\bol{W},\bol{X})\big)\otimes\hom(\bol{V},\bol{W}) 
\ar[rrr]^-{\bullet_{\bol{W},\bol{X},\bol{Y}}^{} \otimes \id^{}_{\hom(\bol{V},\bol{W})}} &&&
 \hom(\bol{W},\bol{Y})\otimes \hom(\bol{V},\bol{W})\ar[dd]^-{\bullet_{\bol{V},\bol{W},\bol{Y}}^{}}\\
\ar[d]_-{ \id^{}_{\hom(\bol{X},\bol{Y})} \otimes \bullet_{\bol{V}, \bol{W},\bol{X}}^{}}
\hom(\bol{X},\bol{Y})\otimes \big(\hom(\bol{W},\bol{X})\otimes \hom(\bol{V},\bol{W}) \big)&&& \\
\hom(\bol{X},\bol{Y})\otimes \hom(\bol{V},\bol{X}) \ar[rrr]_-{\bullet^{}_{\bol{V},\bol{X},\bol{Y}}}&&&\hom(\bol{V},\bol{Y})
}
\end{flalign}
in ${}^H_{}\MMM$ commutes.
\end{itemize}
\end{propo}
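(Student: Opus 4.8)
The plan is to treat parts (i) and (ii) as essentially formal consequences of the currying isomorphism $\zeta$ and its naturality, both established in Theorem \ref{theo:curryingHM}, and to reserve the genuine work for part (iii). Throughout I would use that, by part (i), the assignment $g\mapsto \ev_{\bol{V},\bol{Y}}^{}\circ(g\otimes\id_{\bol{V}}^{})$ is nothing but the bijection $\zeta^{-1}$, which turns questions about equality of morphisms into $\hom$-objects into questions about equality of the corresponding ``evaluated'' morphisms.

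For part (i) I would invoke naturality of $\zeta$ rather than the explicit formulas. Recall from \eqref{eqn:evaluationgeneral} that $\ev_{\bol{W},\bol{X}}^{}=\zeta^{-1}_{\hom(\bol{W},\bol{X}),\bol{W},\bol{X}}(\id_{\hom(\bol{W},\bol{X})})$. Applying the naturality square of $\zeta$ at the object $(\hom(\bol{W},\bol{X}),\bol{W},\bol{X})$ to the morphism $(g^{\mathrm{op}},\id_{\bol{W}}^{\mathrm{op}},\id_{\bol{X}})$ in the first two (opposite) slots, evaluated on $\ev_{\bol{W},\bol{X}}^{}$, gives $\zeta^{}_{\bol{V},\bol{W},\bol{X}}\big(\ev_{\bol{W},\bol{X}}^{}\circ(g\otimes\id_{\bol{W}}^{})\big)=\id_{\hom(\bol{W},\bol{X})}\circ g=g$, so that $\ev_{\bol{W},\bol{X}}^{}\circ(g\otimes\id_{\bol{W}}^{})=\zeta^{-1}_{\bol{V},\bol{W},\bol{X}}(g)$, which is the asserted commuting triangle. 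As a cross-check one can verify this directly on elements from \eqref{eqn:inversecurrying} and \eqref{eqn:evaluationexplicit} using the quasi-antipode relation \eqref{eqn:quasiantipodeproperties3}.

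Part (ii) then follows with no extra work. By the definition \eqref{eqn:compositiongeneral}, the morphism $\bullet_{\bol{V},\bol{W},\bol{X}}^{}$ is precisely $\zeta$ applied to $\ev_{\bol{W},\bol{X}}^{}\circ(\id\otimes\ev_{\bol{V},\bol{W}}^{})\circ\Phi_{\hom(\bol{W},\bol{X}),\hom(\bol{V},\bol{W}),\bol{V}}^{}$, so $\zeta^{-1}$ of it equals that composite. Specializing part (i) to $g=\bullet_{\bol{V},\bol{W},\bol{X}}^{}$ identifies this composite with $\ev_{\bol{V},\bol{X}}^{}\circ(\bullet_{\bol{V},\bol{W},\bol{X}}^{}\otimes\id_{\bol{V}}^{})$, which is exactly the commutativity of the pentagon-shaped diagram. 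The element-wise relation \eqref{eqn:evcompcompatibility} is then read off by writing $\Phi$ in components $(\phi^{(1)},\phi^{(2)},\phi^{(3)})$.

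The real content is part (iii). Both composites in the diagram are morphisms into $\hom(\bol{V},\bol{Y})$, so by the bijectivity just noted it suffices to show they agree after tensoring on the right with $\bol{V}$ and post-composing with $\ev_{\bol{V},\bol{Y}}^{}$. I would unwind each side by repeatedly applying the evaluation--composition compatibility of part (ii), together with functoriality of $\otimes$ and naturality of $\Phi$, until both sides become the same iterated evaluation, schematically $\ev_{\bol{X},\bol{Y}}^{}\circ\big(\id\otimes(\ev_{\bol{W},\bol{X}}^{}\circ(\id\otimes\ev_{\bol{V},\bol{W}}^{}))\big)$ precomposed with a product of associators, applied to $((L\otimes L')\otimes L'')\otimes v$. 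The two unwindings yield identical nested evaluations but attach the associator insertions with different bracketings; \emph{the main obstacle is exactly the bookkeeping of these insertions}, and reconciling them is precisely the pentagon identity for $\Phi$, i.e.\ the $3$-cocycle condition \eqref{eqn:quasibialgebraaxioms3} on $\phi$. Alternatively one may bypass the abstract route and check the equality directly on elements by substituting the explicit formulas \eqref{eqn:compositionexplicit} and \eqref{eqn:evaluationexplicit} together with the adjoint action \eqref{eqn:Hadjointaction}, in which case the same $3$-cocycle relation and quasi-coassociativity \eqref{eqn:quasibialgebraaxioms2} do the work; the abstract coherence argument is cleaner, but the element-wise computation makes the associator tracking most transparent.
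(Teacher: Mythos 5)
Your proposal is correct and follows essentially the same route as the paper: part (ii) is read off from part (i) together with the definition \eqref{eqn:compositiongeneral}, and part (iii) is reduced via part (i) and bijectivity of the currying maps to an evaluated identity that is verified by applying \eqref{eqn:evcompcompatibility} twice and invoking the $3$-cocycle condition \eqref{eqn:quasibialgebraaxioms3}. The only (harmless) divergence is in part (i), where you derive the triangle from naturality of $\zeta$ and $\ev_{\bol{W},\bol{X}}^{}=\zeta^{-1}(\id_{\hom(\bol{W},\bol{X})})$ rather than by the paper's direct comparison of the explicit formulas \eqref{eqn:evaluationexplicit} and \eqref{eqn:inversecurrying}; both arguments are valid.
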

\begin{proof}
The commutative diagram in item (i) follows easily by comparing
 (\ref{eqn:evaluationexplicit}) and (\ref{eqn:inversecurrying}).
Item (ii) follows from item (i) and (\ref{eqn:compositiongeneral}).
In order to prove item (iii), notice that due to item (i) and the fact that the currying maps $\zeta_{\text{--},\text{--},\text{--}}^{}$ 
are bijections, it is enough to prove that
\begin{multline}
\ev_{\bol{V},\bol{Y}}^{}\Big(\Big(\big(L\bullet_{\bol{W},\bol{X},\bol{Y}}^{} L^\prime\, \big) \bullet_{\bol{V},\bol{W},\bol{Y}}^{} L^{\prime\prime}\Big)
\otimes v\Big) =\\
\ev_{\bol{V},\bol{Y}}^{}\Big( \Big((\phi^{(1)}\ra_{\hom(\bol{X},\bol{Y})}^{} L)\bullet_{\bol{V},\bol{X},\bol{Y}}^{}
 \big((\phi^{(2)}\ra_{\hom(\bol{W},\bol{X})}^{} L^\prime\, ) \bullet_{\bol{V},\bol{W},\bol{X}}^{} 
 (\phi^{(3)}\ra_{\hom(\bol{V},\bol{W})}^{} L^{\prime\prime}\, )\big)\Big)
\otimes v\Big)~,
\end{multline}
for all $L\in\hom(\bol{X},\bol{Y})$, $L^\prime\in \hom(\bol{W},\bol{X})$, $L^{\prime\prime}\in\hom(\bol{V},\bol{W})$ and
$v\in\bol{V}$. This equality is shown by applying (\ref{eqn:evcompcompatibility}) twice on both sides and using
the 3-cocycle condition (\ref{eqn:quasibialgebraaxioms3}) to simplify the resulting expressions.
\end{proof}

Given any object $(\bol{V},\bol{W})$ in $\big({}^H_{}\MMM\big)^\mathrm{op}\times {}^H_{}\MMM$,
we can assign to it the set of $H$-invariant internal homomorphisms
\begin{flalign}\label{eqn:invariantsubset}
\hom^{H}(\bol{V},\bol{W}) := 
\big\{L \in \Hom_k(\udl{V},\udl{W}) : h\ra_{\hom(\bol{V},\bol{W})}^{} L =\epsilon(h)\,L\,,~\forall h\in H\big\}~.
\end{flalign}
Notice that $\hom^{H} : \big({}^H_{}\MMM\big)^\mathrm{op}\times {}^H_{}\MMM \to \mathsf{Sets}$ is a functor
(in fact, it is a subfunctor of the internal $\hom$-functor composed with the forgetful functor from ${}^H_{}\MMM$ 
to the category of sets)
and that it has the same source and target as the functor 
$\Hom : \big({}^H_{}\MMM\big)^\mathrm{op}\times {}^H_{}\MMM \to \mathsf{Sets}$ assigning the morphism sets.
The next proposition shows that the morphisms in ${}^H_{}\MMM$ can be identified with the $H$-invariant 
internal homomorphisms.
\begin{propo}\label{propo:invarianthoms}
Let $H$ be a quasi-Hopf algebra.
\begin{itemize}
\item[(i)] There is a natural isomorphism $\vartheta :  \Hom\Rightarrow \hom^H$ 
of functors from $\big({}^H_{}\MMM\big)^\mathrm{op}\times {}^H_{}\MMM$ to $\mathsf{Sets}$.
Explicitly, the components of $\vartheta$ are given by
\begin{flalign}
 \vartheta_{\bol{V},\bol{W}}^{} : \Hom(\bol{V},\bol{W}) \longrightarrow \hom^H(\bol{V},\bol{W})~,~~
f \longmapsto \big(\beta\ra_{\bol{W}}^{} \,\cdot\,\big) \circ f~,
\end{flalign}
 for any object $(\bol{V},\bol{W})$ in $\big({}^H_{}\MMM\big)^\mathrm{op}\times {}^H_{}\MMM$.

\item[(ii)] The natural isomorphism $\vartheta :  \Hom\Rightarrow \hom^H$  preserves evaluations and compositions,
i.e.\ there are identities
\begin{subequations}
\begin{flalign}
\ev_{\bol{V},\bol{W}}^{}\big(\vartheta_{\bol{V},\bol{W}}^{}(f)\otimes v\big) &= f(v)~,\\[4pt]
\vartheta_{\bol{W},\bol{X}}^{} (g) \bullet_{\bol{V},\bol{W},\bol{X}}^{} \vartheta_{\bol{V},\bol{W}}^{}(f) &=
\vartheta_{\bol{V},\bol{X}}^{}\big(g\circ f\big) ~,
\end{flalign}
\end{subequations}
for all $f\in \Hom(\bol{V},\bol{W})$, $g\in \Hom(\bol{W},\bol{X})$ and $v\in V$.

\item[(iii)] For all $f\in \Hom(\bol{V},\bol{W})$, $g\in \Hom(\bol{W},\bol{X})$, $L^\prime\in\hom(\bol{V},\bol{W})$ 
and $L\in \hom(\bol{W},\bol{X})$ we have
\begin{flalign}
\vartheta_{\bol{W},\bol{X}}^{}(g)\bullet_{\bol{V},\bol{W},\bol{X}}^{} L^\prime = g\circ L^\prime\quad,\qquad L\bullet_{\bol{V},\bol{W},\bol{X}}^{} \vartheta_{\bol{V},\bol{W}}^{}(f) = L\circ f~.
\end{flalign}
 \end{itemize}
\end{propo}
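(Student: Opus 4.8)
The plan is to realise $\vartheta$ as a composite of three natural bijections, and then to reduce the evaluation and composition identities to the compatibility relations already recorded in Proposition~\ref{propo:evcompproperties}, using the same currying argument employed in the proof of Proposition~\ref{propo:evcompproperties}(iii). For item~(i) I would exhibit the chain of natural bijections $\Hom(\bol V,\bol W)\cong\Hom(\bol I\otimes\bol V,\bol W)\cong\Hom(\bol I,\hom(\bol V,\bol W))\cong\hom^H(\bol V,\bol W)$: the first arrow is precomposition with the invertible unitor $\lambda_{\bol V}$, natural because $\lambda$ is a natural transformation; the second is the currying isomorphism $\zeta_{\bol I,\bol V,\bol W}$ of Theorem~\ref{theo:curryingHM}, which is natural; and the third is the standard identification of a morphism $\bol I\to\bol M$ out of the monoidal unit with the $H$-invariant element $g(1)$ of $\bol M$, applied to $\bol M=\hom(\bol V,\bol W)$ where the invariance condition is exactly that defining (\ref{eqn:invariantsubset}). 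The composite is then a natural isomorphism $\Hom\Rightarrow\hom^H$, and it remains only to check it equals the stated map $f\mapsto(\beta\ra_{\bol W}\,\cdot\,)\circ f$: tracing $f$ through the chain, using $\lambda_{\bol V}(1\otimes v)=v$, the counit property $\epsilon(\phi^{(-1)})\,\phi^{(-2)}\otimes\phi^{(-3)}=1\otimes1$ induced by (\ref{eqn:quasibialgebraaxioms5}) (which collapses $\epsilon(\phi^{(-1)})\,\phi^{(-2)}\,\beta\,S(\phi^{(-3)})$ to $\beta$), and finally $H$-equivariance (\ref{eqn:Hequivariance}) of $f$, one lands on $\beta\ra_{\bol W}f(\,\cdot\,)$. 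As a by-product the inverse of $\vartheta_{\bol V,\bol W}$ sends an $H$-invariant $L$ to the map $\ev_{\bol V,\bol W}(L\otimes\,\cdot\,)$, i.e.\ it is implemented by the explicit evaluation (\ref{eqn:evaluationexplicit}); I would keep this identification in hand for the remaining parts.

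For the first identity of item~(ii) I would simply insert $L=\vartheta_{\bol V,\bol W}(f)=(\beta\ra_{\bol W}\,\cdot\,)\circ f$ into (\ref{eqn:evaluationexplicit}), pull $S(\phi^{(2)})\,\alpha\,\phi^{(3)}$ through $f$ by $H$-equivariance, and collapse the resulting prefactor $\phi^{(1)}\,\beta\,S(\phi^{(2)})\,\alpha\,\phi^{(3)}$ to $1$ via the quasi-antipode axiom (\ref{eqn:quasiantipodeproperties3}); this gives $\ev_{\bol V,\bol W}(\vartheta_{\bol V,\bol W}(f)\otimes v)=f(v)$, which is exactly the statement that $\ev_{\bol V,\bol W}(-\otimes\,\cdot\,)$ inverts $\vartheta$ from item~(i). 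The second identity of (ii) I would not prove directly but deduce from item~(iii): taking $L'=\vartheta_{\bol V,\bol W}(f)$ in the first identity of (iii) gives $\vartheta_{\bol W,\bol X}(g)\bullet_{\bol V,\bol W,\bol X}\vartheta_{\bol V,\bol W}(f)=g\circ\vartheta_{\bol V,\bol W}(f)=g\circ(\beta\ra_{\bol W}\,\cdot\,)\circ f$, and then $H$-equivariance of $g$ rewrites $g\circ(\beta\ra_{\bol W}\,\cdot\,)$ as $(\beta\ra_{\bol X}\,\cdot\,)\circ g$, so the right-hand side is $\vartheta_{\bol V,\bol X}(g\circ f)$.

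The crux is item~(iii), where I would reuse the reduction mechanism of Proposition~\ref{propo:evcompproperties}(iii). For the first identity, both $L'\mapsto\vartheta_{\bol W,\bol X}(g)\bullet_{\bol V,\bol W,\bol X}L'$ and $L'\mapsto g\circ L'=\hom(\id_{\bol V}^{\mathrm{op}},g)(L')$ are ${}^H_{}\MMM$-morphisms $\hom(\bol V,\bol W)\to\hom(\bol V,\bol X)$ -- the first because $\vartheta_{\bol W,\bol X}(g)$ is $H$-invariant, so freezing it in the first slot of the morphism $\bullet_{\bol V,\bol W,\bol X}$ still yields an equivariant map -- so by bijectivity of currying together with Proposition~\ref{propo:evcompproperties}(i) it suffices to show they agree after composing with $\ev_{\bol V,\bol X}$. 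Evaluating the left-hand side by the compatibility relation (\ref{eqn:evcompcompatibility}), invariance turns $\phi^{(1)}\ra_{\hom(\bol W,\bol X)}\vartheta_{\bol W,\bol X}(g)$ into $\epsilon(\phi^{(1)})\,\vartheta_{\bol W,\bol X}(g)$, the counit identity $\epsilon(\phi^{(1)})\,\phi^{(2)}\otimes\phi^{(3)}=1\otimes1$ from (\ref{eqn:quasibialgebraaxioms5}) strips off the associator, and the first identity of (ii) collapses the expression to $g\big(\ev_{\bol V,\bol W}(L'\otimes v)\big)$; comparing with $\ev_{\bol V,\bol X}((g\circ L')\otimes v)$ rewritten by $H$-equivariance of $g$ gives equality. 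The second identity is symmetric in $L\in\hom(\bol W,\bol X)$, with one crucial difference: the invariant element $\vartheta_{\bol V,\bol W}(f)$ now sits in the \emph{middle} slot of (\ref{eqn:evcompcompatibility}), so the associator is removed by the counit identity (\ref{eqn:quasibialgebraaxioms4}) (the $\epsilon$ in the middle leg of $\phi$) rather than by (\ref{eqn:quasibialgebraaxioms5}). I expect the main obstacle to lie precisely in this bookkeeping: one must first recognise both sides of each identity as genuine ${}^H_{}\MMM$-morphisms, so that the currying reduction legitimately replaces equality of internal homomorphisms by equality of their evaluations, and then deploy exactly the right counital identity to cancel the associator insertions produced by (\ref{eqn:evcompcompatibility}) while keeping the invariance condition in the correct tensor factor.
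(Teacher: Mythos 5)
Your proposal is correct, but for part (i) it takes a genuinely different route from the paper. The paper proves (i) by brute force: it checks $H$-invariance of $\vartheta_{\bol V,\bol W}^{}(f)$ directly from the quasi-antipode identity (\ref{eqn:quasiantipodeproperties2}), writes down the explicit inverse $\vartheta^{-1}_{\bol V,\bol W}(L)=(\phi^{(1)}\ra_{\bol W}^{}\,\cdot\,)\circ L\circ(S(\phi^{(2)})\,\alpha\,\phi^{(3)}\ra_{\bol V}^{}\,\cdot\,)$, and then verifies both composites are identities --- the direction $\vartheta\circ\vartheta^{-1}=\id$ being the heaviest step, requiring the $3$-cocycle condition (\ref{eqn:quasibialgebraaxioms3}) together with (\ref{eqn:quasiantipodeproperties1},\ref{eqn:quasiantipodeproperties2}) and (\ref{eqn:quasibialgebraaxioms5}). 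Your factorization $\Hom(\bol V,\bol W)\cong\Hom(\bol I\otimes\bol V,\bol W)\cong\Hom(\bol I,\hom(\bol V,\bol W))\cong\hom^H(\bol V,\bol W)$ gets bijectivity and naturality for free from structures already established (the unitor, Theorem \ref{theo:curryingHM}, and the elementary identification of morphisms out of $\bol I$ with invariant elements), so the only computation left is the short trace showing the composite is $f\mapsto(\beta\ra_{\bol W}^{}\,\cdot\,)\circ f$; you thereby sidestep the $3$-cocycle computation entirely, and your identified inverse $L\mapsto\ev_{\bol V,\bol W}^{}(L\otimes\,\cdot\,)$ coincides with the paper's explicit formula. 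What the paper's approach buys in exchange is the explicit element-level formula for $\vartheta^{-1}$ up front, which it reuses later. For items (ii) and (iii) the paper gives no details ("follow similarly from straightforward calculations"); your implementation --- reducing equality of internal homomorphisms to equality of evaluations via Proposition \ref{propo:evcompproperties}~(i) and bijectivity of currying, after observing that freezing an $H$-invariant element in one slot of $\bullet_{\bol V,\bol W,\bol X}^{}$ still yields an ${}^H_{}\MMM$-morphism, and then cancelling the associator insertions in (\ref{eqn:evcompcompatibility}) with (\ref{eqn:quasibialgebraaxioms5}) for the outer slot and (\ref{eqn:quasibialgebraaxioms4}) for the middle slot --- is exactly the right bookkeeping, and your ordering (prove the first identity of (ii) directly, then (iii), then deduce the second identity of (ii)) introduces no circularity.
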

\begin{proof}
 It is easy to see that $\vartheta_{\bol{V},\bol{W}}^{}(f)$ is $H$-invariant for any $f\in \Hom(\bol{V},\bol{W})$: One has
\begin{flalign}
\nn h\ra_{\hom(\bol{V},\bol{W})}^{} \vartheta_{\bol{V},\bol{W}}^{}(f)&=\big(h_{(1)}\, \beta \ra_{\bol{W}}^{}\,\cdot\,\big)\circ f \circ \big(S(h_{(2)})\ra_{\bol{V}}^{}\,\cdot\,\big) \\[4pt] \nn &= \big(h_{(1)}\,\beta\, S(h_{(2)})\ra_{\bol{W}}^{}\,\cdot\,\big)\circ f\\[4pt] \nn
&=\epsilon(h) \, \big(\beta\ra_{\bol{W}}^{}\,\cdot\,\big)\circ f \\[4pt] &= \epsilon(h) \,\vartheta_{\bol{V},\bol{W}}^{}(f)~,
\end{flalign}
for all $h\in H$ and $f\in \Hom(\bol{V},\bol{W})$. We now show that the map $\vartheta_{\bol{V},\bol{W}}^{}$ is invertible via 
\begin{flalign}
\vartheta_{\bol{V},\bol{W}}^{-1} : \hom^H(\bol{V},\bol{W}) \longrightarrow \Hom(\bol{V},\bol{W})~,~~
L \longmapsto \big(\phi^{(1)}\ra_{\bol{W}}^{}\,\cdot\,\big)\circ L \circ \big(S(\phi^{(2)})\,\alpha\,\phi^{(3)}\ra_{\bol{V}}^{}\,\cdot\,\big)~.
\end{flalign}
Notice that $\vartheta_{\bol{V},\bol{W}}^{-1}(L)\in \Hom(\bol{V},\bol{W})$ for any $L\in \hom^H(\bol{V},\bol{W})$: One has
\begin{flalign}
\nn \vartheta_{\bol{V},\bol{W}}^{-1}(L)(h\ra_{\bol{V}}^{}v)  &= \phi^{(1)}\ra_{\bol{W}}^{} L \big( S(\phi^{(2)})\,\alpha\,\phi^{(3)}\, h\ra_{\bol{V}}^{} v\big)\\[4pt]
\nn &=\phi^{(1)}\ra_{\bol{W}}^{}\big( h_{(1)}\ra_{\hom(\bol{V},\bol{W})}^{} L\big) \big( S(\phi^{(2)})\,\alpha\,\phi^{(3)}\, h_{(2)}\ra_{\bol{V}}^{} v\big)\\[4pt]
\nn&= \phi^{(1)}\, h_{(1)_{(1)}}\ra_{\bol{W}}^{}L \big( S(\phi^{(2)}\,h_{(1)_{(2)}})\,\alpha\,\phi^{(3)}\, h_{(2)}\ra_{\bol{V}}^{} v\big)\\[4pt]
\nn&=h_{(1)}\, \phi^{(1)}\ra_{\bol{W}}^{}L \big( S(h_{(2)_{(1)}}\,\phi^{(2)} )\,\alpha\,h_{(2)_{(2)}}\, \phi^{(3)}\ra_{\bol{V}}^{} v\big)\\[4pt]
&= h\ra_{\bol{W}}^{} \vartheta_{\bol{V},\bol{W}}^{-1}(L)(v)~,
\end{flalign}
for all $h\in H$, $L\in  \hom^H(\bol{V},\bol{W})$ and $v\in\bol{V}$. The second equality 
here follows from the definition (\ref{eqn:invariantsubset})
and $\epsilon(h_{(1)}) \, h_{(2)} = h$. The fact that $\vartheta_{\bol{V},\bol{W}}^{-1}$ is the inverse of
$\vartheta_{\bol{V},\bol{W}}^{}$ can be checked similarly: 
That $\vartheta_{\bol{V},\bol{W}}^{-1}\circ \vartheta_{\bol{V},\bol{W}}^{}=\id_{\Hom(V,W)}^{}$ 
follows easily from (\ref{eqn:quasiantipodeproperties3}), while for any $L\in\hom^H(V,W)$ we have
\begin{flalign}
\nn \vartheta_{\bol{V},\bol{W}}^{}\circ
\vartheta_{\bol{V},\bol{W}}^{-1}(L) &=
\big(\phi^{(1)}\ra_{\bol{W}}^{}\,\cdot\,\big)\circ L \circ
\big(S(\phi^{(2)})\,\alpha\,\phi^{(3)}\, \beta
\ra_{\bol{V}}^{}\,\cdot\,\big) \\[4pt] \nn &=
\big(\phi^{(1)}\ra_{\bol{W}}^{}\,\cdot\,\big)\circ \big(\,
\widetilde{\phi}^{(-1)}\ra_{\hom(\bol{V},\bol{W})}^{} L\big) \circ
\big(S(\phi^{(2)})\,\alpha\,\phi^{(3)}\, \widetilde{\phi}^{(-2)}\,
\beta \, S(\widetilde{\phi}^{(-3)}) \ra_{\bol{V}}^{}\,\cdot\,\big)
\\[4pt] \nn &= \big(\phi^{(1)}\,\widetilde{\phi}^{(-1)}_{(1)}
\ra_{\bol{W}}^{}\,\cdot\,\big)\circ L \circ \big(S(\phi^{(2)}\,
\widetilde{\phi}^{(-1)}_{(2)}) \,\alpha\,\phi^{(3)}\,
\widetilde{\phi}^{(-2)}\, \beta \, S(\widetilde{\phi}^{(-3)})
\ra_{\bol{V}}^{}\,\cdot\,\big) \\[4pt] \nn &=L \circ
\big(S(\phi^{(-1)}) \, \alpha\,\phi^{(-2)}\, \beta \, S(\phi^{(-3)})
\ra_{\bol{V}}^{}\,\cdot\,\big) \\[4pt] &= L \ ,
\end{flalign}
where here the second equality follows from (\ref{eqn:invariantsubset}) and $\epsilon(\widetilde{\phi}^{(-1)})\, \widetilde{\phi}^{(-2)}\otimes \widetilde{\phi}^{(-3)}= 1\otimes1$, while the fourth equality follows from applying (\ref{eqn:quasibialgebraaxioms3}) and then using (\ref{eqn:quasiantipodeproperties1},\ref{eqn:quasiantipodeproperties2}) and (\ref{eqn:quasibialgebraaxioms5}) to eliminate two of the three factors of $\phi$.
Items (ii) and (iii) follow similarly from straightforward calculations using (\ref{eqn:evaluationexplicit},\ref{eqn:compositionexplicit}) 
and the properties of quasi-Hopf algebras.
\end{proof}
\begin{rem}
Both functors $\Hom$ and $\hom^H$ can be promoted to functors with values in the category of $k$-modules $\MMM$.
The components of the natural isomorphism $\vartheta$ in Proposition \ref{propo:invarianthoms} are obviously $k$-linear isomorphisms,
hence $\vartheta$ also gives a natural isomorphism between $\Hom$ and $\hom^H$ when considered as functors with values in $\MMM$.
\end{rem}

\subsection{\label{subsec:twistinginthom}Cochain twisting of internal homomorphisms}

Given any cochain twist $F = F^{(1)}\otimes F^{(2)}\in H\otimes H$ based on $H$ with 
inverse $F^{-1} =F^{(-1)}\otimes F^{(-2)}\in H\otimes H$, let us consider the closed monoidal categories ${}^H_{}\MMM$ 
and ${}^{H_F^{}}_{}\MMM$ where $H_F^{}$ is the twisted quasi-Hopf algebra of Theorem \ref{theo:twistingofhopfalgebras}. 
We shall denote the internal hom-functor on ${}^H_{}\MMM$ by $\hom$ and that on
${}^{H_F^{}}_{}\MMM$ by $\hom_F^{}$. For any  object $(\bol{V},\bol{W})$ in $\big({}^H_{}\MMM\big)^\mathrm{op} \times 
{}^H_{}\MMM$ we define the $k$-linear map
\begin{flalign}
\nn\gamma_{\bol{V},\bol{W}}^{} :  \hom_F^{}\big(\FF(\bol{V}),\FF(\bol{W})\big) & \longrightarrow \FF\big(\hom(\bol{V},\bol{W})\big)~,\\
 L&\longmapsto \big( F^{(-1)}\ra_{\bol{W}}^{}\,\cdot\,\big) \circ L \circ \big( S\big(F^{(-2)}\big)\ra_{\bol{V}}^{}\,\cdot\,\big)~,\label{eqn:gammamap}
\end{flalign}
and notice that $\gamma_{\bol{V},\bol{W}}^{}$ is an
${}^{H_F^{}}_{}\MMM$-isomorphism: For any $h\in H$ 
and $L\in \Hom_k(\udl{V},\udl{W})$
 we have
\begin{flalign}
\nn \gamma_{\bol{V},\bol{W}}^{} \big(h\ra_{\hom_F^{}(\FF(\bol{V}),\FF(\bol{W}))}^{} L\big) &= \big( F^{(-1)}\,h_{(1)_F}\ra_{\bol{W}}^{}\,\cdot\,\big) \circ L \circ \big( S\big(F^{(-2)}\,h_{(2)_F}\big)\ra_{\bol{V}}^{}\,\cdot\,\big)\\[4pt]
\nn &=\big( h_{(1)}\,F^{(-1)}\ra_{\bol{W}}^{}\,\cdot\,\big) \circ L \circ \big( S\big(h_{(2)}\,F^{(-2)}\big)\ra_{\bol{V}}^{}\,\cdot\,\big)\\[4pt]
&=h\ra_{\hom(\bol{V},\bol{W})}^{} \gamma^{}_{\bol{V},\bol{W}}(L)~,
\end{flalign}
where $\ra_{\hom_F^{}(\FF(\bol{V}),\FF(\bol{W}))}^{}  $ denotes the $H_F^{}$-adjoint action in  $\hom_F^{}\big(\FF(\bol{V}),\FF(\bol{W})\big)$
(recall that $S_F^{} =S$) and we have used the short-hand notation $\Delta_F^{}(h)  = F\,\Delta(h)\,F^{-1}=:
 h_{(1)_F}\otimes h_{(2)_F}$ for all $h\in H_F^{}$ (with summation understood).
A straightforward calculation shows that $\gamma_{\bol{V},\bol{W}}^{}$ are the components of a natural
isomorphism $\gamma : \hom_F^{}\circ (\FF^\mathrm{op}\times \FF) \Rightarrow \FF \circ\hom$ of functors from
$\big({}^H_{}\MMM\big)^\mathrm{op}\times {}^H_{}\MMM$ to ${}^{H_F^{}}_{}\MMM$, i.e.\ the diagram
\begin{flalign}
\xymatrix{
\hom_F^{}\big(\FF(\bol{V}),\FF(\bol{W})\big) \ar[d]_-{\hom^{}_F(\FF^{\mathrm{op}}(f^\mathrm{op}) , \FF(g))}\ar[rr]^-{\gamma^{}_{\bol{V},\bol{W}}} && \ar[d]^-{\FF(\hom(f^\mathrm{op},g))}  \FF\big(\hom(\bol{V},\bol{W})\big) \\
\hom_F^{}\big(\FF(\bol{X}),\FF(\bol{Y})\big)\ar[rr]_-{\gamma^{}_{\bol{X},\bol{Y}}}&& \FF\big(\hom(\bol{X},\bol{Y})\big)
}
\end{flalign} 
in ${}^{H_F^{}}_{}\MMM$
commutes for all morphisms $\big(f^\mathrm{op} : \bol{V}\to\bol{X} , g: \bol{W}\to\bol{Y} \big)$ in
$\big({}^H_{}\MMM\big)^\mathrm{op}\times {}^H_{}\MMM$. This shows
\begin{theo}\label{theo:HMequivalenceclosedmonoidal}
If $H$ is a quasi-Hopf algebra and $F\in H\otimes H$ is any cochain
twist based on $H$, then ${}^H_{}\MMM$ and 
${}^{H_F^{}}_{}\MMM$ are equivalent as closed monoidal categories.
\end{theo}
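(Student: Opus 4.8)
The plan is to promote the monoidal equivalence $\FF:{}^H_{}\MMM\to{}^{H_F^{}}_{}\MMM$ of Theorem~\ref{theo:defmonoidcatHM} to an equivalence of \emph{closed} monoidal categories. The construction preceding the theorem already supplies the object-level data: the natural isomorphism $\gamma:\hom_F^{}\circ(\FF^{\mathrm{op}}\times\FF)\Rightarrow\FF\circ\hom$ with components \eqref{eqn:gammamap}, which we have checked to be ${}^{H_F^{}}_{}\MMM$-isomorphisms and natural in $(\bol{V},\bol{W})$. What remains for ``closed monoidal equivalence'' is the \emph{structural} compatibility of $\gamma$ with the internal-hom adjunctions on the two sides, i.e.\ that $\gamma$ intertwines the currying bijections $\zeta$ and $\zeta_F^{}$ (equivalently, the evaluation morphisms). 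Since currying and evaluation determine the whole closed structure, this single compatibility is all that is genuinely left to prove, and in keeping with the explicit style of the paper I would phrase it through evaluation.

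Concretely, I would reduce the assertion to the statement that $\gamma^{-1}$ coincides with the canonical comparison morphism $\FF(\hom(\bol{V},\bol{W}))\to\hom_F^{}(\FF(\bol{V}),\FF(\bol{W}))$ built from the monoidal coherence map $\varphi$ of \eqref{eqn:coherencemapstensor} and evaluation; equivalently, that
\[
\ev^F_{\FF(\bol{V}),\FF(\bol{W})} = \FF\big(\ev_{\bol{V},\bol{W}}\big)\circ\varphi_{\hom(\bol{V},\bol{W}),\bol{V}}\circ\big(\gamma_{\bol{V},\bol{W}}\otimes_F\id_{\FF(\bol{V})}\big)~.
\]
Because $\gamma$ is already known to be an isomorphism, verifying this one identity shows that $\FF$ is a \emph{strong} closed monoidal functor. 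I would prove it by evaluating both sides on an element $L\otimes_F v$: the left-hand side via the explicit twisted evaluation \eqref{eqn:evaluationexplicit} for $\hom_F^{}$, in which the associator is $\phi_F^{}$ of \eqref{eqn:twistedassociator} and the quasi-antipode data are $\alpha_F^{},\beta_F^{}$ of \eqref{eqn:twistedalphabeta} (with $S_F^{}=S$); the right-hand side by unwinding $\gamma$ from \eqref{eqn:gammamap}, then $\varphi$, then the untwisted evaluation \eqref{eqn:evaluationexplicit}. Both paths produce an operator of the schematic shape $(\ast\ra_{\bol{W}}\,\cdot\,)\circ L\circ(\ast\ra_{\bol{V}}\,\cdot\,)$, so the content is that the factor $S(\phi_F^{(2)})\,\alpha_F^{}\,\phi_F^{(3)}$ entering $\ev^F$, together with the $F^{(\pm1)}$-insertions carried by $\gamma$ and $\varphi$, reorganizes exactly into the untwisted factor $S(\phi^{(2)})\,\alpha\,\phi^{(3)}$ with the correct placement of $F^{(-1)},F^{(-2)}$.

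The hard part will be the bookkeeping of these $F$-insertions: since $\phi_F^{}$ is a fivefold product of $F^{\pm1}$ and $\phi$, expanding $S(\phi_F^{(2)})\,\alpha_F^{}\,\phi_F^{(3)}$ and cancelling it against the coherence factors requires repeated, carefully ordered use of the quasi-coassociativity \eqref{eqn:quasibialgebraaxioms2}, the $3$-cocycle relation \eqref{eqn:quasibialgebraaxioms3}, the counit normalizations \eqref{eqn:twistcondition}, and the quasi-antipode axioms \eqref{eqn:quasiantipodeproperties} together with the fact that $S$ is an algebra anti-automorphism. This is the same type of manipulation already carried out in the currying computation \eqref{eqn:currycalc} and in the inversion of $\vartheta$ in Proposition~\ref{propo:invarianthoms}, so I would organize the calculation to mirror those, isolating the $\phi$-part and the $F$-part and collapsing them separately.

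Finally, compatibility with composition comes essentially for free: the composition morphisms $\bullet$ are defined purely from evaluation, currying and the associator via \eqref{eqn:compositiongeneral}, and $\FF$ respects the associator by the monoidal coherence diagram while $\gamma$ has just been shown to respect evaluation and currying; hence the corresponding square for $\bullet$ commutes automatically (alternatively one checks it directly from \eqref{eqn:compositionexplicit}). Since $\FF$ is moreover invertible -- twisting $H_F^{}$ by $F^{-1}$ recovers $H$ by Remark~\ref{rem:twistinverse}, with inverse comparison data given by $\gamma^{-1}$ -- the strong closed monoidal functor $\FF$ is an equivalence, which is precisely the assertion of the theorem.
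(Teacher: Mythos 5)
Your proposal is correct and follows essentially the same route as the paper: the equivalence is carried by the natural isomorphism $\gamma$ of (\ref{eqn:gammamap}), and the evaluation identity you single out is precisely the first diagram of Proposition \ref{propo:deformedevcirv}, which the paper likewise proves by chasing an element $L\otimes_F^{} v$ through the explicit formulas for $\gamma$, $\varphi$ and the evaluations, and from which it deduces the composition compatibility just as you describe. The only difference is organizational: the paper declares the theorem proven once $\gamma$ is exhibited as a natural isomorphism of the internal hom-functors and records the evaluation/composition compatibility afterwards as a separate proposition, whereas you fold that compatibility into the proof of the theorem itself.
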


For the closed monoidal category ${}^{H_F^{}}_{}\MMM$ we have by Proposition \ref{propo:thetaevcirc} 
the ${}^{H_F^{}}_{}\MMM$-morphisms
$\ev^F_{\FF(\bol{V}),\FF(\bol{W})}$ and $\bullet^F_{\FF(\bol{V}),\FF(\bol{W}),\FF(\bol{X})}$,
for any three objects $\FF(\bol{V}),\FF(\bol{W}),\FF(\bol{X})$ in ${}^{H_F^{}}_{}\MMM$.
These morphisms are related to the corresponding ${}^{H}_{}\MMM$-morphisms 
$\ev^{}_{\bol{V},\bol{W}}$ and $\bullet^{}_{\bol{V},\bol{W},\bol{X}}$ by
\begin{propo}\label{propo:deformedevcirv}
If $\bol{V},\bol{W},\bol{X}$ are any three objects in ${}^H_{}\MMM$, then the diagrams
\begin{subequations}
\begin{flalign}
\xymatrix{
\ar[d]_-{\gamma_{\bol{V},\bol{W}}^{}\otimes^{}_F\id^{}_{\FF(\bol{V})}}\hom_F^{}(\FF(\bol{V}),\FF(\bol{W}))\otimes_F^{} \FF(\bol{V})\ar[rrr]^-{\ev^F_{\FF(\bol{V}),\FF(\bol{W})}}&&&\FF(\bol{W})\\
\ar[d]_-{\varphi^{}_{\hom(\bol{V},\bol{W}),\bol{V}}}\FF\big(\hom(\bol{V},\bol{W})\big)\otimes_F^{} \FF(\bol{V})&&&\\
\FF\big(\hom(\bol{V},\bol{W})\otimes \bol{V}\big)\ar[rrruu]_-{~~\FF(\ev^{}_{\bol{V},\bol{W}})}&&&
}
\end{flalign}
\begin{flalign}
\xymatrix{
\ar[d]_-{\gamma_{\bol{W},\bol{X}}^{} \otimes_F^{}\gamma_{\bol{V},\bol{W}}^{}  }\hom_F^{}(\FF(\bol{W}),\FF(\bol{X}))\otimes_F^{}\hom_F^{}(\FF(\bol{V}),\FF(\bol{W}))\ar[rrr]^-{\bullet^F_{\FF(\bol{V}),\FF(\bol{W}),\FF(\bol{X})}} &&&\hom_F^{}(\FF(\bol{V}),\FF(\bol{X}))\ar[dd]^-{\gamma_{\bol{V},\bol{X}}^{}}\\
\ar[d]_-{\varphi_{\hom(\bol{W},\bol{X}) , \hom(\bol{V},\bol{W}) }^{}}\FF\big(\hom(\bol{W},\bol{X})\big)\otimes_F^{}\FF\big(\hom(\bol{V},\bol{W})\big)&&&\\
\FF\big(\hom(\bol{W},\bol{X}) \otimes \hom(\bol{V},\bol{W})\big)\ar[rrr]_-{\FF(\bullet^{}_{\bol{V},\bol{W},\bol{X}})}&&&\FF\big(\hom(\bol{V},\bol{X})\big)
}
\end{flalign}
\end{subequations}
in ${}^{H_F^{}}_{}\MMM$ commute.
\end{propo}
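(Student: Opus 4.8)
The plan is to anchor the proof on the fact that the evaluation and composition morphisms arise by currying (Proposition \ref{propo:thetaevcirc}), together with the explicit formulas \eqref{eqn:evaluationexplicit} and \eqref{eqn:compositionexplicit}. Since ${}^{H_F^{}}_{}\MMM$ is nothing but the closed monoidal category attached to the quasi-Hopf algebra $H_F^{}$ of Theorem \ref{theo:twistingofhopfalgebras}, these formulas apply verbatim with $\phi,\alpha,S$ replaced by their twisted counterparts $\phi_F^{},\alpha_F^{},S_F^{}=S$ (the underlying actions $\ra_{\bol{V}}^{},\ra_{\bol{W}}^{}$ are unchanged, as $H_F^{}=H$ as an algebra). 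The entire content of the Proposition is then that the twist factors produced by $\gamma$ \eqref{eqn:gammamap} and by the coherence map $\varphi$ \eqref{eqn:coherencemapstensor} recombine with $\phi$ and $\alpha$ so as to reproduce precisely $\phi_F^{}$ \eqref{eqn:twistedassociator} and $\alpha_F^{}$ \eqref{eqn:twistedalphabeta}.

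For the first diagram I would evaluate both composites on a generic element $L\otimes_F^{} v$. Writing out $\ev^F_{\FF(\bol{V}),\FF(\bol{W})}$ via \eqref{eqn:evaluationexplicit} gives $\phi_F^{(1)}\ra_{\bol{W}}^{} L\big(S(\phi_F^{(2)})\,\alpha_F^{}\,\phi_F^{(3)}\ra_{\bol{V}}^{} v\big)$, whereas the lower path $\FF(\ev_{\bol{V},\bol{W}}^{})\circ\varphi_{\hom(\bol{V},\bol{W}),\bol{V}}^{}\circ(\gamma_{\bol{V},\bol{W}}^{}\otimes_F^{}\id^{}_{\FF(\bol{V})})$ expands, using the definitions of $\gamma$, $\varphi$, the adjoint action \eqref{eqn:Hadjointaction} and $\ev$ \eqref{eqn:evaluationexplicit}, into an expression in $\phi$, $\alpha$, the twist legs $F^{(-1)},F^{(-2)}$ and $S$. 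The two are matched by substituting the five-factor expression \eqref{eqn:twistedassociator} for $\phi_F^{}$ and $\alpha_F^{}=S(F^{(-1)})\,\alpha\,F^{(-2)}$ into the first, and then repeatedly using that $S$ is an algebra anti-automorphism to move the twist legs through $S$ in reversed order, together with the counital normalization \eqref{eqn:twistcondition1} of the cochain twist. I do not expect the quasi-antipode identities to enter here, since no factor of $\beta$ appears along either path.

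For the second diagram I would avoid a direct expansion of \eqref{eqn:compositionexplicit} and instead exploit the currying characterization, exactly as in the proof of Proposition \ref{propo:evcompproperties}(iii): two ${}^{H_F^{}}_{}\MMM$-morphisms valued in an internal hom-object coincide as soon as their composites with $\ev^F$ after tensoring with $\FF(\bol{V})$ agree, because the currying maps $\zeta_{\text{--},\text{--},\text{--}}^{}$ are bijections. This reduces the commutativity of the composition square to the evaluation statement just established (applied to each of the two internal-hom factors) together with the $\ev$--$\bullet$ compatibility \eqref{eqn:evcompcompatibility}; the associator reshuffling that then appears is absorbed by the 3-cocycle condition \eqref{eqn:quasibialgebraaxioms3}, which is precisely the relation between $\phi_F^{}$ and the pair $(\phi,F)$ encoded in \eqref{eqn:twistedassociator}.

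I expect the main obstacle to be the purely combinatorial bookkeeping in the first diagram: the twisted associator \eqref{eqn:twistedassociator} is a product of five factors, each carrying Sweedler legs that must be pushed through $S$ in the correct order and then cancelled against the legs of $F^{-1}$ coming from $\gamma$ and $\varphi$. Conceptually, however, there is no surprise: $\gamma$ already realizes the equivalence of \emph{closed} monoidal categories in Theorem \ref{theo:HMequivalenceclosedmonoidal}, and $\ev$ and $\bullet$ are canonical consequences of the closed monoidal structure via Proposition \ref{propo:thetaevcirc}, so any structure-preserving natural isomorphism must intertwine them up to the coherence maps $\varphi$. The explicit computation above simply confirms that this abstract intertwining is implemented by exactly the associator and antipode insertions recorded in $\phi_F^{}$ and $\alpha_F^{}$.
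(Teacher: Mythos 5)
Your overall strategy coincides with the paper's own proof: the first diagram is verified by chasing $L\otimes_F^{}v$ along both paths using the explicit formulas for $\gamma$, $\varphi$ and $\ev$ together with Theorem \ref{theo:twistingofhopfalgebras}, and the second diagram is reduced, via Proposition \ref{propo:evcompproperties}~(i) and bijectivity of the currying maps, to an evaluation identity which is then handled by the first diagram and \eqref{eqn:evcompcompatibility}. So the architecture of your argument is fine.

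There is, however, one concrete mis-step in your account of the first diagram. You assert that the quasi-antipode identities do not enter because no factor of $\beta$ appears; but \eqref{eqn:quasiantipodeproperties1}, namely $S(h_{(1)})\,\alpha\,h_{(2)}=\epsilon(h)\,\alpha$, involves $\alpha$ rather than $\beta$ and is indispensable here. Writing out $\phi_F^{(1)}\otimes S(\phi_F^{(2)})\,\alpha_F^{}\,\phi_F^{(3)}$ from \eqref{eqn:twistedassociator} and \eqref{eqn:twistedalphabeta}, the copy of $F^{-1}$ contained in $\alpha_F^{}$ cancels against the factor $(1\otimes F)$ of $\phi_F^{}$ simply because $F^{-1}\,F=1\otimes 1$; but the remaining factor $(\id_H^{}\otimes\Delta)(F)$ leaves behind a term of the form $S\big(F^{(2)}_{(1)}\big)\,\alpha\,F^{(2)}_{(2)}$, which can only be removed by \eqref{eqn:quasiantipodeproperties1} (yielding $\epsilon(F^{(2)})\,\alpha$) followed by the counital normalization \eqref{eqn:twistcondition1} to eliminate the matching leg $F^{(1)}$ sitting in $\phi_F^{(1)}$. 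With only the anti-automorphism property of $S$ and counitality, as you propose, the five-factor expression for $\phi_F^{}$ does not collapse to the three twist legs produced by $\gamma_{\bol{V},\bol{W}}^{}$ and $\varphi_{\hom(\bol{V},\bol{W}),\bol{V}}^{}$ along the lower path, and the two sides cannot be matched. Once \eqref{eqn:quasiantipodeproperties1} is added to your toolkit the computation closes exactly as you describe, and the remainder of the argument, including the treatment of the second diagram, is sound.
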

\begin{proof}
Commutativity of the first diagram can be shown by chasing an element 
$L\otimes_F^{} v\in \hom_F^{}(\FF(\bol{V}),\FF(\bol{W}))\otimes_F^{} \FF(\bol{V})$ along 
the two paths in the diagram and using the explicit expressions
 for $\gamma_{\text{--},\text{--}}^{}$ in (\ref{eqn:gammamap}), 
 $\varphi_{\text{--},\text{--}}^{}$ in (\ref{eqn:coherencemapstensor}) and $\ev_{\text{--},\text{--}}^{}$ in (\ref{eqn:evaluationexplicit}),
 as well as Theorem \ref{theo:twistingofhopfalgebras}.
In order to prove commutativity of the second diagram,
let us first notice that, due to Proposition \ref{propo:evcompproperties}~(i) and bijectivity of the currying maps, it is enough to show that
\begin{multline}
\ev_{\FF(\bol{V}),\FF(\bol{X})}^{F}\Big(\big( L \bullet_{\FF(\bol{V}),\FF(\bol{W}),\FF(\bol{X})}^{F} L^\prime \, \big) \otimes_F^{} v\Big)=
\\
\ev_{\FF(\bol{V}),\FF(\bol{X})}^{F}\Big(\gamma_{\bol{V},\bol{X}}^{-1}\Big(
\big(F^{(-1)}\ra_{\hom(\bol{W},\bol{X})}^{} \gamma_{\bol{W},\bol{X}}^{}(L)\big)
 \bullet_{\bol{V},\bol{W},\bol{X}}^{}
 \big(F^{(-2)}\ra_{\hom(\bol{V},\bol{W})}^{} \gamma_{\bol{V},\bol{W}}^{}(L^\prime\, )\big)
  \Big)\otimes_F^{} v\Big)~,
\end{multline}
for all $L\in\hom_F^{}(\FF(\bol{W}),\FF(\bol{X}))$, $L^\prime\in \hom_F^{}(\FF(\bol{V}),\FF(\bol{W}))$ and $v\in\FF(\bol{V})$.
Using the first diagram we can express the evaluation 
$\ev_{\FF(\bol{V}),\FF(\bol{X})}^{F}$ on the right-hand side
in terms of the evaluation $\ev_{\bol{V},\bol{X}}^{}$. Simplifying the resulting expression 
and using (\ref{eqn:evcompcompatibility}) to express the composition 
$\bullet_{\bol{V},\bol{W},\bol{X}}^{}$ in terms of evaluations $\ev_{\text{--},\text{--}}^{}$, 
the equality of both sides follows from a straightforward calculation
using Theorem  \ref{theo:twistingofhopfalgebras}.
\end{proof}


\section{\label{sec:HAMA}Algebras and bimodules}

As a first step towards noncommutative and nonassociative differential geometry we shall introduce 
the concept of algebras $\bol{A}$ and $\bol{A}$-bimodules $\bol{V}$ in the category ${}^{H}_{}\MMM$.
The algebras $\bol{A}$ should be interpreted as noncommutative and nonassociative spaces with symmetries modeled on the quasi-Hopf algebra $H$, while the $\bol{A}$-bimodules
$\bol{V}$ describe noncommutative and nonassociative vector bundles over $\bol{A}$. We shall show
that, fixing any quasi-Hopf algebra $H$ and any algebra $\bol{A}$ in ${}^H_{}\MMM$, the category ${}_{}^H{}_{\bol{A}}^{}\MMM_{\bol{A}}^{}$ 
of $\bol{A}$-bimodules in ${}^H_{}\MMM$ is a monoidal category.
Physically this means that there is a tensor product operation 
for the kinds of noncommutative and nonassociative tensor fields that we consider,
which is of course an indispensable tool for describing 
physical theories such as gravity and other field theories in our setting.
 We shall show that cochain twisting leads to an equivalence between the monoidal categories 
${}_{}^H{}_{\bol{A}}^{}\MMM_{\bol{A}}^{}$ and ${}_{}^{H_F^{}}{}_{\bol{A}_F^{}}^{}\MMM_{\bol{A}_F^{}}^{}$,
where $\bol{A}_F^{}$ is an algebra in ${}^{H_F^{}}_{}\MMM$ which is given by a deformation (via a star-product) 
of the original algebra $\bol{A}$. The assignment of the deformed algebras $\bol{A}_F^{}$ in ${}^{H_F^{}}_{}\MMM$
to algebras $\bol{A}$ in ${}^{H^{}}_{}\MMM$ is also shown to be functorial.

\subsection{Algebras}

Let $H$ be a quasi-Hopf algebra and ${}^H_{}\MMM$ the associated closed monoidal category of left $H$-modules.
\begin{defi}\label{defi:internalalgebra}
An {\em algebra in ${}^H_{}\MMM$} is an object $\bol{A} $ in ${}^H_{}\MMM$ together with
two ${}^H_{}\MMM$-morphisms $\mu_{\bol{A}}^{} : \bol{A}\otimes\bol{A}\to\bol{A}$ (product) and $\eta_{\bol{A}}^{} : \bol{I} \to \bol{A}$ (unit)
such that the diagrams
\begin{subequations}
\begin{flalign}
\xymatrix{
\ar[d]_-{\Phi^{}_{\bol{A},\bol{A},\bol{A}}}
(\bol{A}\otimes\bol{A})\otimes\bol{A} \ar[rr]^-{\mu_{\bol{A}}^{}\otimes \id^{}_{\bol{A}}} && 
\bol{A}\otimes \bol{A}\ar[dd]^-{\mu_{\bol{A}}^{}}\\
\ar[d]_-{ \id_{\bol{A}}^{}\otimes \mu_{\bol{A}}^{}}
\bol{A}\otimes (\bol{A}\otimes \bol{A} )&& \\
\bol{A}\otimes \bol{A} \ar[rr]_-{\mu^{}_{\bol{A}}}&&
\bol{A}
}
\end{flalign}
\begin{flalign}
\xymatrix{
\bol{I}\otimes \bol{A}\ar[d]_-{\eta_{\bol{A}}^{}\otimes\id^{}_{\bol{A}}} 
\ar[rrd]^-{\lambda^{}_{\bol{A}}}&& && &&
 \ar[lld]_-{\rho^{}_{\bol{A}}}   
 \bol{A}\otimes\bol{I}
 \ar[d]^-{\id^{}_{\bol{A}}\otimes\eta_{\bol{A}}^{}}\\
\bol{A}\otimes\bol{A} \ar[rr]_-{\mu^{}_{\bol{A}}} && \bol{A} && \bol{A} &&\ar[ll]^-{\mu_{\bol{A}}^{}}  \bol{A}\otimes\bol{A}
}
\end{flalign}
\end{subequations}
in ${}^H_{}\MMM$ commute. We shall denote by ${}^H_{}\AAA$ the category with objects all algebras in ${}^{H}_{}\MMM$
and morphisms given by all structure preserving ${}^{H}_{}\MMM$-morphisms, i.e.\ an ${}^H_{}\AAA$-morphism
$f: \bol{A}\to \bol{B}$ is an ${}^H_{}\MMM$-morphism such that $\mu_{\bol{B}}^{}\circ (f\otimes f) = f\circ \mu_{\bol{A}}^{}$
and $f \circ \eta_{\bol{A}}^{} = \eta_{\bol{B}}^{}$.
\end{defi}

Given an algebra $\bol{A}$ in ${}^H_{}\MMM$ it is sometimes convenient to use a short-hand notation and denote
the product by $\mu_{\bol{A}}^{}(a\otimes a^\prime\, ) = a\,a^\prime$, for all $a,a^\prime\in\bol{A}$.
Since $\mu_{\bol{A}}^{}$ is an ${}^H_{}\MMM$-morphism we have
\begin{flalign}
h\ra_{\bol{A}}^{} (a\,a^\prime\, ) = (h_{(1)}\ra_{\bol{A}}^{}
a)\,(h_{(2)}\ra_{\bol{A}}^{} a^\prime\, )~,
\end{flalign}
for all $h\in H$ and $a,a^\prime\in \bol{A}$.
The first diagram in Definition \ref{defi:internalalgebra} implies that
\begin{flalign}
(a\,a^\prime\, )\,a^{\prime\prime} = (\phi^{(1)}\ra_{\bol{A}}^{}
a)\,\big((\phi^{(2)} \ra_{\bol{A}}^{} a^\prime\,
)\,(\phi^{(3)}\ra_{\bol{A}}^{} a^{\prime\prime}\, )\big)~,
\end{flalign}
for all $a,a^\prime,a^{\prime\prime}\in\bol{A}$. Hence $\bol{A}$ is in general not an associative algebra, but only weakly associative, 
i.e.\ associative up to the associator in $H$. Denoting the unit element in $\bol{A}$ by $1_{\bol{A}}^{} := \eta_{\bol{A}}^{}(1)$,
the fact that $\eta_{\bol{A}}^{}$ is an ${}^H_{}\MMM$-morphism implies that
$h\ra_{\bol{A}}^{} 1_{\bol{A}}^{} = \epsilon(h)\,1_{\bol{A}}^{}$ for
all $h\in H$.
The last two diagrams in Definition \ref{defi:internalalgebra} yield
\begin{flalign}
1_{\bol{A}}^{}\,a = a = a\,1_{\bol{A}}^{}~, 
\end{flalign}
for all $a\in\bol{A}$. In this short-hand notation an ${}^H_{}\AAA$-morphism $f:\bol{A}\to\bol{B}$ is a $k$-linear map that satisfies
\begin{flalign}
f(h\ra_{\bol{A}}^{} a ) = h\ra_{\bol{B}}^{} f(a)~~,~~~f( a \,
a^\prime\, ) = f(a)\, f(a^\prime\, ) ~~,~~~f(1_{\bol{A}}^{}) = 1_{\bol{B}}^{}~~,
\end{flalign}
for all $h\in H$ and $a,a^\prime\in \bol{A}$.

\begin{ex}\label{ex:endalgebra}
Given any object $\bol{V}$ in ${}^H_{}\MMM$ we can consider its internal endomorphisms $\mathrm{end}(\bol{V}) := \hom(\bol{V},\bol{V})$,
which is an object in ${}^H_{}\MMM$. By Proposition \ref{propo:thetaevcirc} there
is an ${}^H_{}\MMM$-morphism
\begin{flalign}
\mu_{\mathrm{end}(\bol{V})}^{} := \bullet_{\bol{V},\bol{V},\bol{V}}^{} : \mathrm{end}(\bol{V})\otimes \mathrm{end}(\bol{V})
\longrightarrow \mathrm{end}(\bol{V})~.
\end{flalign}
Explicitly, the composition morphism is given in (\ref{eqn:compositionexplicit}).
Furthermore, due to the currying $\zeta$ in (\ref{eqn:rightcurrying}) we can assign to the ${}^H_{}\MMM$-morphism
$\lambda_{\bol{V}}^{} : \bol{I}\otimes\bol{V}\to\bol{V}$ the ${}^H_{}\MMM$-morphism
\begin{flalign}
\eta_{\mathrm{end}(\bol{V})}^{} := \zeta_{\bol{I},\bol{V},\bol{V}}^{}(\lambda_{\bol{V}}^{}) : \bol{I}\longrightarrow \mathrm{end}(\bol{V})~.
\end{flalign}
Explicitly, evaluating this morphism on $1\in \bol{I}$ we find
$1_{\mathrm{end}(\bol{V})}^{} := \eta_{\mathrm{end}(\bol{V})}^{}  (1) = (\beta\ra_{\bol{V}}^{}\,\cdot\,) \in\mathrm{end}(\bol{V})$.
The product $\mu_{\mathrm{end}(\bol{V})}^{} $ is weakly associative
(i.e.\ associative up to the associator as in the first diagram
of Definition \ref{defi:internalalgebra}) since
the composition morphisms $\bullet_{\bol{V},\bol{W},\bol{X}}^{}$ 
have this property (cf.\ Proposition \ref{propo:evcompproperties} (iii)). 
By Proposition \ref{propo:invarianthoms} (i) we can identify $1_{\mathrm{end}(\bol{V})}^{} = \vartheta_{\bol{V},\bol{V}}^{}(\id_{\bol{V}}^{}) $
and therefore obtain via the properties listed in Proposition \ref{propo:invarianthoms} (iii)
\begin{flalign}
1_{\mathrm{end}(\bol{V})}^{}\,L =\vartheta_{\bol{V},\bol{V}}^{}(\id_\bol{V}^{})\bullet_{\bol{V},\bol{V},\bol{V}}^{} L  =
L = L \bullet_{\bol{V},\bol{V},\bol{V}}^{} \vartheta_{\bol{V},\bol{V}}^{}(\id_\bol{V}^{})  = L\,1_{\mathrm{end}(\bol{V})}^{}~,
\end{flalign}
for any $L\in\mathrm{end}(\bol{V})$.
Hence $\mathrm{end}(\bol{V})$ together with $\mu_{\mathrm{end}(\bol{V})}^{}$ and $\eta_{\mathrm{end}(\bol{V})}^{}$ 
is an algebra in ${}^H_{}\MMM$.
\end{ex}

\begin{rem}
Given an object $\bol{V}$ in ${}^H_{}\MMM$, the algebra $\mathrm{end}(\bol{V})$ in ${}^H_{}\MMM$ describes the
(nonassociative) algebra of linear operators on $\bol{V}$. A {\em representation} of an object
$A$ in ${}^H_{}\AAA$ on $\bol{V}$ is then defined to be an ${}^H_{}\AAA$-morphism $\pi_{\bol{A}}^{} : \bol{A}\to\mathrm{end}(\bol{V})$.
\end{rem}

\subsection{\label{subsec:cochaintwistingalgebras}Cochain twisting of algebras}

Given any cochain twist $F\in H\otimes H$, Theorem \ref{theo:twistingofhopfalgebras} provides
us with a new quasi-Hopf algebra $H_F^{}$. We shall now construct
an equivalence between the categories ${}^H_{}\AAA$ and ${}^{H_F^{}}_{}\AAA$. First, let us recall
that there is a monoidal functor $\FF : {}^H_{}\MMM\to {}^{H_F^{}}_{}\MMM$. Thus given any
algebra $\bol{A}$ in ${}^H_{}\MMM$ we obtain an object $\FF(\bol{A})$ in ${}^{H_F^{}}_{}\MMM$. For this object we
define the ${}^{H_F^{}}_{}\MMM$-morphisms $\mu_{\bol{A}_F^{}}^{} : \FF(\bol{A})\otimes_F^{}\FF(\bol{A})\to \FF(\bol{A})$
and $\eta_{\bol{A}_F^{}}^{} : \bol{I}_F^{} \to \FF(\bol{A})$ via the coherence maps (\ref{eqn:coherencemapstensor}) and the 
diagrams
\begin{flalign}\label{eqn:deformedalgebra}
\xymatrix{
\ar[d]_-{\varphi_{\bol{A},\bol{A}}^{}} \FF(\bol{A})\otimes_F^{}\FF(\bol{A}) \ar[rr]^-{\mu_{\bol{A}_F^{}}^{}} && \FF(\bol{A}) && \ar[d]_-{\psi} \bol{I}_F^{} \ar[rr]^-{\eta_{\bol{A}_F^{}}^{}} && \FF(\bol{A})\\
\FF(\bol{A}\otimes\bol{A})\ar[rru]_-{\FF(\mu_{\bol{A}}^{})}&& && \FF(\bol{I}) \ar[rru]_-{\FF(\eta_{\bol{A}}^{})}&&
}
\end{flalign}
in ${}^{H_F^{}}_{}\MMM$. It is easy to see that $\FF(\bol{A})$, together with the ${}^{H_F^{}}_{}\MMM$-morphisms $\mu_{\bol{A}_F^{}}^{}$ and
$\eta_{\bol{A}_F^{}}^{}$, is an algebra in ${}^{H_F^{}}_{}\MMM$. We shall denote this algebra also by $\bol{A}_F^{}$.
Using the short-hand notation, the product and unit in this algebra explicitly read as
\begin{flalign}
\mu_{\bol{A}_F^{}}^{}(a\otimes_F^{} a^\prime\, ) =
(F^{(-1)}\ra_{\bol{A}}^{} a)\,(F^{(-2)} \ra_{\bol{A}}^{} a^\prime\, ) =: a\star_F^{} a^\prime\quad,\qquad 
\eta_{\bol{A}_F^{}}^{}(1) = \eta_{\bol{A}}^{}(1) = 1_{\bol{A}}^{}~,
\end{flalign}
for all $a,a^\prime\in \bol{A}_F^{}$. For any ${}^H_{}\AAA$-morphism
$f:\bol{A}\to\bol{B}$ the ${}^{H_F^{}}_{}\MMM$-morphism $\FF(f) : \FF(\bol{A})\to\FF(\bol{B})$ 
is also an ${}^{H_F^{}}_{}\AAA$-morphism (denoted by the same symbol)
$\FF(f) : \bol{A}_F^{} \to\bol{B}_F^{}$. 
Thus we obtain a functor $\FF : {}^{H}_{}\AAA \to {}^{H_F^{}}_{}\AAA$,
which is invertible by using the cochain twist $F^{-1}$ based on $H_F^{}$ (cf.\ Remark \ref{rem:twistinverse}).
In summary, we have shown
\begin{propo}\label{propo:algdeformation}
If $H$ is a quasi-Hopf algebra and $F\in H\otimes H$ is any cochain
twist based on $H$, then the categories ${}^H_{}\AAA$ and ${}^{H_F^{}}_{}\AAA$ 
are equivalent.
\end{propo}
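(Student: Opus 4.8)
The plan is to establish the equivalence by exhibiting the functor $\FF : {}^H_{}\AAA \to {}^{H_F^{}}_{}\AAA$ together with an explicit inverse, leaning on the already-established monoidal equivalence $\FF : {}^H_{}\MMM \to {}^{H_F^{}}_{}\MMM$ of Theorem \ref{theo:defmonoidcatHM}. The key observation is that an algebra object is precisely a monoid in the monoidal category, and monoidal functors transport monoids to monoids along with their structure morphisms; hence most of the work reduces to routine bookkeeping with the coherence maps $\varphi$ and $\psi$ from (\ref{eqn:coherencemapstensor}). First I would verify that the data $(\FF(\bol{A}),\mu_{\bol{A}_F^{}}^{},\eta_{\bol{A}_F^{}}^{})$ defined by the diagrams (\ref{eqn:deformedalgebra}) genuinely satisfy the associativity and unit axioms of Definition \ref{defi:internalalgebra}, now with respect to the \emph{twisted} associator $\Phi^F$ and unitors $\lambda^F,\rho^F$ on ${}^{H_F^{}}_{}\MMM$. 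This is where the coherence diagrams (\ref{eqn:coherencediagrams}) do the essential work: pasting the weak-associativity pentagon for $\bol{A}$ together with the associator-coherence square for $\varphi$ yields exactly the weak-associativity square for $\bol{A}_F^{}$, and similarly the unit-coherence triangles give the unit axioms.

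Next I would check functoriality and compatibility on morphisms. For an ${}^H_{}\AAA$-morphism $f:\bol{A}\to\bol{B}$, the underlying ${}^{H_F^{}}_{}\MMM$-morphism $\FF(f)$ must be shown to respect the twisted products and units, i.e.\ $\mu_{\bol{B}_F^{}}^{}\circ(\FF(f)\otimes_F^{}\FF(f)) = \FF(f)\circ\mu_{\bol{A}_F^{}}^{}$ and $\FF(f)\circ\eta_{\bol{A}_F^{}}^{}=\eta_{\bol{B}_F^{}}^{}$. Using the naturality of the coherence maps $\varphi_{\text{--},\text{--}}^{}$ (which is the square asserting $\varphi_{\bol{B},\bol{B}}^{}\circ(\FF(f)\otimes_F^{}\FF(f)) = \FF(f\otimes f)\circ\varphi_{\bol{A},\bol{A}}^{}$) together with the defining relations (\ref{eqn:deformedalgebra}) and the fact that $f$ is already an algebra morphism, this follows by a direct diagram chase. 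In the explicit $\star_F^{}$-notation this is the elementary statement $f(a\star_F^{}a^\prime) = f(a)\star_F^{} f(a^\prime)$, which is immediate from $H$-equivariance of $f$.

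Finally, invertibility follows from Remark \ref{rem:twistinverse}: since $F^{-1}$ is a cochain twist based on $H_F^{}$ with $(H_F^{})_{F^{-1}}^{}=H$, the same construction produces a functor ${}^{H_F^{}}_{}\AAA\to{}^H_{}\AAA$, and the two compositions are the identity because the corresponding statement already holds for the monoidal functor $\FF$ on ${}^H_{}\MMM$ and because the algebra structures are determined by the underlying monoidal data. The main obstacle I anticipate is purely organizational rather than conceptual: one must be scrupulous about associator insertions, keeping track of where $\Phi$ versus $\Phi^F$ appears and ensuring that the coherence squares (\ref{eqn:coherencediagrams}) are pasted in the correct orientation, since the paper deliberately avoids strictification. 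Everything else is a formal consequence of the fact that a (weak) monoidal functor preserves monoid objects, so I would state the proof at the level of the defining diagrams and remark that the elementary $\star_F^{}$-computations follow from the coherence relations already verified for $\FF$.
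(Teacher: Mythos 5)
Your proposal is correct and follows essentially the same route as the paper: transport the algebra structure along the monoidal functor $\FF$ via the coherence maps $\varphi$ and $\psi$ as in (\ref{eqn:deformedalgebra}), verify the axioms using the coherence diagrams (\ref{eqn:coherencediagrams}), and obtain invertibility from the inverse twist $F^{-1}$ as in Remark \ref{rem:twistinverse}. The paper simply compresses the verification steps you spell out into ``it is easy to see,'' so your write-up is a faithful expansion of the same argument.
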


\subsection{Bimodules}

Given a quasi-Hopf algebra $H$ and an algebra $\bol{A}$ in ${}^H_{}\MMM$ we can consider
objects in ${}^H_{}\MMM$ which are also $\bol{A}$-bimodules in a  compatible way. 
As we have mentioned before, upon interpreting $\bol{A}$ as a noncommutative and nonassociative space,
these objects should be interpreted as noncommutative and nonassociative vector bundles. 
\begin{defi}\label{defi:Abimod}
Let $\bol{A}$ be an algebra in ${}^H_{}\MMM$.
An {\em $\bol{A}$-bimodule in ${}^H_{}\MMM$} is an object $\bol{V} $ in 
${}^H_{}\MMM$ together with two ${}^H_{}\MMM$-morphisms $l_{\bol{V}}^{} : \bol{A}\otimes \bol{V}\to\bol{V}$ (left $\bol{A}$-action)
and $r_{\bol{V}}^{} : \bol{V}\otimes \bol{A}\to\bol{V}$ (right $\bol{A}$-action), such that
the diagrams
\begin{subequations}
\begin{flalign}
\xymatrix{
\ar[d]_-{\Phi^{}_{\bol{V},\bol{A},\bol{A}}} 
(\bol{V}\otimes\bol{A})\otimes\bol{A} \ar[rr]^-{r_{\bol{V}}^{}\otimes \id^{}_{\bol{A}}} && 
\bol{V}\otimes \bol{A}\ar[dd]^-{r_{\bol{V}}^{}} && 
\ar[d]_-{\Phi^{-1}_{\bol{A},\bol{A},\bol{V}}}
\bol{A}\otimes(\bol{A}\otimes\bol{V}) 
\ar[rr]^-{\id^{}_{\bol{A}}\otimes l_{\bol{V}}^{}}&&
\bol{A}\otimes\bol{V}\ar[dd]^-{l_{\bol{V}}^{}}\\
\ar[d]_-{ \id^{}_{\bol{V}}\otimes \mu_{\bol{A}}^{}}
\bol{V}\otimes (\bol{A}\otimes \bol{A} )&& && 
(\bol{A}\otimes\bol{A})\otimes\bol{V} 
\ar[d]_-{\mu_{\bol{A}}^{}\otimes\id^{}_{\bol{V}}}&&\\
\bol{V}\otimes \bol{A} 
\ar[rr]_-{r_{\bol{V}}}&&\bol{V} &&  
\bol{A}\otimes\bol{V} \ar[rr]_-{l_{\bol{V}}^{}}&& \bol{V}
}
\end{flalign}
\begin{flalign}
\xymatrix{
\ar[d]_-{\Phi^{-1}_{\bol{A},\bol{V},\bol{A}}}
\bol{A}\otimes(\bol{V}\otimes\bol{A}) \ar[rr]^-{\id^{}_{\bol{A}}\otimes r_{\bol{V}}^{}} && 
\bol{A}\otimes \bol{V}\ar[dd]^-{l_{\bol{V}}^{}}\\
(\bol{A}\otimes\bol{V})\otimes \bol{A}
\ar[d]_-{l_{\bol{V}}^{}\otimes\id^{}_{\bol{A}}}&&\\
\bol{V}\otimes \bol{A}\ar[rr]_-{r_{\bol{V}}^{}}&& \bol{V}
}
\end{flalign}
\begin{flalign}
\xymatrix{
\bol{I}\otimes \bol{V}
\ar[d]_-{\eta_{\bol{A}}^{}\otimes \id_{\bol{V}}^{}} 
\ar[drr]^-{\lambda^{}_{\bol{V}}}&& && && 
\bol{V}\otimes\bol{I}\ar[d]^-{\id^{}_{\bol{V}}\otimes \eta_{\bol{A}}^{}} 
\ar[dll]_-{\rho_{\bol{V}}^{}}\\
\bol{A}\otimes \bol{V} 
\ar[rr]_-{l_{\bol{V}}^{}}&& \bol{V} && \bol{V} && 
\ar[ll]^-{r_{\bol{V}}^{}}\bol{V}\otimes\bol{A}
}
\end{flalign}
\end{subequations}
in ${}^H_{}\MMM$ commute. We shall denote by ${}^H_{}{}^{}_{\bol{A}}\MMM^{}_{\bol{A}}$ the category with objects all $\bol{A}$-bimodules
in ${}^H_{}\MMM$  and morphisms given by all structure preserving ${}^H_{}\MMM$-morphisms, i.e.\ 
an ${}^H_{}{}^{}_{\bol{A}}\MMM^{}_{\bol{A}}$-morphism $f : \bol{V}\to\bol{W} $ is an ${}^H_{}\MMM$-morphism
such that $l_{\bol{W}}^{} \circ (\id^{}_{\bol{A}}\otimes f) = f\circ l_{\bol{V}}^{}$ and 
$r_{\bol{W}}^{} \circ (f\otimes\id^{}_{\bol{A}}) = f \circ r_{\bol{V}}^{}$.
\end{defi}
\begin{rem}\label{rem:leftrightAmodules}
In complete analogy to Definition \ref{defi:Abimod} one can define
left or right $A$-modules in ${}^H_{}\MMM$.
We shall denote the corresponding categories by ${}^H_{}{}^{}_{A}\MMM$
and ${}^H_{}\MMM^{}_{A}$.
There are obvious forgetful functors
${}^H_{}{}^{}_{\bol{A}}\MMM^{}_{\bol{A}} \to {}^H_{}{}^{}_{\bol{A}}\MMM $
and  ${}^H_{}{}^{}_{\bol{A}}\MMM^{}_{\bol{A}} \to {}^H_{}\MMM^{}_{A} $.
\end{rem}

Given an $\bol{A}$-bimodule $\bol{V}$ in ${}^H_{}\MMM$ it is sometimes convenient to denote the left and right $\bol{A}$-actions
simply by $l_{\bol{V}}^{} (a\otimes v) = a\,v$ and $r_{\bol{V}}^{}(v\otimes a) = v\,a$, for all $a\in\bol{A}$ and $v\in \bol{V}$.
Since $l_{\bol{V}}^{}$ and $r_{\bol{V}}^{}$ are ${}^H_{}\MMM$-morphisms we have
\begin{flalign}
h\ra_{\bol{V}}^{}(a\,v) = (h_{(1)}\ra_{\bol{A}}^{} a)\,(h_{(2)}\ra_{\bol{V}}^{} v)~~,~~~h\ra_{\bol{V}}^{}(v\,a) = (h_{(1)}\ra_{\bol{V}}^{} v) \,(h_{(2)}\ra_{\bol{A}}^{} a)~,
\end{flalign}
for all $h\in H$, $a\in \bol{A}$ and $v\in\bol{V}$.
The first three diagrams in Definition \ref{defi:Abimod} imply that
\begin{subequations}
\begin{flalign}
(v\,a)\,a^\prime &= (\phi^{(1)}\ra_{\bol{V}}^{}
v)\,\big((\phi^{(2)}\ra_{\bol{A}}^{} a)\, (\phi^{(3)}\ra_{\bol{A}}^{}
a^\prime\, )\big)~,\\[4pt]
a\,(a^\prime\, v) &= \big((\phi^{(-1)}\ra_{\bol{A}}^{} a
)\,(\phi^{(-2)}\ra_{\bol{A}}^{} a^\prime\, )\big)\,(\phi^{(-3)}\ra_{\bol{V}}^{} v)~,\\[4pt]
a\,(v\,a^\prime\, ) &=\big((\phi^{(-1)}\ra_{\bol{A}}^{} a
)\,(\phi^{(-2)}\ra_{\bol{V}}^{} v)\big)\,(\phi^{(-3)}\ra_{\bol{A}}^{}
a^\prime\, )~,
\end{flalign}
\end{subequations}
for all $a,a^\prime\in\bol{A}$ and $v\in\bol{V}$. 
From the remaining two diagrams we obtain
\begin{flalign}\label{eqn:unitbimoduleexplicit}
1_{\bol{A}}^{}\,v = v = v\,1_{\bol{A}}^{}~,
\end{flalign}
for all $v\in\bol{V}$.
These are weak versions (i.e.\ up to associator) of the usual bimodule
properties. In this short-hand notation an ${}^H_{}{}^{}_{\bol{A}}\MMM^{}_{\bol{A}}$-morphism
$f: \bol{V}\to\bol{W}$ is a $k$-linear map that satisfies
\begin{flalign}
f(h\ra_{\bol{V}}^{} v) = h\ra_{\bol{W}}^{} f(v)~~,~~~f(a\,v) = a\,f(v)~~,~~~f(v\,a) = f(v)\,a~~,
\end{flalign}
for all $h\in H$, $a\in \bol{A}$ and $v\in \bol{V}$.

\begin{ex}\label{ex:freemod}
Given any algebra $\bol{A}$ in ${}^H_{}\MMM$ we can construct the
$n$-dimensional free $\bol{A}$-bimodule $\bol{A}^n$ in ${}^H_{}\MMM$, where $n\in\mathbb{N}$. 
Elements $\vec{a} \in \bol{A}^n$ can be written as columns
\begin{flalign}
\vec{a} = \begin{pmatrix}
a_1\\\vdots\\a_n
\end{pmatrix}\quad ,\qquad a_i\in \bol{A}~,~~ i=1,\dots,n~.
\end{flalign}
The left $H$-action $\ra_{\bol{A}^n}$ as well as the left and right $\bol{A}$-actions $l_{\bol{A}^n}^{}$ and $r_{\bol{A}^n}^{}$
are defined componentwise by
\begin{flalign}
h\ra_{\bol{A}^n} \vec{a} := \begin{pmatrix}
h \ra_{\bol{A}}^{} a_1\\\vdots\\ h \ra_{\bol{A}}^{} a_n
\end{pmatrix}~~,\quad a^\prime \,\vec{a} := \begin{pmatrix}
a^\prime\, a_1\\\vdots\\ a^\prime\, a_n
\end{pmatrix}~~,\quad \vec{a}\,a^\prime := \begin{pmatrix}
a_1\,a^\prime\, \\\vdots\\  a_n\,a^\prime \, 
\end{pmatrix}~~,
\end{flalign}
for all $h\in H$, $a^\prime\in \bol{A}$ and $\vec{a}\in\bol{A}^n$.
The $\bol{A}$-bimodule properties of $\bol{A}^n$ follow from the algebra properties of $\bol{A}$.
\end{ex}

\begin{rem}
In our geometric interpretation, the $A$-bimodule $A^n$ corresponds to
the trivial rank $n$ vector bundle over the noncommutative and
nonassociative space $A$.
\end{rem}

\subsection{Cochain twisting of bimodules}

In complete analogy to Subsection \ref{subsec:cochaintwistingalgebras}
 we find that the categories ${}^H_{}{}^{}_{\bol{A}}\MMM^{}_{\bol{A}}$ and
${}^{H_F^{}}_{}{}^{}_{\bol{A}_F^{}}\MMM^{}_{\bol{A}_F^{}}$ are equivalent for any quasi-Hopf algebra $H$,
algebra $\bol{A}$ in ${}^H_{}\MMM$ and cochain twist $F\in H\otimes H$.
Using the monoidal functor $\FF: {}^H_{}\MMM\to{}^{H_F^{}}_{}\MMM$ we obtain for any
object $\bol{V}$ in ${}^H_{}{}^{}_{\bol{A}}\MMM^{}_{\bol{A}}$ an object $\FF(\bol{V})$
in ${}^{H_F^{}}_{}\MMM$. For this object we define the ${}^{H_F^{}}_{}\MMM$-morphisms
$l_{\bol{V}_F^{}}^{} :\FF(\bol{A})\otimes_F^{} \FF(\bol{V}) \to \FF(\bol{V}) $ and
$r_{\bol{V}_F^{}}^{} : \FF(\bol{V})\otimes_F^{} \FF(\bol{A})\to \FF(\bol{V})$
via the coherence maps (\ref{eqn:coherencemapstensor}) and the 
diagrams
\begin{flalign}\label{eqn:deformedbimodule}
\xymatrix{
\ar[d]_-{\varphi^{}_{\bol{A},\bol{V}}}\FF(\bol{A})\otimes_F^{} \FF(\bol{V}) \ar[rr]^-{l_{\bol{V}_F^{}}^{} } && \FF(\bol{V}) &&\ar[d]_-{\varphi^{}_{\bol{V},\bol{A}}} \FF(\bol{V})\otimes_F^{}\FF(\bol{A}) \ar[rr]^-{r_{\bol{V}_F^{}}^{}}&& \FF(\bol{V})\\
\FF(\bol{A}\otimes\bol{V})\ar[rru]_-{\FF(l_{\bol{V}}^{})}&& && \FF(\bol{V}\otimes\bol{A})\ar[rru]_-{\FF(r_{\bol{V}}^{})}&&
}
\end{flalign}
in ${}^{H_F^{}}_{}\MMM$. It is straightforward to check that $\FF(\bol{V})$, together with the ${}^{H_F^{}}_{}\MMM$-morphisms $l_{\bol{V}_F^{}}^{}$ and
$r_{\bol{V}_F^{}}^{}$, is an $\bol{A}_F^{}$-bimodule in ${}^{H_F^{}}_{}\MMM$. We shall denote this $\bol{A}_F^{}$-bimodule also by
$\bol{V}_F^{}$. Using the short-hand notation, the left and right $\bol{A}_F^{}$-actions in this $\bol{A}_F^{}$-bimodule
explicitly read as
\begin{subequations}
\begin{flalign}
l_{\bol{V}_F^{}}^{}(a\otimes_F^{} v) &= (F^{(-1)}\ra_{\bol{A}}^{} a)\, (F^{(-2)}\ra_{\bol{V}}^{} v) =: a\star_{F}^{} v~,\\[4pt]
r_{\bol{V}_F^{}}^{}(v\otimes_F^{} a) &=  (F^{(-1)}\ra_{\bol{V}}^{} v)\, (F^{(-2)}\ra_{\bol{A}}^{} a) =: v\star_{F}^{} a~,
\end{flalign}
\end{subequations}
for all $a\in \bol{A}_F^{}$ and $v\in\bol{V}_F^{}$.
If we are given an ${}^H_{}{}^{}_{\bol{A}}\MMM_{\bol{A}}^{}$-morphism
$f: \bol{V}\to\bol{W}$, then the ${}^{H_F^{}}_{}\MMM$-morphism
$\FF(f) : \FF(\bol{V})\to\FF(\bol{W})$ preserves the left and right $\bol{A}_F^{}$-actions,
i.e.\ it is an ${}^{H_F^{}}_{}{}^{}_{\bol{A}_F^{}}\MMM_{\bol{A}_F^{}}^{}$-morphism (denoted by the same symbol)
$\FF(f) : \bol{V}_F^{} \to\bol{W}_F^{}$. 
In summary, we have shown
\begin{propo}\label{propo:equivalenceHAMAcat}
If $H$ is a quasi-Hopf algebra, $A$ is an algebra in ${}^H_{}\MMM$
and $F\in H\otimes H$ is any cochain twist based on $H$, then the categories ${}^{H}_{}{}^{}_{\bol{A}}\MMM_{\bol{A}}^{}$ 
and ${}^{H_F^{}}_{}{}^{}_{\bol{A}_F^{}}\MMM_{\bol{A}_F^{}}^{}$ are equivalent,
where $\bol{A}_F^{}$ is the algebra obtained by applying the functor described in Proposition
\ref{propo:algdeformation} on $\bol{A}$.
\end{propo}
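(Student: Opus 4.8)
The plan is to extend the monoidal functor $\FF$ to the bimodule categories and then invert it by twisting with $F^{-1}$, exactly mirroring the argument for algebras in Proposition \ref{propo:algdeformation}. On objects the functor is already fixed: to an $\bol{A}$-bimodule $\bol{V}$ in ${}^H_{}\MMM$ we assign $\bol{V}_F^{} = \FF(\bol{V})$ equipped with the left and right $\bol{A}_F^{}$-actions $l_{\bol{V}_F^{}}^{}$ and $r_{\bol{V}_F^{}}^{}$ determined by the diagrams (\ref{eqn:deformedbimodule}). On morphisms we set $\FF(f)$ to be the underlying $k$-linear map, which is automatically $H_F^{}$-equivariant because $H$ and $H_F^{}$ share the same algebra structure; this is the content of Theorem \ref{theo:defmonoidcatHM} at the level of underlying modules.

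First I would verify that $\bol{V}_F^{}$ satisfies the axioms of Definition \ref{defi:Abimod} over $\bol{A}_F^{}$. Conceptually this is the general fact that a strong monoidal functor sends algebra objects to algebra objects and $\bol{A}$-bimodules to $\FF(\bol{A})$-bimodules; since the paper favours explicit associator bookkeeping, I would make it concrete. Each bimodule axiom is a commuting diagram in ${}^H_{}\MMM$ built from $\mu_{\bol{A}}^{}$, $l_{\bol{V}}^{}$, $r_{\bol{V}}^{}$, the associator $\Phi$ and the unitors; applying $\FF$ and inserting its coherence isomorphisms $\varphi$ and $\psi$ (using the coherence diagrams (\ref{eqn:coherencediagrams})) turns these into the corresponding diagrams in ${}^{H_F^{}}_{}\MMM$ for $\bol{V}_F^{}$. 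In the short-hand notation, the mixed associativity conditions such as the $\Phi^{-1}_{\bol{A},\bol{V},\bol{A}}$-compatibility translate, via the explicit star-actions $a\star_F^{} v$ and $v\star_F^{} a$, into the weak bimodule identities governed by the twisted associator $\phi_F^{}$ of (\ref{eqn:twistedassociator}). In the same spirit one checks that $\FF(f)$ preserves the twisted actions: using $f(a^\prime\,v^\prime\, ) = a^\prime\,f(v^\prime\, )$ one finds $\FF(f)(a\star_F^{} v) = (F^{(-1)}\ra_{\bol{A}}^{}a)\,(F^{(-2)}\ra_{\bol{W}}^{}f(v)) = a\star_F^{}\FF(f)(v)$, and analogously for the right action, so $\FF(f)$ is an ${}^{H_F^{}}_{}{}^{}_{\bol{A}_F^{}}\MMM_{\bol{A}_F^{}}^{}$-morphism. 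Preservation of identities and composition is inherited directly from $\FF: {}^H_{}\MMM\to{}^{H_F^{}}_{}\MMM$, so this yields a functor $\FF: {}^H_{}{}^{}_{\bol{A}}\MMM_{\bol{A}}^{}\to{}^{H_F^{}}_{}{}^{}_{\bol{A}_F^{}}\MMM_{\bol{A}_F^{}}^{}$.

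For invertibility I would invoke Remark \ref{rem:twistinverse}: $F^{-1}$ is a cochain twist based on $H_F^{}$ with $(H_F^{})_{F^{-1}}^{} = H$. Applying the same bimodule-twisting construction a second time with $F^{-1}$ produces a functor back to ${}^H_{}{}^{}_{\bol{A}}\MMM_{\bol{A}}^{}$, and I would check that the composite is the identity. On algebras this amounts to $(\bol{A}_F^{})_{F^{-1}}^{} = \bol{A}$, which holds because the doubly-twisted product collapses to $\mu_{\bol{A}}^{}$ via the cancellation $F^{-1}\,F = 1\otimes1$, with the counital conditions (\ref{eqn:twistcondition1}, \ref{eqn:twistcondition2}) ensuring the units are matched; the same cancellation gives $(\bol{V}_F^{})_{F^{-1}}^{} = \bol{V}$ for both the left and right actions. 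Since $\FF$ leaves the underlying $k$-linear data untouched, the two functors are mutually inverse, establishing the claimed equivalence.

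The main obstacle will be the associator bookkeeping in the second step: one must thread the coherence isomorphisms $\varphi$ through exactly the right slots so that the twisted associator $\phi_F^{}$ emerges and the weak bimodule axioms for $\bol{V}_F^{}$ come out correctly, without conflating the associators of ${}^H_{}\MMM$ and ${}^{H_F^{}}_{}\MMM$. This is precisely the careful distinction between equality and isomorphy of objects stressed in the Outline, and it is where the monoidal coherence of $\FF$ does the essential work.
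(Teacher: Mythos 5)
Your proposal is correct and follows essentially the same route as the paper: the functor $\FF$ is transported to bimodules via the coherence maps as in (\ref{eqn:deformedbimodule}), the bimodule axioms and morphism compatibility are checked against the twisted associator $\phi_F^{}$, and invertibility follows by twisting back with $F^{-1}$ as in Remark \ref{rem:twistinverse} and Proposition \ref{propo:algdeformation}. No gaps.
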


\subsection{Monoidal structure}

As we have explained in Subsection \ref{subsec:monoidalHM}, the category ${}^H_{}\MMM$
carries a monoidal structure. This induces a monoidal structure $\otimes_{\bol{A}}^{}$ 
(the tensor product over the algebra $\bol{A}$)
on ${}^{H}_{}{}^{}_{\bol{A}}\MMM_{\bol{A}}^{}$ by a construction which we shall now describe. 
First, by using the forgetful functor $\mathsf{Forget} : {}^{H}_{}{}^{}_{\bol{A}}\MMM_{\bol{A}}^{}\to {}^H_{}\MMM$
we can define a functor
\begin{flalign}\label{eqn:pretensorfunctorAbimod}
\otimes \circ (\mathsf{Forget}\times \mathsf{Forget}) :  
{}^{H}_{}{}^{}_{\bol{A}}\MMM_{\bol{A}}^{} \times  {}^{H}_{}{}^{}_{\bol{A}}\MMM_{\bol{A}}^{}\longrightarrow {}^H_{}\MMM~.
\end{flalign}
For any object $(\bol{V},\bol{W})$ in ${}^{H}_{}{}^{}_{\bol{A}}\MMM_{\bol{A}}^{}\times {}^{H}_{}{}^{}_{\bol{A}}\MMM_{\bol{A}}^{}$ we can equip
the object $\bol{V} \otimes\bol{W}$ in ${}^H_{}\MMM$ with the structure
of an $\bol{A}$-bimodule in ${}^H_{}\MMM$ (here and in the following we suppress the forgetful functors). Let us define the left and right $\bol{A}$-action
on $\bol{V}\otimes\bol{W}$ by the ${}^H_{}\MMM$-morphisms
\begin{subequations}\label{eqn:tensorbimodule}
\begin{flalign}
\label{eqn:tensorbimodulel}l_{\bol{V}\otimes \bol{W}}^{} &:= \big(l_{\bol{V}}^{}\otimes\id^{}_{\bol{W}}\big)\circ \Phi^{-1}_{\bol{A},\bol{V},\bol{W}} :\bol{A}\otimes\big(\bol{V}\otimes \bol{W}\big) \longrightarrow\bol{V}\otimes\bol{W}~,\\[4pt]
\label{eqn:tensorbimoduler}r_{\bol{V}\otimes\bol{W}}^{} &:= \big(\id^{}_{\bol{V}}\otimes r_{\bol{W}}^{}\big)\circ \Phi^{}_{\bol{V},\bol{W},\bol{A}}: \big(\bol{V}\otimes\bol{W}\big)\otimes\bol{A} \longrightarrow \bol{V}\otimes\bol{W}~.
\end{flalign}
\end{subequations}
In the short-hand notation, the left and right $\bol{A}$-actions on $\bol{V}\otimes\bol{W}$  read as
\begin{subequations}
\begin{flalign}
l_{\bol{V}\otimes\bol{W}}^{}\big(a\otimes (v\otimes w)\big) &= \big((\phi^{(-1)}\ra_{\bol{A}}^{} a) \, (\phi^{(-2)}\ra_{\bol{V}}^{} v)\big)\otimes (\phi^{(-3)}\ra_{\bol{W}}^{} w)=: a\,(v\otimes w)~,\\[4pt]
r_{\bol{V}\otimes\bol{W}}^{}\big((v\otimes w)\otimes a\big) &= (\phi^{(1)}\ra_{\bol{V}}^{} v)\otimes\big((\phi^{(2)}\ra_{\bol{W}}^{} w)\,(\phi^{(3)}\ra_{\bol{A}}^{} a)\big)=:(v\otimes w)\,a~, 
\end{flalign}
\end{subequations}
for all $a\in\bol{A}$, $v\in\bol{V}$ and $w\in\bol{W}$. From these explicit expressions
it can be easily checked that $l_{\bol{V}\otimes \bol{W}}^{} $ and $r_{\bol{V}\otimes \bol{W}}^{} $ satisfy the properties
in Definition \ref{defi:Abimod} and hence equip $\bol{V}\otimes \bol{W}$ with the structure of an $\bol{A}$-bimodule in
${}^H_{}\MMM$. Given a morphism $\big(f:\bol{V}\to\bol{X} , g: \bol{W} \to \bol{Y}\big)$ in
 ${}^{H}_{}{}^{}_{\bol{A}}\MMM_{\bol{A}}^{}\times  {}^{H}_{}{}^{}_{\bol{A}}\MMM_{\bol{A}}^{}$, 
 the ${}^H_{}\MMM$-morphism $f\otimes g : \bol{V}\otimes \bol{W}\to\bol{X} \otimes \bol{Y}$
 preserves this $\bol{A}$-bimodule structure, i.e.\ it is a morphism in
$ {}^{H}_{}{}^{}_{\bol{A}}\MMM_{\bol{A}}^{}$. As a consequence, the functor in (\ref{eqn:pretensorfunctorAbimod}) 
can be promoted to a functor with values in $ {}^{H}_{}{}^{}_{\bol{A}}\MMM_{\bol{A}}^{}$, which we shall denote 
with an abuse of notation by
\begin{flalign}\label{eqn:tensorfunctortmp}
\otimes :  {}^{H}_{}{}^{}_{\bol{A}}\MMM_{\bol{A}}^{}\times  {}^{H}_{}{}^{}_{\bol{A}}\MMM_{\bol{A}}^{}\longrightarrow 
 {}^{H}_{}{}^{}_{\bol{A}}\MMM_{\bol{A}}^{}~.
\end{flalign} 
We point out some relevant properties, which can be proven by simple computations.
\begin{lem} \label{lem:PhilrAbimodproperties}
\begin{itemize}
\item[(i)]
For any three objects $\bol{V},\bol{W},\bol{X}$ in ${}^{H}_{}{}^{}_{\bol{A}}\MMM_{\bol{A}}^{}$
the ${}^H_{}\MMM$-morphism $\Phi_{\bol{V},\bol{W},\bol{X}}^{} :
 (\bol{V}\otimes \bol{W})\otimes\bol{X}\to \bol{V}\otimes(\bol{W}\otimes\bol{X})$ is an ${}^{H}_{}{}^{}_{\bol{A}}\MMM_{\bol{A}}^{}$-morphism
 with respect to the $\bol{A}$-bimodule structure described by the functor (\ref{eqn:tensorfunctortmp}).
\item[(ii)]
For any object $\bol{V}$ in ${}^{H}_{}{}^{}_{\bol{A}}\MMM_{\bol{A}}^{}$ the ${}^H_{}\MMM$-morphisms
$l_{\bol{V}}^{} : \bol{A}\otimes\bol{V}\to \bol{V}$ and $r_{\bol{V}}^{} : \bol{V}\otimes\bol{A}\to\bol{V}$
are ${}^{H}_{}{}^{}_{\bol{A}}\MMM_{\bol{A}}^{}$-morphisms with respect to the $\bol{A}$-bimodule structure 
described by the functor (\ref{eqn:tensorfunctortmp}). (In the domain of these morphisms $\bol{A}$ is regarded
as the one-dimensional free $\bol{A}$-bimodule, see Example \ref{ex:freemod}.)
\end{itemize}
\end{lem}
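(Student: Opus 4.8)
The plan is to verify both statements directly from the explicit left and right $\bol{A}$-actions defined by the tensor functor (\ref{eqn:tensorfunctortmp}), the only inputs being the weak bimodule relations listed after Definition \ref{defi:Abimod}, naturality of $\Phi$, and the pentagon relations for $\Phi$ (equivalently the $3$-cocycle condition (\ref{eqn:quasibialgebraaxioms3})). Recall that $H$-equivariance is automatic throughout, since $\Phi_{\bol{V},\bol{W},\bol{X}}^{}$, $l_{\bol{V}}^{}$ and $r_{\bol{V}}^{}$ are already ${}^H_{}\MMM$-morphisms; hence only $\bol{A}$-bilinearity needs to be checked in each case, and a bimodule morphism is required only to be \emph{separately} left and right $\bol{A}$-linear.

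For item (i), I would first note that the left and right $\bol{A}$-actions on an iterated tensor product are built recursively out of (\ref{eqn:tensorbimodulel}) and (\ref{eqn:tensorbimoduler}), so that $l_{(\bol{V}\otimes\bol{W})\otimes\bol{X}}^{}$ and $l_{\bol{V}\otimes(\bol{W}\otimes\bol{X})}^{}$ are expressed purely in terms of $l_{\bol{V}}^{}$ and components of $\Phi$, and similarly for the right actions. To prove left $\bol{A}$-linearity of $\Phi_{\bol{V},\bol{W},\bol{X}}^{}$ I would push it past the factor $(l_{\bol{V}}^{}\otimes\id^{}_{\bol{W}})\otimes\id^{}_{\bol{X}}$ using naturality of $\Phi$ applied to the morphism $(l_{\bol{V}}^{},\id^{}_{\bol{W}},\id^{}_{\bol{X}})$, after which the required identity collapses to the single instance of the pentagon relation for the objects $(\bol{A},\bol{V},\bol{W},\bol{X})$. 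Right $\bol{A}$-linearity is the mirror-image computation, reducing to the pentagon relation for $(\bol{V},\bol{W},\bol{X},\bol{A})$. Equivalently, one may simply chase a generic element $(v\otimes w)\otimes x$ through both actions and reduce to (\ref{eqn:quasibialgebraaxioms3}).

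For item (ii), regarding $\bol{A}$ as the rank-one free bimodule of Example \ref{ex:freemod}, left $\bol{A}$-linearity of $l_{\bol{V}}^{}:\bol{A}\otimes\bol{V}\to\bol{V}$ evaluated on $a^\prime\otimes(a\otimes v)$ unpacks, via the tensor left action (\ref{eqn:tensorbimodulel}), into exactly the left-module associativity relation $a^\prime\,(a\,v) = \big((\phi^{(-1)}\ra_{\bol{A}}^{} a^\prime\, )\,(\phi^{(-2)}\ra_{\bol{A}}^{} a)\big)\,(\phi^{(-3)}\ra_{\bol{V}}^{} v)$ listed after Definition \ref{defi:Abimod}. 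Right $\bol{A}$-linearity amounts to the identity $(a\,v)\,a^\prime = (\phi^{(1)}\ra_{\bol{A}}^{} a)\,\big((\phi^{(2)}\ra_{\bol{V}}^{} v)\,(\phi^{(3)}\ra_{\bol{A}}^{} a^\prime\, )\big)$, which I would derive from the mixed relation $a\,(v\,a^\prime\, ) = \big((\phi^{(-1)}\ra_{\bol{A}}^{} a)\,(\phi^{(-2)}\ra_{\bol{V}}^{} v)\big)\,(\phi^{(-3)}\ra_{\bol{A}}^{} a^\prime\, )$ by substituting $a\mapsto\phi^{(1)}\ra_{\bol{A}}^{} a$, $v\mapsto\phi^{(2)}\ra_{\bol{V}}^{} v$, $a^\prime\mapsto\phi^{(3)}\ra_{\bol{A}}^{} a^\prime$ and collapsing the resulting product of a fresh copy of $\phi^{-1}$ against $\phi$ through $\phi^{-1}\,\phi = 1\otimes1\otimes1$ in $H\otimes H\otimes H$. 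The argument for $r_{\bol{V}}^{}$ is entirely symmetric, using the other two listed associativity relations.

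There is no deep obstacle here; the content is bookkeeping with associator insertions. The one point demanding care is precisely the cancellation used in the right-linearity step of (ii): the factors that cancel are an \emph{original} associator and a \emph{freshly introduced} one, so they must be kept as distinct copies (as with the $\widetilde\phi$ convention of the excerpt) until the very end, and conflating them would produce a false identity. In the categorical formulation of (i) the analogous care amounts to invoking the correct single instance of the pentagon relation rather than an uncontrolled composite of several.
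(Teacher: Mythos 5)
Your proof is correct: both the reduction of item (i) to naturality of $\Phi$ plus one instance of the pentagon relation for $(\bol{A},\bol{V},\bol{W},\bol{X})$ (resp.\ $(\bol{V},\bol{W},\bol{X},\bol{A})$), and the identification of the four linearity conditions in item (ii) with the listed weak bimodule relations (one of which must be inverted against a fresh copy of $\phi$, as you note) check out. The paper gives no proof at all, merely asserting the lemma ``can be proven by simple computations,'' so your argument is precisely the omitted verification and matches the intended approach.
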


The functor (\ref{eqn:tensorfunctortmp}) is {\em not yet} the correct monoidal functor 
on the category $ {}^{H}_{}{}^{}_{\bol{A}}\MMM_{\bol{A}}^{}$
as it does not take the tensor product over the algebra $\bol{A}$. We modify this functor as follows: For any object
$(\bol{V},\bol{W})$ in ${}^{H}_{}{}^{}_{\bol{A}}\MMM_{\bol{A}}^{}\times {}^{H}_{}{}^{}_{\bol{A}}\MMM_{\bol{A}}^{}$ we have two 
parallel morphisms in $ {}^H_{}\MMM$ given by
\begin{flalign}\label{eqn:coequal}
\xymatrix{
(\bol{V}\otimes \bol{A})\otimes \bol{W}
~\ar@<-1ex>[rrr]_-{r_{\bol{V}}^{}\otimes\id^{}_{\bol{W}}}
\ar@<1ex>[rrr]^-{(\id^{}_{\bol{V}}\otimes l_{\bol{W}}^{})\circ
  \Phi^{}_{\bol{V},\bol{A},\bol{W}}} &&& ~\bol{V}\otimes \bol{W} \ .
}
\end{flalign}
Due to Lemma \ref{lem:PhilrAbimodproperties}, the two morphisms in (\ref{eqn:coequal}) are
 ${}^{H}_{}{}^{}_{\bol{A}}\MMM_{\bol{A}}^{}$-morphisms.
We define the object $\bol{V}\otimes_{\bol{A}}^{}\bol{W}$ (together with the epimorphism
$\pi_{\bol{V},\bol{W}}^{}: \bol{V}\otimes \bol{W}\to \bol{V}\otimes_{\bol{A}}^{}\bol{W}$) in ${}^H_{}{}_{\bol{A}}^{}\MMM{}_{\bol{A}}^{}$
in terms of the coequalizer of the two parallel  ${}^{H}_{}{}^{}_{\bol{A}}\MMM_{\bol{A}}^{}$-morphisms (\ref{eqn:coequal}), i.e.\
\begin{flalign}
\xymatrix{
(\bol{V}\otimes \bol{A})\otimes \bol{W} ~\ar@<-1ex>[rrr]_-{r_{\bol{V}}^{}\otimes\id^{}_{\bol{W}}} \ar@<1ex>[rrr]^-{(\id^{}_{\bol{V}}\otimes l_{\bol{W}}^{})\circ \Phi^{}_{\bol{V},\bol{A},\bol{W}}} &&& ~\bol{V}\otimes \bol{W} \ar[rr]^-{\pi_{\bol{V},\bol{W}}^{}}&& \bol{V}\otimes_{\bol{A}}^{}\bol{W}
}~.
\end{flalign}
We can give an explicit characterization of the coequalizer: Let us denote the image
of the difference of the  ${}^{H}_{}{}^{}_{\bol{A}}\MMM_{\bol{A}}^{}$-morphisms in (\ref{eqn:coequal})
by
\begin{flalign}\label{eqn:Nkernel}
\bol{N}^{}_{\bol{V},\bol{W}} := 
\mathrm{Im}\big(r_{\bol{V}}^{}\otimes\id^{}_{\bol{W}} -
(\id_{\bol{V}}^{}\otimes l_{\bol{W}}^{})\circ
\Phi^{}_{\bol{V},\bol{A},\bol{W}}\big) \ ,
\end{flalign}
and notice that $\bol{N}^{}_{\bol{V},\bol{W}} \subseteq \bol{V}\otimes \bol{W}$ 
is an object in ${}^{H}_{}{}^{}_{\bol{A}}\MMM_{\bol{A}}^{}$ with respect to the induced
left $H$-module and $\bol{A}$-bimodule structures. Then the object
$\bol{V}\otimes_{\bol{A}}^{}\bol{W}$ in ${}^{H}_{}{}^{}_{\bol{A}}\MMM_{\bol{A}}^{}$ can be represented explicitly as the quotient
\begin{flalign}\label{eqn:otimesA}
\bol{V}\otimes_{\bol{A}}^{}\bol{W} = \frac{\bol{V}\otimes\bol{W}}{\bol{N}^{}_{\bol{V},\bol{W}}} \ ,
\end{flalign}
and the epimorphism $\pi_{\bol{V},\bol{W}}^{} : \bol{V}\otimes \bol{W} \to \bol{V}\otimes_{\bol{A}}^{}\bol{W}$
is given by the quotient map assigning equivalence classes. 
\sk

In the spirit of our short-hand notation, we shall denote elements in $\bol{V}\otimes_{\bol{A}}^{}\bol{W}$
by $v\otimes_{\bol{A}}^{} w$, which one should read as the equivalence class in $\bol{V}\otimes_{\bol{A}}^{}\bol{W}$
defined by the element $v\otimes w\in\bol{V}\otimes\bol{W}$, i.e.\ $v\otimes_{\bol{A}}^{} w = \pi_{\bol{V},\bol{W}}^{}(v\otimes w)$.
As a consequence of the equivalence relation
in $\bol{V}\otimes_{\bol{A}}^{}\bol{W}$, one has the identity
\begin{subequations}\label{eqn:tensorAidentities}
\begin{flalign}\label{eqn:tensorAidentitiesa}
(v\,a)\otimes_{\bol{A}}^{} w = (\phi^{(1)}\ra_{\bol{V}}^{} v)\otimes_{\bol{A}}^{}\big((\phi^{(2)}\ra_{\bol{A}}^{} a)\,(\phi^{(3)}\ra_{\bol{W}}^{} w)\big)~,
\end{flalign}
for all $a\in\bol{A}$, $v\in\bol{V}$ and $w\in\bol{W}$. The $\bol{A}$-bimodule structure on $\bol{V}\otimes_{\bol{A}}^{}\bol{W}$
in this notation reads as
\begin{flalign}\label{eqn:tensorAidentitiesb}
a\,(v\otimes_{\bol{A}}^{} w) &= \big((\phi^{(-1)}\ra_{\bol{A}}^{} a)\,(\phi^{(-2)}\ra_{\bol{V}}^{} v)\big)\otimes_{\bol{A}}^{} (\phi^{(-3)}\ra_{\bol{W}}^{}w)~,\\[4pt] \label{eqn:tensorAidentitiesc}
(v\otimes_{\bol{A}}^{} w)\,a &= (\phi^{(1)}\ra_{\bol{V}}^{} v)\otimes_{\bol{A}}^{} \big((\phi^{(2)}\ra_{\bol{W}}^{} w) \,(\phi^{(3)}\ra_{\bol{A}}^{} a)\big)~,
\end{flalign} 
\end{subequations}
for all $a\in \bol{A}$, $v\in\bol{V}$ and $w\in\bol{W}$. 
\sk

It can be easily checked that the construction of $\bol{V}\otimes_{\bol{A}}^{}\bol{W}$ is functorial:
Given any morphism $\big(f:\bol{V}\to\bol{X} ,g:\bol{W}\to\bol{Y}\big)$
in ${}^{H}_{}{}^{}_{\bol{A}}\MMM_{\bol{A}}^{}\times {}^{H}_{}{}^{}_{\bol{A}}\MMM_{\bol{A}}^{}$ we 
obtain an ${}^{H}_{}{}^{}_{\bol{A}}\MMM_{\bol{A}}^{}$-morphism $f\otimes_{\bol{A}}^{} g : \bol{V}\otimes_{\bol{A}}^{}\bol{W}
\to \bol{X} \otimes_{\bol{A}}^{}\bol{Y}$ by setting
\begin{flalign}\label{eqn:otimesAmorph}
f\otimes_{\bol{A}}^{}g \big(v\otimes_{\bol{A}}^{} w\big) := f(v)\otimes_{\bol{A}}^{} g(w)~,
\end{flalign}
for all $v\in\bol{V}$ and $w\in\bol{W}$.
We shall denote this functor by
\begin{flalign}
\otimes_{\bol{A}}^{} :  {}^{H}_{}{}^{}_{\bol{A}}\MMM_{\bol{A}}^{} \times {}^{H}_{}{}^{}_{\bol{A}}\MMM_{\bol{A}}^{}\longrightarrow {}^{H}_{}{}^{}_{\bol{A}}\MMM_{\bol{A}}^{}~.
\end{flalign}
By Lemma \ref{lem:PhilrAbimodproperties}, the components $\Phi_{\bol{V},\bol{W},\bol{X}}^{}$ of the associator in ${}^H\MMM$
are ${}^{H}_{}{}^{}_{\bol{A}}\MMM_{\bol{A}}^{}$-morphisms, for any three objects $\bol{V},\bol{W},\bol{X}$ in
${}^{H}_{}{}^{}_{\bol{A}}\MMM_{\bol{A}}^{}$. 
With a simple computation one checks that these morphisms descend to the quotients and thereby
induce an associator $\Phi^{\bol{A}} $ for the monoidal functor $\otimes_{\bol{A}}^{}$ on
${}^{H}_{}{}^{}_{\bol{A}}\MMM_{\bol{A}}^{}$. Explicitly, 
the components of $\Phi^{\bol{A}} $ read as
\begin{flalign}
\nn \Phi_{\bol{V},\bol{W},\bol{X}}^{\bol{A}} : (\bol{V}\otimes_{\bol{A}}^{}\bol{W})\otimes_{\bol{A}}^{} \bol{X} &\longrightarrow \bol{V}\otimes_{\bol{A}}^{}(\bol{W}\otimes_{\bol{A}}^{}\bol{X})~,\\
(v\otimes_{\bol{A}}^{} w)\otimes_{\bol{A}}^{} x & \longmapsto (\phi^{(1)}\ra_{\bol{V}}^{} v) \otimes_{\bol{A}}^{}\big((\phi^{(2)}\ra_{\bol{W}}^{} w)\otimes_{\bol{A}}^{} (\phi^{(3)}\ra_{\bol{X}}^{} x)\big)~,\label{eqn:assocA}
\end{flalign}
for any three objects $\bol{V}, \bol{W}, \bol{X}$ in ${}^{H}_{}{}^{}_{\bol{A}}\MMM_{\bol{A}}^{}$.
Finally, by declaring $\bol{A}$ (regarded as the one-dimensional free $\bol{A}$-bimodule, cf.\ Example \ref{ex:freemod})
as the unit object in ${}^{H}_{}{}^{}_{\bol{A}}\MMM_{\bol{A}}^{}$,
 we can define unitors for the monoidal functor $\otimes_{\bol{A}}^{} $ on ${}^{H}_{}{}^{}_{\bol{A}}\MMM_{\bol{A}}^{}$ by using the fact that 
 $l_{\bol{V}}^{} : \bol{A}\otimes\bol{V}\to\bol{V}$  and $r_{\bol{V}}^{}:\bol{V}\otimes\bol{A}\to\bol{V}$ are
  ${}^{H}_{}{}^{}_{\bol{A}}\MMM_{\bol{A}}^{}$-morphisms (cf.\ Lemma \ref{lem:PhilrAbimodproperties}) 
 that descend to the quotients. Explicitly, the components of the unitors $\lambda^{\bol{A}}_{}$ and $\rho^{\bol{A}}_{}$ read
 as
 \begin{subequations}\label{eqn:unitorsA}
 \begin{flalign}
\lambda^{\bol{A}}_{\bol{V}} & : \bol{A}\otimes_{\bol{A}}^{}\bol{V}\longrightarrow \bol{V}~,~~a\otimes_{\bol{A}}^{} v\longmapsto a\,v~,\\[4pt]
\rho^{\bol{A}}_{\bol{V}} & : \bol{V}\otimes_{\bol{A}}^{}\bol{A} \longrightarrow \bol{V} ~,~~v\otimes_{\bol{A}}^{} a \longmapsto v\,a~,
 \end{flalign}
 \end{subequations}
for any object $\bol{V}$ in ${}^{H}_{}{}^{}_{\bol{A}}\MMM_{\bol{A}}^{}$. In summary, this shows
\begin{propo}\label{propo:HAMAmonoidalcategory}
For any quasi-Hopf algebra $H$ and any algebra $\bol{A}$ in ${}^H_{}\MMM$,
the category ${}^{H}_{}{}^{}_{\bol{A}}\MMM_{\bol{A}}^{}$ of $\bol{A}$-bimodules in ${}^H_{}\MMM$
is a monoidal category with monoidal functor $\otimes_{\bol{A}}^{}$ (cf.\ (\ref{eqn:otimesA}) and (\ref{eqn:otimesAmorph})),
associator $\Phi^{\bol{A}}$ (cf.\ (\ref{eqn:assocA})), unit object $\bol{A}$ (regarded as the one-dimensional free $\bol{A}$-bimodule, 
cf.\ Example \ref{ex:freemod}),  and unitors $\lambda^{\bol{A}}$ and $\rho^{\bol{A}}$
(cf.\ (\ref{eqn:unitorsA})).
\end{propo}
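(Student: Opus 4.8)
The plan is to verify the monoidal category axioms for the data $(\otimes_{\bol{A}}^{},\Phi^{\bol{A}},\bol{A},\lambda^{\bol{A}},\rho^{\bol{A}})$ already assembled above, reducing everything to the corresponding properties of the monoidal structure on ${}^H_{}\MMM$ (Subsection \ref{subsec:monoidalHM}) by pushing them through the quotient maps $\pi_{\bol{V},\bol{W}}^{}$. Functoriality of $\otimes_{\bol{A}}^{}$ is immediate from (\ref{eqn:otimesAmorph}) once one checks that $f\otimes_{\bol{A}}^{}g$ is well-defined on equivalence classes, which holds because $f$ and $g$ are ${}^{H}_{}{}^{}_{\bol{A}}\MMM_{\bol{A}}^{}$-morphisms and hence send the submodule $\bol{N}_{\bol{V},\bol{W}}^{}$ of (\ref{eqn:Nkernel}) into $\bol{N}_{\bol{X},\bol{Y}}^{}$. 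First I would confirm that $\Phi^{\bol{A}}$, $\lambda^{\bol{A}}$ and $\rho^{\bol{A}}$ are well-defined ${}^{H}_{}{}^{}_{\bol{A}}\MMM_{\bol{A}}^{}$-isomorphisms: by Lemma \ref{lem:PhilrAbimodproperties} the $\MMM$-level maps $\Phi_{\bol{V},\bol{W},\bol{X}}^{}$, $l_{\bol{V}}^{}$ and $r_{\bol{V}}^{}$ are already $\bol{A}$-bimodule morphisms \emph{before} quotienting, so the only point is independence of the chosen representatives, a direct check using the defining relations (\ref{eqn:tensorAidentities}). Invertibility of $\Phi^{\bol{A}}$ descends from that of $\Phi$, whose inverse is built from $\phi^{-1}$, and likewise the unitors are invertible as quotients of $l_{\bol{V}}^{}$ and $r_{\bol{V}}^{}$ on the free bimodule $\bol{A}$ of Example \ref{ex:freemod}.

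The bulk of the work is the pentagon and triangle coherence axioms. The key observation is that all vertices of the pentagon for $\otimes_{\bol{A}}^{}$ are quotients of the corresponding vertices of the pentagon for $\otimes$ in ${}^H_{}\MMM$, and the quotient maps $\pi_{\text{--},\text{--}}^{}$ are epimorphisms intertwining $\Phi^{\bol{A}}$ with $\Phi$. Concretely, I would choose a representative $((v\otimes w)\otimes x)\otimes y$ in the iterated ordinary tensor product, apply the explicit formula (\ref{eqn:assocA}) along each of the two paths, and observe that the two resulting expressions coincide already as elements of $\bol{V}\otimes(\bol{W}\otimes(\bol{X}\otimes\bol{Y}))$. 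This is exactly the pentagon relation for $\Phi$ in ${}^H_{}\MMM$, which holds by the $3$-cocycle condition (\ref{eqn:quasibialgebraaxioms3}). Since $\pi_{\text{--},\text{--}}^{}$ is surjective, equality on representatives forces equality of the induced maps on the quotients. The triangle axiom is handled identically, reducing to the triangle relation for $\lambda,\rho,\Phi$ in ${}^H_{}\MMM$ (resting on the counital conditions (\ref{eqn:quasibialgebraaxioms1}) and (\ref{eqn:quasibialgebraaxioms4})) after inserting (\ref{eqn:unitorsA}) and the unit identities (\ref{eqn:unitbimoduleexplicit}).

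Naturality of $\Phi^{\bol{A}}$, $\lambda^{\bol{A}}$ and $\rho^{\bol{A}}$ follows by the same descent principle: the naturality squares for $\Phi,\lambda,\rho$ in ${}^H_{}\MMM$ commute, and the quotient maps assemble into a natural transformation from $\otimes$ to $\otimes_{\bol{A}}^{}$, so postcomposing with epimorphisms transfers commutativity to the quotient-level squares.

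The main obstacle I anticipate is the well-definedness bookkeeping for $\Phi^{\bol{A}}$ on the doubly-quotiented object $(\bol{V}\otimes_{\bol{A}}^{}\bol{W})\otimes_{\bol{A}}^{}\bol{X}$: one must verify that moving an algebra element across a $\otimes_{\bol{A}}^{}$ at the inner level and at the outer level is respected by the associator, i.e.\ that the relations defining $\bol{N}_{\bol{V}\otimes_{\bol{A}}\bol{W},\bol{X}}^{}$ and $\bol{N}_{\bol{V},\bol{W}}^{}$ are mapped consistently under (\ref{eqn:assocA}). This is where the associator insertions genuinely matter and the nontrivial $\phi$-dependence must cancel against the weak bimodule relations; once this representative-independence is secured, the coherence axioms themselves come essentially for free, being inherited from ${}^H_{}\MMM$.
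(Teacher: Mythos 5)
Your proposal is correct and follows essentially the same route as the paper: define everything on representatives in ${}^H_{}\MMM$, use Lemma \ref{lem:PhilrAbimodproperties} to see that $\Phi$, $l_{\bol{V}}^{}$ and $r_{\bol{V}}^{}$ are already $\bol{A}$-bimodule morphisms, check descent through the quotients by $\bol{N}_{\bol{V},\bol{W}}^{}$ via the relations (\ref{eqn:tensorAidentities}), and then inherit the pentagon and triangle axioms from the coherence of $({}^H_{}\MMM,\otimes,\Phi,\lambda,\rho)$. You correctly single out the representative-independence of $\Phi^{\bol{A}}$ on the doubly-quotiented object as the only genuinely nontrivial computation, which is exactly the step the paper dispatches with its "simple computation" remark.
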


\begin{rem}
The category ${}^H_{}\MMM^{}_{A}$ of right $A$-modules in ${}^H_{}\MMM$ 
(cf.\ Remark \ref{rem:leftrightAmodules}) is not a monoidal category. Instead, 
using the right $A$-actions $r_{V\otimes W}^{}$ given in (\ref{eqn:tensorbimoduler}),
we can construct a bifunctor $\otimes: {}^H_{}\MMM\times {}^H_{}\MMM^{}_{A}\to {}^H_{}\MMM^{}_{A}$ 
which endows the category ${}^H_{}\MMM^{}_{A}$ with the structure of a (left) {\em module category} 
 over the monoidal category ${}^H_{}\MMM$ \cite[Section 3.1]{Ostrik}. In complete analogy, using the 
 left $A$-actions $l_{V\otimes W}^{}$ given in (\ref{eqn:tensorbimodulel}),
 we can construct a bifunctor $\otimes : {}^H_{}{}^{}_{A}\MMM \times {}^H_{}\MMM \to {}^H_{}{}^{}_{A}\MMM$
 and equip the category $ {}^H_{}{}^{}_{A}\MMM$ of left $A$-modules in ${}^H_{}\MMM$ with the structure of a (right) module category
 over the monoidal category ${}^H_{}\MMM$.
\end{rem}

\subsection{Cochain twisting of monoidal structures}

The monoidal category developed in Proposition \ref{propo:HAMAmonoidalcategory} behaves nicely 
under cochain twisting. 
\begin{theo}\label{theo:twistingmonoidalHAMA}
If $H$ is a quasi-Hopf algebra, $A$ is an algebra in ${}^H_{}\MMM$ and
$F\in H\otimes H$ is any cochain twist based on $H$, then the equivalence of categories in
 Proposition \ref{propo:equivalenceHAMAcat} can be promoted to
 an equivalence between the monoidal categories ${}^{H}_{}{}^{}_{\bol{A}}\MMM_{\bol{A}}^{}$ and
 ${}^{H_F^{}}_{}{}^{}_{\bol{A}_F^{}}\MMM_{\bol{A}_F^{}}^{}$. Explicitly, the coherence maps are given by the 
  ${}^{H_F^{}}_{}{}^{}_{\bol{A}_F^{}}\MMM_{\bol{A}_F^{}}^{}$-isomorphisms
 \begin{subequations}
\begin{flalign}
\nn \varphi^{\bol{A}}_{\bol{V},\bol{W}} : \FF(\bol{V})\otimes_{\bol{A}_F^{}}^{} \FF(\bol{W}) & \longrightarrow \FF\big(\bol{V}\otimes_{\bol{A}}^{}\bol{W}\big)~,\\
v\otimes_{\bol{A}_F}^{} w & \longmapsto (F^{(-1)}\ra_{\bol{V}}^{} v)\otimes_{\bol{A}}^{} (F^{(-2)}\ra_{\bol{W}}^{} w)~,
\end{flalign}
for any two objects $\bol{V}, \bol{W}$ in ${}^{H}_{}{}^{}_{\bol{A}}\MMM_{\bol{A}}^{}$,
and
\begin{flalign}
\psi^{\bol{A}} : \bol{A}_F^{} \longrightarrow \FF(\bol{A}) ~,~~a\longmapsto a~.
\end{flalign}
\end{subequations}
\end{theo}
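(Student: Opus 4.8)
The plan is to reduce the statement to the monoidal equivalence $\FF : {}^H_{}\MMM \to {}^{H_F^{}}_{}\MMM$ of Theorem \ref{theo:defmonoidcatHM}, whose coherence maps $\varphi_{\bol{V},\bol{W}}$ and $\psi$ are recorded in \eqref{eqn:coherencemapstensor}. By Proposition \ref{propo:equivalenceHAMAcat} the functor $\FF : {}^{H}_{}{}^{}_{\bol{A}}\MMM_{\bol{A}}^{} \to {}^{H_F^{}}_{}{}^{}_{\bol{A}_F^{}}\MMM_{\bol{A}_F^{}}^{}$ is already an equivalence of categories, so it remains only to equip it with the stated coherence maps $\varphi^{\bol{A}}$, $\psi^{\bol{A}}$ and to verify the monoidal-functor axioms. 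Since $\otimes_{\bol{A}}^{}$ is defined as the coequalizer \eqref{eqn:otimesA} of the parallel pair \eqref{eqn:coequal}, and since $\FF$ acts as the identity on underlying $k$-modules (so it manifestly sends the quotient \eqref{eqn:otimesA} to a quotient of the same underlying $k$-module), the heart of the argument is to show that the underlying coherence isomorphism $\varphi_{\bol{V},\bol{W}}$ descends along the quotient maps $\pi_{\bol{V},\bol{W}}^{}$.

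First I would establish this descent. One checks that the ${}^{H_F^{}}_{}\MMM$-isomorphism $\varphi_{\bol{V},\bol{W}} : \FF(\bol{V})\otimes_F^{}\FF(\bol{W}) \to \FF(\bol{V}\otimes\bol{W})$ intertwines the two parallel ${}^{H_F^{}}_{}{}^{}_{\bol{A}_F^{}}\MMM_{\bol{A}_F^{}}^{}$-morphisms presenting $\FF(\bol{V})\otimes_{\bol{A}_F^{}}^{}\FF(\bol{W})$ with the images under $\FF$ of the two parallel morphisms \eqref{eqn:coequal} presenting $\bol{V}\otimes_{\bol{A}}^{}\bol{W}$; this uses the defining diagrams \eqref{eqn:deformedbimodule} of the deformed actions $r_{\bol{V}_F^{}}^{}$ and $l_{\bol{W}_F^{}}^{}$ together with the coherence diagram for $\Phi^F$ in \eqref{eqn:coherencediagrams}. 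Consequently $\varphi_{\bol{V},\bol{W}}$ maps the subobject of $\FF(\bol{V})\otimes_F^{}\FF(\bol{W})$ defined as in \eqref{eqn:Nkernel} (for the $\bol{A}_F^{}$-bimodules $\FF(\bol{V}),\FF(\bol{W})$) isomorphically onto $\FF(\bol{N}^{}_{\bol{V},\bol{W}})$, and therefore induces an ${}^{H_F^{}}_{}\MMM$-isomorphism $\varphi^{\bol{A}}_{\bol{V},\bol{W}}$ on the quotients, given explicitly by the stated formula. That $\varphi^{\bol{A}}_{\bol{V},\bol{W}}$ and $\psi^{\bol{A}}$ are in fact ${}^{H_F^{}}_{}{}^{}_{\bol{A}_F^{}}\MMM_{\bol{A}_F^{}}^{}$-morphisms, i.e.\ that they preserve the left and right $\bol{A}_F^{}$-actions $\star_F^{}$, then follows from a short computation with the star-products and the explicit formula for $\varphi^{\bol{A}}$, using \eqref{eqn:twistcondition1} and \eqref{eqn:twistcondition2}; their inverses are induced in the same way from $\varphi_{\bol{V},\bol{W}}^{-1}$ and the evident inverse of $\psi$.

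It then remains to verify the monoidal-functor coherence diagrams for the triple $(\FF,\varphi^{\bol{A}},\psi^{\bol{A}})$, relating $\varphi^{\bol{A}}$, $\psi^{\bol{A}}$ to the associators $\Phi^{\bol{A}}$, $\Phi^{\bol{A}_F^{}}$ and the unitors $\lambda^{\bol{A}},\rho^{\bol{A}}$ and their twisted counterparts. Here I would exploit that every structure morphism on the bimodule categories is itself induced on quotients from the corresponding structure morphism on ${}^H_{}\MMM$ or ${}^{H_F^{}}_{}\MMM$: the associators $\Phi^{\bol{A}}$, $\Phi^{\bol{A}_F^{}}$ descend from $\Phi$, $\Phi^F$ via \eqref{eqn:assocA} (using Lemma \ref{lem:PhilrAbimodproperties}), and the unitors descend from the actions via \eqref{eqn:unitorsA}. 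Since the quotient maps $\pi_{\text{--},\text{--}}^{}$ are epimorphisms, commutativity of each bimodule coherence diagram may be tested after precomposition with the appropriate (iterated) quotient map, which reduces it to the corresponding already-commuting coherence diagram \eqref{eqn:coherencediagrams} of the monoidal functor $\FF : {}^H_{}\MMM\to {}^{H_F^{}}_{}\MMM$; commutativity thus transfers directly from Theorem \ref{theo:defmonoidcatHM}.

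The main obstacle I anticipate is the descent step: verifying that $\varphi_{\bol{V},\bol{W}}$ is compatible with the two coequalizer presentations requires carefully tracking the associator insertions $\phi$ and $\phi_F^{}$ together with the twist elements $F^{\pm}$, since the parallel morphisms \eqref{eqn:coequal} on the two sides live in differently-associated triple tensor products and the deformed associator $\phi_F^{}$ of \eqref{eqn:twistedassociator} enters nontrivially when one matches $\Phi^F$ against $\FF(\Phi)$. Once this bookkeeping is settled — essentially the same type of manipulation as in \eqref{eqn:currycalc} — the remaining well-definedness, bilinearity, and coherence checks are routine.
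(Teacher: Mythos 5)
Your proposal is correct and follows essentially the same route as the paper: the paper likewise isolates the well-definedness of $\varphi^{\bol{A}}_{\bol{V},\bol{W}}$ on the quotient by $\bol{N}^F_{\FF(\bol{V}),\FF(\bol{W})}$ as the only non-trivial step, proves it by exactly the star-product/twisted-associator computation you anticipate (your ``intertwining of the two parallel pairs'' is just the un-quotiented form of the paper's check that $\FF(\pi_{\bol{V},\bol{W}}^{})\circ\varphi_{\bol{V},\bol{W}}^{}$ annihilates the difference of the morphisms in (\ref{eqn:coequal})), and then dismisses the bimodule-morphism property and the coherence diagrams as straightforward transfers from Theorem \ref{theo:defmonoidcatHM}. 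Your formulation has the small bonus of yielding $\varphi_{\bol{V},\bol{W}}(\bol{N}^F_{\FF(\bol{V}),\FF(\bol{W})})=\FF(\bol{N}_{\bol{V},\bol{W}})$ and hence the isomorphism property directly, but this is a presentational variant rather than a different argument.
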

\begin{proof}
The only non-trivial step is to prove that $\varphi^{\bol{A}}_{\bol{V},\bol{W}}$ is well defined, which amounts to proving
that the ${}^{H_F^{}}_{}{}^{}_{\bol{A}_F^{}}\MMM_{\bol{A}_F^{}}^{}$-morphism
\begin{flalign}
\nn \FF(\pi_{\bol{V},\bol{W}}^{})\circ \varphi_{\bol{V},\bol{W}}^{} : \FF(\bol{V})\otimes_F^{}\FF(\bol{W}) &\longrightarrow \FF(\bol{V}\otimes_{\bol{A}}^{} \bol{W})~,\\
v\otimes_F^{} w & \longmapsto (F^{(-1)}\ra_{\bol{V}}^{} v)\otimes_{\bol{A}}^{}  (F^{(-2)}\ra_{\bol{W}}^{} w)~,
\end{flalign}
descends to the quotient by $\bol{N}^F_{\FF(\bol{V}),\FF(\bol{W})}$ (cf.\ (\ref{eqn:otimesA})). 
Taking any element
$(v\star_F^{} a)\otimes_F^{} w \in \FF(\bol{V})\otimes_F^{} \FF(\bol{W})$ we obtain
\begin{flalign}
\nn &\FF(\pi_{\bol{V},\bol{W}}^{})\circ \varphi_{\bol{V},\bol{W}}^{}\big((v\star_F^{} a)\otimes_F^{} w\big) \\
\nn&\qquad ~\quad=\Big( \big(\widetilde{F}^{(-1)}_{(1)} \, F^{(-1)}\ra_{\bol{V}}^{} v\big)\,\big(\widetilde{F}^{(-1)}_{(2)}\, F^{(-2)}\ra_{\bol{A}}^{} a\big)\Big) \otimes_{\bol{A}}^{} \big(\widetilde{F}^{(-2)}\ra_{\bol{W}}^{} w\big)\\[4pt]
\nn &\qquad ~\quad= \big(\phi^{(1)} \, \widetilde{F}^{(-1)}_{(1)} \,
F^{(-1)}\ra_{\bol{V}}^{}
v\big)\otimes_{\bol{A}}^{}\Big(\big(\phi^{(2)}\,
\widetilde{F}^{(-1)}_{(2)} \, F^{(-2)}\ra_{\bol{A}}^{} a\big)\,
\big(\phi^{(3)}\, \widetilde{F}^{(-2)}\ra_{\bol{W}}^{} w\big)\Big)\\[4pt]
\nn &\qquad~\quad =\big(\widetilde{F}^{(-1)}\,
\phi^{(1)}_F\ra_{\bol{V}}^{}
v\big)\otimes_{\bol{A}}^{}\Big(\big(\widetilde{F}^{(-2)}_{(1)}\,
F^{(-1)}\, \phi^{(2)}_F\ra_{\bol{A}}^{} a\big)\,
\big(\widetilde{F}^{(-2)}_{(2)}\, F^{(-2)}\, \phi^{(3)}_F\ra_{\bol{W}}^{} w\big)\Big)\\[4pt]
&\qquad ~\quad=\FF(\pi_{\bol{V},\bol{W}}^{})\circ\varphi_{\bol{V},\bol{W}}^{}\Big(\big(\phi^{(1)}_F\ra_{\bol{V}}^{} v\big) \otimes_F^{}\Big(\big(\phi^{(2)}_F \ra_{\bol{A}}^{} a\big)\star_F^{}\big(\phi^{(3)}_F\ra_{\bol{W}}^{} w\big)\Big)\Big)~.
\end{flalign}
In the second equality we used (\ref{eqn:tensorAidentities}) and in
the third equality we used (\ref{eqn:twistedassociator}).
This implies that $\FF(\pi_{\bol{V},\bol{W}}^{})\circ \varphi_{\bol{V},\bol{W}}^{}$ vanishes on $\bol{N}^F_{\FF(\bol{V}),\FF(\bol{W})}$ and
hence it descends to the desired coherence map $\varphi_{\bol{V},\bol{W}}^{\bol{A}}$ on the quotient 
$\FF(\bol{V})\otimes_{\bol{A}_F^{}}^{} \FF(\bol{W}) = \FF(\bol{V})\otimes_F^{} \FF(\bol{W})/ \bol{N}^F_{\FF(\bol{V}),\FF(\bol{W})}$.
It is straightforward to check that $\varphi^{\bol{A}}_{\bol{V},\bol{W}}$ is an ${}^{H_F^{}}_{}{}^{}_{A_F^{}}\MMM^{}_{A_F^{}}$-isomorphism 
and that the analogues of the coherence diagrams in (\ref{eqn:coherencediagrams})
commute.
\end{proof}


\section{\label{sec:internalhom}Internal homomorphisms for bimodules}

The constructions of Section \ref{sec:HAMA} enable us to talk about noncommutative and nonassociative spaces (described by objects
$\bol{A}$ in ${}^H_{}\AAA$) and vector bundles over them (described by objects $\bol{V}$ in
${}^H_{}{}^{}_{\bol{A}}\MMM^{}_{\bol{A}}$). We have further established a tensor product operation $\otimes_{\bol{A}}^{}$
for noncommutative and nonassociative vector bundles, which is an indispensable tool for 
constructing tensor fields in physical theories such as gravity. As a next step towards 
developing a full-fledged theory of differential geometry in ${}^H_{}\MMM$ we shall
study internal homomorphisms $\hom_{\bol{A}}^{}$ in ${}^H_{}{}^{}_{\bol{A}}\MMM^{}_{\bol{A}}$, 
i.e.\ we construct an internal $\hom$-functor $\hom_{\bol{A}}^{} : \big({}^H_{}{}^{}_{\bol{A}}\MMM^{}_{\bol{A}}\big)^{\mathrm{op}}
\times {}^H_{}{}^{}_{\bol{A}}\MMM^{}_{\bol{A}} \to {}^H_{}{}^{}_{\bol{A}}\MMM^{}_{\bol{A}}$
for the monoidal category ${}^H_{}{}^{}_{\bol{A}}\MMM^{}_{\bol{A}}$
described by Proposition \ref{propo:HAMAmonoidalcategory}.
These are essential constructions for differential geometry since, for example, the dual
of an object $\bol{V}$ in ${}^H_{}{}^{}_{\bol{A}}\MMM^{}_{\bol{A}}$ is
described by the internal $\hom$-object $\bol{V}_{}^{\vee} := \hom^{}_{\bol{A}}(\bol{V},\bol{A})$.
 Moreover, many geometric quantities like (Riemannian) metrics, curvatures, etc., can be regarded
 as elements in $\hom^{}_{\bol{A}}(\bol{V},\bol{W})$ (for suitable objects $\bol{V},\bol{W}$ in ${}^H_{}{}^{}_{\bol{A}}\MMM^{}_{\bol{A}}$),
 which are {\em not necessarily} $H$-invariant and hence they cannot be identified with 
 morphisms in  ${}^H_{}{}^{}_{\bol{A}}\MMM^{}_{\bol{A}}$. Regarding geometric quantities
 as internal homomorphisms leads to a much richer framework for
 nonassociative geometry than those previously developed (see e.g.\
 \cite{BeggsMajid1}), where all geometric quantities are typically taken to be 
 morphisms in the category ${}^H_{}{}^{}_{\bol{A}}\MMM^{}_{\bol{A}}$. In particular, in situations where
 the geometric quantities are dynamical (e.g.\ the metric field in
 gravity or the curvature field of a connection in Yang-Mills theory) 
 our internal homomorphism point of view is indispensable.
 We will conclude this section by proving that the internal homomorphisms
 $\hom_A^{}$  behave well under cochain twisting, i.e.\ that
 ${}^H_{}{}^{}_{\bol{A}}\MMM^{}_{\bol{A}}$  and ${}^{H_F^{}}_{}{}^{}_{\bol{A}_F^{}}\MMM^{}_{\bol{A}_F^{}}$ are equivalent
 as closed monoidal categories for any cochain twist $F\in H\otimes H$.

\subsection{Bimodule structure\label{subsec:bimodstruct}}

Let $H$ be a quasi-Hopf algebra and $\bol{A}$ an algebra in ${}^H_{}\MMM$. Let us consider the monoidal category
${}^H_{}{}^{}_{\bol{A}}\MMM^{}_{\bol{A}}$ (cf.\ Proposition \ref{propo:HAMAmonoidalcategory})
and notice that, by using the forgetful functor 
$\mathsf{Forget} : {}^H_{}{}^{}_{\bol{A}}\MMM^{}_{\bol{A}} \to {}^H_{}\MMM$, we can define a functor
\begin{flalign}\label{eqn:functorhombimod}
\hom \circ \big(\mathsf{Forget}^\mathrm{op} \times \mathsf{Forget}\big) : \big({}^H_{}{}^{}_{\bol{A}}\MMM^{}_{\bol{A}}\big)^{\mathrm{op}}\times {}^H_{}{}^{}_{\bol{A}}\MMM^{}_{\bol{A}}\longrightarrow {}^H_{}\MMM~.
\end{flalign}
For any object $(\bol{V},\bol{W})$ in 
$\big({}^H_{}{}^{}_{\bol{A}}\MMM^{}_{\bol{A}}\big)^{\mathrm{op}}\times {}^H_{}{}^{}_{\bol{A}}\MMM^{}_{\bol{A}}$
the object $\hom(\bol{V},\bol{W})$ in ${}^H_{}\MMM$
can be equipped with the structure of an $\bol{A}$-bimodule in
${}^H_{}\MMM$ (here and in the following we suppress the forgetful functors).
As preparation for this, we require
\begin{lem}\label{lem:leftAactionvsrepresentation}
For any object $\bol{V}$ in ${}^H_{}{}^{}_{\bol{A}}\MMM{}^{}_{\bol{A}}$ the ${}^H_{}\MMM$-morphism
\begin{flalign}\label{eqn:leftAactionvsrepresentation1}
\widehat{l}_{\bol{V}}^{} := \zeta_{\bol{A},\bol{V},\bol{V}}^{}(l_{\bol{V}}^{}) : \bol{A}\longrightarrow \mathrm{end}(\bol{V})
\end{flalign}
is an ${}^H_{}\AAA$-morphism with respect to the algebra structure on $\mathrm{end}(\bol{V})$ described in
Example \ref{ex:endalgebra}.
Given any ${}^H_{}{}^{}_{\bol{A}}\MMM{}^{}_{\bol{A}}$-morphism
$f:V\to W$ it follows that
\begin{flalign}\label{eqn:leftAactionvsrepresentation2}
\vartheta_{V,W}^{}(f)\bullet_{V,V,W}^{} \widehat{l}_{V}^{}(a) = \widehat{l}_W^{}(a)\bullet_{V,W,W}^{} \vartheta_{V,W}^{}(f)~,
\end{flalign}
for all $a\in A$, where $\vartheta_{V,W}^{}$ is defined in Proposition \ref{propo:invarianthoms}.
\end{lem}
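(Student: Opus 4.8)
The plan is to prove the two claims separately, using the algebra structure on $\mathrm{end}(\bol{V})$ recorded in Example \ref{ex:endalgebra} together with the evaluation and composition identities of Propositions \ref{propo:evcompproperties} and \ref{propo:invarianthoms}. For the first claim, observe that $\widehat{l}_{\bol{V}}^{}$ is automatically an ${}^H_{}\MMM$-morphism, as it lies in the image of the currying map $\zeta_{\bol{A},\bol{V},\bol{V}}^{}$; hence it remains only to verify that it preserves the product $\bullet_{\bol{V},\bol{V},\bol{V}}^{}$ and the unit $1_{\mathrm{end}(\bol{V})}^{} = (\beta\ra_{\bol{V}}^{}\,\cdot\,)$. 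The central observation is that, by Proposition \ref{propo:evcompproperties} (i) applied to $\widehat{l}_{\bol{V}}^{} = \zeta_{\bol{A},\bol{V},\bol{V}}^{}(l_{\bol{V}}^{})$, one has the evaluation relation $\ev_{\bol{V},\bol{V}}^{}(\widehat{l}_{\bol{V}}^{}(a)\otimes v) = l_{\bol{V}}^{}(a\otimes v) = a\,v$ for all $a\in\bol{A}$ and $v\in\bol{V}$. This turns every statement about $\widehat{l}_{\bol{V}}^{}$ into a statement about the left $\bol{A}$-action, which can be handled with the bimodule axioms.

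Unit preservation is then immediate: evaluating the explicit form of $\widehat{l}_{\bol{V}}^{}$ on $1_{\bol{A}}^{}$ and using $h\ra_{\bol{A}}^{} 1_{\bol{A}}^{} = \epsilon(h)\,1_{\bol{A}}^{}$ together with the counit identity (\ref{eqn:quasibialgebraaxioms5}) collapses the associator and yields $\widehat{l}_{\bol{V}}^{}(1_{\bol{A}}^{}) = (\beta\ra_{\bol{V}}^{}\,\cdot\,) = 1_{\mathrm{end}(\bol{V})}^{}$. For product preservation I would argue that, since the currying maps are bijections, it suffices to prove $\ev_{\bol{V},\bol{V}}^{}\big((\widehat{l}_{\bol{V}}^{}(a)\bullet_{\bol{V},\bol{V},\bol{V}}^{}\widehat{l}_{\bol{V}}^{}(a^\prime))\otimes v\big) = \ev_{\bol{V},\bol{V}}^{}\big(\widehat{l}_{\bol{V}}^{}(a\,a^\prime)\otimes v\big)$. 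The right-hand side equals $(a\,a^\prime)\,v$ by the evaluation relation above. For the left-hand side I would apply the evaluation--composition compatibility (\ref{eqn:evcompcompatibility}), then use the $H$-equivariance of $\widehat{l}_{\bol{V}}^{}$ to rewrite $\phi^{(i)}\ra_{\mathrm{end}(\bol{V})}^{}\widehat{l}_{\bol{V}}^{}(a) = \widehat{l}_{\bol{V}}^{}(\phi^{(i)}\ra_{\bol{A}}^{}a)$, and finally the evaluation relation twice to obtain $(\phi^{(1)}\ra_{\bol{A}}^{}a)\,\big((\phi^{(2)}\ra_{\bol{A}}^{}a^\prime)\,(\phi^{(3)}\ra_{\bol{V}}^{}v)\big)$. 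The weak associativity of the $\bol{A}$-bimodule $\bol{V}$ collapses this to $(a\,a^\prime)\,v$, matching the right-hand side. This step, where the associator insertions must be tracked exactly, is the main obstacle of the proof.

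For the second claim, I would rewrite both sides using Proposition \ref{propo:invarianthoms} (iii): the left-hand side $\vartheta_{\bol{V},\bol{W}}^{}(f)\bullet_{\bol{V},\bol{V},\bol{W}}^{}\widehat{l}_{\bol{V}}^{}(a)$ equals $f\circ\widehat{l}_{\bol{V}}^{}(a)$, while the right-hand side $\widehat{l}_{\bol{W}}^{}(a)\bullet_{\bol{V},\bol{W},\bol{W}}^{}\vartheta_{\bol{V},\bol{W}}^{}(f)$ equals $\widehat{l}_{\bol{W}}^{}(a)\circ f$. The claim thus reduces to the identity $f\circ\widehat{l}_{\bol{V}}^{}(a) = \widehat{l}_{\bol{W}}^{}(a)\circ f$ of $k$-linear maps $\bol{V}\to\bol{W}$. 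This follows by evaluating $\widehat{l}_{\bol{V}}^{}(a)$ explicitly and using that $f$ is left $\bol{A}$-linear, to pull $f$ past the factor $\phi^{(-1)}\ra_{\bol{A}}^{}a$, and $H$-equivariant, to pull $f$ past the remaining $H$-action $(\phi^{(-2)}\,\beta\,S(\phi^{(-3)}))\ra_{\bol{V}}^{}(\,\cdot\,)$. This part is a short direct computation with no associator subtleties.
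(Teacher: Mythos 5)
Your proposal is correct and follows essentially the same route as the paper's proof: unit preservation via the counit identities, product preservation by reducing to evaluations through the relation $\ev_{\bol{V},\bol{V}}^{}(\,\widehat{l}_{\bol{V}}^{}(a)\otimes v)=a\,v$ together with (\ref{eqn:evcompcompatibility}) and the weak bimodule associativity, and the second claim via Proposition \ref{propo:invarianthoms}~(iii) combined with left $\bol{A}$-linearity and $H$-equivariance of $f$. You merely spell out some steps the paper leaves as "one easily checks"; no substantive difference.
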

\begin{proof}
Acting with $\widehat{l}_{\bol{V}}^{} $ on the unit element $1_{\bol{A}}^{} = \eta_{\bol{A}}^{}(1)\in\bol{A}$ 
and using the explicit expression for the currying map
(\ref{eqn:rightcurrying}) we obtain
\begin{flalign}
\widehat{l}_{\bol{V}}^{}  (1_{\bol{A}}^{}) =  l_{\bol{V}}^{} \Big(\big(\phi^{(-1)}\ra_{\bol{A}}^{} 1_{\bol{A}}^{} \big)\otimes 
\big(\phi^{(-2)}\, \beta \, S(\phi^{(-3)})\ra_{\bol{V}}^{} (\,\cdot\,)\big)\Big)
 = \big(\beta\ra_{\bol{V}}^{}\,\cdot\,\big) = 1_{\mathrm{end}(\bol{V})}^{}~.
\end{flalign}
To show that $\widehat{l}_{\bol{V}}^{} $ preserves the product, let us 
notice that $\ev_{\bol{V},\bol{V}}^{}(\, \widehat{l}_{\bol{V}}^{} (a) \otimes v) = l_{\bol{V}}^{}(a\otimes v) = a\,v$, for all $a\in\bol{A}$ and
$v\in\bol{V}$ (see Proposition \ref{propo:evcompproperties} (i)). Using (\ref{eqn:compositionexplicit}) and this property one easily checks that
\begin{flalign}
\mu_{\mathrm{end}(\bol{V})}^{}\big(\, \widehat{l}_{\bol{V}}^{}
(a)\otimes \widehat{l}_{\bol{V}}^{} (a^\prime\, )\big) 
= \widehat{l}_{\bol{V}}^{} (a)\bullet_{\bol{V},\bol{V},\bol{V}}^{}
\widehat{l}_{\bol{V}}^{} (a^\prime\, ) = \widehat{l}_{\bol{V}}^{}
(a\,a^\prime\, )~,
\end{flalign}
for all $a,a^\prime\in\bol{A}$. Hence $\widehat{l}_{\bol{V}}^{}  $ is an ${}^H_{}\AAA$-morphism.
Equation (\ref{eqn:leftAactionvsrepresentation2}) follows from left $A$-linearity and $H$-equivariance of $f$,
the properties listed in Proposition \ref{propo:invarianthoms} (iii)
and a short calculation using the explicit expression for the currying map
(\ref{eqn:rightcurrying}).
\end{proof}

Due to the first statement in Lemma \ref{lem:leftAactionvsrepresentation} and Proposition \ref{propo:evcompproperties}
(iii), the ${}^H_{}\MMM$-morphisms defined by the diagrams
\begin{subequations}\label{eqn:hombimodule}
\begin{flalign}
\xymatrix{
\ar[d]_-{\widehat{l}_{\bol{W}}^{} \otimes\id_{\hom(\bol{V},\bol{W})}^{}} 
\bol{A}\otimes \hom(\bol{V},\bol{W}) \ar[rr]^-{l_{\hom(\bol{V},\bol{W})}^{}}&&\hom(\bol{V},\bol{W})\\
\mathrm{end}(\bol{W}) \otimes \hom(\bol{V},\bol{W})\ar[rru]_-{~~~\bullet_{\bol{V},\bol{W},\bol{W}}^{}} &&
}
\end{flalign}
\begin{flalign}
\xymatrix{
\ar[d]_-{\id_{\hom(\bol{V},\bol{W})}^{}\otimes\widehat{l}_{\bol{V}}^{} } 
 \hom(\bol{V},\bol{W})\otimes\bol{A} \ar[rr]^-{r_{\hom(\bol{V},\bol{W})}^{}}&&\hom(\bol{V},\bol{W})\\
 \hom(\bol{V},\bol{W}) \otimes\mathrm{end}(\bol{V})\ar[rru]_-{~~~\bullet_{\bol{V},\bol{V},\bol{W}}^{}} &&
}
\end{flalign}
\end{subequations}
in ${}^H_{}\MMM$
induce an $\bol{A}$-bimodule structure on $\hom(\bol{V},\bol{W})$. It will be convenient to use the short-hand notation
\begin{subequations}
\begin{flalign}
l_{\hom(\bol{V},\bol{W})}^{}(a\otimes L) &= \widehat{l}_{\bol{W}}^{}(a) \bullet_{\bol{V},\bol{W},\bol{W}}^{} L  =: a\,L~,\\[4pt]
r_{\hom(\bol{V},\bol{W})}^{}(L\otimes a) &= L\bullet_{\bol{V},\bol{V},\bol{W}}^{}\widehat{l}_{\bol{V}}^{} (a) =: L\,a~,
\end{flalign}
\end{subequations}
for all $a\in \bol{A}$ and $L\in\hom(\bol{V},\bol{W})$.
\sk

Given any morphism $\big(f^{\mathrm{op}}: \bol{V}\to\bol{X}, g:\bol{W}\to\bol{Y}\big)$ in
$\big({}^H_{}{}^{}_{\bol{A}}\MMM^{}_{\bol{A}}\big)^{\mathrm{op}}\times {}^H_{}{}^{}_{\bol{A}}\MMM^{}_{\bol{A}}$,
the ${}^H_{}\MMM$-morphism
$\hom(f^\mathrm{op},g): \hom(\bol{V},\bol{W}) \to\hom(\bol{X},\bol{Y})$ preserves the 
$\bol{A}$-bimodule structure, hence it is an $ {}^H_{}{}^{}_{\bol{A}}\MMM^{}_{\bol{A}}$-morphism: recalling Proposition \ref{propo:invarianthoms} and
using the short-hand notation above, we find that $\hom(f^{\mathrm{op}},g)$ preserves the left $\bol{A}$-action since
\begin{flalign}
\nn \hom(f^{\mathrm{op}},g)\big(a\,L\big) &= g\circ \big(\, \widehat{l}_{\bol{W}}^{} (a) \bullet_{\bol{V},\bol{W},\bol{W}}^{} L \big)\circ f\\[4pt]
\nn &=\big(\vartheta_{\bol{W},\bol{Y}}^{}(g)\bullet_{\bol{W},\bol{W},\bol{Y}}^{} \widehat{l}_{\bol{W}}^{} (a)\big)\bullet_{\bol{X},\bol{W},\bol{Y}}^{} \big(L\bullet_{\bol{X},\bol{V},\bol{W}}^{}\vartheta_{\bol{X},\bol{V}}^{}(f)\big)\\[4pt]
\nn &=\big(\, \widehat{l}_{\bol{Y}}^{} (a) \bullet_{\bol{W},\bol{Y},\bol{Y}}^{} \vartheta_{\bol{W},\bol{Y}}^{}(g)\big)\bullet_{\bol{X},\bol{W},\bol{Y}}^{} \big(L\bullet_{\bol{X},\bol{V},\bol{W}}^{}\vartheta_{\bol{X},\bol{V}}^{}(f)\big)\\[4pt] \nn
&= \widehat{l}_{\bol{Y}}^{} (a) \bullet_{\bol{X},\bol{Y},\bol{Y}}^{} \hom(f^\mathrm{op},g)\big(L\big) \\[4pt] &= a\,\hom(f^\mathrm{op},g)\big(L\big)~,
\end{flalign}
for all $a\in \bol{A}$ and $L\in\hom(\bol{V},\bol{W})$. In the third equality we used the second
 statement in Lemma \ref{lem:leftAactionvsrepresentation},
 while the second and fourth equalities follow from $H$-invariance and the properties of $\vartheta_{\bol{W},\bol{Y}}^{}(g)$ and
 $\vartheta_{\bol{X},\bol{V}}^{}(f)$, cf.\ Proposition \ref{propo:invarianthoms} (i) and (iii).
By a similar argument, one shows that $\hom(f^{\mathrm{op}},g)$ also preserves the right $\bol{A}$-action, which proves our claim.
As a consequence, the functor in (\ref{eqn:functorhombimod}) can be promoted to a functor with values in
the category ${}^H_{}{}^{}_{\bol{A}}\MMM^{}_{\bol{A}}$ which we shall denote with an abuse of notation by
\begin{flalign}\label{eqn:internalhomHAMAtemp}
\hom : \big({}^H_{}{}^{}_{\bol{A}}\MMM^{}_{\bol{A}}\big)^{\mathrm{op}}\times {}^H_{}{}^{}_{\bol{A}}\MMM^{}_{\bol{A}}\longrightarrow {}^H_{}{}^{}_{\bol{A}}\MMM^{}_{\bol{A}}~.
\end{flalign}

This functor is {\em not yet} the internal $\hom$-functor for the monoidal category ${}^H_{}{}^{}_{\bol{A}}\MMM^{}_{\bol{A}}$ 
as it does not satisfy the currying property, i.e.\ 
$\Hom_{({}^H_{}{}^{}_{\bol{A}}\MMM^{}_{\bol{A}})}^{}(\bol{V}\otimes_{\bol{A}}^{} \bol{W},\bol{X}) \not\simeq
 \Hom_{({}^H_{}{}^{}_{\bol{A}}\MMM^{}_{\bol{A}})}^{}(\bol{V},\hom(\bol{W},\bol{X}))$,
where by $\Hom_{({}^H_{}{}^{}_{\bol{A}}\MMM^{}_{\bol{A}})}^{}$ we 
denote the morphism sets in the category ${}^H_{}{}^{}_{\bol{A}}\MMM^{}_{\bol{A}}$.
However, we notice that the currying maps (\ref{eqn:rightcurrying}) 
induce a bijection between the sets $\Hom_{({}^H_{}{}^{}_{\bol{A}}\MMM)}^{}(\bol{V}\otimes_{\bol{A}}^{} \bol{W},\bol{X}) $
and $ \Hom_{({}^H_{}{}^{}_{\bol{A}}\MMM^{}_{\bol{A}})}^{}(\bol{V},\hom(\bol{W},\bol{X}))$,
where by $\Hom_{({}^H_{}{}^{}_{\bol{A}}\MMM)}^{}$ we denote the set of ${}^H_{}\MMM$-morphisms between objects
in ${}^H_{}{}^{}_{\bol{A}}\MMM^{}_{\bol{A}}$ that preserve only the left $\bol{A}$-module structure but
not necessarily the right $\bol{A}$-module structure. (For brevity, we refer to these morphisms as
${}^H_{}{}^{}_{A}\MMM$-morphisms.)
\begin{lem}\label{lem:curryingHAMAtmp}
For any quasi-Hopf algebra $H$ and any algebra $\bol{A}$ in ${}^H_{}\MMM$ there exist 
natural bijections 
\begin{flalign}
\zeta_{\bol{V},\bol{W},\bol{X}}^{\bol{A}} : \Hom_{({}^H_{}{}^{}_{\bol{A}}\MMM)}^{}(\bol{V}\otimes_{\bol{A}}^{} \bol{W},\bol{X}) 
\longrightarrow \Hom_{({}^H_{}{}^{}_{\bol{A}}\MMM^{}_{\bol{A}})}^{}(\bol{V},\hom(\bol{W},\bol{X}))~,
\end{flalign}
for any three objects $\bol{V},\bol{W},\bol{X}$ in ${}^H_{}{}^{}_{\bol{A}}\MMM^{}_{\bol{A}}$. 
\end{lem}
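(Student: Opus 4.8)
The plan is to build $\zeta^{\bol{A}}_{\bol{V},\bol{W},\bol{X}}$ directly out of the currying isomorphism $\zeta_{\bol{V},\bol{W},\bol{X}}$ of Theorem \ref{theo:curryingHM} for the ambient category ${}^H_{}\MMM$, by precomposing with the canonical projection $\pi_{\bol{V},\bol{W}}^{} : \bol{V}\otimes\bol{W}\to\bol{V}\otimes_{\bol{A}}^{}\bol{W}$. Concretely, given an ${}^H_{}{}^{}_{\bol{A}}\MMM$-morphism $f:\bol{V}\otimes_{\bol{A}}^{}\bol{W}\to\bol{X}$, the composite $f\circ\pi_{\bol{V},\bol{W}}^{}$ is an ${}^H_{}\MMM$-morphism $\bol{V}\otimes\bol{W}\to\bol{X}$, so I would set $\zeta^{\bol{A}}_{\bol{V},\bol{W},\bol{X}}(f) := \zeta_{\bol{V},\bol{W},\bol{X}}(f\circ\pi_{\bol{V},\bol{W}}^{})$, which is by construction an ${}^H_{}\MMM$-morphism $\bol{V}\to\hom(\bol{W},\bol{X})$ given explicitly by the formula (\ref{eqn:rightcurrying}). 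The content of the lemma is then that this ${}^H_{}\MMM$-morphism is automatically an ${}^H_{}{}^{}_{\bol{A}}\MMM^{}_{\bol{A}}$-morphism for the bimodule structure on $\hom(\bol{W},\bol{X})$ introduced in (\ref{eqn:hombimodule}), and that the assignment is bijective onto bimodule morphisms.

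The key observation is an asymmetry between the two $\bol{A}$-actions. Right $\bol{A}$-linearity of $\zeta^{\bol{A}}_{\bol{V},\bol{W},\bol{X}}(f)$ comes essentially for free: since $f$ factors through the quotient $\bol{V}\otimes_{\bol{A}}^{}\bol{W}$, I can replace $(v\,a)\otimes_{\bol{A}}^{}w$ using the relation (\ref{eqn:tensorAidentitiesa}), and together with the explicit formula (\ref{eqn:rightcurrying}) for $\zeta$ and the definition $L\,a = L\bullet_{\bol{W},\bol{W},\bol{X}}^{}\widehat{l}_{\bol{W}}^{}(a)$ of the right action on $\hom(\bol{W},\bol{X})$, a direct computation with the associator yields $\zeta^{\bol{A}}_{\bol{V},\bol{W},\bol{X}}(f)(v\,a)=\zeta^{\bol{A}}_{\bol{V},\bol{W},\bol{X}}(f)(v)\,a$. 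Left $\bol{A}$-linearity, on the other hand, genuinely requires that $f$ be left $\bol{A}$-linear: starting from the left action $a\,L = \widehat{l}_{\bol{X}}^{}(a)\bullet_{\bol{W},\bol{X},\bol{X}}^{}L$, I would use Lemma \ref{lem:leftAactionvsrepresentation} to rewrite $\widehat{l}_{\bol{X}}^{}(a)$ in terms of the representation of $\bol{A}$ on $\bol{X}$, pass it through $f$ via its left $\bol{A}$-linearity, and reassemble using the compatibility of evaluation and composition in Proposition \ref{propo:evcompproperties}~(ii). This explains why the left-hand side of the bijection need only consist of left $\bol{A}$-linear maps: the right $\bol{A}$-linearity of the curried morphism is absorbed into the passage to $\otimes_{\bol{A}}^{}$.

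For the inverse I would reverse the construction. Given an ${}^H_{}{}^{}_{\bol{A}}\MMM^{}_{\bol{A}}$-morphism $g:\bol{V}\to\hom(\bol{W},\bol{X})$, the morphism $\zeta_{\bol{V},\bol{W},\bol{X}}^{-1}(g)$ of (\ref{eqn:inversecurrying}) is an ${}^H_{}\MMM$-morphism $\bol{V}\otimes\bol{W}\to\bol{X}$; I would show that it annihilates the subobject $\bol{N}^{}_{\bol{V},\bol{W}}$ of (\ref{eqn:Nkernel}) and hence descends along $\pi_{\bol{V},\bol{W}}^{}$ to a map on $\bol{V}\otimes_{\bol{A}}^{}\bol{W}$. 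This vanishing is precisely where right $\bol{A}$-linearity of $g$ enters, via $g(v)\,a = g(v)\bullet_{\bol{W},\bol{W},\bol{X}}^{}\widehat{l}_{\bol{W}}^{}(a) = g(v\,a)$ and Proposition \ref{propo:evcompproperties}~(ii). The descended morphism is then left $\bol{A}$-linear as a consequence of left $\bol{A}$-linearity of $g$, so it lies in $\Hom_{({}^H_{}{}^{}_{\bol{A}}\MMM)}^{}(\bol{V}\otimes_{\bol{A}}^{}\bol{W},\bol{X})$. That the two assignments are mutually inverse follows immediately from the fact that $\zeta_{\bol{V},\bol{W},\bol{X}}$ and $\zeta_{\bol{V},\bol{W},\bol{X}}^{-1}$ are already mutually inverse in ${}^H_{}\MMM$ together with the universal property of the quotient, and naturality descends from the naturality of $\zeta$ established in Theorem \ref{theo:curryingHM}.

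The main obstacle I anticipate is the bookkeeping of associator insertions in the two linearity checks. Both the verification that $\zeta^{\bol{A}}_{\bol{V},\bol{W},\bol{X}}(f)$ is left $\bol{A}$-linear and that $\zeta_{\bol{V},\bol{W},\bol{X}}^{-1}(g)$ kills $\bol{N}^{}_{\bol{V},\bol{W}}$ hinge on correctly matching the $\phi$-factors coming from the currying formulas (\ref{eqn:rightcurrying}) and (\ref{eqn:inversecurrying}), the bimodule identities (\ref{eqn:tensorAidentities}), and the nontrivial composition morphism (\ref{eqn:compositionexplicit}); the identity (\ref{eqn:leftAactionvsrepresentation2}) and Proposition \ref{propo:evcompproperties}~(ii) are exactly the tools that make these $\phi$-manipulations close up, so the difficulty is computational rather than conceptual.
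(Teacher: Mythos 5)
Your proposal is correct and follows essentially the same route as the paper: the map $\zeta^{\bol{A}}_{\bol{V},\bol{W},\bol{X}}(f)=\zeta^{}_{\bol{V},\bol{W},\bol{X}}(f\circ\pi^{}_{\bol{V},\bol{W}})$ coincides with the formula the paper writes down directly, and you identify exactly the same division of labour — right $\bol{A}$-linearity of the curried map from $f$ factoring through $\otimes_{\bol{A}}^{}$, left $\bol{A}$-linearity from that of $f$, and well-definedness of the inverse on the quotient from right $\bol{A}$-linearity of $g$ — with naturality and invertibility inherited from Theorem \ref{theo:curryingHM}. The remaining associator computations you flag are precisely the ones the paper also leaves as straightforward checks.
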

\begin{proof}
We define the maps $\zeta_{\bol{V},\bol{W},\bol{X}}^{\bol{A}}$ in complete analogy to the currying maps in
 (\ref{eqn:rightcurrying}) by setting
\begin{flalign}
\nn \zeta_{\bol{V},\bol{W},\bol{X}}^{\bol{A}} (f) : \bol{V} &\longrightarrow \hom(\bol{W},\bol{X})~,\\
v&\longmapsto f\Big(\big(\phi^{(-1)}\ra_{\bol{V}}^{} v \big) \otimes_{\bol{A}}^{} \big(\big(\phi^{(-2)}\,\beta\,S(\phi^{(-3)})\big) \ra_{\bol{W}}^{} (\,\cdot\,)\big)\Big)~,\label{eqn:rightcurryingA}
\end{flalign}
for all ${}^H_{}{}^{}_{\bol{A}}\MMM$-morphisms $f: \bol{V}\otimes_{\bol{A}}^{}\bol{W} \to \bol{X}$.
It is a straightforward calculation to 
show that the ${}^H_{}\MMM$-morphism $\zeta_{\bol{V},\bol{W},\bol{X}}^{\bol{A}} (f) : \bol{V}\to \hom(\bol{W},\bol{X})$
is an ${}^H_{}{}^{}_{\bol{A}}\MMM^{}_{\bol{A}}$-morphism, for all ${}^H_{}{}^{}_{\bol{A}}\MMM$-morphisms
 $f: \bol{V}\otimes_{\bol{A}}^{}\bol{W} \to \bol{X}$, i.e.\ that 
 \begin{subequations}
 \begin{flalign}
 \zeta_{\bol{V},\bol{W},\bol{X}}^{\bol{A}} (f)(a\,v)  &= \widehat{l}_{\bol{X}}^{}(a) \bullet_{\bol{W},\bol{X},\bol{X}}^{}  \big(\zeta_{\bol{V},\bol{W},\bol{X}}^{\bol{A}} (f)(v)\big)
 = a\,\big(\zeta_{\bol{V},\bol{W},\bol{X}}^{\bol{A}} (f)(v)\big)~,\\[4pt]
 \zeta_{\bol{V},\bol{W},\bol{X}}^{\bol{A}} (f)(v\,a) &=  \big(\zeta_{\bol{V},\bol{W},\bol{X}}^{\bol{A}} (f)(v)\big)\bullet_{\bol{W},\bol{W},\bol{X}}^{} 
 \widehat{l}_{\bol{W}}^{}(a) = \big(\zeta_{\bol{V},\bol{W},\bol{X}}^{\bol{A}} (f)(v)\big)\,a~,
 \end{flalign}
 \end{subequations}
for all $v\in\bol{V}$ and $a\in\bol{A}$. Notice that left $\bol{A}$-linearity of  $\zeta_{\bol{V},\bol{W},\bol{X}}^{\bol{A}} (f)$
is a consequence of the left $A$-linearity of $f: \bol{V}\otimes_{\bol{A}}^{}\bol{W} \to \bol{X}$ 
and that right $\bol{A}$-linearity of $\zeta_{\bol{V},\bol{W},\bol{X}}^{\bol{A}} (f)$ 
is a consequence of $f$ being defined on the quotient $\bol{V}\otimes_\bol{A}^{} \bol{W}$.
\sk

In complete analogy to  the inverse currying maps in (\ref{eqn:inversecurrying}) we set
\begin{flalign}
\nn  (\zeta_{\bol{V},\bol{W},\bol{X}}^{\bol{A}})^{-1}(g) : \bol{V}\otimes_{\bol{A}}^{}\bol{W} &\longrightarrow \bol{X}~,\\
  v\otimes_{\bol{A}}^{} w & \longmapsto \phi^{(1)}\ra_{\bol{X}}^{} \Big(g(v)\big( \big(S(\phi^{(2)})\,\alpha\,\phi^{(3)} \big)\ra_{\bol{W}}^{} w\big)\Big)~,\label{eqn:inversecurryingA}
\end{flalign}
for all ${}^H_{}{}^{}_{\bol{A}}\MMM^{}_{\bol{A}}$-morphisms $g: \bol{V}\to\hom(\bol{W},\bol{X})$.
Notice that $(\zeta_{\bol{V},\bol{W},\bol{X}}^{\bol{A}})^{-1}(g)$ is well-defined as a consequence of 
the right $A$-linearity of $g$. It is straightforward to check that 
left $\bol{A}$-linearity of $g$ implies that $(\zeta_{\bol{V},\bol{W},\bol{X}}^{\bol{A}})^{-1}(g)$ 
is also left $\bol{A}$-linear. Hence $(\zeta_{\bol{V},\bol{W},\bol{X}}^{\bol{A}})^{-1}(g): \bol{V}\otimes_{\bol{A}}^{}\bol{W}\to\bol{X}$ 
is a well-defined ${}^H_{}{}^{}_{\bol{A}}\MMM$-morphism, for all ${}^H_{}{}^{}_{\bol{A}}\MMM^{}_{\bol{A}}$-morphisms
$g : \bol{V}\to\hom(\bol{W},\bol{X})$. Naturality of $\zeta_{\text{--},\text{--},\text{--}}^{\bol{A}}$ and 
the fact that $(\zeta_{\bol{V},\bol{W},\bol{X}}^{\bol{A}})^{-1}$ is
the inverse of $\zeta_{\bol{V},\bol{W},\bol{X}}^{\bol{A}}$ is easily seen and
 completely analogous to  the proof of Theorem \ref{theo:curryingHM}.
\end{proof}

Let $V,W,X$ be any three objects in ${}^H_{}{}^{}_{A}\MMM^{}_{A}$.
Recalling the definition of the evaluation morphism $\ev_{V,W}^{}$ from (\ref{eqn:evaluationgeneral}),
 Lemma \ref{lem:curryingHAMAtmp} and the fact that $\id_{\hom(\bol{V},\bol{W})}^{}$
 is an ${}^H_{}{}^{}_{A}\MMM^{}_{A}$-morphism implies that $\ev_{V,W}^{}$ is an ${}^{H}_{}{}^{}_{\bol{A}}\MMM$-morphism
 which also descends to the quotient given by the tensor product $\otimes_{\bol{A}}^{}$.
We denote the induced ${}^{H}_{}{}^{}_{\bol{A}}\MMM$-morphism by
\begin{flalign}\label{eqn:evalquotient}
\ev_{\bol{V},\bol{W}}^{\bol{A}} : \hom(\bol{V},\bol{W})\otimes_{\bol{A}}^{}\bol{V} \longrightarrow \bol{W}~.
\end{flalign}
Furthermore, the composition morphism $\bullet_{V,W,X}^{}$ from (\ref{eqn:compositiongeneral}) descends to 
an ${}^{H}_{}{}^{}_{\bol{A}}\MMM^{}_{A}$-morphism on the quotient. 
We denote the induced ${}^{H}_{}{}^{}_{\bol{A}}\MMM^{}_{A}$-morphism by
\begin{flalign}\label{eqn:compositionquotient}
\bullet_{V,W,X}^A : \hom(W,X)\otimes_A^{}\hom(V,W)\longrightarrow \hom(V,X)~.
\end{flalign}
\begin{rem}\label{rem:propertiesevAcircA}
Since $\ev_{V,W}^{A}$ and $\bullet_{V,W,X}^{A}$ are canonically induced by, respectively, 
 $\ev_{V,W}^{}$ and $\bullet_{V,W,X}^{}$, the analogous properties of 
 Proposition \ref{propo:evcompproperties} hold in the present setting.
 In particular, given any three objects $V,W,X$ in ${}^H_{}{}^{}_{A}\MMM^{}_{A}$
 and any ${}^H_{}{}^{}_{A}\MMM^{}_{A}$-morphism $g: V\to\hom(W,X)$,
 the diagram
 \begin{flalign}\label{eqn:propertiesevAcircA}
 \xymatrix{
 \ar[rrd]_-{(\zeta_{V,W,X}^{A})^{-1}(g)~~~~} V\otimes_A^{} W \ar[rr]^-{g\otimes_{A}^{}\id_{W}^{}} && \hom(W,X)\otimes_A^{} W\ar[d]^-{\ev_{W,X}^{A}}\\
 && X
 }
 \end{flalign}
 of ${}^H_{}{}^{}_{A}\MMM$-morphisms commutes.
 As a consequence we have
 \begin{multline}\label{eqn:evcompcompatibilityA}
 \ev_{\bol{V},\bol{X}}^{A} \big(\big(L\bullet_{\bol{V},\bol{W},\bol{X}}^{A} L^\prime\, \big)\otimes_A^{} v\big) = \\
 \ev_{\bol{W},\bol{X}}^{A}\Big(\big(\phi^{(1)}\ra_{\hom(\bol{W},\bol{X})}^{} L\big)\otimes_{A}^{} \ev_{\bol{V},\bol{W}}^{A}\Big(\big(\phi^{(2)}\ra_{\hom(\bol{V},\bol{W})}^{} L^\prime\, \big) \otimes_{A}^{} \big(\phi^{(3)}\ra_{\bol{V}}^{} v\big)\Big)\Big)~,
 \end{multline}
for all $L\in\hom(W,X)$, $L^\prime\in\hom(V,W)$ and $v\in V$.
Finally, the composition morphisms $\bullet_{\text{--},\text{--},\text{--}}^{A}$ are weakly associative.
 \end{rem}

\subsection{Cochain twisting of bimodule structures}

Given any cochain twist $F\in H\otimes H$ based on a quasi-Hopf algebra $H$,
we have shown in Subsection \ref{subsec:twistinginthom} that
there is a natural isomorphism $\gamma : \hom_F^{}\circ (\FF^{\mathrm{op}}\times\FF )\Rightarrow \FF\circ \hom$
of functors from $\big({}^H_{}\MMM\big)^{\mathrm{op}}\times {}^H_{}\MMM$ to ${}^{H_F^{}}_{}\MMM$,
where $\hom$ is the internal $\hom$-functor on ${}^H_{}\MMM$, $\hom_F^{}$ is the internal $\hom$-functor
on ${}^{H_{F}^{}}_{}\MMM$ and $\FF : {}^H_{}\MMM \to {}^{H_F^{}}_{}\MMM$ is the monoidal functor from Theorem 
\ref{theo:defmonoidcatHM}. In Subsection \ref{subsec:bimodstruct} we have shown that $\hom$ gives
rise to a functor $\hom: \big({}^H_{}{}^{}_{A}\MMM^{}_{A}\big)^{\mathrm{op}}\times {}^H_{}{}^{}_{A}\MMM^{}_{A}
\to {}^H_{}{}^{}_{A}\MMM^{}_{A}$ and analogously that $\hom_F^{}$ gives 
rise to a functor $\hom_F^{}: \big({}^{H_F^{}}_{}{}^{}_{A_F^{}}\MMM^{}_{A_F^{}}\big)^{\mathrm{op}}\times
 {}^{H_F^{}}_{}{}^{}_{A_F^{}}\MMM^{}_{A_F^{}} \to {}^{H_F^{}}_{}{}^{}_{A_F^{}}\MMM^{}_{A_F^{}}$.
By Theorem \ref{theo:twistingmonoidalHAMA} we have a monoidal functor
$\FF : {}^H_{}{}^{}_{A}\MMM^{}_{A}\to  {}^{H_F^{}}_{}{}^{}_{A_F^{}}\MMM^{}_{A_F^{}}$.
The goal of this subsection is to prove that the natural isomorphism $\gamma$ gives rise
to a natural isomorphism $\gamma : \hom_F^{}\circ (\FF^{\mathrm{op}}\times\FF )\Rightarrow \FF\circ \hom$
of functors from $ \big({}^H_{}{}^{}_{A}\MMM^{}_{A}\big)^{\mathrm{op}}\times {}^H_{}{}^{}_{A}\MMM^{}_{A}$
to  ${}^{H_F^{}}_{}{}^{}_{A_F^{}}\MMM^{}_{A_F^{}}$,
which amounts to showing
that the components of $\gamma$ (cf.\ (\ref{eqn:gammamap})) are ${}^{H_F^{}}_{}{}^{}_{A_F^{}}\MMM^{}_{A_F^{}}$-isomorphisms.
Let us start with the following observation.
\begin{lem}\label{eqn:lefthatactiontwisting}
Let $F\in H\otimes H$ be any cochain twist and 
$V$ any object in ${}^{H}_{}{}^{}_{A}\MMM^{}_{A}$. Let us denote by $\widehat{l}_V^{} : A\to \mathrm{end}(V)$
the ${}^{H}_{}\AAA$-morphism of Lemma \ref{lem:leftAactionvsrepresentation} 
and by $\widehat{l}_{V_F^{}}^{} : A_F^{} \to \mathrm{end}_F^{}(\FF(V))$ the ${}^{H_F^{}}_{}\AAA$-morphism
obtained by twisting $V$ into the object $V_F^{}$ in ${}^{H_F^{}}_{}{}^{}_{A_F^{}}\MMM^{}_{A_F^{}}$ and then applying
Lemma \ref{lem:leftAactionvsrepresentation}.
Then the diagram
\begin{flalign}
\xymatrix{
\ar[rrd]_-{\FF(\,\widehat{l}_V^{})} \FF(A)\ar[rr]^-{\widehat{l}_{V_F^{}}^{}} && \mathrm{end}_F^{}(\FF(V))\ar[d]^-{\gamma_{V,V}^{}}\\
&&\FF\big(\mathrm{end}(V)\big) 
}
\end{flalign}
in ${}^{H_F^{}}_{}\MMM$ commutes.
\end{lem}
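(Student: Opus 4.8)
The plan is to avoid unwinding the explicit associator and antipode insertions hidden in $\widehat{l}_V^{}$ and $\widehat{l}_{V_F^{}}^{}$, and instead to characterize both composites through evaluation. Recall from Lemma \ref{lem:leftAactionvsrepresentation} that $\widehat{l}_V^{} = \zeta_{A,V,V}^{}(l_V^{})$ and, after twisting, $\widehat{l}_{V_F^{}}^{} = \zeta_{\FF(A),\FF(V),\FF(V)}^{F}(l_{V_F^{}}^{})$, where $\zeta^{F}$ is the currying isomorphism for ${}^{H_F^{}}_{}\MMM$ (Theorem \ref{theo:curryingHM} applied to $H_F^{}$) and, by the defining diagram (\ref{eqn:deformedbimodule}), the twisted left action is $l_{V_F^{}}^{}(a\otimes_F^{} v) = (F^{(-1)}\ra_A^{} a)\,(F^{(-2)}\ra_V^{} v) = a\star_F^{} v$. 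Since $\gamma_{V,V}^{}$ is an ${}^{H_F^{}}_{}\MMM$-isomorphism, the asserted commutativity is equivalent to the equality $\widehat{l}_{V_F^{}}^{} = \gamma_{V,V}^{-1}\circ \FF(\widehat{l}_V^{})$ of ${}^{H_F^{}}_{}\MMM$-morphisms $\FF(A)\to \mathrm{end}_F^{}(\FF(V))=\hom_F^{}(\FF(V),\FF(V))$.

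First I would invoke the bijectivity of $\zeta_{\FF(A),\FF(V),\FF(V)}^{F}$ together with Proposition \ref{propo:evcompproperties}~(i) applied to $H_F^{}$: two ${}^{H_F^{}}_{}\MMM$-morphisms $g_1,g_2:\FF(A)\to \hom_F^{}(\FF(V),\FF(V))$ coincide if and only if $\ev_{\FF(V),\FF(V)}^{F}\circ(g_1\otimes_F^{}\id_{\FF(V)}^{}) = \ev_{\FF(V),\FF(V)}^{F}\circ(g_2\otimes_F^{}\id_{\FF(V)}^{})$. For $g_1 = \widehat{l}_{V_F^{}}^{}$ this composite is by construction $l_{V_F^{}}^{}$, so it remains to show that $g_2 = \gamma_{V,V}^{-1}\circ\FF(\widehat{l}_V^{})$ produces the same morphism $l_{V_F^{}}^{}$.

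To evaluate $\ev_{\FF(V),\FF(V)}^{F}\big(\gamma_{V,V}^{-1}(\widehat{l}_V^{}(a))\otimes_F^{} v\big)$ I would use the first diagram of Proposition \ref{propo:deformedevcirv}, which expresses $\ev^{F}\circ(\gamma_{V,V}^{}\otimes_F^{}\id)$ as $\FF(\ev_{V,V}^{})\circ\varphi_{\mathrm{end}(V),V}^{}$; reading this with $\gamma_{V,V}^{}$ replaced by its inverse and inserting the coherence map $\varphi_{\mathrm{end}(V),V}^{}$ from (\ref{eqn:coherencemapstensor}) gives, for $M\in\FF(\mathrm{end}(V))$,
\[
\ev_{\FF(V),\FF(V)}^{F}\big(\gamma_{V,V}^{-1}(M)\otimes_F^{} v\big) = \ev_{V,V}^{}\big((F^{(-1)}\ra_{\mathrm{end}(V)}^{} M)\otimes (F^{(-2)}\ra_V^{} v)\big)~.
\]
Taking $M = \widehat{l}_V^{}(a)$ and using that $\widehat{l}_V^{}$ is ${}^H_{}\MMM$-equivariant to rewrite $F^{(-1)}\ra_{\mathrm{end}(V)}^{}\widehat{l}_V^{}(a) = \widehat{l}_V^{}(F^{(-1)}\ra_A^{} a)$, I would finally apply Proposition \ref{propo:evcompproperties}~(i) in the untwisted category, $\ev_{V,V}^{}(\widehat{l}_V^{}(b)\otimes w) = l_V^{}(b\otimes w) = b\,w$, to obtain $(F^{(-1)}\ra_A^{} a)\,(F^{(-2)}\ra_V^{} v) = a\star_F^{} v = l_{V_F^{}}^{}(a\otimes_F^{} v)$. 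This is exactly the required identity, so uniqueness of currying forces $\gamma_{V,V}^{-1}\circ\FF(\widehat{l}_V^{}) = \widehat{l}_{V_F^{}}^{}$ and the diagram commutes.

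The main obstacle is organizational rather than computational: one must resist expanding $\gamma_{V,V}^{}$, $\widehat{l}_V^{}$ and the twisted currying into their explicit $\phi$-, $\alpha$- and $\beta$-laden forms, which would reproduce the lengthy manipulations of Proposition \ref{propo:invarianthoms}, and instead route everything through the already-established compatibility of $\gamma$ with evaluation. The only genuinely load-bearing inputs are that compatibility (Proposition \ref{propo:deformedevcirv}), the ${}^H_{}\MMM$-equivariance of $\widehat{l}_V^{}$, and the defining relation $\ev\circ(\widehat{l}\otimes\id)=l$ on both sides of the twist.
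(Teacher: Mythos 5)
Your proposal is correct and follows essentially the same route as the paper: both reduce the claim, via bijectivity of the currying maps and Proposition \ref{propo:evcompproperties}~(i), to an evaluation identity, and both resolve that identity using the first diagram of Proposition \ref{propo:deformedevcirv} together with $H$-equivariance of $\widehat{l}_V^{}$ and the relation $\ev\circ(\,\widehat{l}\otimes\id)=l$. The only cosmetic difference is that you verify the identity on the twisted side (landing on $a\star_F^{}v$) whereas the paper verifies it on the untwisted side (landing on $a\,v$); these are the two directions of the same commutative diagram.
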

\begin{proof}
The two ${}^{H_F^{}}_{}\MMM$-morphisms $\gamma_{V,V}^{}\circ \widehat{l}_{V_F^{}}^{}$ and $\FF(\,\widehat{l}_{V}^{})$
coincide if and only if the ${}^H_{}\MMM$-morphism $\FF^{-1}\big(\gamma_{V,V}^{}\circ \widehat{l}_{V_F^{}}^{}\big):A\to\mathrm{end}(V)$
coincides with $\widehat{l}_V^{} : A\to\mathrm{end}(V)$. Due to  Proposition \ref{propo:evcompproperties} (i)
and bijectivity of the currying maps,
this property is equivalent to the condition
\begin{flalign}\label{eqn:tmpconditiongamma}
\ev_{V,V}^{}\big(\gamma_{V,V}^{}\big(\,\widehat{l}_{V_F^{}}^{}(a)\big)\otimes v\big) = \ev_{V,V}^{}\big(\,\widehat{l}_V^{}(a)\otimes v\big) = a\,v~,
\end{flalign}
for all $a\in A$ and $v\in V$. Making use of the first diagram in Proposition \ref{propo:deformedevcirv}
we can simplify the left-hand side of (\ref{eqn:tmpconditiongamma}) as
\begin{flalign}
\nn \ev_{V,V}^{}\big(\gamma_{V,V}^{}\big(\,\widehat{l}_{V_F^{}}^{}(a)\big)\otimes v\big) &= \ev_{\FF(V),\FF(V)}^{F}\big(\, \widehat{l}_{V_F^{}}^{}\big(F^{(1)}\ra_A^{}a\big)\otimes_F^{} \big(F^{(2)}\ra_V^{}v\big)\big)\\[4pt]
\nn &= \big(F^{(1)}\ra_A^{}a\big)\star_F^{} \big(F^{(2)}\ra_V^{}v\big)\\[4pt] &= a\,v~,
\end{flalign}
which completes the proof.
\end{proof}

We now can prove that $\gamma$ gives rise
to a natural isomorphism $\gamma : \hom_F^{}\circ (\FF^{\mathrm{op}}\times\FF )\Rightarrow \FF\circ \hom$
of functors from $ \big({}^H_{}{}^{}_{A}\MMM^{}_{A}\big)^{\mathrm{op}}\times {}^H_{}{}^{}_{A}\MMM^{}_{A}$
to  ${}^{H_F^{}}_{}{}^{}_{A_F^{}}\MMM^{}_{A_F^{}}$.
\begin{propo}\label{propo:gammaisHAMAmorphism}
Let $F\in H\otimes H$ be any cochain twist and $A$ any algebra in ${}^H_{}\MMM$.
Then the ${}^{H_F^{}}_{}\MMM$-isomorphisms $\gamma_{V,W}^{} : \hom_F^{}(\FF(V),\FF(W))\to\FF(\hom(V,W))$
given in (\ref{eqn:gammamap}) are ${}^{H_F^{}}_{}{}^{}_{A_F^{}}\MMM^{}_{A_F^{}}$-isomorphisms
for any object $(V,W)$ in $\big({}^H_{}{}^{}_{A}\MMM^{}_{A}\big)^{\mathrm{op}}\times {}^H_{}{}^{}_{A}\MMM^{}_{A}$.
As a consequence,  $\gamma : \hom_F^{}\circ (\FF^{\mathrm{op}}\times\FF )\Rightarrow \FF\circ \hom$
is a natural isomorphism of functors from $ \big({}^H_{}{}^{}_{A}\MMM^{}_{A}\big)^{\mathrm{op}}\times {}^H_{}{}^{}_{A}\MMM^{}_{A}$
to  ${}^{H_F^{}}_{}{}^{}_{A_F^{}}\MMM^{}_{A_F^{}}$.
\end{propo}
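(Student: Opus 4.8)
The plan is to leverage everything already in place: by Theorem \ref{theo:HMequivalenceclosedmonoidal} each $\gamma_{V,W}^{}$ is an ${}^{H_F^{}}_{}\MMM$-isomorphism and $\gamma$ is natural as a transformation of functors valued in ${}^{H_F^{}}_{}\MMM$. Hence the entire content of the proposition reduces to checking that each component $\gamma_{V,W}^{}$ \emph{additionally} intertwines the left and right $A_F^{}$-actions, for then it is automatically an ${}^{H_F^{}}_{}{}^{}_{A_F^{}}\MMM^{}_{A_F^{}}$-isomorphism, and the naturality squares --- already commuting in ${}^{H_F^{}}_{}\MMM$ --- become squares of bimodule morphisms, yielding the asserted natural isomorphism in ${}^{H_F^{}}_{}{}^{}_{A_F^{}}\MMM^{}_{A_F^{}}$.

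To check compatibility with the left action I would first spell out the two $A_F^{}$-module structures using the defining diagrams (\ref{eqn:hombimodule}): on $\hom_F^{}(\FF(V),\FF(W))$ the left action reads $a\,L=\widehat{l}_{W_F^{}}^{}(a)\bullet^F_{\FF(V),\FF(W),\FF(W)}L$, whereas on the twisted object $\FF(\hom(V,W))$ it is the star-action $a\star_F^{}L'=\widehat{l}_{W}^{}(F^{(-1)}\ra_A^{}a)\bullet_{V,W,W}^{}\big(F^{(-2)}\ra_{\hom(V,W)}^{}L'\big)$, where $\widehat{l}$ denotes the ${}^H_{}\AAA$-morphism of Lemma \ref{lem:leftAactionvsrepresentation}. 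The computation is then a three-step reduction. First, the second diagram of Proposition \ref{propo:deformedevcirv}, together with the explicit coherence map $\varphi$ from (\ref{eqn:coherencemapstensor}), rewrites
\begin{flalign*}
\gamma_{V,W}^{}\big(\widehat{l}_{W_F^{}}^{}(a)\bullet^F_{\FF(V),\FF(W),\FF(W)}L\big) &= \big(F^{(-1)}\ra_{\hom(W,W)}^{}\gamma_{W,W}^{}(\widehat{l}_{W_F^{}}^{}(a))\big)\bullet_{V,W,W}^{}\big(F^{(-2)}\ra_{\hom(V,W)}^{}\gamma_{V,W}^{}(L)\big)~. &
\end{flalign*}
Second, Lemma \ref{eqn:lefthatactiontwisting} identifies $\gamma_{W,W}^{}(\widehat{l}_{W_F^{}}^{}(a))=\FF(\widehat{l}_W^{})(a)=\widehat{l}_W^{}(a)$. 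Third, $H$-equivariance of $\widehat{l}_W^{}$ absorbs the remaining twist leg, $F^{(-1)}\ra_{\hom(W,W)}^{}\widehat{l}_W^{}(a)=\widehat{l}_W^{}(F^{(-1)}\ra_A^{}a)$, so the right-hand side collapses to exactly $a\star_F^{}\gamma_{V,W}^{}(L)$. Compatibility with the right $A_F^{}$-action is proven verbatim after replacing $\widehat{l}_W^{}$ by $\widehat{l}_V^{}$ and $\bullet_{V,W,W}^{}$ by $\bullet_{V,V,W}^{}$.

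The calculation is short and essentially formal; the real work has been front-loaded into Lemma \ref{eqn:lefthatactiontwisting} and the second part of Proposition \ref{propo:deformedevcirv}, which are precisely the statements that trade the twisted composition $\bullet^F$ and the twisted representation $\widehat{l}_{W_F^{}}^{}$ for their undeformed counterparts. I expect the only delicate point to be the bookkeeping of the twist legs $F^{(-1)},F^{(-2)}$ and of the associator insertions hidden inside $\bullet^F$ and $\star_F^{}$: one must verify that the pattern produced by $\varphi$ matches the pattern defining the star-action on $\FF(\hom(V,W))$ with no residual associator surviving. That this works out is ensured by the counital normalization (\ref{eqn:twistcondition}) of the cochain twist together with the compatibility already encoded in Proposition \ref{propo:deformedevcirv}.
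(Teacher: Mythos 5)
Your proposal is correct and follows essentially the same route as the paper: the left $A_F^{}$-linearity is obtained by combining the second diagram of Proposition \ref{propo:deformedevcirv} with Lemma \ref{eqn:lefthatactiontwisting} and the $H$-equivariance of $\widehat{l}_W^{}$, and the right action is handled analogously. The only cosmetic difference is that the paper explicitly verifies that $\gamma_{V,W}^{-1}$ is also a bimodule morphism by applying it to the derived identities, whereas you invoke the (valid) fact that a bijective bimodule morphism is automatically a bimodule isomorphism.
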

\begin{proof}
Using the second diagram in Proposition \ref{propo:deformedevcirv}
and Lemma \ref{eqn:lefthatactiontwisting} we can show that $\gamma_{V,W}^{} $ is an 
${}^{H_F^{}}_{}{}^{}_{A_F^{}}\MMM^{}_{A_F^{}}$-morphism. Explicitly, one has
\begin{flalign}
\nn \gamma_{V,W}^{} \big(a\,L\big) &= \gamma_{V,W}^{}\big(\, \widehat{l}_{W_F^{}}^{}(a)\bullet_{\FF(V),\FF(W),\FF(W)}^F L\big)\\[4pt]
\nn &=\big(F^{(-1)}\ra_{\mathrm{end}(W)}^{} \gamma_{W,W}^{}(\, \widehat{l}_{W_F^{}}^{}(a))\big)\bullet_{V,W,W}^{}\big(F^{(-2)}\ra_{\hom(V,W)}^{}\gamma_{V,W}^{}(L)\big)\\[4pt]
\nn &=\big(F^{(-1)}\ra_{\mathrm{end}(W)}^{} \widehat{l}_{W}^{}(a)\big)\bullet_{V,W,W}^{}\big(F^{(-2)}\ra_{\hom(V,W)}^{}\gamma_{V,W}^{}(L)\big)\\[4pt]
&= a\star_F^{} \gamma_{V,W}^{}(L)~,
\end{flalign}
for all $a\in A_F^{}$ and $L\in\hom_F^{}(\FF(V),\FF(W))$. By a similar calculation one also finds
that $\gamma_{V,W}^{} \big(L\,a\big) = \gamma_{V,W}^{}(L)\star_F^{}a $, for all $a\in A_F^{}$ and $L\in\hom_F^{}(\FF(V),\FF(W))$.
Applying the inverse ${}^{H_F^{}}_{}\MMM$-morphism
$\gamma_{V,W}^{-1}$ on these equalities and setting $L = \gamma_{V,W}^{-1}(\widetilde{L})$, for $\widetilde{L}\in\hom(V,W)$,
we find that $\gamma_{V,W}^{-1}$ is also an ${}^{H_F^{}}_{}{}^{}_{A_F^{}}\MMM^{}_{A_F^{}}$-morphism and
hence that $\gamma_{V,W}^{}$ is an ${}^{H_F^{}}_{}{}^{}_{A_F^{}}\MMM^{}_{A_F^{}}$-isomorphism.
\end{proof}
\begin{rem}\label{rem:deformedevcirvA}
If $V,W,X$ are any three objects in ${}^H_{}{}^{}_{A}\MMM^{}_{A}$, then
by the fact that the evaluation and composition morphisms descend to the quotients,
the commutative diagrams in Proposition \ref{propo:deformedevcirv}
induce the commutative diagram
\begin{subequations}
\begin{flalign}\label{eqn:deformedevcirvA}
\xymatrix{
\ar[d]_-{\gamma_{\bol{V},\bol{W}}^{}\otimes^{}_{A_F^{}}\id^{}_{\FF(\bol{V})}} \hom_F^{}(\FF(\bol{V}),\FF(\bol{W}))\otimes_{A_F^{}}^{}
 \FF(\bol{V})\ar[rrr]^-{\ev^{A_F^{}}_{\FF(\bol{V}),\FF(\bol{W})}}&&&\FF(\bol{W})\\
\ar[d]_-{\varphi^{A}_{\hom(\bol{V},\bol{W}),\bol{V}}}\FF\big(\hom(\bol{V},\bol{W})\big)\otimes_{A_F^{}}^{} \FF(\bol{V})&&&\\
\FF\big(\hom(\bol{V},\bol{W})\otimes_{A}^{} \bol{V}\big)\ar[rrruu]_-{~~\FF(\ev^{A}_{\bol{V},\bol{W}})}&&&
}
\end{flalign}
of ${}^{H_F^{}}_{}{}^{}_{A_F^{}}\MMM$-morphisms and the commutative diagram
\begin{flalign}
\xymatrix{
\ar[d]_-{\gamma_{\bol{W},\bol{X}}^{} \otimes_{A_F^{}}^{}\gamma_{\bol{V},\bol{W}}^{}  }\hom_F^{}(\FF(\bol{W}),\FF(\bol{X}))\otimes_{A_F^{}}^{}\hom_F^{}(\FF(\bol{V}),\FF(\bol{W}))
\ar[rrr]^-{\bullet^{A_F^{}}_{\FF(\bol{V}),\FF(\bol{W}),\FF(\bol{X})}} &&&\hom_F^{}(\FF(\bol{V}),\FF(\bol{X}))
\ar[dd]^-{\gamma_{\bol{V},\bol{X}}^{}}\\
\ar[d]_-{\varphi_{\hom(\bol{W},\bol{X}) , \hom(\bol{V},\bol{W}) }^{A}}\FF\big(\hom(\bol{W},\bol{X})\big)\otimes_{A_F^{}}^{}\FF\big(\hom(\bol{V},\bol{W})\big)&&&\\
\FF\big(\hom(\bol{W},\bol{X}) \otimes_{A}^{} \hom(\bol{V},\bol{W})\big)\ar[rrr]_-{\FF(\bullet^{A}_{\bol{V},\bol{W},\bol{X}})}&&&\FF\big(\hom(\bol{V},\bol{X})\big)
}
\end{flalign}
\end{subequations}
in ${}^{H_F^{}}_{}{}^{}_{A_F^{}}\MMM^{}_{A_F^{}}$.
\end{rem}

\subsection{Internal homomorphisms}

Let us recall from Lemma \ref{lem:curryingHAMAtmp} that the functor $\hom$ given in (\ref{eqn:internalhomHAMAtemp})
is {\em not} an internal $\hom$-functor for the monoidal category ${}^H_{}{}^{}_{A}\MMM_{A}^{}$.
The goal of this subsection it to show that
the correct internal $\hom$-objects $\hom_A^{}(V,W)$ 
in ${}^H_{}{}^{}_{A}\MMM_{A}^{}$ can be obtained from the objects
$\hom(\bol{V},\bol{W})$ in ${}^H_{}{}^{}_{A}\MMM_{A}^{}$ in terms of an equalizer.
Roughly speaking, the idea behind our  equalizer is that we want to determine an
object $\hom_A^{}(V,W)$ (together with a  monomorphism $\hom_A^{}(V,W) \to \hom(V,W)$)
 in ${}^H_{}{}^{}_{A}\MMM_{A}^{}$ such that
the evaluation morphisms $\ev_{V,W}^{A}$ given in (\ref{eqn:evalquotient}) induce to
${}^H_{}{}^{}_{A}\MMM^{}_{A}$-morphisms on $\hom_A^{}(V,W)\otimes_A^{}V$.
 (Recall that $\ev_{V,W}^{A} :\hom(V,W)\otimes_A^{}V\to W$ is only an ${}^H_{}{}^{}_{A}\MMM$-morphism.)
The latter property may be interpreted as a right $A$-linearity condition for the internal homomorphisms
$\hom_A^{}(V,W)$. Let us now formalize this idea:
For any object  $(\bol{V},\bol{W})$ in
 $\big({}^{H}_{}{}^{}_{\bol{A}}\MMM_{\bol{A}}^{}\big)^\mathrm{op}\times
 {}^{H}_{}{}^{}_{\bol{A}}\MMM_{\bol{A}}^{}$ we have two parallel 
${}^{H}_{}{}^{}_{\bol{A}}\MMM$-morphisms
 \begin{subequations}\label{eqn:rightlinhommorph0}
 \begin{flalign}
 \ev_{\bol{V},\bol{W}}^{\bol{A}} \circ \big(\id_{\hom(\bol{V},\bol{W})}^{}\otimes_{A}^{} r_{\bol{V}}^{}\big)& : 
 \hom(\bol{V},\bol{W})\otimes_{\bol{A}}^{} (\bol{V}\otimes \bol{A}) \longrightarrow \bol{W}~,\\[4pt]
 r_{\bol{W}}^{} \circ \big(\ev_{\bol{V},\bol{W}}^{\bol{A}} \otimes \id_{\bol{A}}^{}\big)\circ 
 \Phi^{-1}_{\hom(\bol{V},\bol{W}),\bol{V},\bol{A}}&:
 \hom(\bol{V},\bol{W})\otimes_{\bol{A}}^{} (\bol{V}\otimes \bol{A}) \longrightarrow \bol{W}~.
 \end{flalign}
 \end{subequations}
Using Lemma \ref{lem:curryingHAMAtmp} we can define the
two parallel  ${}^{H}_{}{}^{}_{\bol{A}}\MMM^{}_{\bol{A}}$-morphisms
\begin{subequations}\label{eqn:rightlinhommorph}
\begin{flalign}
(\mathrm{ev\text{\underline{~~}}r})_{\hom(\bol{V},\bol{W})}^{} &:=\zeta_{\hom(\bol{V},\bol{W}),\bol{V}\otimes \bol{A},\bol{W}}^{\bol{A}}\Big( \ev_{\bol{V},\bol{W}}^{\bol{A}} \circ \big(\id_{\hom(\bol{V},\bol{W})}^{}\otimes_{\bol{A}}^{} r_{\bol{V}}^{}\big)\Big)~,\\[4pt]
(\mathrm{r\text{\underline{~~}}ev})_{\hom(\bol{V},\bol{W})}^{}&:=\zeta_{\hom(\bol{V},\bol{W}),\bol{V}\otimes \bol{A},\bol{W}}^{\bol{A}}\Big(r_{\bol{W}}^{} \circ \big(\ev_{\bol{V},\bol{W}}^{\bol{A}} \otimes \id_{\bol{A}}^{}\big)\circ \Phi^{-1}_{\hom(\bol{V},\bol{W}),\bol{V},\bol{A}}\Big)~,
\end{flalign}
\end{subequations}
from $\hom(V,W)$ to $\hom(V\otimes A,W)$.
We define the object $\hom_{\bol{A}}^{}(\bol{V},\bol{W})$ (together with the monomorphism  
$\iota_{\bol{V},\bol{W}}^{\bol{A}} : \hom_{\bol{A}}^{}(\bol{V},\bol{W})\to \hom(\bol{V},\bol{W})$)
in ${}^{H}_{}{}^{}_{\bol{A}}\MMM_{\bol{A}}^{}$ in terms of the equalizer of the two parallel 
${}^{H}_{}{}^{}_{\bol{A}}\MMM^{}_{\bol{A}}$-morphisms (\ref{eqn:rightlinhommorph}),
i.e.
\begin{flalign}
\xymatrix{
\hom_{\bol{A}}^{}(\bol{V},\bol{W})\ar[rrr]^-{\iota_{\bol{V},\bol{W}}^{\bol{A}}}&&& \hom(\bol{V},\bol{W}) \ar@<-1ex>[rrr]_-{(\mathrm{ev\text{\underline{~~}}r})_{\hom(\bol{V},\bol{W})}^{}} \ar@<1ex>[rrr]^-{(\mathrm{r\text{\underline{~~}}ev})_{\hom(\bol{V},\bol{W})}^{}}  &&& \hom(\bol{V}\otimes \bol{A},\bol{W})
}~,
\end{flalign}
for all objects $(\bol{V},\bol{W})$ in 
$\big({}^H_{}{}^{}_{\bol{A}}\MMM^{}_{\bol{A}}\big)^{\mathrm{op}}\times {}^H_{}{}^{}_{\bol{A}}\MMM^{}_{\bol{A}}$. 
This equalizer can be represented explicitly as the kernel
\begin{flalign}\label{eqn:rightlinhom}
\hom_{\bol{A}}^{}(\bol{V},\bol{W}) = \mathrm{Ker}\big((\mathrm{ev\text{\underline{~~}}r})_{\hom(\bol{V},\bol{W})}^{} - (\mathrm{r\text{\underline{~~}}ev})_{\hom(\bol{V},\bol{W})}^{}\big)~,
\end{flalign}
in which case the monomorphism $\iota_{\bol{V},\bol{W}}^{\bol{A}} : \hom_{\bol{A}}^{}(\bol{V},\bol{W})\to \hom(\bol{V},\bol{W})$
is just the inclusion. 
Notice that $\hom_{\bol{A}}^{}(\bol{V},\bol{W})$ is an object in ${}^{H}_{}{}^{}_{\bol{A}}\MMM_{\bol{A}}^{}$ 
(because it is the kernel of an ${}^{H}_{}{}^{}_{\bol{A}}\MMM_{\bol{A}}^{}$-morphism) and that
$\iota_{\bol{V},\bol{W}}^{\bol{A}}$ is an ${}^{H}_{}{}^{}_{\bol{A}}\MMM_{\bol{A}}^{}$-morphism.
Moreover, we have
\begin{lem}\label{lem:evisrightAlin}
For any two objects $\bol{V},\bol{W}$ in ${}^{H}_{}{}^{}_{\bol{A}}\MMM_{\bol{A}}^{}$ the evaluation 
(induced to  $\hom_{\bol{A}}^{} (\bol{V},\bol{W})$)
\begin{flalign}
\ev_{\bol{V},\bol{W}}^{\bol{A}} : \hom_{\bol{A}}^{} (\bol{V},\bol{W})\otimes_{\bol{A}}^{} \bol{V}\longrightarrow \bol{W} 
\end{flalign}
is an ${}^{H}_{}{}^{}_{\bol{A}}\MMM_{\bol{A}}^{}$-morphism. 
\end{lem}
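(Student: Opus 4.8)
The plan is to observe that the evaluation $\ev^{\bol{A}}_{\bol{V},\bol{W}}$ is already known to be an ${}^H_{}{}^{}_{\bol{A}}\MMM$-morphism on all of $\hom(\bol{V},\bol{W})\otimes_{\bol{A}}^{}\bol{V}$ (this is how (\ref{eqn:evalquotient}) was obtained, and it is recorded in Remark \ref{rem:propertiesevAcircA}); restricting along the monomorphism $\iota^{\bol{A}}_{\bol{V},\bol{W}}\otimes_{\bol{A}}^{}\id^{}_{\bol{V}}$ keeps it $H$-equivariant and left $\bol{A}$-linear. Hence the only thing left to verify is right $\bol{A}$-linearity, and this is precisely the condition that the defining equalizer (\ref{eqn:rightlinhom}) was engineered to impose. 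So the first step is to convert the abstract equalizer condition into a usable pointwise identity.

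First I would precompose the two parallel ${}^H_{}{}^{}_{\bol{A}}\MMM_{\bol{A}}^{}$-morphisms (\ref{eqn:rightlinhommorph}) with $\iota^{\bol{A}}_{\bol{V},\bol{W}}$; by definition of the equalizer these agree on $\hom_{\bol{A}}^{}(\bol{V},\bol{W})$. Using naturality and injectivity of the currying maps $\zeta^{\bol{A}}_{\text{--},\text{--},\text{--}}$ from Lemma \ref{lem:curryingHAMAtmp} to uncurry, together with the relation (\ref{eqn:propertiesevAcircA}) between $\zeta^{\bol{A}}$ and $\ev^{\bol{A}}$, the equality of the two underlying ${}^H_{}{}^{}_{\bol{A}}\MMM$-morphisms (\ref{eqn:rightlinhommorph0}) on $L\in\hom_{\bol{A}}^{}(\bol{V},\bol{W})$ reads
\begin{multline*}
\ev^{\bol{A}}_{\bol{V},\bol{W}}\big(L\otimes_{\bol{A}}^{}(v\,a)\big) = \\
r_{\bol{W}}^{}\Big(\big(\ev^{\bol{A}}_{\bol{V},\bol{W}}\otimes\id^{}_{\bol{A}}\big)\,\Phi^{-1}_{\hom(\bol{V},\bol{W}),\bol{V},\bol{A}}\big(L\otimes_{\bol{A}}^{}(v\otimes a)\big)\Big)~,
\end{multline*}
for all $v\in\bol{V}$ and $a\in\bol{A}$.

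With this identity in hand I would verify the desired right $\bol{A}$-linearity by a direct computation. Expanding the right $\bol{A}$-action on the tensor product $\otimes_{\bol{A}}^{}$ by means of (\ref{eqn:tensorAidentitiesc}) gives
\[
\ev^{\bol{A}}_{\bol{V},\bol{W}}\big((L\otimes_{\bol{A}}^{}v)\,a\big)=\ev^{\bol{A}}_{\bol{V},\bol{W}}\Big(\big(\phi^{(1)}\ra_{\hom(\bol{V},\bol{W})}^{}L\big)\otimes_{\bol{A}}^{}\big((\phi^{(2)}\ra_{\bol{V}}^{}v)\,(\phi^{(3)}\ra_{\bol{A}}^{}a)\big)\Big)~.
\]
Since $\hom_{\bol{A}}^{}(\bol{V},\bol{W})$ is an object of ${}^H_{}{}^{}_{\bol{A}}\MMM_{\bol{A}}^{}$, hence an $H$-submodule of $\hom(\bol{V},\bol{W})$, the element $\phi^{(1)}\ra L$ again lies in $\hom_{\bol{A}}^{}(\bol{V},\bol{W})$, so the pointwise identity applies to it with arguments $\phi^{(2)}\ra v$ and $\phi^{(3)}\ra a$. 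Substituting turns the right-hand side into $r_{\bol{W}}^{}\big((\ev^{\bol{A}}\otimes\id^{}_{\bol{A}})\,\Phi^{-1}_{\hom(\bol{V},\bol{W}),\bol{V},\bol{A}}(\cdots)\big)$, and now the three associator legs $\phi^{(1)},\phi^{(2)},\phi^{(3)}$ produced by (\ref{eqn:tensorAidentitiesc}) are exactly inverse to those inserted by $\Phi^{-1}_{\hom(\bol{V},\bol{W}),\bol{V},\bol{A}}$; they cancel by $\phi^{-1}\,\phi=1\otimes1\otimes1$, leaving $r_{\bol{W}}^{}\big(\ev^{\bol{A}}_{\bol{V},\bol{W}}(L\otimes_{\bol{A}}^{}v)\otimes a\big)=\ev^{\bol{A}}_{\bol{V},\bol{W}}(L\otimes_{\bol{A}}^{}v)\,a$, as required.

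The hard part is not the computation itself but the bookkeeping that precedes it: correctly extracting the pointwise equalizer identity (keeping careful track of which tensor products are over $k$ and which over $\bol{A}$), and pinning down the precise form of the mixed associator $\Phi^{-1}_{\hom(\bol{V},\bol{W}),\bol{V},\bol{A}}$ so that its $\phi$-insertions visibly cancel those coming from the right $\bol{A}$-action. Should the cancellation fail to be immediate in some grouping of the tensor factors, it can be completed using the $3$-cocycle condition (\ref{eqn:quasibialgebraaxioms3}) together with the counit normalizations (\ref{eqn:quasibialgebraaxioms5}); the $H$-equivariance and left $\bol{A}$-linearity require no further work, being inherited from the already-established properties of $\ev^{\bol{A}}_{\bol{V},\bol{W}}$.
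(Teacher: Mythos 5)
Your proposal is correct and follows essentially the same route as the paper: reduce to right $\bol{A}$-linearity, uncurry the equalizer condition (\ref{eqn:rightlinhommorph}) via (\ref{eqn:propertiesevAcircA}) into the pointwise identity $\ev^{\bol{A}}_{\bol{V},\bol{W}}(L\otimes_{\bol{A}}^{}(v\,a))=\ev^{\bol{A}}_{\bol{V},\bol{W}}\big((\phi^{(-1)}\ra L)\otimes_{\bol{A}}^{}(\phi^{(-2)}\ra v)\big)\,(\phi^{(-3)}\ra a)$, then expand $(L\otimes_{\bol{A}}^{}v)\,a$ with (\ref{eqn:tensorAidentitiesc}) and cancel the associator legs via $\phi^{-1}\,\phi=1\otimes1\otimes1$. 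This is exactly the paper's ``short calculation,'' with the correct observation that $\phi^{(1)}\ra L$ stays in $\hom_{\bol{A}}^{}(\bol{V},\bol{W})$ because it is an $H$-submodule.
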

\begin{proof}
We already know that $\ev_{\bol{V},\bol{W}}^{\bol{A}}$ is an ${}^{H}_{}{}^{}_{\bol{A}}\MMM$-morphism,
 so it remains to prove right $\bol{A}$-linearity. This follows from the short calculation
 \begin{flalign}
 \nn \ev_{\bol{V},\bol{W}}^{\bol{A}}\big((L\otimes_{\bol{A}}^{} v)\,a\big) &=  
 \ev_{\bol{V},\bol{W}}^{\bol{A}}\Big((\phi^{(1)}\ra_{\hom_{A}^{}(V,W)}^{} L) \otimes_{\bol{A}}^{} \big((\phi^{(2)}\ra_V^{} v) \,(\phi^{(3)}\ra_A^{}a)\big)\Big)\\[4pt]
 &= \big(\ev_{\bol{V},\bol{W}}^{\bol{A}}\big(L\otimes_{\bol{A}}^{} v\big)\big)\,a~,
 \end{flalign}
where in the second equality we used  (\ref{eqn:propertiesevAcircA}),  (\ref{eqn:rightlinhommorph}) and (\ref{eqn:rightlinhom}).
\end{proof}
\begin{rem}\label{rem:diagramimpliesrightlinear}
Assume that we have given a $k$-submodule $\widetilde{\hom}(V,W)\subseteq \hom(V,W)$ 
which is also an object in ${}^{H}_{}{}^{}_{A}\MMM^{}_A$
with respect to the induced left $H$-module and $A$-bimodule structures. 
Then
\begin{flalign}
\ev_{\bol{V},\bol{W}}^{\bol{A}} : \widetilde{\hom} (\bol{V},\bol{W})\otimes_{\bol{A}}^{} \bol{V}\longrightarrow \bol{W} 
\end{flalign}
is an ${}^H_{}{}^{}_{A}\MMM^{}_{A}$-morphism if and only if 
$\widetilde{\hom}(V,W)\subseteq \hom_A^{}(V,W)$:
The implication ``$\Leftarrow$'' is due to Lemma \ref{lem:evisrightAlin}
and ``$\Rightarrow$'' follows from (\ref{eqn:propertiesevAcircA}),  (\ref{eqn:rightlinhommorph}), (\ref{eqn:rightlinhom})
and bijectivity of the currying maps.
\end{rem}

The assignment of the objects $\hom_{A}^{}(V,W)$ in ${}^H_{}{}^{}_{A}\MMM{}_{A}^{}$ is 
functorial. 
\begin{lem}\label{lem:homAsubfunctor}
If $H$ is a quasi-Hopf algebra $H$ and $A$ is any algebra in ${}^H_{}\MMM$, then
\begin{flalign}\label{eqn:internalhomHAMA}
\hom_A^{} :  \big({}^{H}_{}{}^{}_{\bol{A}}\MMM_{\bol{A}}^{}\big)^{\mathrm{op}}
\times {}^{H}_{}{}^{}_{\bol{A}}\MMM_{\bol{A}}^{} \longrightarrow {}^{H}_{}{}^{}_{\bol{A}}\MMM_{\bol{A}}^{}
\end{flalign}
 is a subfunctor of $\hom :  \big({}^{H}_{}{}^{}_{\bol{A}}\MMM_{\bol{A}}^{}\big)^{\mathrm{op}}
\times {}^{H}_{}{}^{}_{\bol{A}}\MMM_{\bol{A}}^{} \to {}^{H}_{}{}^{}_{\bol{A}}\MMM_{\bol{A}}^{}$.
\end{lem}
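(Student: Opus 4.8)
The plan is to exhibit $\hom_A^{}$ as a subfunctor by upgrading the objectwise inclusions $\iota_{\bol{V},\bol{W}}^{\bol{A}}$ to a natural transformation. Since $\hom_A^{}(\bol{V},\bol{W})$ is defined as the equalizer (kernel) of the two parallel ${}^{H}_{}{}^{}_{\bol{A}}\MMM_{\bol{A}}^{}$-morphisms in (\ref{eqn:rightlinhommorph}), with monomorphism $\iota_{\bol{V},\bol{W}}^{\bol{A}}$ the inclusion, it suffices to show that for every morphism $\big(f^{\mathrm{op}}:\bol{V}\to\bol{X},\,g:\bol{W}\to\bol{Y}\big)$ in $\big({}^{H}_{}{}^{}_{\bol{A}}\MMM_{\bol{A}}^{}\big)^{\mathrm{op}}\times{}^{H}_{}{}^{}_{\bol{A}}\MMM_{\bol{A}}^{}$ the ${}^{H}_{}{}^{}_{\bol{A}}\MMM_{\bol{A}}^{}$-morphism $\hom(f^\mathrm{op},g):\hom(\bol{V},\bol{W})\to\hom(\bol{X},\bol{Y})$ (shown to respect the $\bol{A}$-bimodule structure in Subsection \ref{subsec:bimodstruct}) maps the subobject $\hom_A^{}(\bol{V},\bol{W})$ into $\hom_A^{}(\bol{X},\bol{Y})$. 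Granting this, one defines $\hom_A^{}(f^\mathrm{op},g)$ as the corestriction of $\hom(f^\mathrm{op},g)$, so that $\iota^{\bol{A}}_{\bol{X},\bol{Y}}\circ\hom_A^{}(f^\mathrm{op},g)=\hom(f^\mathrm{op},g)\circ\iota^{\bol{A}}_{\bol{V},\bol{W}}$; preservation of identities and compositions, hence functoriality, then follows immediately from that of $\hom$, and the displayed commutativity is precisely naturality of $\iota^{\bol{A}}$.

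The technical heart is a naturality property of the evaluation morphism. Using the explicit formula (\ref{eqn:evaluationexplicit}), the definition (\ref{eqn:homfunctormorphi}) of $\hom(f^\mathrm{op},g)(L)=g\circ L\circ f$, and the $H$-equivariance of $f$ and $g$, I would first establish
\begin{flalign*}
\ev_{\bol{X},\bol{Y}}^{}\big(\hom(f^\mathrm{op},g)(L)\otimes x\big)=g\big(\ev_{\bol{V},\bol{W}}^{}(L\otimes f(x))\big)~,
\end{flalign*}
for all $L\in\hom(\bol{V},\bol{W})$ and $x\in\bol{X}$, where I moved $f$ through the $H$-action using $H$-equivariance and then pulled $g$ out by the same token. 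Since both sides are built from ${}^{H}_{}{}^{}_{\bol{A}}\MMM$-morphisms that factor through $\otimes_{\bol{A}}^{}$, this identity descends to the quotients and yields $\ev^{\bol{A}}_{\bol{X},\bol{Y}}\big(\hom(f^\mathrm{op},g)(L)\otimes_{\bol{A}}^{}x\big)=g\big(\ev^{\bol{A}}_{\bol{V},\bol{W}}(L\otimes_{\bol{A}}^{}f(x))\big)$.

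With this in hand I would invoke Remark \ref{rem:diagramimpliesrightlinear}. Set $\widetilde{\hom}(\bol{X},\bol{Y}):=\hom(f^\mathrm{op},g)\big(\hom_A^{}(\bol{V},\bol{W})\big)$, which is a sub-object of $\hom(\bol{X},\bol{Y})$ in ${}^{H}_{}{}^{}_{\bol{A}}\MMM_{\bol{A}}^{}$ because $\hom(f^\mathrm{op},g)$ is an ${}^{H}_{}{}^{}_{\bol{A}}\MMM_{\bol{A}}^{}$-morphism and $\hom_A^{}(\bol{V},\bol{W})$ is such an object. By the remark, $\widetilde{\hom}(\bol{X},\bol{Y})\subseteq\hom_A^{}(\bol{X},\bol{Y})$ holds if and only if $\ev^{\bol{A}}_{\bol{X},\bol{Y}}$ is right $\bol{A}$-linear on $\widetilde{\hom}(\bol{X},\bol{Y})\otimes_{\bol{A}}^{}\bol{X}$. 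This right $\bol{A}$-linearity I would read off from the descended naturality identity: the right-hand side $g\big(\ev^{\bol{A}}_{\bol{V},\bol{W}}(L\otimes_{\bol{A}}^{}f(x))\big)$ is right $\bol{A}$-linear because $\ev^{\bol{A}}_{\bol{V},\bol{W}}$ is right $\bol{A}$-linear on $\hom_A^{}(\bol{V},\bol{W})\otimes_{\bol{A}}^{}\bol{V}$ (Lemma \ref{lem:evisrightAlin}) and both $f$ and $g$ are right $\bol{A}$-linear bimodule morphisms. Thus $\hom(f^\mathrm{op},g)\big(\hom_A^{}(\bol{V},\bol{W})\big)\subseteq\hom_A^{}(\bol{X},\bol{Y})$, completing the argument. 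The main obstacle I anticipate is the associator bookkeeping when transporting the right $\bol{A}$-action through $\otimes_{\bol{A}}^{}$ in verifying that last right $\bol{A}$-linearity, since the $\bol{A}$-action on $\otimes_{\bol{A}}^{}$ carries $\phi$-insertions as in (\ref{eqn:tensorAidentitiesc}); routing everything through Remark \ref{rem:diagramimpliesrightlinear} and the naturality identity is designed precisely to confine that bookkeeping to the already-established Lemma \ref{lem:evisrightAlin} rather than redoing it by hand.
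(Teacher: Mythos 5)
Your proposal is correct and follows essentially the same route as the paper: reduce to showing $\hom(f^{\mathrm{op}},g)$ maps $\hom_A^{}(V,W)$ into $\hom_A^{}(X,Y)$, characterize that inclusion via Remark \ref{rem:diagramimpliesrightlinear} as right $A$-linearity of the induced evaluation, and deduce this from the naturality identity $\ev_{X,Y}^{A}(\hom(f^{\mathrm{op}},g)(L)\otimes_A^{}x)=g\circ\ev_{V,W}^{A}(L\otimes_A^{}f(x))$ together with Lemma \ref{lem:evisrightAlin} and the right $A$-linearity of $f$ and $g$. The only cosmetic difference is that you package the target as the image subobject $\widetilde{\hom}(X,Y)$ before invoking the remark, whereas the paper verifies the elementwise condition directly.
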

\begin{proof}
We have to show that for any morphism
$\big(f^{\mathrm{op}}: V\to X , g : W\to Y \big)$ in $\big({}^{H}_{}{}^{}_{\bol{A}}\MMM_{\bol{A}}^{}\big)^{\mathrm{op}}
\times {}^{H}_{}{}^{}_{\bol{A}}\MMM_{\bol{A}}^{}$, 
the ${}^H_{}{}^{}_{A}\MMM^{}_{A}$-morphism $\hom(f^\mathrm{op},g) : \hom(V,W) \to\hom(X,Y)$
induces to an ${}^H_{}{}^{}_{A}\MMM^{}_{A}$-morphism $\hom_{A}^{}(V,W) \to\hom_{A}^{}(X,Y)$,
i.e.\ that $\hom(f^\mathrm{op},g)(L) \in \hom_{A}^{}(X,Y)$ for all $L\in \hom_A^{}(V,W)$.
By Remark \ref{rem:diagramimpliesrightlinear} this holds provided that
\begin{multline}\label{eqn:tmphomAsubfunc}
\ev_{X,Y}^{A}\big(\hom(f^\mathrm{op},g)(L) \otimes_A^{} (x\,a) \big) = \\
\ev_{X,Y}^{A}\big(\big(\phi^{(-1)}\ra_{\hom(X,Y)}^{}\hom(f^\mathrm{op},g)(L)\big) \otimes_A^{} (\phi^{(-2)}\ra_X^{} x)\big)\,
(\phi^{(-3)}\ra_A^{} a)~,
\end{multline}
for all  $L\in \hom_A^{}(V,W)$, $x\in X$ and $a\in A$.
Using naturality of the evaluation morphisms
\begin{flalign}\label{eqn:naturalev}
\ev_{X,Y}^{A}\big( \hom(f^\mathrm{op},g)(K) \otimes_A^{} x\big) = g \circ \ev_{V,W}^{A}\big(K\otimes_A^{} f(x)\big) ~,
\end{flalign}
for all $K \in \hom(V,W)$ and $x\in X$,
the equality in (\ref{eqn:tmphomAsubfunc}) follows from the fact that $L\in \hom_A^{}(V,W)$
and Lemma \ref{lem:evisrightAlin}.
\end{proof}

With these preparations we can now show that
${}^{H}_{}{}^{}_{\bol{A}}\MMM_{\bol{A}}^{}$ is a closed monoidal category.
\begin{theo}\label{theo:HAMAclosedmonoidal}
For any quasi-Hopf algebra $H$ and any algebra $\bol{A}$ in ${}^H_{}\MMM$ the monoidal
category ${}^{H}_{}{}^{}_{\bol{A}}\MMM_{\bol{A}}^{}$ (cf.\ Proposition \ref{propo:HAMAmonoidalcategory}) 
is a closed monoidal category with internal $\hom$-functor
$\hom_{\bol{A}}^{} : \big({}^{H}_{}{}^{}_{\bol{A}}\MMM_{\bol{A}}^{}\big)^{\mathrm{op}}\times {}^{H}_{}{}^{}_{\bol{A}}\MMM_{\bol{A}}^{}
\to {}^{H}_{}{}^{}_{\bol{A}}\MMM_{\bol{A}}^{}$ described above.
\end{theo}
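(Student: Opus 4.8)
The plan is to obtain the currying isomorphism for $\hom_{\bol{A}}^{}$ by restricting the currying bijection $\zeta^{\bol{A}}_{\bol{V},\bol{W},\bol{X}}$ of Lemma \ref{lem:curryingHAMAtmp} along the two inclusions of morphism sets that distinguish $\hom_{\bol{A}}^{}$ from $\hom$. Concretely, for any three objects $\bol{V},\bol{W},\bol{X}$ in ${}^H_{}{}^{}_{\bol{A}}\MMM^{}_{\bol{A}}$ I would show that the bijection
\[
\zeta^{\bol{A}}_{\bol{V},\bol{W},\bol{X}}:\Hom_{({}^H_{}{}^{}_{\bol{A}}\MMM)}^{}(\bol{V}\otimes_{\bol{A}}^{}\bol{W},\bol{X})\longrightarrow\Hom_{({}^H_{}{}^{}_{\bol{A}}\MMM^{}_{\bol{A}})}^{}(\bol{V},\hom(\bol{W},\bol{X}))
\]
restricts to a bijection between the subset of genuine ${}^H_{}{}^{}_{\bol{A}}\MMM^{}_{\bol{A}}$-morphisms on the left (those $f$ that are in addition right $\bol{A}$-linear) and the subset of those bimodule morphisms $g$ on the right whose image lies in the subobject $\hom_{\bol{A}}^{}(\bol{W},\bol{X})\subseteq\hom(\bol{W},\bol{X})$, i.e.\ $\Hom_{({}^H_{}{}^{}_{\bol{A}}\MMM^{}_{\bol{A}})}^{}(\bol{V},\hom_{\bol{A}}^{}(\bol{W},\bol{X}))$. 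Granting this restriction, the natural isomorphism $\Hom_{({}^H_{}{}^{}_{\bol{A}}\MMM^{}_{\bol{A}})}^{}(\bol{V}\otimes_{\bol{A}}^{}\bol{W},\bol{X})\simeq\Hom_{({}^H_{}{}^{}_{\bol{A}}\MMM^{}_{\bol{A}})}^{}(\bol{V},\hom_{\bol{A}}^{}(\bol{W},\bol{X}))$ is immediate, which is precisely the defining currying property of a closed monoidal category.

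The key equivalence I would establish using the triangle identity (\ref{eqn:propertiesevAcircA}), which expresses the inverse currying as $(\zeta^{\bol{A}}_{\bol{V},\bol{W},\bol{X}})^{-1}(g)=\ev_{\bol{W},\bol{X}}^{\bol{A}}\circ(g\otimes_{\bol{A}}^{}\id_{\bol{W}}^{})$. For the direction ``$g$ factors through $\hom_{\bol{A}}^{}$ implies $f$ right $\bol{A}$-linear'', I would combine right $\bol{A}$-linearity of the bimodule morphism $g\otimes_{\bol{A}}^{}\id_{\bol{W}}^{}$ with Lemma \ref{lem:evisrightAlin}, which guarantees that $\ev_{\bol{W},\bol{X}}^{\bol{A}}$ is right $\bol{A}$-linear once corestricted to $\hom_{\bol{A}}^{}(\bol{W},\bol{X})$; the composite $f$ is then right $\bol{A}$-linear. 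For the converse, I would pass to the bimodule subobject $\mathrm{Im}(g)\subseteq\hom(\bol{W},\bol{X})$ and invoke Remark \ref{rem:diagramimpliesrightlinear}, which reduces the claim $\mathrm{Im}(g)\subseteq\hom_{\bol{A}}^{}(\bol{W},\bol{X})$ to right $\bol{A}$-linearity of $\ev_{\bol{W},\bol{X}}^{\bol{A}}$ on $\mathrm{Im}(g)\otimes_{\bol{A}}^{}\bol{W}$. Since the functor $\text{--}\otimes_{\bol{A}}^{}\bol{W}$ is a quotient of the $k$-linear tensor product, it preserves epimorphisms, so the corestriction $\bol{V}\twoheadrightarrow\mathrm{Im}(g)$ induces an epimorphism $\bol{V}\otimes_{\bol{A}}^{}\bol{W}\twoheadrightarrow\mathrm{Im}(g)\otimes_{\bol{A}}^{}\bol{W}$; right $\bol{A}$-linearity of $\ev_{\bol{W},\bol{X}}^{\bol{A}}$ on the target then transfers from right $\bol{A}$-linearity of $f=(\zeta^{\bol{A}}_{\bol{V},\bol{W},\bol{X}})^{-1}(g)$ on the source along this epimorphism.

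Finally, naturality of the restricted bijection in all three arguments follows from naturality of $\zeta^{\bol{A}}$ in Lemma \ref{lem:curryingHAMAtmp} together with the fact that $\hom_{\bol{A}}^{}$ is a subfunctor of $\hom$ by Lemma \ref{lem:homAsubfunctor}, so that the monomorphism $\iota_{\bol{W},\bol{X}}^{\bol{A}}:\hom_{\bol{A}}^{}(\bol{W},\bol{X})\to\hom(\bol{W},\bol{X})$ is a natural transformation and the restriction of a natural isomorphism to a subfunctor is again natural. I expect the main obstacle to be the converse direction of the key equivalence: one must argue carefully that right $\bol{A}$-linearity of the curried morphism forces the internal homomorphism to land in the equalizer defining $\hom_{\bol{A}}^{}$, and the cleanest route is to feed the epimorphism-preservation of $\text{--}\otimes_{\bol{A}}^{}\bol{W}$ into Remark \ref{rem:diagramimpliesrightlinear}, rather than to manipulate the associator insertions in the explicit currying formula (\ref{eqn:rightcurryingA}) by hand.
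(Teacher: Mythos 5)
Your proposal is correct and follows the same overall architecture as the paper's proof: both restrict the currying bijection $\zeta^{\bol{A}}_{\bol{V},\bol{W},\bol{X}}$ of Lemma \ref{lem:curryingHAMAtmp} and match right $\bol{A}$-linear morphisms $f:\bol{V}\otimes_{\bol{A}}^{}\bol{W}\to\bol{X}$ with those $g:\bol{V}\to\hom(\bol{W},\bol{X})$ whose image lies in $\hom_{\bol{A}}^{}(\bol{W},\bol{X})$, invoking Remark \ref{rem:diagramimpliesrightlinear} and Lemma \ref{lem:evisrightAlin} in exactly the places the paper does; your treatment of the direction ``$g$ lands in $\hom_{\bol{A}}^{}$ $\Rightarrow$ $f$ right $\bol{A}$-linear'' coincides with the paper's. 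The one genuine difference is the converse direction: the paper verifies the elementwise condition (\ref{eqn:curryAtmp}) by an explicit computation with associator insertions, using right $\bol{A}$-linearity of $f$ on one side and $H$-equivariance of $\zeta^{\bol{A}}_{\bol{V},\bol{W},\bol{X}}(f)$ on the other, whereas you transfer right $\bol{A}$-linearity of $\ev^{\bol{A}}_{\bol{W},\bol{X}}$ from $f$ along the epimorphism $\bol{V}\otimes_{\bol{A}}^{}\bol{W}\twoheadrightarrow\mathrm{Im}(g)\otimes_{\bol{A}}^{}\bol{W}$. This is sound: $\text{--}\otimes_{\bol{A}}^{}\bol{W}$ is a quotient of the $k$-module tensor product and hence preserves surjections, the corestriction $\bol{V}\to\mathrm{Im}(g)$ is an ${}^{H}_{}{}^{}_{\bol{A}}\MMM^{}_{\bol{A}}$-morphism so $\mathrm{Im}(g)$ carries the induced structures required by Remark \ref{rem:diagramimpliesrightlinear}, and precomposition with a surjective right $\bol{A}$-linear map detects right $\bol{A}$-linearity. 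Your route buys a cleaner, computation-free argument for this step at the cost of a slightly more abstract setup; the paper's explicit calculation has the advantage of exhibiting exactly how the associator insertions cancel, which is in keeping with its stated aim of keeping all coherence data explicit. Naturality is handled identically in both.
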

\begin{proof}
In the proof of Lemma \ref{lem:curryingHAMAtmp} we have shown that there exists
a natural bijection $\zeta_{V,W,X}^A : \Hom_{({}^H_{}{}^{}_{A}\MMM)}(V\otimes_A^{} W,X)
\to \Hom_{({}^H_{}{}^{}_{A}\MMM^{}_{A})}^{}(V,\hom(W,X))$, for any three objects $V,W,X$ in ${}^H_{}{}^{}_{A}\MMM^{}_{A}$. 
It remains to prove that for any three objects $V,W,X$ in ${}^H_{}{}^{}_{A}\MMM^{}_{A}$ the map
$\zeta_{V,W,X}^A $ restricts to a bijection
\begin{flalign}
\zeta_{V,W,X}^A : \Hom_{({}^H_{}{}^{}_{A}\MMM^{}_{A})}(V\otimes_A^{} W,X)
\longrightarrow \Hom_{({}^H_{}{}^{}_{A}\MMM^{}_{A})}^{}(V,\hom_A^{}(W,X))~.
\end{flalign}
Given any ${}^H_{}{}^{}_{A}\MMM^{}_{A}$-morphism $f:V\otimes_A^{} W\to X$,
we already know from Lemma \ref{lem:curryingHAMAtmp} that
$\zeta_{V,W,X}^A(f) : V\to \hom(W,X)$ is an ${}^H_{}{}^{}_{A}\MMM^{}_{A}$-morphism.  
It remains to prove that the image of $\zeta_{V,W,X}^A(f)$ lies in $\hom_A^{}(W,X)$, which
 by Remark \ref{rem:diagramimpliesrightlinear} is equivalent to the condition
\begin{multline}\label{eqn:curryAtmp}
\ev_{W,X}^A\big( \zeta_{V,W,X}^A(f)(v) \otimes_A^{} (w\,a)\big) = \\
 \ev_{W,X}^A\Big( \big(\phi^{(-1)}\ra_{\hom^{}(W,X)} 
\zeta_{V,W,X}^A(f)(v)\big) \otimes_A^{} \big(\phi^{(-2)}\ra_W^{}w\big)\Big)\,\big(\phi^{(-3)}\ra_A^{} a\big)~, 
\end{multline}
for all $v\in V$, $w\in W$ and $a\in A$. Since $f$ is an ${}^H_{}{}^{}_{A}\MMM^{}_{A}$-morphism (and in particular right $A$-linear)
the left-hand side of (\ref{eqn:curryAtmp}) simplifies as
\begin{flalign}
\nn \ev_{W,X}^A\big( \zeta_{V,W,X}^A(f)(v) \otimes_A^{} (w\,a)\big) &= f\big(v\otimes_A^{} (w\,a)\big) \\[4pt]
&=f\Big(\big(\phi^{(-1)}\ra_V^{} v \big) \otimes_A^{} \big(\phi^{(-2)}\ra_W^{} w\big)\Big) \, \big(\phi^{(-3)}\ra_A^{} a\big)~,
\end{flalign}
for all $v\in V$, $w\in W$ and $a\in A$.
Using now $H$-equivariance of $\zeta_{V,W,X}^A(f)$, the right-hand side of (\ref{eqn:curryAtmp}) can be simplified as
\begin{flalign}
\nn &\ev_{W,X}^A\Big( \big(\phi^{(-1)}\ra_{\hom^{}(W,X)} 
\zeta_{V,W,X}^A(f)(v)\big) \otimes_A^{} \big(\phi^{(-2)}\ra_W^{}w\big)\Big)\,\big(\phi^{(-3)}\ra_A^{} a\big)\\
\nn &\qquad~\qquad~\quad=\ev_{W,X}^A\Big(  
\zeta_{V,W,X}^A(f)\big(\phi^{(-1)}\ra_V^{}v\big) \otimes_A^{} \big(\phi^{(-2)}\ra_W^{}w\big)\Big)\,\big(\phi^{(-3)}\ra_A^{} a\big)\\[4pt]
&\qquad~\qquad~\quad=f\Big(\big(\phi^{(-1)}\ra_V^{} v \big) \otimes_A^{} \big(\phi^{(-2)}\ra_W^{} w\big)\Big) \, \big(\phi^{(-3)}\ra_A^{} a\big)~,
\end{flalign}
for all $v\in V$, $w\in W$ and $a\in A$, which establishes the equality (\ref{eqn:curryAtmp}).
\sk

We will now show that for any ${}^H_{}{}^{}_A\MMM^{}_A$-morphism
$g: V\to \hom_A^{}(W,X)$ the ${}^H_{}{}^{}_A\MMM$-morphism 
$(\zeta_{V,W,X}^{A})^{-1}(g) : V\otimes_A^{}W \to X$ is actually an ${}^H_{}{}^{}_A\MMM^{}_A$-morphism.
This follows from the calculation
\begin{flalign}
\nn(\zeta_{V,W,X}^{A})^{-1}(g) \big((v\otimes_A^{} w)\,a \big) &= (\zeta_{V,W,X}^{A})^{-1}(g) \Big( (\phi^{(1)} \ra_V^{}v)
\otimes_A^{} \big((\phi^{(2)}\ra_W^{}w)\,(\phi^{(3)}\ra_A^{}a) \big)\Big)\\[4pt]
\nn &=\ev_{W,X}^{A}\Big(g\big(\phi^{(1)} \ra_V^{}v\big)\otimes_A^{}\big((\phi^{(2)}\ra_W^{}w)\,(\phi^{(3)}\ra_A^{}a) \big)\Big)\\[4pt]
\nn&=\ev_{W,X}^{A}\Big(\big(\phi^{(1)} \ra_{\hom(W,X)}^{}g(v)\big)\otimes_A^{}\big((\phi^{(2)}\ra_W^{}w)\,(\phi^{(3)}\ra_A^{}a) \big)\Big)\\[4pt]
\nn &=\ev_{W,X}^{A}\Big(\big(g(v) \otimes_A^{}w\big)\,a\Big)\\[4pt]
\nn &= \ev_{W,X}^{A}\big(g(v) \otimes_A^{}w\big) \,a\\[4pt]
&=(\zeta_{V,W,X}^{A})^{-1}(g) \big(v\otimes_A^{} w \big)\,a~,
\end{flalign}
for all $v\in V$, $w\in W$ and $a\in A$. In the fifth equality we have used right $A$-linearity of $\ev_{W,X}^{A}$ when induced to $\hom_A^{}(W,X)\otimes_A^{} W$, cf.\ Lemma \ref{lem:evisrightAlin}.
 Naturality of $\zeta_{\text{--},\text{--},\text{--}}^{\bol{A}}$ and 
the fact that $(\zeta_{\bol{V},\bol{W},\bol{X}}^{\bol{A}})^{-1}$ is
the inverse of $\zeta_{\bol{V},\bol{W},\bol{X}}^{\bol{A}}$  follow from the results of 
 Lemma \ref{lem:curryingHAMAtmp}.
\end{proof}

\begin{rem}
The evaluation morphisms for the internal $\hom$-functor 
$\hom_A^{}$ described in Theorem \ref{theo:HAMAclosedmonoidal}
agree with the ${}^H_{}{}^{}_{A}\MMM^{}_{A}$-morphisms
which are induced by restricting the ${}^H_{}{}^{}_{A}\MMM$-morphisms $\ev_{V,W}^{A}$ in
 (\ref{eqn:evalquotient}) to $\hom_A^{}(V,W)\otimes_A^{}V$.
Furthermore, the composition morphisms for the internal
$\hom$-functor $\hom_A^{}$ described in Theorem \ref{theo:HAMAclosedmonoidal}
agree with the ${}^H_{}{}^{}_{A}\MMM^{}_{A}$-morphisms
which are induced by restricting the ${}^H_{}{}^{}_{A}\MMM^{}_{A}$-morphisms $\bullet_{V,W,X}^{A}$ in
 (\ref{eqn:compositionquotient}) to $\hom_A^{}(W,X)\otimes_A^{}\hom_{A}^{}(V,W)$.
 We shall therefore use the same symbols, i.e.\ 
 \begin{subequations}
 \begin{flalign}
 \ev_{V,W}^A &: \hom_A^{}(V,W)\otimes_A^{}V \longrightarrow W~,\\[4pt]
\bullet_{V,W,X}^A &: \hom_A^{}(W,X)\otimes_A^{} \hom_A^{}(V,W)\longrightarrow \hom_A^{}(V,X) ~.
 \end{flalign}
 \end{subequations}
\end{rem}

\subsection{Cochain twisting of internal homomorphisms}

Given any cochain twist $F= F^{(1)}\otimes F^{(2)}\in H\otimes H$ based on $H$ with inverse
$F^{-1} = F^{(-1)}\otimes F^{(-2)}\in H\otimes H$, Theorem \ref{theo:twistingmonoidalHAMA} implies
that the monoidal categories ${}^{H}_{}{}^{}_{\bol{A}}\MMM_{\bol{A}}^{}$ and ${}^{H_F^{}}_{}{}^{}_{\bol{A}_F^{}}\MMM_{\bol{A}_F^{}}^{}$
are equivalent; recall that we have denoted the corresponding monoidal functor by $\FF: {}^{H}_{}{}^{}_{\bol{A}}\MMM_{\bol{A}}^{}\to
{}^{H_F^{}}_{}{}^{}_{\bol{A}_F^{}}\MMM_{\bol{A}_F^{}}^{}$. We now prove that this equivalence also 
respects the internal $\hom$-functors.
Let us denote the internal $\hom$-functor for ${}^{H}_{}{}^{}_{\bol{A}}\MMM_{\bol{A}}^{}$ by $\hom_{\bol{A}}^{}$
and the one for ${}^{H_F^{}}_{}{}^{}_{\bol{A}_F^{}}\MMM_{\bol{A}_F^{}}^{}$ by $\hom_{\bol{A}_F^{}}^{}$. 
From Proposition \ref{propo:gammaisHAMAmorphism} we know that $\gamma : \hom_F^{}\circ (\FF^\mathrm{op}\times \FF)\Rightarrow
\FF\circ \hom$ is a natural isomorphism of functors from  
$\big({}^{H}_{}{}^{}_{\bol{A}}\MMM_{\bol{A}}^{}\big)^\mathrm{op}\times  {}^{H}_{}{}^{}_{\bol{A}}\MMM_{\bol{A}}^{}$
to ${}^{H_F^{}}_{}{}^{}_{\bol{A}_F^{}}\MMM_{\bol{A}_F^{}}^{}$. Moreover, Lemma \ref{lem:homAsubfunctor}
implies that $\hom_A^{}$ is a subfunctor of $\hom$ and that $\hom_{A_F^{}}^{}$ is a subfunctor
of $\hom_F^{}$. It therefore remains to prove
\begin{lem}
The components of $\gamma$ (cf.\ (\ref{eqn:gammamap})) induce to the
${}^{H_F^{}}_{}{}^{}_{A_F^{}}\MMM^{}_{A_F^{}}$-isomorphisms 
\begin{flalign}
\gamma_{V,W}^{A} : \hom_{A_F^{}}^{}(\FF(V),\FF(W)) &\longrightarrow \FF\big(\hom_A^{}(V,W)\big)~,
\end{flalign}
for all objects $(V,W)$ in $\big({}^H_{}{}^{}_{A}\MMM^{}_{A}\big)^{\mathrm{op}}\times {}^H_{}{}^{}_{A}\MMM^{}_{A}$.
\end{lem}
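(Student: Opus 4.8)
The plan is to prove that $\gamma_{V,W}^{}$ restricts to a bijection between the $k$-submodules $\hom_{A_F^{}}^{}(\FF(V),\FF(W))\subseteq \hom_F^{}(\FF(V),\FF(W))$ and $\FF(\hom_A^{}(V,W))\subseteq \FF(\hom(V,W))$. Since by Proposition \ref{propo:gammaisHAMAmorphism} the ambient map $\gamma_{V,W}^{}$ is already an ${}^{H_F^{}}_{}{}^{}_{A_F^{}}\MMM^{}_{A_F^{}}$-isomorphism, and since both $\hom_{A_F^{}}^{}(\FF(V),\FF(W))$ and $\FF(\hom_A^{}(V,W))$ are $A_F^{}$-bimodule sub-objects of their respective ambient spaces (the latter by Lemma \ref{lem:homAsubfunctor} and Theorem \ref{theo:twistingmonoidalHAMA}), establishing this bijection of underlying $k$-modules will automatically upgrade the restriction $\gamma_{V,W}^{A}$ to an ${}^{H_F^{}}_{}{}^{}_{A_F^{}}\MMM^{}_{A_F^{}}$-isomorphism. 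The single tool driving both inclusions is the characterization of Remark \ref{rem:diagramimpliesrightlinear} (an $A$-bimodule sub-object of $\hom(V,W)$ lies inside $\hom_A^{}(V,W)$ precisely when $\ev_{V,W}^{A}$ is right $A$-linear on it), combined with the first commutative diagram of Remark \ref{rem:deformedevcirvA}, which factors $\ev_{\FF(V),\FF(W)}^{A_F^{}}$ through $\FF(\ev_{V,W}^{A})$, the coherence map $\varphi_{\hom(V,W),V}^{A}$, and $\gamma_{V,W}^{}\otimes_{A_F^{}}^{}\id$.

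First I would prove the inclusion $\FF(\hom_A^{}(V,W))\subseteq \gamma_{V,W}^{}\big(\hom_{A_F^{}}^{}(\FF(V),\FF(W))\big)$. Consider the $A_F^{}$-bimodule (and $H_F^{}$-module) sub-object $\widetilde{\hom}:=\gamma_{V,W}^{-1}\big(\FF(\hom_A^{}(V,W))\big)$ of $\hom_F^{}(\FF(V),\FF(W))$, which is well defined because $\gamma_{V,W}^{}$ is an $A_F^{}$-bimodule isomorphism. By Remark \ref{rem:diagramimpliesrightlinear} applied in ${}^{H_F^{}}_{}{}^{}_{A_F^{}}\MMM^{}_{A_F^{}}$, it suffices to check that $\ev_{\FF(V),\FF(W)}^{A_F^{}}$ is right $A_F^{}$-linear on $\widetilde{\hom}\otimes_{A_F^{}}^{}\FF(V)$. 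Using the first diagram of Remark \ref{rem:deformedevcirvA}, this evaluation equals $\FF(\ev_{V,W}^{A})\circ\varphi_{\hom(V,W),V}^{A}\circ(\gamma_{V,W}^{}\otimes_{A_F^{}}^{}\id)$. On $\widetilde{\hom}$ the map $\gamma_{V,W}^{}$ takes values in $\FF(\hom_A^{}(V,W))$, so $\varphi^{A}$ lands in $\FF\big(\hom_A^{}(V,W)\otimes_A^{}V\big)$; since $\ev_{V,W}^{A}$ is right $A$-linear there by Lemma \ref{lem:evisrightAlin}, its image under the monoidal functor $\FF$ is right $A_F^{}$-linear, and the two coherence morphisms are ${}^{H_F^{}}_{}{}^{}_{A_F^{}}\MMM^{}_{A_F^{}}$-morphisms. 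The composite is therefore right $A_F^{}$-linear, giving $\widetilde{\hom}\subseteq\hom_{A_F^{}}^{}(\FF(V),\FF(W))$, which is the claimed inclusion.

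For the reverse inclusion I would exploit the symmetry under inverse twisting rather than repeat the computation. By Remark \ref{rem:twistinverse}, $F^{-1}$ is a cochain twist based on $H_F^{}$ with $(H_F^{})_{F^{-1}}^{}=H$ and $(A_F^{})_{F^{-1}}^{}=A$; its associated monoidal functor is $\FF^{-1}$, and a direct inspection of (\ref{eqn:gammamap}) shows that the corresponding internal-hom coherence isomorphism is exactly $\gamma^{-1}$ (its components $(F^{(1)}\ra\,\cdot\,)\circ(\,\cdot\,)\circ(S(F^{(2)})\ra\,\cdot\,)$ invert those of $\gamma$, using that $S$ is an anti-automorphism and $F\,F^{-1}=1$). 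Applying the inclusion just established to the twist $F^{-1}$ therefore yields $\gamma_{V,W}^{}\big(\hom_{A_F^{}}^{}(\FF(V),\FF(W))\big)\subseteq\hom_A^{}(V,W)$, the desired reverse inclusion. Together the two inclusions give the equality of $k$-submodules, whence $\gamma_{V,W}^{}$ restricts to the asserted ${}^{H_F^{}}_{}{}^{}_{A_F^{}}\MMM^{}_{A_F^{}}$-isomorphism $\gamma_{V,W}^{A}$; naturality of $\gamma^{A}$ then follows from naturality of $\gamma$ (Proposition \ref{propo:gammaisHAMAmorphism}) together with the fact that $\hom_A^{}$ and $\hom_{A_F^{}}^{}$ are subfunctors with natural inclusions (Lemma \ref{lem:homAsubfunctor}).

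The main obstacle is the reverse inclusion. The cleanest route, as above, is to recognize $\gamma^{-1}$ as the coherence isomorphism of the inverse twist so that the argument becomes genuinely symmetric; the alternative — unwinding the star products $\star_F^{}$ and the twisted associator $\phi_F^{}$ to verify the untwisted right $A$-linearity of $\ev_{V,W}^{A}$ on $\gamma_{V,W}^{}(L)$ directly — is possible but requires a lengthy manipulation of the type appearing in the well-definedness proof of Theorem \ref{theo:twistingmonoidalHAMA}, which I would avoid.
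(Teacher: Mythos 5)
Your proposal is correct, but it takes a genuinely different route from the paper. The paper establishes the inclusion $\gamma_{V,W}^{}\big(\hom_{A_F^{}}^{}(\FF(V),\FF(W))\big)\subseteq \FF(\hom_A^{}(V,W))$ first, by an explicit element computation: it reduces to the right $A$-linearity condition of Remark \ref{rem:diagramimpliesrightlinear}, then verifies it by unwinding the star-products and the twisted associator (\ref{eqn:twistedassociator}) via the diagram (\ref{eqn:deformedevcirvA}) --- precisely the ``lengthy manipulation'' you chose to avoid --- and then asserts the reverse inclusion ``by a similar argument''. You instead prove the opposite inclusion first, and you do so diagrammatically: factoring $\ev^{A_F^{}}_{\FF(V),\FF(W)}$ through $\FF(\ev^{A}_{V,W})$ and the coherence morphisms, and using that $\ev^{A}_{V,W}$ is already right $A$-linear on (the image of) $\hom_A^{}(V,W)\otimes_A^{}V$ by Lemma \ref{lem:evisrightAlin}, so that no star-product computation is needed; you then obtain the remaining inclusion by identifying $\gamma^{-1}$ with the internal-hom coherence isomorphism of the inverse twist $F^{-1}$ based on $H_F^{}$ and invoking Remark \ref{rem:twistinverse}. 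This buys a cleaner and genuinely symmetric argument in which both inclusions follow from one functorial observation, at the cost of two small verifications you correctly flag: that $\gamma^{-1}$ really is the $\gamma$-map of the inverse twist (a one-line check from (\ref{eqn:gammamap}) using $F\,F^{-1}=1$ and anti-multiplicativity of $S$), and that right $A$-linearity on a sub-bimodule is preserved by $\FF$ and by restriction along the coherence isomorphisms. One harmless imprecision: when you say $\varphi^{A}$ ``lands in $\FF(\hom_A^{}(V,W)\otimes_A^{}V)$'' you are implicitly identifying $\hom_A^{}(V,W)\otimes_A^{}V$ with its image in $\hom(V,W)\otimes_A^{}V$ under $\iota^A_{V,W}\otimes_A^{}\id_V^{}$, which need not be injective in general; since only the element-wise identity (\ref{eqn:tmpgammahomA}) is needed, this does not affect the argument, but it is worth stating the inclusion at the level of images.
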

\begin{proof}
We have to prove that the image of $\hom_{A_F^{}}^{}(\FF(V),\FF(W))\subseteq \hom_F^{}(\FF(V),\FF(W))$
under $\gamma_{V,W}^{}: \hom_F^{}(\FF(V),\FF(W))\to \FF(\hom(V,W))$ 
(cf.\ (\ref{eqn:gammamap})) lies in $\FF(\hom_A^{}(V,W))$, which
by Remark \ref{rem:diagramimpliesrightlinear} is equivalent to the condition
\begin{multline}\label{eqn:tmpgammahomA}
\ev_{V,W}^{A}\big(\gamma_{V,W}^{}(L) \otimes_A^{} (v\,a)\big) =\\
 \ev_{V,W}^{A}\Big( \big(\phi^{(-1)}\ra_{\hom(V,W)}^{} \gamma_{V,W}^{}(L)\big)\otimes_A^{} \big(\phi^{(-2)}\ra_V^{}v\big)  \Big)\,
\big(\phi^{(-3)}\ra_A^{} a\big)~,
\end{multline}
for all $L\in\hom_{A_F^{}}(\FF(V),\FF(W))$, $v\in V$ and $a\in A$.
Using the diagram in (\ref{eqn:deformedevcirvA}), the left-hand side of (\ref{eqn:tmpgammahomA})
can be simplified as
\begin{multline}
 \ev_{V,W}^{A}\big(\gamma_{V,W}^{}(L) \otimes_A^{} (v\,a)\big) =\\
\ev_{\FF(V),\FF(W)}^{A_F^{}} \Big( \big(F^{(1)}\ra_{\hom_F^{}(\FF(V),\FF(W))}^{} L\big) \otimes_{A_F^{}}^{} 
\Big(\big(F^{(2)}_{(1)}\ra_V^{} v\big)\,\big(F^{(2)}_{(2)}\ra_A^{} a\big)\Big)\Big)~.
\end{multline}
Since
$\big(F^{(2)}_{(1)}\ra_V^{} v\big)\,\big(F^{(2)}_{(2)}\ra_A^{} a\big) = 
\big(\widetilde{F}^{(1)}F^{(2)}_{(1)}\ra_V^{} v\big)\star_F^{} \big(\widetilde{F}^{(2)}F^{(2)}_{(2)}\ra_A^{} a\big)$
and $\big(F^{(1)}\ra_{\hom_F^{}(\FF(V),\FF(W))}^{} L\big)\in\hom_{A_F^{}}(\FF(V),\FF(W)) $, we can use 
the right $A_F^{}$-linearity of $\ev_{\FF(V),\FF(W)}^{A_F^{}}$ (cf.\ Lemma \ref{lem:evisrightAlin})
to pull out the element $\big(\widetilde{F}^{(2)}F^{(2)}_{(2)}\ra_A^{} a\big)$ to the right (up to an associator $\phi_F^{}$).
Using again the diagram in (\ref{eqn:deformedevcirvA}) and writing out all star-products,
the equality (\ref{eqn:tmpgammahomA}) follows from (\ref{eqn:twistedassociator}).
By a similar  argument one can easily show that the image of
$\FF(\hom_{A}^{}(V,W))\subseteq \FF(\hom(V,W))$
under $\gamma_{V,W}^{-1}:  \FF(\hom(V,W)) \to \hom_F^{}(\FF(V),\FF(W))$ 
lies in $\hom_{A_F^{}}^{}(\FF(V),\FF(W))$.
\end{proof}
In summary, we have shown
\begin{theo}\label{theo:equivalenceclosedmonoidalHAMA}
If $H$ is a quasi-Hopf algebra, $\bol{A}$ is an algebra in ${}^H_{}\MMM$ and $F\in H\otimes H$ is any cochain twist based on $H$,
then ${}^{H}_{}{}^{}_{\bol{A}}\MMM_{\bol{A}}^{}$ and ${}^{H_F^{}}_{}{}^{}_{\bol{A}_F^{}}\MMM_{\bol{A}_F^{}}^{}$
are equivalent as closed monoidal categories.
\end{theo}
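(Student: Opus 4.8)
The plan is to assemble the structural results of this section into a single closed monoidal equivalence, following the same pattern already used for Theorem~\ref{theo:HMequivalenceclosedmonoidal} at the level of ${}^H_{}\MMM$. By Theorem~\ref{theo:twistingmonoidalHAMA} we already possess an equivalence of \emph{monoidal} categories $\FF : {}^{H}_{}{}^{}_{\bol{A}}\MMM_{\bol{A}}^{}\to {}^{H_F^{}}_{}{}^{}_{\bol{A}_F^{}}\MMM_{\bol{A}_F^{}}^{}$, with coherence maps $\varphi^{\bol{A}}$ and $\psi^{\bol{A}}$. To upgrade this to an equivalence of closed monoidal categories, it remains only to exhibit a natural isomorphism relating the two internal $\hom$-functors $\hom_{\bol{A}}^{}$ and $\hom_{\bol{A}_F^{}}^{}$ which is compatible with the evaluation and composition morphisms, exactly as $\gamma$ was shown to be in Proposition~\ref{propo:deformedevcirv}.

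Such a natural isomorphism is furnished by the components $\gamma_{V,W}^{\bol{A}} : \hom_{\bol{A}_F^{}}^{}(\FF(V),\FF(W)) \to \FF\big(\hom_{\bol{A}}^{}(V,W)\big)$ produced in the Lemma immediately preceding this theorem. First I would record that these assemble into a natural isomorphism $\gamma^{\bol{A}} : \hom_{\bol{A}_F^{}}^{}\circ(\FF^{\mathrm{op}}\times\FF)\Rightarrow \FF\circ\hom_{\bol{A}}^{}$ of functors from $\big({}^{H}_{}{}^{}_{\bol{A}}\MMM_{\bol{A}}^{}\big)^{\mathrm{op}}\times{}^{H}_{}{}^{}_{\bol{A}}\MMM_{\bol{A}}^{}$ to ${}^{H_F^{}}_{}{}^{}_{\bol{A}_F^{}}\MMM_{\bol{A}_F^{}}^{}$: since $\hom_{\bol{A}}^{}$ and $\hom_{\bol{A}_F^{}}^{}$ are subfunctors of $\hom$ and $\hom_F^{}$ respectively (Lemma~\ref{lem:homAsubfunctor}), and $\gamma_{V,W}^{\bol{A}}$ is by construction the restriction of the ambient isomorphism $\gamma_{V,W}^{}$ of Proposition~\ref{propo:gammaisHAMAmorphism} along the inclusions $\iota^{\bol{A}}$, naturality of $\gamma^{\bol{A}}$ is inherited directly from naturality of $\gamma$ together with functoriality of the inclusions.

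The remaining point is compatibility of $\gamma^{\bol{A}}$ with the closed structure, namely that it intertwines $\ev_{V,W}^{\bol{A}_F^{}}$ with $\ev_{V,W}^{\bol{A}}$ and $\bullet_{V,W,X}^{\bol{A}_F^{}}$ with $\bullet_{V,W,X}^{\bol{A}}$. This I would deduce by restriction rather than by fresh computation: the commutative diagrams of Remark~\ref{rem:deformedevcirvA} already express precisely these compatibilities for the evaluation and composition morphisms attached to the larger functor $\hom$, and by the Remark following Theorem~\ref{theo:HAMAclosedmonoidal} the corresponding morphisms for $\hom_{\bol{A}}^{}$ are exactly the restrictions of $\ev_{V,W}^{\bol{A}}$ and $\bullet_{V,W,X}^{\bol{A}}$ to the internal hom-objects $\hom_{\bol{A}}^{}(V,W)$. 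Because $\gamma^{\bol{A}}$ is the restriction of $\gamma$ and each $\iota^{\bol{A}}$ is an ${}^{H}_{}{}^{}_{\bol{A}}\MMM_{\bol{A}}^{}$-morphism, the diagrams of Remark~\ref{rem:deformedevcirvA} descend verbatim to $\hom_{\bol{A}}^{}$, and the triangle relating $\gamma^{\bol{A}}$ to the currying bijections $\zeta^{\bol{A}}$ of Theorem~\ref{theo:HAMAclosedmonoidal} commutes for the same reason.

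The step I expect to be the true obstacle is not this final assembly, which is essentially bookkeeping, but the one already dispatched in the Lemma above: the genuinely nontrivial content is that $\gamma_{V,W}^{}$ carries the equalizer-defined subobject $\hom_{\bol{A}_F^{}}^{}(\FF(V),\FF(W))$ isomorphically onto $\FF\big(\hom_{\bol{A}}^{}(V,W)\big)$, i.e.\ that cochain twisting respects the weak right $\bol{A}$-linearity condition cut out by the equalizer~(\ref{eqn:rightlinhom}). Granting that compatibility of the equalizers with the twist, the closed monoidal equivalence will follow formally, in complete parallel with the argument for Theorem~\ref{theo:HMequivalenceclosedmonoidal}.
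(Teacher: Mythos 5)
Your proposal is correct and follows essentially the same route as the paper: the paper likewise obtains the theorem by combining the monoidal equivalence of Theorem~\ref{theo:twistingmonoidalHAMA} with the fact that the natural isomorphism $\gamma$ of Proposition~\ref{propo:gammaisHAMAmorphism} restricts, via the subfunctor property of Lemma~\ref{lem:homAsubfunctor} and the preceding Lemma, to a natural isomorphism $\hom_{\bol{A}_F^{}}^{}\circ(\FF^{\mathrm{op}}\times\FF)\Rightarrow\FF\circ\hom_{\bol{A}}^{}$, treating the final assembly as formal bookkeeping. You also correctly locate the genuine content in the equalizer-compatibility Lemma, which is exactly where the paper concentrates its effort (the evaluation/composition compatibilities you mention are recorded in the paper only as a Remark after the theorem).
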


\begin{rem}
The commutative diagrams in Remark \ref{rem:deformedevcirvA}
induce the commutative diagrams 
\begin{subequations}
\begin{flalign}
\xymatrix{
\ar[d]_-{\gamma_{\bol{V},\bol{W}}^{A}\otimes^{}_{A_F^{}}\id^{}_{\FF(\bol{V})}} \hom_{A_F^{}}^{}(\FF(\bol{V}),\FF(\bol{W}))\otimes_{A_F^{}}^{}
 \FF(\bol{V})\ar[rrr]^-{\ev^{A_F^{}}_{\FF(\bol{V}),\FF(\bol{W})}}&&&\FF(\bol{W})\\
\ar[d]_-{\varphi^{A}_{\hom_{A}^{}(\bol{V},\bol{W}),\bol{V}}}\FF\big(\hom_{A}^{}(\bol{V},\bol{W})\big)\otimes_{A_F^{}}^{} \FF(\bol{V})&&&\\
\FF\big(\hom_{A}^{}(\bol{V},\bol{W})\otimes_{A}^{} \bol{V}\big)\ar[rrruu]_-{~~\FF(\ev^{A}_{\bol{V},\bol{W}})}&&&
}
\end{flalign}
\begin{flalign}
\xymatrix{
\ar[d]_-{\gamma_{\bol{W},\bol{X}}^{A} \otimes_{A_F^{}}^{}\gamma_{\bol{V},\bol{W}}^{A}  }\hom_{A_F^{}}^{}(\FF(\bol{W}),\FF(\bol{X}))\otimes_{A_F^{}}^{}\hom_{A_F^{}}^{}(\FF(\bol{V}),\FF(\bol{W}))
\ar[rrr]^-{\bullet^{A_F^{}}_{\FF(\bol{V}),\FF(\bol{W}),\FF(\bol{X})}} &&&\hom_{A_F^{}}^{}(\FF(\bol{V}),\FF(\bol{X}))
\ar[dd]^-{\gamma_{\bol{V},\bol{X}}^{A}}\\
\ar[d]_-{\varphi_{\hom_{A}^{}(\bol{W},\bol{X}) , \hom_A^{}(\bol{V},\bol{W}) }^{A}}\FF\big(\hom_A^{}(\bol{W},\bol{X})\big)\otimes_{A_F^{}}^{}\FF\big(\hom_A^{}(\bol{V},\bol{W})\big)&&&\\
\FF\big(\hom_A^{}(\bol{W},\bol{X}) \otimes_{A}^{} \hom_{A}^{}(\bol{V},\bol{W})\big)\ar[rrr]_-{\FF(\bullet^{A}_{\bol{V},\bol{W},\bol{X}})}&&&\FF\big(\hom_A^{}(\bol{V},\bol{X})\big)
}
\end{flalign}
\end{subequations}
in ${}^{H_F^{}}_{}{}^{}_{A_F^{}}\MMM^{}_{A_F^{}}$.
\end{rem}


\section{\label{sec:quasitriangular}Quasitriangularity and braiding}

In this section we shall consider quasi-Hopf algebras $H$ which are also equipped with a 
quasitriangular structure (universal $R$-matrix). This additional structure allows us to
equip the representation category ${}^{H}_{}\MMM$ with a braiding, which 
turns it into a braided closed monoidal category. We use this braiding
to define a tensor product morphism $\obultimes^{}_{\text{--},\text{--},\text{--},\text{--}}$ for the
internal $\hom$-objects in ${}^H_{}\MMM$. We shall also work out in detail the compatibility
between this tensor product morphism and the composition morphism $\bullet_{\text{--},\text{--},\text{--}}^{}$
as well as the cochain twisting of $\obultimes^{}_{\text{--},\text{--},\text{--},\text{--}}$.
If $A$ is an algebra in ${}^H_{}\MMM$ for which the product is compatible
with the braiding (we call such algebras braided commutative), then the
category of symmetric $A$-bimodules in ${}^H_{}\MMM$ (i.e.\ those $A$-bimodules
for which the left and right $A$-action are identified via the braiding) also forms
a braided closed monoidal category ${}^H_{}{}^{}_{A}\MMM^{\mathrm{sym}}_{A}$. 
As before, this braiding is then used to define a tensor product morphism for the internal
$\hom$-objects in ${}^H_{}{}^{}_{A}\MMM^{\mathrm{sym}}_{A}$
which enjoys compatibility conditions with the composition morphism in ${}^H_{}{}^{}_{A}\MMM^{\mathrm{sym}}_{A}$.

\subsection{Quasitriangular quasi-Hopf algebras}

We shall use the following standard notation: 
Let $H$ be a quasi-Hopf algebra and
$X= X^{(1)} \otimes\cdots\otimes X^{(p)} \in H^{\otimes p}$ (with $p>1$ and summation understood).
For any $p$-tuple $(i_1,\dots,i_p)$ of distinct elements of $\{1,\dots,n\}$ (with $n\geq p$),
we denote by $X_{i_1,\dots,i_p}^{}$ the element of $H^{\otimes n}$ given by
\begin{flalign}
X_{i_1,\dots,i_p}^{} = Y^{(1)}\otimes \cdots \otimes Y^{(n)}~\qquad\text{(with summation understood)}~~,
\end{flalign}
where $Y^{(i_j)} = X^{(j)}$ for all $j\leq p$ and $Y^{(k)} = 1$ otherwise.
For example, if $X= X^{(1)}\otimes X^{(2)}\in H^{\otimes 2}$ and $n=3$, then
$X_{12}^{} = X^{(1)}\otimes X^{(2)}\otimes 1\in H^{\otimes 3}$ and $X_{31}^{} = X^{(2)}\otimes 1\otimes X^{(1)}\in H^{\otimes 3}$.

\begin{defi}
A {\it quasitriangular quasi-Hopf algebra} is a quasi-Hopf algebra $H$ together with an invertible element
$R\in H\otimes H$, called the {\em universal $R$-matrix}, such that
\begin{subequations}\label{eqn:Rmatrixaxioms}
\begin{flalign}
\label{eqn:Rmatrixaxioms1}\Delta^{\mathrm{op}}(h) &= R\,\Delta(h)\,R^{-1}~,\\[4pt]
\label{eqn:Rmatrixaxioms2}(\id_{H}^{}\otimes \Delta)(R) &= \phi_{231}^{-1} \, R^{}_{13}\,\phi_{213}^{}\, R_{12}^{}\, \phi_{123}^{-1}~,\\[4pt]
\label{eqn:Rmatrixaxioms3}(\Delta\otimes\id_{H}^{})(R) &= \phi_{312}^{}\,R_{13}^{} \, \phi_{132}^{-1} \,R_{23}^{}\,\phi_{123}^{}~,
\end{flalign}
\end{subequations}
for all $h\in H$. By $\Delta^{\mathrm{op}}$ we have denoted the opposite coproduct, i.e.\ if $\Delta(h) = h_{(1)}\otimes h_{(2)}$ for $h\in H$, then $\Delta^{\rm op}(h) = h_{(2)}\otimes h_{(1)}$. 
A {\it triangular quasi-Hopf algebra} is a quasitriangular quasi-Hopf algebra such that 
\begin{flalign}\label{eqn:Rmatrixaxiomstriangular}
R_{21}^{} = R^{-1}~.
\end{flalign}
\end{defi}

We shall often suppress the universal $R$-matrix and denote a quasitriangular or triangular
quasi-Hopf algebra simply by $H$; for brevity we also drop the adjective `universal' and simply refer to $R$ as an $R$-matrix.
It will further be convenient to denote
the $R$-matrix $R\in H\otimes H$ of a quasitriangular quasi-Hopf algebra
by $R = R^{(1)}\otimes R^{(2)}$ and its inverse by
$R^{-1} = R^{(-1)}\otimes R^{(-2)}$ (with summations understood). 

\begin{rem}\label{rem:2Rmatrices}
Whenever $H$ is a quasitriangular quasi-Hopf algebra with $R$-matrix
$R\in H\otimes H$, then $R^\prime := R_{21}^{-1}\in H\otimes H$ is also an $R$-matrix,
i.e.\ it satisfies the conditions in (\ref{eqn:Rmatrixaxioms}).
If $H$ is a triangular quasi-Hopf algebra then the two $R$-matrices $R$ and $R^\prime$ 
coincide, cf.\ (\ref{eqn:Rmatrixaxiomstriangular}).
\end{rem}

\subsection{Braided representation categories}

Let us recall from Section \ref{sec:HM} that the representation category ${}^H_{}\MMM$
of any quasi-Hopf algebra $H$ is a  closed monoidal category; we have denoted the monoidal functor by $\otimes$
and the internal $\hom$-functor by $\hom$. 
In the case that $H$ is a quasitriangular quasi-Hopf algebra 
this category has further structure: it is a \emph{braided} closed monoidal category.
We shall briefly review the construction of the braiding $\tau$, see also \cite{Drinfeld} and
\cite[Chapter XV]{Kassel}.
\sk

Let $H$ be a quasitriangular quasi-Hopf algebra 
and ${}^H_{}\MMM$ the closed monoidal category of
left $H$-modules. In addition to the monoidal functor $\otimes : {}^H_{}\MMM\times{}^H_{}\MMM\to {}^H_{}\MMM$
we may consider another functor describing the opposite tensor product.
Let us set $\otimes^{\mathrm{op}} := \otimes\circ \sigma:   {}^H_{}\MMM\times{}^H_{}\MMM\to {}^H_{}\MMM $,
where $\sigma: {}^H_{}\MMM\times{}^H_{}\MMM\to {}^H_{}\MMM\times {}^H_{}\MMM$ is the flip functor
acting on objects $(V,W)$ in  ${}^H_{}\MMM\times{}^H_{}\MMM$ as $\sigma(V,W) = (W,V)$
and on morphisms $\big(f:V\to X , g: W\to Y\big)$ in $ {}^H_{}\MMM\times{}^H_{}\MMM$
as $\sigma\big(f:V\to X, g:W\to Y\big) = \big(g:W\to Y, f: V\to X\big)$.
Using the $R$-matrix $R= R^{(1)}\otimes R^{(2)}\in H\otimes H$ of $H$, we can define a natural isomorphism
$\tau : \otimes \Rightarrow \otimes^{\mathrm{op}}$ by setting
\begin{flalign}\label{eqn:taumap}
\tau_{V,W}^{} : V\otimes W \longrightarrow W\otimes V~~,~~~v\otimes w \longmapsto (R^{(2)}\ra_{W}^{} w ) \otimes (R^{(1)}\ra_V^{} v)~,
\end{flalign}
for any two objects $V,W$ in ${}^H_{}\MMM$.
It follows from (\ref{eqn:Rmatrixaxioms1}) that $\tau_{V,W}^{}$ is an ${}^H_{}\MMM$-morphism, while as a direct consequence of (\ref{eqn:Rmatrixaxioms2},\ref{eqn:Rmatrixaxioms3}) 
the natural isomorphism $\tau$ satisfies the hexagon relations.
In summary, we have obtained
\begin{propo}\label{propo:HMisbraided}
For any quasitriangular quasi-Hopf algebra $H$ the category ${}^H_{}\MMM$ of left $H$-modules
is a braided closed monoidal category with braiding given by (\ref{eqn:taumap}).
\end{propo}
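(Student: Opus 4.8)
The plan is to verify the three claims asserted in the proposition: that $\tau_{V,W}$ is a well-defined $H$-module morphism, that it is a natural isomorphism, and that it satisfies the hexagon relations. Since Proposition \ref{propo:HMisbraided} already establishes that ${}^H_{}\MMM$ is a closed monoidal category, the only new data are the braiding $\tau$ and its compatibility axioms, so the work reduces to checking these against the $R$-matrix axioms \eqref{eqn:Rmatrixaxioms}.

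First I would check $H$-equivariance of each component $\tau_{V,W}$. On an element $h\ra_{\bol V\otimes\bol W}^{}(v\otimes w)=(h_{(1)}\ra_{\bol V}^{}v)\otimes(h_{(2)}\ra_{\bol W}^{}w)$ one applies $\tau_{V,W}$ and uses the quasi-cocommutativity relation \eqref{eqn:Rmatrixaxioms1} in the form $R\,\Delta(h)=\Delta^{\mathrm{op}}(h)\,R$; writing this out componentwise as $R^{(1)}\,h_{(1)}\otimes R^{(2)}\,h_{(2)}=h_{(2)}\,R^{(1)}\otimes h_{(1)}\,R^{(2)}$ and using that $\ra_{\bol V}^{}$ and $\ra_{\bol W}^{}$ are left actions yields exactly $h\ra_{\bol W\otimes\bol V}^{}\tau_{V,W}(v\otimes w)$. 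Invertibility of each component is immediate: the $R$-matrix is invertible by hypothesis, and the inverse braiding is given by $\tau_{V,W}^{-1}(w\otimes v)=(R^{(-1)}\ra_{\bol V}^{}v)\otimes(R^{(-2)}\ra_{\bol W}^{}w)$. Naturality, i.e.\ that $\tau$ commutes with morphisms $f\otimes g$, follows from $H$-equivariance of $f$ and $g$ together with the definition \eqref{eqn:taumap}, and is a routine diagram chase.

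The main obstacle, and the heart of the proof, is the verification of the two hexagon axioms, since this is where the associator $\Phi$ (built from $\phi$) interacts nontrivially with $\tau$. The relevant hexagons express $\tau_{\bol V,\bol W\otimes\bol X}$ and $\tau_{\bol U\otimes\bol V,\bol W}$ as suitable composites of the elementary braidings $\tau_{\bol V,\bol W}$, $\tau_{\bol V,\bol X}$ and the associator components $\Phi_{\text{--},\text{--},\text{--}}$ and their inverses. Translated to elements through the explicit formulas for $\Phi$ (in terms of $\phi$) and for $\tau$ (in terms of $R$), each hexagon becomes precisely the statement that the two sides agree when one substitutes the $R$-matrix coproduct identities \eqref{eqn:Rmatrixaxioms2} and \eqref{eqn:Rmatrixaxioms3}. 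Concretely, I would expand both paths of one hexagon acting on $v\otimes(w\otimes x)$, collect the resulting element of $H^{\otimes 3}$ acting on the factors, and recognize the discrepancy as the difference between $(\id_H\otimes\Delta)(R)$ and $\phi_{231}^{-1}R_{13}\phi_{213}R_{12}\phi_{123}^{-1}$, which vanishes by \eqref{eqn:Rmatrixaxioms2}; the second hexagon is handled symmetrically using \eqref{eqn:Rmatrixaxioms3}. The bookkeeping of associator insertions is the delicate part, but no genuinely new idea beyond careful tracking of the $\phi$-factors is required.

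Having assembled these three ingredients, one concludes that $\tau$ is a braiding compatible with the monoidal structure, and since ${}^H_{}\MMM$ was already shown to be closed monoidal, it is a braided closed monoidal category, which is the assertion of the proposition. I would note in passing that by Remark \ref{rem:2Rmatrices} the alternative $R$-matrix $R':=R_{21}^{-1}$ yields a second braiding, and that in the triangular case $R_{21}^{}=R^{-1}$ the braiding is a symmetry, i.e.\ $\tau_{\bol W,\bol V}\circ\tau_{\bol V,\bol W}=\id$.
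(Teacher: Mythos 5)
Your proposal is correct and follows essentially the same route as the paper, which likewise derives $H$-equivariance of $\tau_{V,W}$ from the quasi-cocommutativity axiom (\ref{eqn:Rmatrixaxioms1}) and the two hexagon relations from (\ref{eqn:Rmatrixaxioms2}) and (\ref{eqn:Rmatrixaxioms3}), with the closed monoidal structure already in place from Theorem \ref{theo:curryingHM}. The only slip is a self-referential citation: the closed monoidal structure is established by Theorem \ref{theo:curryingHM} (and Proposition 2.1 for the monoidal part), not by Proposition \ref{propo:HMisbraided} itself.
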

\begin{rem}
In general the ${}^H_{}\MMM$-morphism
$\tau_{W,V}^{}\circ \tau_{V,W}^{} : V\otimes W\to V\otimes W$
 does not coincide with the identity morphism $\id_{V\otimes W}^{}$, hence 
 the braided closed monoidal category ${}^H_{}\MMM$ is not symmetric: The inverse of $\tau_{V,W}^{} : V\otimes W\to W\otimes V$ is
given by the braiding $\tau^\prime_{W,V}: W\otimes V\to V\otimes W$
 induced by the second $R$-matrix $R^\prime := R_{21}^{-1}$, cf.\ Remark \ref{rem:2Rmatrices}.
However for a triangular quasi-Hopf algebra we have the additional property
(\ref{eqn:Rmatrixaxiomstriangular}), which implies that $R= R^\prime$ and hence
 $\tau_{W,V}^{}\circ \tau_{V,W}^{} =\id_{V\otimes W}^{}$.
Thus the representation category ${}^H_{}\MMM$ of a triangular quasi-Hopf algebra $H$ is a symmetric closed monoidal category.
\end{rem}

\subsection{\label{subsec:tensorHMhom}Tensor products of internal homomorphisms}

By Proposition \ref{propo:HMisbraided} the representation category ${}^H_{}\MMM$ of
a quasitriangular quasi-Hopf algebra $H$ 
is a braided closed monoidal category. 
For any braided closed monoidal category there is a canonical tensor product morphism
for the internal $\hom$-objects, see e.g.\ \cite[Proposition 9.3.13]{Majidbook}.
We shall now briefly review the construction of this tensor product morphism
and then study its properties in detail.
\begin{propo}\label{propo:tensormorphism}
Let $\mathscr{C}$  be any braided monoidal category with internal $\hom$-functor
$\hom: \mathscr{C}^\mathrm{op}\times \mathscr{C} \to \mathscr{C}$. Then there is
a $\mathscr{C}$-morphism
\begin{flalign}\label{eqn:obultimes1}
\obultimes_{V,W,X,Y}^{} : \hom(V,W)\otimes \hom(X,Y)\longrightarrow \hom(V\otimes X,W\otimes Y)~,
\end{flalign}
for all objects $V,W,X,Y$ in $\mathscr{C}$.
\end{propo}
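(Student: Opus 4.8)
The plan is to mimic the construction of the evaluation and composition morphisms in Proposition \ref{propo:thetaevcirc}: I would define $\obultimes_{V,W,X,Y}^{}$ by currying a suitable $\mathscr{C}$-morphism out of $\big(\hom(V,W)\otimes\hom(X,Y)\big)\otimes(V\otimes X)$. Since $\mathscr{C}$ is closed, currying provides a natural bijection between $\mathscr{C}$-morphisms $\big(\hom(V,W)\otimes\hom(X,Y)\big)\otimes(V\otimes X)\to W\otimes Y$ and $\mathscr{C}$-morphisms $\hom(V,W)\otimes\hom(X,Y)\to\hom(V\otimes X,W\otimes Y)$, so it suffices to produce an instance of the former and declare $\obultimes_{V,W,X,Y}^{}$ to be its image under this bijection. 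The natural candidate feeds the first internal hom to $V$ and the second to $X$ through the two evaluation morphisms $\ev_{V,W}^{}$ and $\ev_{X,Y}^{}$.

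First I would assemble the interchange isomorphism
\[
\big(\hom(V,W)\otimes\hom(X,Y)\big)\otimes(V\otimes X)\longrightarrow\big(\hom(V,W)\otimes V\big)\otimes\big(\hom(X,Y)\otimes X\big)~,
\]
which is the only place the braiding of Proposition \ref{propo:HMisbraided} enters. Writing $P:=\hom(V,W)$ and $Q:=\hom(X,Y)$ for brevity, this map is built from components of the associator $\Phi$ together with a single braiding $\tau_{Q,V}^{}$ that moves $Q$ past $V$: starting from $(P\otimes Q)\otimes(V\otimes X)$ one reassociates to $P\otimes\big((Q\otimes V)\otimes X\big)$, applies $\id_P^{}\otimes(\tau_{Q,V}^{}\otimes\id_X^{})$ to reach $P\otimes\big((V\otimes Q)\otimes X\big)$, and then reassociates to $(P\otimes V)\otimes(Q\otimes X)$. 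Post-composing with $\ev_{V,W}^{}\otimes\ev_{X,Y}^{}:(P\otimes V)\otimes(Q\otimes X)\to W\otimes Y$ yields the required $\mathscr{C}$-morphism, and $\obultimes_{V,W,X,Y}^{}$ is its currying. Specialising to the concrete category ${}^H_{}\MMM$, one may then read off an explicit formula in terms of $\phi$, $R$, $\alpha$ and $\beta$ by inserting (\ref{eqn:evaluationexplicit}), (\ref{eqn:taumap}) and the definition of currying.

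The main obstacle is purely bookkeeping rather than conceptual: one must verify that the chain of associators realising the interchange is independent of the path taken and that the resulting $\obultimes_{V,W,X,Y}^{}$ is natural in all four arguments. Both are guaranteed abstractly by the coherence theorem for braided monoidal categories, so no genuine calculation is needed for existence; the only point requiring care is to insert the single braiding $\tau_{Q,V}^{}$ between exactly the pair $\hom(X,Y)$ and $V$, since any other insertion would yield a morphism with the wrong source and target. It will nonetheless be worth recording the explicit associator insertions, as these are precisely the data one must track later when establishing compatibility of $\obultimes^{}_{\text{--},\text{--},\text{--},\text{--}}$ with the composition morphism $\bullet_{\text{--},\text{--},\text{--}}^{}$ and its behaviour under cochain twisting.
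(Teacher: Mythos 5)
Your construction is exactly the paper's: the proof of Proposition \ref{propo:tensormorphism} defines $\obultimes_{V,W,X,Y}^{}$ by currying the composite (\ref{eqn:obultimesdefi}), which is precisely your interchange map — associators rebracketing to expose $\hom(X,Y)\otimes V$, a single braiding $\tau_{\hom(X,Y),V}^{}$, more associators, then $\ev_{V,W}^{}\otimes\ev_{X,Y}^{}$. The only cosmetic difference is that the paper writes out the five explicit associator components rather than appealing to coherence, which, as you anticipate, is the bookkeeping needed later for the compatibility with $\bullet_{\text{--},\text{--},\text{--}}^{}$ and for cochain twisting.
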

\begin{proof}
Consider the following composition of $\mathscr{C}$-morphisms
 \begin{flalign}\label{eqn:obultimesdefi}
\xymatrix{
 \big(\hom(V,W)\otimes \hom(X,Y)\big) \otimes (V\otimes X) 
 \ar[d]^-{\Phi^{}_{\hom(V,W),\hom(X,Y),V\otimes X}}\\
 \hom(V,W)\otimes\big(\hom(X,Y)\otimes (V\otimes X)\big) 
 \ar[d]^-{\id_{\hom(V,W)}^{} \otimes \Phi^{-1}_{\hom(X,Y),V,X}}\\
\hom(V,W)\otimes\big((\hom(X,Y)\otimes V)\otimes X\big)
\ar[d]^-{\id_{\hom(V,W)}^{} \otimes(\tau_{\hom(X,Y),V}^{} \otimes \id_X^{})}\\
 \hom(V,W)\otimes\big((V\otimes \hom(X,Y))\otimes X\big) 
 \ar[d]^-{\id_{\hom(V,W)}^{} \otimes \Phi_{V,\hom(X,Y),X}^{}}\\
 \hom(V,W)\otimes\big(V\otimes (\hom(X,Y)\otimes X)\big) 
 \ar[d]^-{\Phi^{-1}_{\hom(V,W),V,\hom(X,Y)\otimes X}}\\
 \big(\hom(V,W)\otimes V\big)\otimes\big(\hom(X,Y)\otimes X\big)
 \ar[d]^-{\ev_{V,W}^{}\otimes\ev_{X,Y}^{}}\\
 W\otimes Y
 }
 \end{flalign}
 and define the $\mathscr{C}$-morphism
  $\obultimes_{V,W,X,Y}^{} : \hom(V,W)\otimes \hom(X,Y)\longrightarrow \hom(V\otimes X,W\otimes Y)$ 
  by acting with the currying map
 $\zeta_{\hom(V,W)\otimes \hom(X,Y),V\otimes X,W\otimes Y}^{}$ on the composed $\mathscr{C}$-morphism 
 in (\ref{eqn:obultimesdefi}).
\end{proof}

Recalling Example \ref{ex:endalgebra}, for any object $V$ in ${}^H_{}\MMM$ there is the internal endomorphism
algebra $\mathrm{end}(V) = \hom(V,V)$ with product given by $\mu_{\mathrm{end}(V)}^{} = \bullet_{V,V,V}^{}$
and unit $\eta_{\mathrm{end}(V)}^{} : I \to \mathrm{end}(V)\,,~c\mapsto c\,(\beta\ra_V^{}\,\cdot\,)$, i.e.\ the
unit element in $\mathrm{end}(V)$ is $1_{\mathrm{end}(V)}^{} = (\beta\ra_V^{} \,\cdot\,)$. We shall now explicitly compute
the evaluations of the internal homomorphisms $L\obultimes_{V,W,X,X}^{} 1_{\mathrm{end}(X)}^{} $ and
$1_{\mathrm{end}(V)}^{} \obultimes_{V,V,X,Y}^{} L^\prime$, for any
$L\in\hom(V,W)$ and $L^\prime \in \hom(X,Y)$, from which we can later derive properties of
$L\obultimes_{V,W,X,Y}^{} L^\prime$. Using Proposition \ref{propo:evcompproperties} (i),
 (\ref{eqn:obultimesdefi}) and the fact that the identity elements
$1_{\mathrm{end}(V)}^{}$ and $1_{\mathrm{end}(X)}^{} $ are $H$-invariant, we obtain
\begin{subequations}\label{eqn:obultimes1identity}
\begin{multline}\label{eqn:obultimes1identityr}
\ev_{V\otimes X, W\otimes X}^{}\big((L\obultimes_{V,W,X,X}^{} 1_{\mathrm{end}(X)}^{})\otimes (v\otimes x)\big)=\\
\ev_{V,W}^{}\big((\phi^{(-1)}\ra_{\hom(V,W)}^{} L ) \otimes (\phi^{(-2)}\ra_V^{} v)\big) \otimes (\phi^{(-3)}\ra_X^{} x)~
\end{multline}
and
\begin{multline}\label{eqn:obultimes1identityl}
\ev_{V\otimes X, V\otimes Y}^{}\big((1_{\mathrm{end}(V)}^{} \obultimes_{V,V,X,Y}^{} L^\prime\, ) \otimes(v\otimes x)\big) = \\
(\, \widetilde{\phi}^{(1)} \, R^{(2)} \, \phi^{(-2)} \ra_V^{} v)\otimes \ev_{X,Y}^{}\big((\, \widetilde{\phi}^{(2)} \, R^{(1)} \, \phi^{(-1)} \ra_{\hom(X,Y)}^{} L^\prime\, ) \otimes (\, \widetilde{\phi}^{(3)}\, \phi^{(-3)}\ra_X^{} x)\big)~,
\end{multline}
\end{subequations}
for all $v\in V$ and $x\in X$. As a consequence of (\ref{eqn:obultimes1identity}) it follows that
\begin{flalign}\label{eqn:obultimesidentities}
1_{\mathrm{end}(V)}^{} \obultimes_{V,V,X,X}^{} 1_{\mathrm{end}(X)}^{}  = 1_{\mathrm{end}(V\otimes X)}^{}~.
\end{flalign}
The next lemma will be very useful for proving some of our results below.
\begin{lem}\label{lem:eqn:obultimesintermsofcirc}
For any $L\in \hom(V,W)$ and $L^\prime \in \hom(X,Y)$ one has
\begin{flalign}\label{eqn:obultimesintermsofcirc}
L\obultimes_{V,W,X,Y}^{} L^\prime = \big(L\obultimes_{V,W,Y,Y}^{} 1_{\mathrm{end}(Y)}^{}\big)
\bullet_{V\otimes X,V\otimes Y,W\otimes Y}^{} \big(1_{\mathrm{end}(V)}^{}\obultimes_{V,V,X,Y}^{} L^\prime\, \big)~.
\end{flalign}
\end{lem}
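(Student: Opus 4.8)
The plan is to regard both sides of (\ref{eqn:obultimesintermsofcirc}) as the values on $L\otimes L'$ of two $\mathscr{C}$-morphisms $\hom(V,W)\otimes\hom(X,Y)\to\hom(V\otimes X,W\otimes Y)$, so that the lemma amounts to the equality of these two morphisms. Since $\obultimes_{V,W,X,Y}^{}$ is by construction the image under the currying bijection $\zeta_{\hom(V,W)\otimes\hom(X,Y),V\otimes X,W\otimes Y}^{}$ of the composite in (\ref{eqn:obultimesdefi}), and the right-hand side is likewise a composite of $\mathscr{C}$-morphisms, bijectivity of the currying maps together with Proposition \ref{propo:evcompproperties}~(i) reduces the claim to showing that the two sides agree after evaluation against an arbitrary $v\otimes x\in V\otimes X$. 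First I would evaluate the left-hand side: by Proposition \ref{propo:evcompproperties}~(i) this is precisely the composite (\ref{eqn:obultimesdefi}) applied to $(L\otimes L')\otimes(v\otimes x)$, which I would chase through once to obtain an explicit expression in terms of $\ev_{V,W}^{}$, $\ev_{X,Y}^{}$, a single application of the braiding $\tau$, and the accompanying associator insertions.

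Next I would evaluate the right-hand side. Applying the compatibility (\ref{eqn:evcompcompatibility}) between $\ev$ and $\bullet$ to the composition $\bullet_{V\otimes X,V\otimes Y,W\otimes Y}^{}$ rewrites the evaluation of $(L\obultimes_{V,W,Y,Y}^{}1_{\mathrm{end}(Y)}^{})\bullet_{V\otimes X,V\otimes Y,W\otimes Y}^{}(1_{\mathrm{end}(V)}^{}\obultimes_{V,V,X,Y}^{}L')$ on $v\otimes x$ as a nested evaluation, with one copy of the associator $\phi$ distributed over the three tensor slots. Into this I would substitute the two precomputed formulae: (\ref{eqn:obultimes1identityl}) for the inner evaluation of $1_{\mathrm{end}(V)}^{}\obultimes_{V,V,X,Y}^{}L'$ on $V\otimes X$, which is where the $R$-matrix factor enters, and (\ref{eqn:obultimes1identityr}) for the outer evaluation of $L\obultimes_{V,W,Y,Y}^{}1_{\mathrm{end}(Y)}^{}$ on the resulting element of $V\otimes Y$. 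This presents the right-hand side purely in terms of $\ev_{V,W}^{}$, $\ev_{X,Y}^{}$, one $R$-matrix, and a collection of $\phi^{(\pm\cdot)}$-insertions.

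It then remains to match the two expressions, and this bookkeeping of associator data is where I expect the real work to lie. The single $R$-matrix on the right, inherited from (\ref{eqn:obultimes1identityl}), corresponds exactly to the unique braiding $\tau$ in (\ref{eqn:obultimesdefi}), so no $R$-matrix axiom beyond what is already encoded in $\tau$ should be required; the genuine obstacle is to reorganize the $\phi$-factors acting on the slots $V$, $X$ (respectively $Y$) and on the internal-hom arguments so that both sides coincide. I would carry this out using quasi-coassociativity (\ref{eqn:quasibialgebraaxioms2}) to commute coproducts past the associators and the 3-cocycle condition (\ref{eqn:quasibialgebraaxioms3}) to collapse the nested associators produced by (\ref{eqn:evcompcompatibility}) into the single pattern dictated by (\ref{eqn:obultimesdefi}). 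Once the two evaluated expressions are seen to be equal, the reduction of the first paragraph delivers (\ref{eqn:obultimesintermsofcirc}).
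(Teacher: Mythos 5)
Your proposal follows the paper's own proof essentially verbatim: reduce to evaluation on a generic element $v\otimes x$ via Proposition \ref{propo:evcompproperties}~(i) and bijectivity of currying, compute the left-hand side directly from (\ref{eqn:obultimesdefi}), and simplify the right-hand side using Proposition \ref{propo:evcompproperties}~(ii) together with the precomputed identities (\ref{eqn:obultimes1identity}) before matching associator insertions. Your additional remarks on which quasi-Hopf axioms govern the final bookkeeping, and on the fact that no $R$-matrix axiom beyond the single braiding is needed, are accurate elaborations of what the paper leaves as "easy to check."
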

\begin{proof}
By Proposition \ref{propo:evcompproperties} (i) and bijectivity of the currying maps,
 it is enough to prove that (\ref{eqn:obultimesintermsofcirc})
holds when evaluated on a generic element $v\otimes x \in V\otimes X$.
The evaluation of the left-hand side is easily computed from (\ref{eqn:obultimesdefi}), while the evaluation of the
right-hand side can be simplified by first using Proposition \ref{propo:evcompproperties} (ii) and then
(\ref{eqn:obultimes1identity}). It is then easy to check that both expressions agree.
\end{proof}

We shall now study compatibility conditions between the tensor product morphisms
$\obultimes_{\text{--},\text{--},\text{--},\text{--}}^{}$ and the composition morphisms
$\bullet_{\text{--},\text{--},\text{--}}^{}$. The next lemma clarifies these properties
for three special cases.
\begin{lem}\label{lem:compobultimesprop}
For any $L\in \hom(V,W)$, $K \in \hom(W,X)$, $L^\prime \in \hom(X,Y)$
and $K^\prime \in \hom(Y,Z)$ one has
\begin{subequations}\label{eqn:compobultimesprop}
\begin{multline}\label{eqn:compobultimesprop1}
 \big(K\bullet_{V,W,X}^{} L\big)\obultimes_{V,X,Y,Y}^{} 1_{\mathrm{end}(Y)}^{} =\\
\big(K\obultimes_{W,X,Y,Y}^{} 1_{\mathrm{end}(Y)}^{}\big)\bullet_{V\otimes Y, W\otimes Y, X\otimes Y}^{} 
\big(L\obultimes_{V,W,Y,Y}^{} 1_{\mathrm{end}(Y)}^{}\big) ~,
 \end{multline}
 \begin{multline}\label{eqn:compobultimesprop2}
 1_{\mathrm{end}(V)}\obultimes_{V,V,X,Z}^{} \big(K^\prime\bullet_{X,Y,Z}^{} L^\prime\, \big) = \\
\big(1_{\mathrm{end}(V)}^{}\obultimes_{V,V,Y,Z}^{} K^\prime\, \big)\bullet_{V\otimes X,V\otimes Y,V\otimes Z}^{} \big(1_{\mathrm{end}(V)}^{} \obultimes_{V,V,X,Y}^{} L^\prime\, \big)~,
\end{multline}
\begin{multline}\label{eqn:compobultimesprop3}
\big(R^{(2)}\ra_{\hom(V,W)}^{} L\big)\obultimes_{V,W,X,Y}^{} \big(R^{(1)}\ra_{\hom(X,Y)}^{} L^\prime\, \big) =\\
\big( 1_{\mathrm{end}(W)}^{}\obultimes_{W,W,X,Y}^{} L^\prime\, \big) \bullet_{V\otimes X,W\otimes X, W\otimes Y}^{} \big(L\obultimes_{V,W,X,X}^{} 1_{\mathrm{end}(X)}\big)
\end{multline}
\end{subequations}
\end{lem}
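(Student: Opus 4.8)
The plan is to prove all three identities in Lemma \ref{lem:compobultimesprop} by the same strategy used for Lemma \ref{lem:eqn:obultimesintermsofcirc}: since the currying maps $\zeta$ are bijections and the evaluation morphisms determine internal homomorphisms uniquely (Proposition \ref{propo:evcompproperties}~(i)), it suffices to check that both sides agree after evaluation on a generic element of the appropriate tensor product. Thus for each of the three equations I would apply $\ev$ to both sides and reduce everything to an identity in the target object, using the explicit formulas \eqref{eqn:obultimes1identity} for the mixed evaluations together with \eqref{eqn:evcompcompatibility} to unfold the composition morphisms $\bullet$.

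For \eqref{eqn:compobultimesprop1}, I would evaluate the left-hand side on $(v\otimes y)\in V\otimes Y$ using \eqref{eqn:obultimes1identityr} (the ``right-slot identity'' case), which converts it into $\ev_{V,X}\big((\phi^{(-1)}\ra K\bullet L)\otimes(\phi^{(-2)}\ra v)\big)\otimes(\phi^{(-3)}\ra y)$; expanding $K\bullet L$ via \eqref{eqn:evcompcompatibility} then produces two nested evaluations weighted by associator legs. On the right-hand side I would unfold the outer $\bullet_{V\otimes Y,W\otimes Y,X\otimes Y}$ by \eqref{eqn:evcompcompatibility} and apply \eqref{eqn:obultimes1identityr} to each factor. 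The two sides are then matched by reorganizing the associator insertions using the 3-cocycle condition \eqref{eqn:quasibialgebraaxioms3}. Equation \eqref{eqn:compobultimesprop2} is entirely analogous but uses the ``left-slot identity'' \eqref{eqn:obultimes1identityl}, so the braiding factors $R^{(1)},R^{(2)}$ and the extra associator copy $\widetilde\phi$ will appear; here the reorganization additionally invokes the quasitriangularity axioms \eqref{eqn:Rmatrixaxioms2} or \eqref{eqn:Rmatrixaxioms3} (one of them will be needed because the $R$-matrix interacts with a coproduct through the nested $\ev_{X,Y}$).

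The genuinely hard case is \eqref{eqn:compobultimesprop3}, which mixes a left-slot and a right-slot tensor factor and carries the braiding explicitly on the left-hand side. The plan is again to evaluate on $(v\otimes x)$, but now the left-hand side is a full $\obultimes_{V,W,X,Y}$ whose evaluation I would read off directly from the defining diagram \eqref{eqn:obultimesdefi}; the right-hand side is a composition of a left-slot-identity term with a right-slot-identity term, unfolded by \eqref{eqn:evcompcompatibility} and the two special evaluations \eqref{eqn:obultimes1identity}. Matching the two will require moving an $R$-matrix past a coproduct factor, which is precisely where the axiom \eqref{eqn:Rmatrixaxioms1} ($\Delta^{\mathrm{op}}(h)=R\,\Delta(h)\,R^{-1}$) together with one of \eqref{eqn:Rmatrixaxioms2}--\eqref{eqn:Rmatrixaxioms3} enters, combined with several applications of the 3-cocycle condition to absorb the accumulated associator legs.

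I expect the main obstacle to be purely bookkeeping: keeping track of the many simultaneous insertions of $\phi^{\pm}$, $\widetilde\phi^{\pm}$ and the $R$-matrix components on both sides, and identifying the exact sequence of cocycle and quasitriangularity moves that collapses them into equality. The conceptual content is light — each identity says that $\obultimes$ factors through $\bullet$ in the expected way — but because the category is only weakly associative and non-symmetric, the associators and the $R$-matrix cannot be suppressed, so the verification is a careful term-by-term comparison rather than a short abstract argument. I would organize the computation so that \eqref{eqn:compobultimesprop1} and \eqref{eqn:compobultimesprop2} are dispatched first as warm-ups, and then reuse their structure (via Lemma \ref{lem:eqn:obultimesintermsofcirc}, which already expresses a general $\obultimes$ as a composition of the two special cases) to streamline \eqref{eqn:compobultimesprop3}.
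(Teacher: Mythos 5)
Your proposal is correct and follows essentially the same route as the paper: reduce each identity to an evaluation on generic elements (justified by bijectivity of the currying maps and Proposition \ref{propo:evcompproperties}), unfold the compositions via (\ref{eqn:evcompcompatibility}) and the special evaluations (\ref{eqn:obultimes1identity}), and collapse the resulting element identities in $H^{\otimes 4}$ using the $3$-cocycle condition (\ref{eqn:quasibialgebraaxioms3}) together with the quasitriangularity axioms (\ref{eqn:Rmatrixaxioms1})--(\ref{eqn:Rmatrixaxioms3}), invoking Lemma \ref{lem:eqn:obultimesintermsofcirc} for the third identity. This matches the paper's proof, including your anticipation that (\ref{eqn:compobultimesprop1}) needs only associator bookkeeping while (\ref{eqn:compobultimesprop2}) and (\ref{eqn:compobultimesprop3}) additionally require the $R$-matrix properties.
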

\begin{proof}
It is enough to prove that the equalities hold after evaluation on generic elements.
The equality (\ref{eqn:compobultimesprop1}) is easily proven by first evaluating both sides
and then using Proposition \ref{propo:evcompproperties} (ii), (\ref{eqn:obultimes1identityr}) and the 
$3$-cocycle condition (\ref{eqn:quasibialgebraaxioms3}) to simplify the expressions.
\sk

The equality (\ref{eqn:compobultimesprop2}) is slightly more complicated to prove.
We again evaluate both sides and use Proposition \ref{propo:evcompproperties} (ii) together with (\ref{eqn:obultimes1identityl})
to simplify the expressions. The problem then reduces to proving that
\begin{subequations}\label{eqn:tmpobultimescomp}
\begin{multline}\label{eqn:tmpobultimescomp1}
\big[(\id_H^{}\otimes\id_H^{} \otimes\Delta)(\phi)\big]_{3124}^{}\, R_{13}^{}\,\big[(\id_H^{}\otimes\id_H^{}\otimes\Delta)(\phi^{-1})\big]_{1324}^{}\\
\cdot\,
\phi_{324}^{}\,R_{23}^{} \,\phi^{-1}_{234}\,\big[(\id_H^{}\otimes\id_H^{}\otimes\Delta)(\phi)\big]_{1234}^{}
\end{multline}
is equal to
\begin{flalign}\label{eqn:tmpobultimescomp2}
\phi_{124}^{}\,\big[(\id_H^{}\otimes\Delta\otimes\id_H^{})(\phi)\big]_{3124}^{}\, \big[(\Delta\otimes\id_H^{})(R)\big]_{123}^{}\,\big[(\Delta\otimes\id_H^{}\otimes\id_H^{})(\phi^{-1})\big]^{}_{1234}~.
\end{flalign}
\end{subequations}
Multiplying both expressions in (\ref{eqn:tmpobultimescomp}) from the left by 
$\big[(\id_H^{}\otimes\Delta\otimes\id_H^{})(\phi^{-1})\big]_{3124}^{}\,\phi_{124}^{-1}$
and from the right by $\big[(\Delta\otimes\id_H^{}\otimes\id_H^{})(\phi)\big]^{}_{1234}$,
the expression (\ref{eqn:tmpobultimescomp2}) becomes $\big[(\Delta\otimes\id_H^{})(R)\big]_{123}^{}$.
Simplifying the expression (\ref{eqn:tmpobultimescomp1}) by applying the $3$-cocycle condition (\ref{eqn:quasibialgebraaxioms3}) three times,
the $R$-matrix property (\ref{eqn:Rmatrixaxioms1}) twice and then the $R$-matrix property
  (\ref{eqn:Rmatrixaxioms3}) it also becomes  $\big[(\Delta\otimes\id_H^{})(R)\big]_{123}^{}$.
 This proves (\ref{eqn:compobultimesprop2}).
\sk
 
To prove the equality (\ref{eqn:compobultimesprop3}) we again evaluate both sides 
and use Proposition \ref{propo:evcompproperties} (ii), Lemma \ref{lem:eqn:obultimesintermsofcirc} 
and (\ref{eqn:obultimes1identity}) to simplify the expressions. 
 The problem then reduces to proving that
 \begin{subequations}\label{eqn:tmpobultimescomp3}
 \begin{multline}
 \big[(\Delta\otimes\id_H^{}\otimes\id_H^{})(\phi)\big]_{2314}^{}\,\big[(\id_H^{}\otimes\Delta)(R)\big]_{123}^{}\\
 \cdot\, \big[(\id_H^{}\otimes\Delta\otimes\id_H^{})(\phi^{-1})\big]_{1234}^{}\,
\phi^{-1}_{234}\, \big[(\id_H^{}\otimes\id_H^{}\otimes\Delta)(\phi)\big]_{1234}^{}
 \end{multline}
 is equal to
 \begin{flalign}
 \big[(\id_H^{}\otimes\id_H^{}\otimes\Delta)(\phi^{-1})\big]_{2314}^{}\,\phi_{314}^{}\,R_{13}^{}\,\phi^{-1}_{134}\,
 \big[(\id_H^{}\otimes\id_H^{}\otimes\Delta)(\phi)\big]_{2134}^{}\,R_{12}^{}~.
 \end{flalign}
 \end{subequations}
 As above, this follows  by
  using the $3$-cocycle condition (\ref{eqn:quasibialgebraaxioms3}) as well as 
  the $R$-matrix properties (\ref{eqn:Rmatrixaxioms1},\ref{eqn:Rmatrixaxioms2}).
\end{proof}

With this preparation we can now prove a compatibility condition between 
 the tensor product morphisms $\obultimes_{\text{--},\text{--},\text{--},\text{--}}^{}$
 and the composition morphisms $\bullet_{\text{--},\text{--},\text{--}}^{}$.
\begin{propo}\label{propo:braidedcompHM}
Let $H$ be a quasitriangular quasi-Hopf algebra.
Then the tensor product morphisms $\obultimes_{\text{--},\text{--},\text{--},\text{--}}^{}$
satisfy the braided composition property, i.e.\ 
for any six objects $U,V,W,X,Y,Z$ in ${}^H_{}\MMM$, the ${}^H_{}\MMM$-diagram 
in (\ref{eqn:braidedcompHM})
commutes.
\end{propo}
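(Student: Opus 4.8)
The plan is to reduce commutativity of the diagram (\ref{eqn:braidedcompHM}) to an identity of evaluated internal homomorphisms, and then to assemble that identity from the building blocks already established in Lemmas \ref{lem:eqn:obultimesintermsofcirc} and \ref{lem:compobultimesprop}. Since both legs of the diagram are ${}^H_{}\MMM$-morphisms landing in an internal $\hom$-object, Proposition \ref{propo:evcompproperties}~(i) together with bijectivity of the currying maps $\zeta_{\text{--},\text{--},\text{--}}^{}$ implies that it suffices to check that the two legs agree after composition with the appropriate evaluation morphism and evaluation on a generic element. First I would therefore write out both sides applied to a generic tensor product of internal homomorphisms and a generic vector, thereby turning the commuting diagram into an equality in the target object.

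Next I would eliminate the ``full'' tensor products $\obultimes_{\text{--},\text{--},\text{--},\text{--}}^{}$ in favour of the one-sided ones $L\obultimes 1_{\mathrm{end}(\text{--})}^{}$ and $1_{\mathrm{end}(\text{--})}^{}\obultimes L^\prime$ by repeatedly invoking Lemma \ref{lem:eqn:obultimesintermsofcirc}. This splits each occurrence of $\obultimes$ into a composition $\bullet$ of two simpler factors whose evaluations are given explicitly by (\ref{eqn:obultimes1identity}). At this stage every term on both legs is a composition of one-sided tensor products, so the statement becomes an identity phrased purely in terms of the composition morphisms $\bullet_{\text{--},\text{--},\text{--}}^{}$.

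I would then bring in the three compatibility identities of Lemma \ref{lem:compobultimesprop}: (\ref{eqn:compobultimesprop1}) and (\ref{eqn:compobultimesprop2}) allow me to push the left and right one-sided tensor products through a composition, while (\ref{eqn:compobultimesprop3}) supplies precisely the braiding $R^{(1)}\otimes R^{(2)}$ that relates the two orders in which the inner and outer factors can be interleaved. Using the evaluation--composition compatibility (\ref{eqn:evcompcompatibility}) of Proposition \ref{propo:evcompproperties}~(ii) and the weak associativity of $\bullet$ from Proposition \ref{propo:evcompproperties}~(iii), both legs can be rewritten as nested evaluations of the same collection of one-sided factors, differing only by the placement of associators $\phi$ and of the $R$-matrix.

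The main obstacle, just as in the proof of Lemma \ref{lem:compobultimesprop}, will be the bookkeeping of these associator and $R$-matrix insertions in the nonassociative setting: matching the two legs should reduce to an identity in $H^{\otimes n}$ that I expect to follow from iterated application of the $3$-cocycle condition (\ref{eqn:quasibialgebraaxioms3}) together with the quasitriangularity axioms (\ref{eqn:Rmatrixaxioms}). The conceptual content---that the braiding appearing in (\ref{eqn:compobultimesprop3}) is exactly what is needed to interchange composition and tensoring---has already been isolated in Lemma \ref{lem:compobultimesprop}, so the remaining work is the explicit verification that the accumulated factors of $\phi$ and of $R$ on the two sides coincide.
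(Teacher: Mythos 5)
Your proposal is correct and follows essentially the same route as the paper: the paper's proof is precisely a direct calculation combining Lemma \ref{lem:eqn:obultimesintermsofcirc} (splitting each $\obultimes$ into one-sided factors joined by $\bullet$), the three interchange identities of Lemma \ref{lem:compobultimesprop} (with (\ref{eqn:compobultimesprop3}) supplying the braiding), and weak associativity of $\bullet$ from Proposition \ref{propo:evcompproperties}~(iii). Your preliminary reduction to evaluated identities via currying is exactly the mechanism by which those lemmas were themselves established, so it is a harmless elaboration rather than a different argument.
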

\begin{proof}
This is a direct calculation using Lemma \ref{lem:eqn:obultimesintermsofcirc}, Lemma \ref{lem:compobultimesprop}
and weak associativity of the composition morphisms
$\bullet_{\text{--},\text{--},\text{--}}^{}$, cf.\ Proposition \ref{propo:evcompproperties} (iii).
\end{proof}
\begin{sidewaystable}
\begin{flalign}\label{eqn:braidedcompHM}
\xymatrix{
\ar[d]_-{\Phi_{\hom(V,Y),\hom(X,Z), \hom(U,V)\otimes \hom(W,X)}^{}} 
\big(\hom(V,Y) \otimes \hom(X,Z) \big)\otimes \big(\hom(U,V)\otimes \hom(W,X)\big)
\ar[rrrr]^-{\obultimes_{V,Y,X,Z}^{}\otimes \obultimes_{U,V,W,X}^{}} &&&& 
\hom(V\otimes X,Y\otimes Z)\otimes \hom(U\otimes W, V\otimes X)
\ar[dddddd]^-{\bullet_{U\otimes W,V\otimes X, Y\otimes Z}^{}}\\
\ar[d]_-{\id_{\hom(V,Y)}^{} \otimes \Phi_{\hom(X,Z),\hom(U,V),\hom(W,X)}^{-1}}
\hom(V,Y)\otimes \big(\hom(X,Z) \otimes\big(\hom(U,V)\otimes \hom(W,X)\big)  \big) &&&&\\
\ar[d]_-{\id_{\hom(V,Y)}^{} \otimes (\tau_{\hom(X,Z),\hom(U,V)}^{}\otimes \id_{\hom(W,X)}^{})} 
 \hom(V,Y)\otimes \big(\big(\hom(X,Z) \otimes\hom(U,V) \big)\otimes \hom(W,X)  \big) &&&&\\
\ar[d]_-{\id_{\hom(V,Y)}^{}\otimes\Phi_{\hom(U,V),\hom(X,Z),\hom(W,X)}^{}}  
\hom(V,Y)\otimes \big(\big(\hom(U,V) \otimes\hom(X,Z) \big)\otimes \hom(W,X)  \big) &&&&\\
\ar[d]_-{\Phi^{-1}_{\hom(V,Y),\hom(U,V),\hom(X,Z) \otimes \hom(W,X) }}
 \hom(V,Y)\otimes \big(\hom(U,V) \otimes \big(\hom(X,Z) \otimes \hom(W,X) \big) \big)&&&& \\
\ar[d]_-{\bullet_{U,V,Y}^{}\otimes \bullet_{W,X,Z}^{}} 
\big(\hom(V,Y)\otimes  \hom(U,V)\big) \otimes \big(\hom(X,Z) \otimes \hom(W,X) \big)&&&&\\
\hom(U,Y) \otimes \hom(W,Z) \ar[rrrr]_-{\obultimes_{U,Y,W,Z}^{}}
&&&& \hom(U\otimes W,Y\otimes Z)
}
\end{flalign}\sk
\begin{flalign}\label{eqn:weakassobul}
\xymatrix{
\ar[d]_-{\Phi_{\hom(U,V),\hom(W,X),\hom(Y,Z)}^{}} 
\big(\hom(U,V)\otimes\hom(W,X) \big)\otimes \hom(Y,Z) 
\ar[rrrr]^-{\obultimes_{U,V,W,X}^{}\otimes \id_{\hom(Y,Z)}^{} }&& && 
\hom(U\otimes W,V\otimes X)\otimes \hom(Y,Z) 
\ar[d]^-{\obultimes_{U\otimes W,V\otimes X,Y,Z}^{}}\\
\ar[d]_-{\id_{\hom(U,V)}^{}\otimes\obultimes_{W,X,Y,Z}^{}}
\hom(U,V) \otimes\big(\hom(W,X)\otimes \hom(Y,Z)\big)&& &&
\hom\big((U\otimes W)\otimes Y,(V\otimes X)\otimes Z\big)
\ar[d]^-{\Phi_{V,X,Z}^{} \circ (\,\cdot\,)\circ \Phi_{U,W,Y}^{-1} }\\
\hom(U,V)\otimes\hom(W\otimes Y,X\otimes Z) 
\ar[rrrr]_-{\obultimes_{U,V,W\otimes Y,X\otimes Z}^{}}&& &&
\hom\big(U\otimes(W\otimes Y), V\otimes(X\otimes Z)\big)
}
\end{flalign}
\end{sidewaystable}

It remains to prove that the tensor product morphisms $\obultimes_{\text{--},\text{--},\text{--},\text{--}}^{}$ 
are weakly associative. 
\begin{propo}\label{propo:weakassoobultimes}
Let $H$ be a quasitriangular quasi-Hopf algebra.
Then the tensor product morphisms $\obultimes_{\text{--},\text{--},\text{--},\text{--}}^{}$
 are weakly associative, i.e.\ for any
six objects $U,V,W,X,Y,Z$ in ${}^H_{}\MMM$, the ${}^H_{}\MMM$-diagram
in (\ref{eqn:weakassobul}) commutes.
\end{propo}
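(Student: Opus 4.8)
The plan is to prove the weak associativity of $\obultimes_{\text{--},\text{--},\text{--},\text{--}}^{}$ by the same strategy that worked for Proposition \ref{propo:braidedcompHM}, namely reducing the commutativity of the hexagonal diagram (\ref{eqn:weakassobul}) to an equality of evaluations on generic elements. Since every arrow in the diagram is ultimately built from currying maps $\zeta$, and by Proposition \ref{propo:evcompproperties}~(i) the currying maps are bijections, it suffices to check that both composite ${}^H_{}\MMM$-morphisms agree after evaluating on a generic element $(u\otimes w)\otimes y \in (U\otimes W)\otimes Y$ and feeding in internal homomorphisms $L\in\hom(U,V)$, $L^\prime\in\hom(W,X)$, $L^{\prime\prime}\in\hom(Y,Z)$. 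Concretely, I would apply $\ev_{U\otimes(W\otimes Y),V\otimes(X\otimes Z)}^{}$ to the images of $(L\otimes L^\prime)\otimes L^{\prime\prime}$ under the two paths and show the resulting elements of $V\otimes(X\otimes Z)$ coincide.

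First I would compute the evaluation along each path by repeatedly unfolding the defining composite (\ref{eqn:obultimesdefi}) of $\obultimes_{\text{--},\text{--},\text{--},\text{--}}^{}$ together with Proposition \ref{propo:evcompproperties}~(i), which relates $\ev^{}\circ(\zeta(f)\otimes\id)$ back to $\zeta^{-1}(\zeta(f))=f$. The key computational device, exactly as in the proof of Lemma \ref{lem:compobultimesprop}, is that after both sides are evaluated one is left with an equality between two elements of $H^{\otimes n}$ (here $n$ will involve four tensor factors once the braiding $\tau$ and the associators are written out in terms of $R$ and $\phi$) acting on $u\otimes w\otimes y$ through the various $\ra_{\text{--}}^{}$. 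Stripping off the common evaluations $\ev_{U,V}^{}$, $\ev_{W,X}^{}$, $\ev_{Y,Z}^{}$, the problem reduces to a purely algebraic identity among products of components of $\phi$, $\phi^{-1}$ and $R$ in $H^{\otimes 4}$, in the same spirit as the identities (\ref{eqn:tmpobultimescomp}) and (\ref{eqn:tmpobultimescomp3}).

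The main obstacle will be bookkeeping: correctly tracking the leg-numbering of the various associator insertions on each side of (\ref{eqn:weakassobul}), since the two paths group the three internal-hom factors differently and the trailing $\Phi_{V,X,Z}^{}\circ(\,\cdot\,)\circ\Phi_{U,W,Y}^{-1}$ on the right-hand side reshuffles the output associativity. Once the identity is written in the form ``expression $A$ in $H^{\otimes 4}$ equals expression $B$ in $H^{\otimes 4}$,'' I expect it to follow entirely from the $3$-cocycle (pentagon) condition (\ref{eqn:quasibialgebraaxioms3}) applied a finite number of times, with no need for the $R$-matrix axioms beyond what is already encoded in the braiding $\tau$; indeed the braidings $\tau$ appearing on the two sides occur in matching positions, so their $R$-matrix contributions should cancel against each other, leaving a statement about $\phi$ alone. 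I would therefore organize the final step as a sequence of pentagon rewrites that transforms $A$ into $B$, analogous to the reductions carried out in Lemma \ref{lem:compobultimesprop}, and conclude that the diagram (\ref{eqn:weakassobul}) commutes.
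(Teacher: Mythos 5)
Your overall strategy---reduce commutativity of (\ref{eqn:weakassobul}) to an equality of evaluations on generic elements and then to an identity between elements of $H^{\otimes 4}$ acting through the various module structures---is viable in principle, but it contains a concrete error: the claim that the braidings on the two sides ``occur in matching positions, so their $R$-matrix contributions should cancel,'' leaving an identity provable from the pentagon condition (\ref{eqn:quasibialgebraaxioms3}) alone. This is not the case. Along the upper path of (\ref{eqn:weakassobul}) the second tensor product morphism $\obultimes_{U\otimes W,V\otimes X,Y,Z}^{}$ braids $\hom(Y,Z)$ past the \emph{tensor product} $U\otimes W$, so its $R$-matrix contribution enters through $(\id_H^{}\otimes\Delta)(R)$; along the lower path one instead braids $\hom(Y,Z)$ past $W$ alone and then $\hom(W\otimes Y,X\otimes Z)$ past $U$, so the contribution enters through $(\Delta\otimes\id_H^{})(R)$ applied to different legs. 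These expressions are literally different and can only be reconciled using the hexagon relations (\ref{eqn:Rmatrixaxioms2},\ref{eqn:Rmatrixaxioms3})---which, for a quasi-Hopf algebra, themselves involve $\phi$, so the $R$- and $\phi$-contributions do not decouple. Indeed, in the model computation you cite, Lemma \ref{lem:compobultimesprop}, the paper needs both the $3$-cocycle condition \emph{and} the $R$-matrix properties (\ref{eqn:Rmatrixaxioms1}--\ref{eqn:Rmatrixaxioms3}); the same is unavoidable here. A sequence of pentagon rewrites alone will not close the identity, so as written your final step fails.

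Separately, note that the paper does not carry out the single large evaluation you propose. It first uses Lemma \ref{lem:eqn:obultimesintermsofcirc} and (\ref{eqn:obultimesidentities}) to factor $(L\obultimes L^\prime\,)\obultimes L^{\prime\prime}$ into a $\bullet$-composition of the three ``one-slot'' morphisms $(L\obultimes 1)\obultimes 1$, $(1\obultimes L^\prime\,)\obultimes 1$ and $(1\obultimes 1)\obultimes L^{\prime\prime}$, conjugates each by $\Phi$ using Lemma \ref{lem:compobultimesprop} and Proposition \ref{propo:invarianthoms}, and then verifies three small identities via (\ref{eqn:obultimes1identity}); weak associativity of $\bullet$ (Proposition \ref{propo:evcompproperties}~(iii)) finishes the argument. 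If you want to salvage your direct approach, you should either adopt this factorization or be prepared to invoke the hexagon axioms explicitly in the $H^{\otimes 4}$ identity; the bookkeeping you worry about is precisely where those axioms enter.
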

\begin{proof}
To simplify the notation we shall drop throughout this proof all labels on $\obultimes$, $\bullet$, $\Phi$
and the unit internal endomorphisms $1$.
Following the upper path of the diagram in (\ref{eqn:weakassobul}) we obtain
\begin{flalign}
\nn  &\Phi\circ \big((L\obultimes L^\prime \, )\obultimes
L^{\prime\prime} \, \big)\circ \Phi^{-1}\\
\nn &\quad = \Phi\circ\Big(\big(\big((L\obultimes 1) \bullet
(1\obultimes L^\prime\, )\big)\obultimes 1\big)\bullet
\big((1\obultimes 1)\obultimes L^{\prime\prime}\, \big)\Big)\circ \Phi^{-1}\\[4pt]
\nn &\quad =\Phi\circ \Big(\Big(\big((L\obultimes 1)\obultimes
1\big)\bullet \big( (1\obultimes L^\prime\, ) \obultimes 1\big) \Big)
\bullet \big((1\obultimes 1)\obultimes L^{\prime\prime}\, \big)\Big)\circ \Phi^{-1}\\[4pt]
&\quad = 
\Big(\big( \Phi\circ \big( (L\obultimes 1)\obultimes 1\big)\circ
\Phi^{-1}\big)\bullet \big(\Phi\circ \big( (1\obultimes L^\prime\, ) \obultimes 1\big)\circ\Phi^{-1}\big) \Big)
\bullet \big( \Phi\circ \big((1\obultimes 1)\obultimes
L^{\prime\prime}\, \big)\circ\Phi^{-1}\big)~.\label{eqn:tmptensorhom}
\end{flalign}
In the first equality we used Lemma \ref{lem:eqn:obultimesintermsofcirc} twice
and (\ref{eqn:obultimesidentities}). The second equality follows from 
Lemma \ref{lem:compobultimesprop} and the third equality from Proposition
\ref{propo:invarianthoms} together with $H$-equivariance of $\Phi$ and $\Phi^{-1}$.
By a straightforward computation using (\ref{eqn:obultimes1identity}) one checks the equalities
\begin{subequations}
\begin{flalign}
\Phi\circ \big( (L\obultimes 1)\obultimes 1\big)\circ \Phi^{-1} &=
L\obultimes\big(1\obultimes 1\big)~,\\[4pt] 
\Phi\circ \big( (1\obultimes L^\prime\, )\obultimes 1\big)\circ
\Phi^{-1} &= 1\obultimes \big(L^\prime\obultimes 1\big)~,\\[4pt]
\Phi\circ \big( (1\obultimes 1)\obultimes L^{\prime\prime}\,
\big)\circ \Phi^{-1} &= 1\obultimes\big(1\obultimes L^{\prime\prime}\,
\big)~,
\end{flalign}
\end{subequations}
which together with (\ref{eqn:tmptensorhom}) and weak associativity of the composition morphisms
$\bullet$ (cf.\ Proposition \ref{propo:evcompproperties} (iii))
implies commutativity of the diagram in (\ref{eqn:weakassobul}).
\end{proof}

To conclude this subsection, let us recall that by Proposition \ref{propo:invarianthoms}
we can identify the morphisms in the category ${}^H_{}\MMM$ with the $H$-invariant
internal homomorphisms. It is also shown that this identification 
preserves compositions and evaluations. For a quasitriangular quasi-Hopf algebra $H$ the tensor product is also preserved.
\begin{propo}
Let $H$ be a quasitriangular quasi-Hopf algebra. Then the natural isomorphism $\vartheta: \Hom\Rightarrow \hom^{H}_{}$
(cf.\ Proposition \ref{propo:invarianthoms}) preserves tensor
products, i.e.\ for any $f\in \Hom(V,W)$ and $g\in \Hom(X,Y)$ one has
\begin{flalign}
\vartheta_{V,W}^{}(f)\obultimes_{V,W,X,Y}^{} \vartheta_{X,Y}^{}(g) = \vartheta_{V\otimes X,W\otimes Y}^{}(f\otimes g)~.
\end{flalign}
\end{propo}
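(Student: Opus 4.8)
The plan is to prove the identity by evaluating both sides on a generic element $v\otimes x\in V\otimes X$ and using the currying bijection (Theorem~\ref{theo:curryingHM}) together with the explicit formula for $\obultimes$. Since $\vartheta$ preserves evaluations and compositions (Proposition~\ref{propo:invarianthoms}~(ii)), and since $\obultimes$ is built out of $\ev$, $\Phi$ and $\tau$ by currying, the natural strategy is to reduce everything to evaluations and exploit the fact that $\vartheta_{V,W}^{}(f)$ and $\vartheta_{X,Y}^{}(g)$ are $H$-invariant internal homomorphisms. Concretely, by Proposition~\ref{propo:evcompproperties}~(i) and bijectivity of the currying maps it suffices to show that
\begin{flalign}
\nn \ev_{V\otimes X,W\otimes Y}^{}\Big(\big(\vartheta_{V,W}^{}(f)\obultimes_{V,W,X,Y}^{}\vartheta_{X,Y}^{}(g)\big)\otimes (v\otimes x)\Big)
= \ev_{V\otimes X,W\otimes Y}^{}\big(\vartheta_{V\otimes X,W\otimes Y}^{}(f\otimes g)\otimes (v\otimes x)\big)~,
\end{flalign}
for all $v\in V$ and $x\in X$.

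First I would compute the right-hand side: by Proposition~\ref{propo:invarianthoms}~(ii) it is simply $(f\otimes g)(v\otimes x)=f(v)\otimes g(x)$. Next I would unwind the left-hand side using the defining chain of morphisms~(\ref{eqn:obultimesdefi}) for $\obultimes$ composed with the explicit evaluation~(\ref{eqn:evaluationexplicit}). This produces an expression involving the associator $\phi$, the $R$-matrix $R$ and the two internal homomorphisms $\vartheta_{V,W}^{}(f)=(\beta\ra_{\bol{W}}^{}\,\cdot\,)\circ f$ and $\vartheta_{X,Y}^{}(g)=(\beta\ra_{\bol{Y}}^{}\,\cdot\,)\circ g$. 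The decisive simplification is that, by definition~(\ref{eqn:invariantsubset}), $H$-invariance lets me collapse any prefactor $h$ acting on $\vartheta(f)$ via the adjoint action into $\epsilon(h)$ times $\vartheta(f)$; this eliminates all the associator and $R$-matrix insertions that entangle $\vartheta_{V,W}^{}(f)$ with the surrounding structure. Using $H$-equivariance of $f$ and $g$ together with the quasi-antipode identities~(\ref{eqn:quasiantipodeproperties1},\ref{eqn:quasiantipodeproperties2}) and the counit conditions~(\ref{eqn:quasibialgebraaxioms5}), the tangle of $\phi$'s and the $R$-matrix should telescope down to the bare product $f(v)\otimes g(x)$, matching the right-hand side.

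A cleaner organizing device, which I would prefer, is to invoke Lemma~\ref{lem:eqn:obultimesintermsofcirc} to write
\begin{flalign}
\nn \vartheta_{V,W}^{}(f)\obultimes_{V,W,X,Y}^{}\vartheta_{X,Y}^{}(g)
= \big(\vartheta_{V,W}^{}(f)\obultimes_{V,W,Y,Y}^{} 1_{\mathrm{end}(Y)}^{}\big)
\bullet^{}_{V\otimes X,V\otimes Y,W\otimes Y}\big(1_{\mathrm{end}(V)}^{}\obultimes_{V,V,X,Y}^{}\vartheta_{X,Y}^{}(g)\big)~,
\end{flalign}
and then evaluate each factor on the relevant element using~(\ref{eqn:obultimes1identity}). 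Because $\vartheta_{V,W}^{}(f)$ and $\vartheta_{X,Y}^{}(g)$ are $H$-invariant, the associator and $R$-matrix prefactors appearing in~(\ref{eqn:obultimes1identityr}) and~(\ref{eqn:obultimes1identityl}) act trivially after applying $\epsilon$, so the two evaluations reduce (via Proposition~\ref{propo:invarianthoms}~(ii)) to $f(v)$ and $g(x)$ respectively. The composition $\bullet$ then glues these together, and by the compatibility~(\ref{eqn:evcompcompatibility}) together with~(\ref{eqn:obultimesidentities}) the result is precisely $f(v)\otimes g(x)$.

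\textbf{The main obstacle} I anticipate is bookkeeping: the raw evaluation of $\obultimes$ carries several nested associators and an $R$-matrix factor, and one must be careful that the particular components of $\phi$ that sit \emph{to the left} of $\vartheta(f)$ are exactly those annihilated by the $H$-invariance/counit argument, rather than components that act genuinely on $v$ or $x$. In other words, the crux is checking that the $H$-invariance of $\vartheta$ is strong enough to kill \emph{all} the associator and $R$-matrix insertions, which relies on the precise placement of $\beta$ and on the quasi-antipode axioms~(\ref{eqn:quasiantipodeproperties1}--\ref{eqn:quasiantipodeproperties4}); the triangularity or even quasitriangularity of $R$ is not really needed beyond the bare existence of the braiding, since $R$ only ever appears sandwiched against an invariant element and thus drops out under $\epsilon$. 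Once this cancellation is verified, the remaining steps are routine applications of the already-established evaluation and composition formulas.
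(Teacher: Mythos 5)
Your proposal is correct and follows essentially the same route as the paper: the paper likewise evaluates the uncurried morphism $\zeta^{-1}_{\hom(V,W)\otimes\hom(X,Y),V\otimes X,W\otimes Y}(\obultimes_{V,W,X,Y}^{})$ on $(\vartheta_{V,W}^{}(f)\otimes\vartheta_{X,Y}^{}(g))\otimes(v\otimes x)$, uses $H$-invariance of $\vartheta_{V,W}^{}(f)$ and $\vartheta_{X,Y}^{}(g)$ to collapse the associator and $R$-matrix insertions in (\ref{eqn:obultimesdefi}) via the counit, applies Proposition \ref{propo:invarianthoms}~(ii) to obtain $(f\otimes g)(v\otimes x)$, and then curries. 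Your alternative organization via Lemma \ref{lem:eqn:obultimesintermsofcirc} would also work but is not needed; the direct evaluation already closes the argument.
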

\begin{proof}
Recalling the definition of $\zeta_{\hom(V,W)\otimes \hom(X,Y),V\otimes X,W\otimes Y}^{-1}(\obultimes_{V,W,X,Y})$ 
in (\ref{eqn:obultimesdefi}), the fact that $\vartheta_{V,W}^{}(f)$ and $\vartheta_{X,Y}^{}(g)$ are $H$-invariant
and Proposition \ref{propo:invarianthoms} (ii) imply that
\begin{flalign}
\zeta_{\hom(V,W)\otimes \hom(X,Y),V\otimes X,W\otimes Y}^{-1}(\obultimes_{V,W,X,Y}) \Big(\big(\vartheta_{V,W}^{}(f)\otimes\vartheta_{X,Y}^{}(g)\big)\otimes (v\otimes x)\Big) = (f\otimes g)(v\otimes x)~,
\end{flalign}
for all $f\in \Hom(V,W)$, $g\in \Hom(X,Y)$, $v\in V$ and $x\in X$. Using now the explicit expression
for the currying map (\ref{eqn:rightcurrying}) gives the result.
\end{proof}

\subsection{Cochain twisting of braidings}

Quasitriangular quasi-Hopf algebras can be deformed by cochain twists.
\begin{theo}\label{theo:deformationquasitriangular}
If $F\in H\otimes H$ is any cochain twist based on a quasitriangular quasi-Hopf algebra
$H$ with $R$-matrix $R\in H\otimes H$, then the quasi-Hopf algebra $H_F^{}$
of Theorem \ref{theo:twistingofhopfalgebras} is quasitriangular 
with $R$-matrix
\begin{flalign}\label{eqn:twistedRmatrix}
R_F^{} := F_{21}^{} \,R\,F^{-1}~.
\end{flalign}
Moreover, $H_F^{}$ is triangular if and only if $H$ is triangular.
\end{theo}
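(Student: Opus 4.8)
The plan is to verify the three $R$-matrix axioms (\ref{eqn:Rmatrixaxioms1})--(\ref{eqn:Rmatrixaxioms3}) for the candidate $R_F^{} = F_{21}^{}\,R\,F^{-1}$ relative to the twisted coproduct and associator of Theorem \ref{theo:twistingofhopfalgebras}, and then to settle the triangular case by a short direct computation. It is worth first recording how the formula (\ref{eqn:twistedRmatrix}) is forced. Transporting the braiding $\tau$ of (\ref{eqn:taumap}) along the monoidal equivalence $\FF:{}^H_{}\MMM\to{}^{H_F^{}}_{}\MMM$ of Theorem \ref{theo:defmonoidcatHM}, i.e.\ setting $\tau^F_{\FF(V),\FF(W)} := \varphi_{W,V}^{-1}\circ\FF(\tau_{V,W}^{})\circ\varphi_{V,W}^{}$ with $\varphi$ the coherence maps (\ref{eqn:coherencemapstensor}) and evaluating on $v\otimes_F^{} w$, produces exactly $(F^{(1)}\,R^{(2)}\,F^{(-2)}\ra_W^{}w)\otimes_F^{}(F^{(2)}\,R^{(1)}\,F^{(-1)}\ra_V^{}v)$, whose underlying element of $H\otimes H$ is $F_{21}^{}\,R\,F^{-1}$. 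Since a transported braiding again obeys the hexagon axioms, this already explains why the axioms should hold; the explicit verification then makes the argument self-contained.

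Axiom (\ref{eqn:Rmatrixaxioms1}) is immediate. From $\Delta_F^{}(h)=F\,\Delta(h)\,F^{-1}$ one gets $\Delta_F^{\mathrm{op}}(h)=F_{21}^{}\,\Delta^{\mathrm{op}}(h)\,F_{21}^{-1}$, and substituting (\ref{eqn:Rmatrixaxioms1}) for $R$ gives $\Delta_F^{\mathrm{op}}(h)=F_{21}^{}\,R\,\Delta(h)\,R^{-1}\,F_{21}^{-1}$. On the other hand $R_F^{}\,\Delta_F^{}(h)\,R_F^{-1}=F_{21}^{}\,R\,F^{-1}\cdot F\,\Delta(h)\,F^{-1}\cdot F\,R^{-1}\,F_{21}^{-1}$, in which the inner $F^{-1}F$ pairs cancel to leave the same expression. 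Hence (\ref{eqn:Rmatrixaxioms1}) holds for $R_F^{}$.

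The main work, and the principal obstacle, is the pair of hexagon relations (\ref{eqn:Rmatrixaxioms2}) and (\ref{eqn:Rmatrixaxioms3}) for $R_F^{}$ with the twisted associator $\phi_F^{}$ of (\ref{eqn:twistedassociator}). Here I would proceed by direct substitution: insert (\ref{eqn:twistedcoproduct}), (\ref{eqn:twistedassociator}) and (\ref{eqn:twistedRmatrix}) into, say, the right-hand side of (\ref{eqn:Rmatrixaxioms3}), expand each leg-labelled factor $(\phi_F^{})_{ijk}^{}$ and $(R_F^{})_{ij}^{}$ in terms of $F$, $\phi$ and $R$, and then collapse the result by repeatedly applying the defining relation (\ref{eqn:twistedassociator}) for $\phi_F^{}$ together with the original axioms (\ref{eqn:Rmatrixaxioms1})--(\ref{eqn:Rmatrixaxioms3}), checking that it coincides with the left-hand side $(\Delta_F^{}\otimes\id^{}_H)(R_F^{})$ after the same expansion; relation (\ref{eqn:Rmatrixaxioms2}) is handled symmetrically. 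The difficulty is entirely organisational: tracking the three- and four-fold leg insertions and ensuring that every $F,F^{-1}$ produced by the twisted coproduct and associator cancels against its neighbour. A cleaner alternative that bypasses this bookkeeping is to invoke the categorical computation above together with the standard bijection between braidings on ${}^{H_F^{}}_{}\MMM$ and $R$-matrices on $H_F^{}$ (the converse direction of Proposition \ref{propo:HMisbraided}): the transported $\tau^F$ is a braiding, so its associated $R$-matrix $R_F^{}$ satisfies (\ref{eqn:Rmatrixaxioms2}) and (\ref{eqn:Rmatrixaxioms3}) automatically.

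Finally, triangularity transfers because the leg-flip $X\mapsto X_{21}^{}$ is an algebra automorphism of $H\otimes H$ with $(X_{21}^{})_{21}^{}=X$ and $(X^{-1})_{21}^{}=(X_{21}^{})^{-1}$. Applying it to $R_F^{}=F_{21}^{}\,R\,F^{-1}$ yields $(R_F^{})_{21}^{}=F\,R_{21}^{}\,F_{21}^{-1}$, whereas $R_F^{-1}=F\,R^{-1}\,F_{21}^{-1}$. Hence $(R_F^{})_{21}^{}=R_F^{-1}$ holds if and only if $F\,R_{21}^{}\,F_{21}^{-1}=F\,R^{-1}\,F_{21}^{-1}$, i.e.\ (cancelling $F$ on the left and $F_{21}^{-1}$ on the right) if and only if $R_{21}^{}=R^{-1}$, which is precisely the triangularity condition (\ref{eqn:Rmatrixaxiomstriangular}) for $H$.
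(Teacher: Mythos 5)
Your proposal is correct and follows essentially the same two routes the paper itself offers: the paper's proof merely states that the $R$-matrix axioms for $R_F^{}$ can be checked either directly or by the abstract categorical argument of \cite[Proposition XV.3.6]{Kassel} --- which is precisely your transported-braiding argument --- and its treatment of the triangularity claim is the identical computation comparing $(R_F^{})_{21}^{}=F\,R_{21}^{}\,F_{21}^{-1}$ with $R_F^{-1}=F\,R^{-1}\,F_{21}^{-1}$. Your explicit verification of (\ref{eqn:Rmatrixaxioms1}) and your derivation of the formula (\ref{eqn:twistedRmatrix}) from the coherence maps (\ref{eqn:coherencemapstensor}) are somewhat more detailed than what the paper records, but the approach is the same.
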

\begin{proof}
The first part of the proof can be either seen with a direct check of the relations (\ref{eqn:Rmatrixaxioms})
for $R_F^{}$ in the quasi-Hopf algebra $H_F^{}$ (i.e.\ in (\ref{eqn:Rmatrixaxioms}) one has to replace
$\Delta$ by $\Delta_F^{}$, $\phi$ by $\phi_F^{}$ and $R$ by $R_F^{}$)
or with a more abstract argument as in \cite[Proposition XV.3.6]{Kassel}.
For the second part, notice that $(R_{F}^{})_{21}^{} = F\,R_{21}^{}\,F_{21}^{-1}$
and $R_F^{-1} = F\,R^{-1}\,F_{21}^{-1}$, hence $(R_{F}^{})_{21}^{} = R_F^{-1}$ if and
only if $R_{21}^{}  = R^{-1}$  since $F$ is invertible.
\end{proof}

Recalling Theorem \ref{theo:HMequivalenceclosedmonoidal}, there is an equivalence 
between the closed monoidal categories ${}^H_{}\MMM$ and ${}^{H_F^{}}_{}\MMM$
for any cochain twist $F\in H\otimes H$. We have denoted the corresponding functor
by $\FF : {}^H_{}\MMM\to {}^{H_F^{}}_{}\MMM$ and the coherence maps
$\varphi_{\text{--},\text{--}}^{}$, $\psi$ and $\gamma_{\text{--},\text{--}}^{}$ 
are given in (\ref{eqn:coherencemapstensor}) and (\ref{eqn:gammamap}).
For a quasitriangular quasi-Hopf algebra $H$, it follows from
Theorem \ref{theo:deformationquasitriangular} that $H_F^{}$ 
is also a quasitriangular quasi-Hopf algebra with $R$-matrix $R_F^{}$. Proposition \ref{propo:HMisbraided} then implies
that both ${}^H_{}\MMM$ and ${}^{H_F^{}}_{}\MMM$ are braided closed
monoidal categories; we denote the braiding in ${}^H_{}\MMM$ by $\tau$ and that in ${}^{H_F^{}}_{}\MMM$ by $\tau^F_{}$.
\begin{theo}\label{theo:braidedequivalenceHM}
For any quasitriangular quasi-Hopf algebra $H$ and any cochain twist $F\in H\otimes H$,
the equivalence of closed mo\-noi\-dal categories in Theorem
\ref{theo:HMequivalenceclosedmonoidal} is an equivalence between the braided closed monoidal categories
${}^H_{}\MMM$ and ${}^{H_F^{}}_{}\MMM$. 
\end{theo}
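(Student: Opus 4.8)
The plan is to observe that, by Theorem \ref{theo:HMequivalenceclosedmonoidal}, the functor $\FF : {}^H_{}\MMM\to {}^{H_F^{}}_{}\MMM$ is \emph{already} an equivalence of closed monoidal categories, with tensor coherence maps $\varphi_{V,W}^{}$ as in (\ref{eqn:coherencemapstensor}). Since the closed and monoidal structures are thereby matched, and since the braiding interacts only with the monoidal structure, all that remains is to check that $\FF$ is a \emph{braided} monoidal functor, i.e.\ that its coherence maps intertwine the braiding $\tau$ of ${}^H_{}\MMM$ from (\ref{eqn:taumap}) with the braiding $\tau^F_{}$ of ${}^{H_F^{}}_{}\MMM$. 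The latter is the one induced via Proposition \ref{propo:HMisbraided} from the twisted $R$-matrix $R_F^{} = F_{21}^{}\,R\,F^{-1}$ of Theorem \ref{theo:deformationquasitriangular}. Concretely, I would verify the identity
\[
\FF(\tau_{V,W}^{})\circ \varphi_{V,W}^{} = \varphi_{W,V}^{}\circ \tau^F_{\FF(V),\FF(W)}
\]
of ${}^{H_F^{}}_{}\MMM$-morphisms $\FF(V)\otimes_F^{}\FF(W)\to \FF(W\otimes V)$, for all objects $V,W$ in ${}^H_{}\MMM$.

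First I would chase a generic element $v\otimes_F^{} w$ through both composites, using the explicit formula (\ref{eqn:taumap}) for $\tau$, its analogue for $\tau^F_{}$ with $R$ replaced by $R_F^{} = R_F^{(1)}\otimes R_F^{(2)}$, and the formula (\ref{eqn:coherencemapstensor}) for $\varphi$. The left-hand composite produces $(R^{(2)}\,F^{(-2)}\ra_W^{} w)\otimes (R^{(1)}\,F^{(-1)}\ra_V^{} v)$, while the right-hand composite produces $(F^{(-1)}\,R_F^{(2)}\ra_W^{} w)\otimes (F^{(-2)}\,R_F^{(1)}\ra_V^{} v)$. Hence, after flipping the two tensor legs, commutativity of the square is equivalent to the single identity
\[
F_{21}^{-1}\,R_F^{} = R\,F^{-1}
\]
in $H\otimes H$, where $F_{21}^{-1}=(F_{21}^{})^{-1}=(F^{-1})_{21}^{}$ (these agree because the tensor flip $\sigma:H\otimes H\to H\otimes H$ is an algebra automorphism).

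The last step is then immediate: substituting $R_F^{} = F_{21}^{}\,R\,F^{-1}$ gives $F_{21}^{-1}\,R_F^{} = (F_{21}^{-1}\,F_{21}^{})\,R\,F^{-1} = R\,F^{-1}$, since $F_{21}^{-1}\,F_{21}^{} = 1\otimes 1$ by definition of the inverse. Because $\FF$ is invertible (twist back by $F^{-1}$, cf.\ Remark \ref{rem:twistinverse}) and is now seen to be a braided monoidal functor between braided closed monoidal categories, this establishes the asserted braided equivalence. I expect no genuine obstacle here; the only delicate point is the bookkeeping in the element chase -- keeping track of which copy of $H$ acts on which leg, and of the order in which the braiding permutes the factors, so that the final identity is assembled with the legs, the $F^{\pm1}$ factors and the $R$-matrix components in their correct positions. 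As a cross-check one could instead invoke the abstract argument of \cite[Proposition XV.3.6]{Kassel}, which bypasses the explicit computation entirely.
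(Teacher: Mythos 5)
Your proposal is correct and follows essentially the same route as the paper: one checks commutativity of the square $\FF(\tau_{V,W}^{})\circ\varphi_{V,W}^{}=\varphi_{W,V}^{}\circ\tau^F_{\FF(V),\FF(W)}$ by an element chase with the explicit formulas (\ref{eqn:taumap}), (\ref{eqn:coherencemapstensor}) and $R_F^{}=F_{21}^{}\,R\,F^{-1}$, the paper simplifying the two composites directly where you package the same computation into the equivalent identity $F_{21}^{-1}\,R_F^{}=R\,F^{-1}$ in $H\otimes H$. Both calculations agree, so there is nothing to correct.
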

\begin{proof}
We have to show that the diagram
\begin{flalign}
\xymatrix{
\ar[d]_-{\varphi_{V,W}^{}}\FF(V)\otimes_F^{} \FF(W) \ar[rr]^-{\tau^F_{\FF(V),\FF(W)} } && \FF(W)\otimes_F^{} \FF(V)\ar[d]^-{\varphi_{W,V}^{}}\\
\FF(V\otimes W)\ar[rr]_-{\FF(\tau_{V,W}^{})}&& \FF(W\otimes V)
}
\end{flalign}
in ${}^{H_F^{}}_{}\MMM$ commutes for any two objects $V,W$ in ${}^H_{}\MMM$.
This is a direct consequence of the definition of the twisted 
$R$-matrix (\ref{eqn:twistedRmatrix}), together with
(\ref{eqn:taumap}) and (\ref{eqn:coherencemapstensor}): one has
\begin{flalign}
\nn \varphi_{W,V}^{}\big(\tau_{\FF(V),\FF(W)}^{F}(v\otimes_F^{} w)\big) 
&= \varphi_{W,V}^{}\big((R_F^{(2)}\ra_W^{} w) \otimes_F^{}
(R_F^{(1)}\ra_V^{} v)\big)\\[4pt] 
\nn &=(R^{(2)}\, F^{(-2)}\ra_W^{} w)\otimes (R^{(1)}\,
F^{(-1)}\ra_V^{} v)\\[4pt] 
\nn&=\tau_{V,W}^{}\big((F^{(-1)}\ra_V^{}v) \otimes
(F^{(-2)}\ra_W^{}w)\big)\\[4pt] 
&=\tau_{V,W}^{}\big(\varphi_{V,W}^{}(v\otimes_F^{} w)\big)~,
\end{flalign}
for all $v\in V$ and $w\in W$.
\end{proof}

For the braided closed monoidal category ${}^{H_F^{}}_{}\MMM$ we have by 
Proposition \ref{propo:tensormorphism} the tensor product morphisms
$\obultimes_{\text{--},\text{--},\text{--},\text{--}}^F$ for the internal $\hom$-objects
$\hom_F^{}$. They are related to the corresponding tensor product morphisms $\obultimes_{\text{--},\text{--},\text{--},\text{--}}$
in ${}^H_{}\MMM$ by
\begin{propo}\label{propo:obultimesF}
If $V,W,X,Y$ are any four objects in ${}^H_{}\MMM$, then the ${}^{H_F^{}}_{}\MMM$-diagram
in (\ref{eqn:twistedobul}) commutes.
\end{propo}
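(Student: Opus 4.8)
The plan is to prove the stated commutativity by the same strategy that has served throughout Section~\ref{sec:quasitriangular}: rather than manipulating the tensor product morphisms $\obultimes$ directly, I would reduce the statement to an identity about \emph{evaluations} on generic elements, since by Proposition~\ref{propo:evcompproperties}~(i) and the bijectivity of the currying maps $\zeta_{\text{--},\text{--},\text{--}}^{}$ two internal homomorphisms agree if and only if their evaluations agree. Concretely, I would chase an element $(L\otimes_F^{} L^\prime\,)\otimes_F^{}(v\otimes_F^{} x)$ with $L\in\hom_F^{}(\FF(V),\FF(W))$, $L^\prime\in\hom_F^{}(\FF(X),\FF(Y))$, $v\in V$, $x\in X$ around both paths of the diagram (\ref{eqn:twistedobul}), evaluate the resulting internal homomorphisms, and verify that the two expressions coincide.

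First I would assemble the available translation dictionary between the twisted and untwisted structures. The coherence maps $\varphi_{\text{--},\text{--}}^{}$ of (\ref{eqn:coherencemapstensor}) and $\gamma_{\text{--},\text{--}}^{}$ of (\ref{eqn:gammamap}) relate $\otimes_F^{}$ to $\otimes$ and $\hom_F^{}$ to $\hom$; Proposition~\ref{propo:deformedevcirv} (its first diagram) expresses $\ev_{\FF(\bol{V}),\FF(\bol{W})}^{F}$ in terms of $\ev_{\bol{V},\bol{W}}^{}$ via $\gamma$ and $\varphi$; and the explicit definition (\ref{eqn:obultimesdefi}) unfolds $\obultimes$ into a composite built from $\ev$, the braiding $\tau$ and associators $\Phi$. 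The natural move is therefore to evaluate $\obultimes_{\text{--},\text{--},\text{--},\text{--}}^F$ in the twisted category using (\ref{eqn:obultimesdefi}) written with $\tau^F$, $\Phi^F$ and $\ev^F$, then systematically rewrite each twisted ingredient in terms of its untwisted counterpart: $\tau^F$ via the twisted $R$-matrix $R_F^{}=F_{21}^{}\,R\,F^{-1}$ of Theorem~\ref{theo:deformationquasitriangular} together with Theorem~\ref{theo:braidedequivalenceHM}, $\Phi^F$ via the twisted associator $\phi_F^{}$ of (\ref{eqn:twistedassociator}), and $\ev^F$ via Proposition~\ref{propo:deformedevcirv}. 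On the other side of the diagram one evaluates $\FF(\obultimes_{V,W,X,Y}^{})$ using (\ref{eqn:obultimesdefi}) for the untwisted category and then transports along $\varphi$ and $\gamma$.

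The hard part will be the bookkeeping in the middle: after substituting $R_F^{}$, $\phi_F^{}$ and all the $\gamma,\varphi$ insertions, both sides become long products of components of $F$, $F^{-1}$, $R$ and $\phi$ acting on $v$ and $x$, and the claim reduces to a single identity in $H^{\otimes n}$ (after stripping off the currying, as in the proof of Lemma~\ref{lem:compobultimesprop}). I expect that identity to collapse once the twist factors from $R_F^{}=F_{21}^{}\,R\,F^{-1}$ cancel against the $F$'s coming from the coherence maps $\varphi$ and from the definition (\ref{eqn:twistedcoproduct}) of $\Delta_F^{}$, leaving precisely the untwisted composite $(\ref{eqn:obultimesdefi})$ sandwiched between the appropriate $\gamma$ and $\varphi$ maps. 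The key algebraic inputs driving these cancellations are the counital normalization (\ref{eqn:twistcondition1},\ref{eqn:twistcondition2}) of the twist and the compatibility of $F$ with the coproduct encoded in $\Delta_F^{}$ and $\phi_F^{}$.

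A cleaner and more robust alternative, which I would prefer in order to avoid the heaviest computation, is to invoke abstract coherence: Theorem~\ref{theo:braidedequivalenceHM} already establishes that $\FF$ is a braided monoidal equivalence, and Theorem~\ref{theo:HMequivalenceclosedmonoidal} that it is a closed monoidal equivalence with the natural isomorphism $\gamma$. The morphism $\obultimes_{\text{--},\text{--},\text{--},\text{--}}^{}$ is, by Proposition~\ref{propo:tensormorphism}, canonically determined from the braiding, the associator, the evaluation and the currying of a braided closed monoidal category via (\ref{eqn:obultimesdefi}). Since $\FF$ (together with $\varphi$, $\psi$, $\gamma$) preserves all of these defining data, it must preserve the canonically induced $\obultimes$ up to exactly the coherence isomorphisms appearing in (\ref{eqn:twistedobul}); commutativity of the diagram is then the naturality statement that $\gamma$ and $\varphi$ intertwine $\obultimes^F$ with $\FF(\obultimes)$. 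I would present the element-chasing verification as the explicit proof but remark that it is forced by this abstract preservation argument, exactly as the analogous diagrams in Proposition~\ref{propo:deformedevcirv} and Remark~\ref{rem:deformedevcirvA} were handled.
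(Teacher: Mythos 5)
Your overall strategy — reduce to an identity of evaluations via Proposition \ref{propo:evcompproperties}~(i) and bijectivity of the currying maps, then translate all twisted ingredients ($\tau^F$, $\Phi^F$, $\ev^F$) into untwisted ones via $R_F^{}$, $\phi_F^{}$, $\gamma$ and $\varphi$ — is sound and is indeed the engine behind the paper's argument. The difference is organizational, and it matters for feasibility: you propose a single monolithic element chase of $(L\otimes_F^{}L^\prime\,)\otimes_F^{}(v\otimes_F^{}x)$ around both paths, whereas the paper first invokes Lemma \ref{lem:eqn:obultimesintermsofcirc} to write $L\obultimes^F_{}L^\prime=\big(L\obultimes^F_{}1_F^{}\big)\bullet^F_{}\big(1_F^{}\obultimes^F_{}L^\prime\,\big)$, proves commutativity of (\ref{eqn:twistedobul}) separately on elements of the form $L\otimes_F^{}1_F^{}$ (case $X=Y$) and $1_F^{}\otimes_F^{}L^\prime$ (case $V=W$) using the first diagram of Proposition \ref{propo:deformedevcirv}, and then glues the two cases with the second diagram of Proposition \ref{propo:deformedevcirv} (compatibility of $\bullet^F_{}$ with $\bullet$). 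This decomposition is what keeps each computation to the size of (\ref{eqn:obultimes1identityr}) and (\ref{eqn:obultimes1identityl}) rather than the full six-map composite (\ref{eqn:obultimesdefi}) with all twist insertions at once; your direct route should terminate at the same identity in $H^{\otimes n}$, but you have not exhibited the cancellation, and without the splitting the bookkeeping is substantially heavier than in Lemma \ref{lem:compobultimesprop}. Your abstract alternative — that a braided closed monoidal equivalence must intertwine the canonically induced $\obultimes$ with $\FF(\obultimes)$ up to the coherence isomorphisms — is the right conceptual explanation and is consistent with how the paper motivates such diagrams, but to count as a proof it needs the additional (true, and implicit in Theorem \ref{theo:HMequivalenceclosedmonoidal}) statement that $\gamma$ and $\varphi$ intertwine the currying bijections $\zeta^F_{}$ and $\zeta$, since $\obultimes$ is defined through currying in (\ref{eqn:obultimesdefi}); as written that step is asserted rather than established.
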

\begin{proof}
The strategy for this proof is similar to that of the proof of
Proposition \ref{propo:weakassoobultimes} (where we also explain the
compact notations used below).
In the special case where the objects $X$ and $Y$ are the same,
one can prove directly that the diagram in (\ref{eqn:twistedobul})  commutes when acting on elements
of the form $L\otimes_F^{} 1_F^{}$; this computation makes use of 
Proposition \ref{propo:deformedevcirv} to express $\ev^F_{}$ and $\ev^{}_{}$ in terms of each other.
 Similarly, one can prove that in the case
where the objects $V$ and $W$ are the same the diagram in (\ref{eqn:twistedobul})  commutes when acting on 
elements of the form $1_F^{} \otimes_F^{} L^\prime$. In the generic situation we recall that
by Lemma  \ref{lem:eqn:obultimesintermsofcirc} we have
\begin{flalign}
L\obultimes^F_{} L^\prime = \big(L\obultimes^F_{}
1_F^{}\big)\bullet^F_{} \big(1_F^{} \obultimes^F_{} L^\prime\, \big)~,
\end{flalign}
which reduces the problem of proving commutativity of the diagram in (\ref{eqn:twistedobul}) to the two special cases above.
The relevant step here is to use Proposition \ref{propo:deformedevcirv} in order to express $\bullet^F_{}$ and
$\bullet$ in terms of each other. The explicit calculations are
straightforward and hence are omitted.
\end{proof}
\begin{sidewaystable}
\begin{flalign}\label{eqn:twistedobul}
\xymatrix{
\ar[d]_-{\gamma_{V,W}^{}\otimes_F^{}\gamma_{X,Y}^{}}
\hom_F^{}(\FF(V),\FF(W))\otimes_F^{} \hom_F^{}(\FF(X),\FF(Y)) 
\ar[rrrr]^-{\obultimes_{\FF(V),\FF(W),\FF(X),\FF(Y)}^{F}}&&&& 
\ar[d]^-{\varphi_{W,Y}^{} \circ (\,\cdot\,)\circ \varphi_{V,X}^{-1}}
 \hom_F^{}(\FF(V)\otimes_F^{}\FF(X),\FF(W)\otimes_F^{}\FF(Y))\\
\ar[d]_-{\varphi_{\hom(V,W),\hom(X,Y)}^{}} 
\FF(\hom(V,W))\otimes_F^{}\FF(\hom(X,Y)) &&&& 
\ar[d]^-{\gamma_{V\otimes X,W\otimes Y}^{}}
\hom_F^{}(\FF(V\otimes X) , \FF(W\otimes Y))\\
\FF(\hom(V,W)\otimes \hom(X,Y)) 
\ar[rrrr]_-{\FF(\obultimes^{}_{V,W,X,Y})}&&&& 
\FF(\hom(V\otimes X,W\otimes Y))
}
\end{flalign}
\end{sidewaystable}

\subsection{Braided commutative algebras and symmetric bimodules}

Given a quasitriangular quasi-Hopf algebra $H$,
we may consider algebras $A$ in the braided closed monoidal category ${}^H_{}\MMM$
for which the product is compatible with the braiding $\tau$.
The motivation for studying such algebras is twofold: Firstly,
restricting ourselves to algebras $A$ of this type, there exists a closed monoidal subcategory of the closed
monoidal category ${}^H_{}{}^{}_{A}\MMM^{}_{A}$ that can be equipped with a braiding.
Secondly, all examples of noncommutative and nonassociative 
algebras which are obtained from deformation quantization via cochain twists
of  the algebra of smooth functions on a manifold are of this type, see Section \ref{sec:defquant}.
\sk

In the following let us fix a quasitriangular quasi-Hopf algebra $H$ and
denote the $R$-matrix  by $R = R^{(1)}\otimes R^{(2)}\in H\otimes H$.
\begin{defi}
\begin{itemize}
\item[(i)]
An algebra $A$ in ${}^H_{}\MMM$ is called {\em braided commutative} 
if for the product $\mu_A^{} : A\otimes A\to A$ the diagram
\begin{flalign}\label{eqn:wcalgdiag}
\xymatrix{
\ar[dr]_-{\mu_A^{}} A\otimes A\ar[rr]^-{\tau_{A,A}^{}} && \ar[dl]^-{\mu_A^{}} A\otimes A\\
& A &
}
\end{flalign}
in ${}^H_{}\MMM$ commutes.
We denote the full subcategory of ${}^H_{}\AAA$ of braided commutative algebras by
${}^H_{}\AAA^\mathrm{com}_{}$.

\item[(ii)] 
Let $A$ be a braided commutative algebra in ${}^H_{}\MMM$. An $A$-bimodule $V$ in ${}^H_{}\MMM$
is called {\em symmetric} if for the left and right $A$-actions the diagrams
\begin{flalign}\label{eqn:wcbimoddiag}
\xymatrix{
\ar[dr]_-{l_V^{}} A\otimes V \ar[rr]^-{\tau_{A,V}^{}} && \ar[dl]^-{r_V^{}} V\otimes A && \ar[dr]_-{l_V^{}} A\otimes V  && \ar[dl]^-{r_V^{}} V\otimes A \ar[ll]_-{\tau_{V,A}^{}}\\
& V & && &V&
}
\end{flalign}
in ${}^H_{}\MMM$ commute. We denote the full subcategory of ${}^H_{}{}^{}_{A}\MMM^{}_{A}$ of symmetric
$A$-bimodules by ${}^H_{}{}^{}_{A}\MMM^{\mathrm{sym}}_{A}$.
\end{itemize}
\end{defi}

\begin{rem}\label{rem:wcsecondR}
Recall that the braiding $\tau^{\prime}$ which is determined by the second 
$R$-matrix $R^\prime := R_{21}^{-1}$ (cf.\ Remark \ref{rem:2Rmatrices}) 
is related to the original braiding $\tau$ by $\tau_{V,W}^{\prime} = \tau_{W,V}^{-1}$.
As a consequence, the commutative diagram (\ref{eqn:wcalgdiag})
is equivalent to the same diagram with $\tau$ replaced  by $\tau^\prime$.
Moreover, the left diagram in (\ref{eqn:wcbimoddiag}) 
is equivalent to the right diagram in (\ref{eqn:wcbimoddiag}) with $\tau$
replaced  by $\tau^\prime$ and the right diagram 
is equivalent to the left diagram with $\tau$ replaced  by $\tau^\prime$.
In other words, braided commutative algebras in ${}^H_{}\MMM$ are braided commutative 
with respect to both quasitriangular structures $R$ and $R^\prime$ 
on $H$. The same statement holds for symmetric
$A$-bimodules.
\end{rem}

In the short-hand notation the product in a braided commutative algebra 
satisfies
\begin{subequations}\label{eqn:ecalg}
\begin{flalign}
a\,a^\prime = (R^{(2)}\ra_A^{} a^\prime\, )\,(R^{(1)}\ra_A^{} a)~,
\end{flalign}
for all $a,a^\prime\in A$. 
By Remark \ref{rem:wcsecondR} this condition is equivalent to
\begin{flalign}
a\,a^\prime = (R^{(-1)}\ra_A^{} a^\prime\, )\,(R^{(-2)}\ra_A^{} a)~,
\end{flalign}
\end{subequations}
for all $a,a^\prime\in A$. 
In a symmetric $A$-bimodule $V$ in ${}^H_{}\MMM$ 
the left and right $A$-actions satisfy
\begin{subequations}\label{eqn:wcbimod}
\begin{flalign}
\label{eqn:wcbimodl}a\,v &= (R^{(2)}\ra_V^{} v) \, (R^{(1)}\ra_A^{} a)~,\\[4pt]
\label{eqn:wcbimodr} v\,a &= (R^{(2)}\ra_A^{} a)\,(R^{(1)}\ra_V^{} v)~,
\end{flalign}
for all $a\in A$ and $v\in V$.
By Remark \ref{rem:wcsecondR} we also have
\begin{flalign}
\label{eqn:wcbimodrprime}v\,a &= (R^{(-1)}\ra_A^{} a)\,(R^{(-2)}\ra_V^{} v)~,\\[4pt]
\label{eqn:wcbimodlprime}a\,v &= (R^{(-1)}\ra_V^{} v) \, (R^{(-2)}\ra_A^{} a)~,
\end{flalign}
for all $a\in A$ and $v\in V$. The condition (\ref{eqn:wcbimodl}) is equivalent to (\ref{eqn:wcbimodrprime})
and (\ref{eqn:wcbimodr}) is equivalent to (\ref{eqn:wcbimodlprime}).
\end{subequations}
\sk

The braided commutativity and symmetry properties are preserved under cochain twisting.
\begin{propo}\label{propo:equivalencewccategories}
Let $H$ be a quasitriangular quasi-Hopf algebra and $F\in H\otimes H$ any cochain twist
based on $H$. Then the equivalence between the categories ${}^H_{}\AAA$ and ${}^{H_F^{}}_{}\AAA$
of Proposition \ref{propo:algdeformation} restricts to an equivalence between the full subcategories
${}^H_{}\AAA^\mathrm{com}_{}$ and ${}^{H_F^{}}_{}\AAA^\mathrm{com}_{}$.
Moreover, the equivalence between the categories ${}^H_{}{}^{}_{A}\MMM^{}_{A}$ and
${}^{H_F^{}}_{}{}^{}_{A_F^{}}\MMM^{}_{A_F^{}}$ of Proposition \ref{propo:equivalenceHAMAcat}
restricts to an equivalence between the full subcategories
${}^H_{}{}^{}_{A}\MMM^{\mathrm{sym}}_{A}$ and ${}^{H_F^{}}_{}{}^{}_{A_F^{}}\MMM^{\mathrm{sym}}_{A_F^{}}$.
\end{propo}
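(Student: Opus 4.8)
The plan is to leverage the already-established equivalence of Proposition \ref{propo:equivalenceHAMAcat} (and its algebra counterpart in Proposition \ref{propo:algdeformation}) together with the compatibility of the braiding under twisting recorded in Theorem \ref{theo:braidedequivalenceHM}. Since the functor $\FF$ is already known to be an equivalence on the full categories ${}^H_{}\AAA$ and ${}^H_{}{}^{}_{A}\MMM^{}_{A}$, the entire content of this proposition is to show that $\FF$ maps a braided commutative algebra to a braided commutative algebra (and likewise for symmetric bimodules), and that its inverse does the same. Because ${}^H_{}\AAA^\mathrm{com}_{}$ and ${}^H_{}{}^{}_{A}\MMM^{\mathrm{sym}}_{A}$ are \emph{full} subcategories, no condition on morphisms needs to be checked; only the defining conditions on objects must be verified to be preserved.

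First I would take a braided commutative algebra $A$ in ${}^H_{}\MMM$ and show $\bol{A}_F^{}$ is braided commutative in ${}^{H_F^{}}_{}\MMM$. The cleanest route is to compute directly in the short-hand notation: using the twisted product $a\star_F^{} a^\prime = (F^{(-1)}\ra_A^{} a)\,(F^{(-2)}\ra_A^{} a^\prime\,)$ together with the twisted $R$-matrix $R_F^{} = F_{21}^{}\,R\,F^{-1}$ from Theorem \ref{theo:deformationquasitriangular}, I would verify that $a\star_F^{} a^\prime = (R_F^{(2)}\ra_A^{} a^\prime\,)\star_F^{} (R_F^{(1)}\ra_A^{} a)$ follows from the undeformed braided commutativity relation (\ref{eqn:ecalg}). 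Unwinding the star-products on the right-hand side and inserting $R_F^{} = F_{21}^{}\,R\,F^{-1}$, the factors of $F^{-1}$ from the star-product and the $F_{21}^{}$, $F^{-1}$ from $R_F^{}$ combine so that the expression collapses precisely to the original braided commutativity condition for $A$. An entirely analogous computation handles the symmetric bimodule case, starting from (\ref{eqn:wcbimodl}) and using the twisted actions $a\star_F^{} v$ and $v\star_F^{} a$ from Subsection on cochain twisting of bimodules.

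A more conceptual alternative—which I would prefer to present—is to invoke Theorem \ref{theo:braidedequivalenceHM} abstractly: the coherence diagram relating $\tau^F_{\FF(V),\FF(W)}$, $\varphi_{\text{--},\text{--}}^{}$ and $\FF(\tau_{V,W}^{})$ commutes, and the product and actions of $\bol{A}_F^{}$, $\bol{V}_F^{}$ are \emph{defined} through $\FF(\mu_A^{})$, $\FF(l_V^{})$, $\FF(r_V^{})$ composed with the coherence maps $\varphi$ (cf.\ the defining diagrams (\ref{eqn:deformedalgebra}) and (\ref{eqn:deformedbimodule})). Braided commutativity of $A$ means $\mu_A^{}\circ \tau_{A,A}^{} = \mu_A^{}$; applying $\FF$ and pasting in the braiding-coherence square then yields $\mu_{\bol{A}_F^{}}^{}\circ \tau^F_{\FF(A),\FF(A)} = \mu_{\bol{A}_F^{}}^{}$, which is exactly diagram (\ref{eqn:wcalgdiag}) for $\bol{A}_F^{}$. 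The same diagram-chase, using the two squares of (\ref{eqn:wcbimoddiag}), transports the symmetry condition to $\bol{V}_F^{}$.

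The main obstacle is not any single computation but rather bookkeeping the associator and coherence-map insertions so that the pasted diagrams genuinely commute as stated; the abstract argument sidesteps the explicit $\phi$-juggling, but one must confirm that the naturality of $\tau$ and the monoidal structure of $\FF$ interact correctly at the level of the bimodule diagrams, where $\tau_{A,V}^{}$ and $\tau_{V,A}^{}$ appear in the two separate squares of (\ref{eqn:wcbimoddiag}). Finally, since the inverse functor is obtained by twisting with $F^{-1}$ based on $H_F^{}$ (Remark \ref{rem:twistinverse}), precisely the same argument applied in reverse shows the inverse preserves braided commutativity and symmetry; hence the equivalences restrict to the full subcategories, completing the proof.
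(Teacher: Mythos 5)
Your proposal is correct, and it actually contains the paper's proof as its first route: the paper verifies braided commutativity of $\bol{A}_F^{}$ by exactly the element-level computation you sketch, writing $a\star_F^{}a^\prime=(F^{(-1)}\ra_A^{}a)\,(F^{(-2)}\ra_A^{}a^\prime\,)$, applying (\ref{eqn:ecalg}) to the undeformed product, inserting $F^{-1}F$ to reassemble star-products, and recognizing $R_F^{}=F_{21}^{}\,R\,F^{-1}$; the symmetric-bimodule case is then dispatched as ``a similar calculation.'' The conceptual alternative you say you would prefer to present is a genuinely different argument, and it is sound: since $\mu_{\bol{A}_F^{}}=\FF(\mu_A^{})\circ\varphi_{A,A}^{}$ by (\ref{eqn:deformedalgebra}) and the coherence square of Theorem \ref{theo:braidedequivalenceHM} gives $\varphi_{A,A}^{}\circ\tau^F_{\FF(A),\FF(A)}=\FF(\tau_{A,A}^{})\circ\varphi_{A,A}^{}$, pasting immediately yields $\mu_{\bol{A}_F^{}}\circ\tau^F_{\FF(A),\FF(A)}=\mu_{\bol{A}_F^{}}$, and likewise for the two squares of (\ref{eqn:wcbimoddiag}) using (\ref{eqn:deformedbimodule}). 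Note that your worry about associator bookkeeping is moot here: the diagrams (\ref{eqn:wcalgdiag}) and (\ref{eqn:wcbimoddiag}) contain no instances of $\Phi$, so the paste is a bare composition of two commuting squares and requires only the naturality already packaged in Theorem \ref{theo:braidedequivalenceHM}. What the abstract route buys is that it works verbatim for any braided monoidal equivalence, with no reference to the explicit form of $F$ or $R_F^{}$; what the paper's computation buys is self-containedness and an explicit confirmation of how the twist factors cancel, which is in the spirit of the paper's stated policy of making all calculations explicit. Your closing remark that the inverse is handled by twisting with $F^{-1}$ (Remark \ref{rem:twistinverse}) correctly supplies the one point the paper's proof leaves implicit, namely that the restriction is again an equivalence of the full subcategories.
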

\begin{proof}
This is an immediate consequence of the definition of the twisted $R$-matrix (\ref{eqn:twistedRmatrix}),
the twisted algebra product (\ref{eqn:deformedalgebra}) and the twisted 
 $A$-bimodule structure (\ref{eqn:deformedbimodule}): For any object $A$ in  ${}^H_{}\AAA^{\mathrm{com}}_{}$ we have
 \begin{flalign}
\nn a\star_F^{} a^\prime &= (F^{(-1)} \ra_A^{} a)\,(F^{(-2)}\ra_A^{}a^\prime\, ) \\[4pt]
\nn &= (R^{(2)}\, F^{(-2)}\ra_A^{}a^\prime\, )\, (R^{(1)}\, F^{(-1)} \ra_A^{} a)\\[4pt]
 \nn &=(\widetilde{F}^{(1)}\, R^{(2)}\, F^{(-2)}\ra_A^{}a^\prime)\star_F^{}  (\widetilde{F}^{(2)}\, R^{(1)}\, F^{(-1)} \ra_A^{} a)\\[4pt]
 &= (R^{(2)}_F \ra_A^{} a^\prime)\star_F^{} (R^{(1)}_F\ra_A^{} a)~,
 \end{flalign}
 for all $a,a^\prime\in A_F^{}$, hence $A_F^{}$ is an object in ${}^{H_F^{}}_{}\AAA^{\mathrm{com}}_{}$.
 With a similar calculation one shows that for any object $V$ in ${}^H_{}{}^{}_{A}\MMM^{\mathrm{sym}}_{A}$
 one has $a\star_F^{} v = (R^{(2)}_F\ra_V^{} v)\star_F^{} (R^{(1)}_F\ra_A^{} a)$
 and $v\star_F^{} a =  (R^{(2)}_F\ra_A^{} a)\star_F^{} (R^{(1)}_F\ra_V^{} v)$, for all $a\in A_F^{}$
 and $v\in V_F^{}$, hence $V_F^{}$ is an object in ${}^{H_F^{}}_{}{}^{}_{A_F^{}}\MMM^{\mathrm{sym}}_{A_F^{}}$.
\end{proof}

\begin{ex}\label{ex:wcbimodules}
\begin{itemize}
\item[(i)] For any braided commutative algebra $A$ in ${}^H_{}\MMM$ the free $A$-bimodules
of Example \ref{ex:freemod} are symmetric $A$-bimodules.
\item[(ii)] If $H$ is any cocommutative quasi-Hopf algebra with trivial $R$-matrix $R=1\otimes 1$
then commutative algebras $A$ in ${}^H_{}\MMM$ are braided commutative.
 Such examples arise in ordinary differential geometry, see Section \ref{sec:defquant}.
By Proposition \ref{propo:equivalencewccategories}, any cochain twisting of such examples satisfies the
braided commutativity condition. This will be our main source of examples.
\end{itemize}
\end{ex}

\subsection{Braided categories of symmetric bimodules}

We shall show that ${}^H_{}{}^{}_{A}\MMM^{\mathrm{sym}}_{A}$ is a braided closed monoidal category,
for any quasitriangular quasi-Hopf algebra $H$ and any braided commutative algebra $A$ in ${}^H_{}\MMM$.
Then we develop in complete analogy to Subsection \ref{subsec:tensorHMhom} the theory
of tensor product morphisms for the internal $\hom$-objects
$\hom_A^{}$ in ${}^H_{}{}^{}_{A}\MMM^{\mathrm{sym}}_{A}$.

\begin{lem}\label{lem:HAMAwcisclosedmonoidal}
Let $H$ be a quasitriangular quasi-Hopf algebra and $A$ any braided commutative
algebra in ${}^H_{}\MMM$. Then the full subcategory ${}^H_{}{}^{}_{A}\MMM^{\mathrm{sym}}_{A}$ 
of the closed monoidal category ${}^H_{}{}^{}_{A}\MMM^{}_{A}$ is a closed monoidal subcategory. 
Explicitly, the monoidal functor and internal $\hom$-functor on ${}^H_{}{}^{}_{A}\MMM^{}_{A}$ 
restrict to functors (denoted by the same symbols)
\begin{subequations}
 \begin{flalign}
 \otimes_A^{} : {}^H_{}{}^{}_{A}\MMM^{\mathrm{sym}}_{A}\times {}^H_{}{}^{}_{A}\MMM^{\mathrm{sym}}_{A}&\longrightarrow {}^H_{}{}^{}_{A}\MMM^{\mathrm{sym}}_{A}~,\\[4pt] 
\hom_A^{} :\big( {}^H_{}{}^{}_{A}\MMM^{\mathrm{sym}}_{A}\big)^{\mathrm{op}}\times {}^H_{}{}^{}_{A}\MMM^{\mathrm{sym}}_{A} &\longrightarrow {}^H_{}{}^{}_{A}\MMM^{\mathrm{sym}}_{A}~.
 \end{flalign}
 \end{subequations}
\end{lem}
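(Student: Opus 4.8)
The plan is to show that the two functors $\otimes_A^{}$ and $\hom_A^{}$, already known to land in ${}^H_{}{}^{}_{A}\MMM^{}_{A}$, actually preserve the symmetry property, so that they restrict to the full subcategory ${}^H_{}{}^{}_{A}\MMM^{\mathrm{sym}}_{A}$. Since symmetry is a property of objects (not extra data) and the subcategory is full, it suffices to check the object-level statements: if $\bol{V},\bol{W}$ are symmetric then so are $\bol{V}\otimes_A^{}\bol{W}$ and $\hom_A^{}(\bol{V},\bol{W})$; the associator, unitors, currying, evaluation and composition then automatically restrict, because all coherence and structure morphisms are inherited from the ambient closed monoidal category ${}^H_{}{}^{}_{A}\MMM^{}_{A}$ established in Theorem \ref{theo:HAMAclosedmonoidal}.

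First I would verify closure under $\otimes_A^{}$. Working in the short-hand notation with the explicit $A$-bimodule structure on $\bol{V}\otimes_A^{}\bol{W}$ given in (\ref{eqn:tensorAidentitiesb},\ref{eqn:tensorAidentitiesc}), I would compute $a\,(v\otimes_A^{}w)$ and rewrite the leftmost factor using the symmetry relation (\ref{eqn:wcbimodl}) for $\bol{V}$, then push the $R$-matrix legs through the coproduct using the hexagon-type relations (\ref{eqn:Rmatrixaxioms2},\ref{eqn:Rmatrixaxioms3}) together with the associator identities, and finally reassemble the result as $(R^{(2)}\ra^{}_{\bol{V}\otimes_A^{}\bol{W}}(v\otimes_A^{}w))\,(R^{(1)}\ra_A^{}a)$. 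The key point is that the braiding $\tau$ satisfies the hexagon relations (Proposition \ref{propo:HMisbraided}), which is exactly what controls how the $R$-matrix interacts with tensor products; the associator insertions from $\phi$ will cancel against those in (\ref{eqn:tensorAidentities}) by the $3$-cocycle condition (\ref{eqn:quasibialgebraaxioms3}). The right-handed symmetry relation follows by the same computation, or more cleanly by invoking Remark \ref{rem:wcsecondR} to trade $R$ for $R^\prime$.

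Next I would treat $\hom_A^{}(\bol{V},\bol{W})$, which is where the main obstacle lies. Here the $A$-bimodule structure is defined via composition with the internal representation $\widehat{l}$ of Lemma \ref{lem:leftAactionvsrepresentation}, namely $a\,L=\widehat{l}_{\bol{W}}^{}(a)\bullet L$ and $L\,a=L\bullet\widehat{l}_{\bol{V}}^{}(a)$. The symmetry condition to be proven, $a\,L=(R^{(2)}\ra^{}_{\hom_A^{}(\bol{V},\bol{W})}L)\,(R^{(1)}\ra_A^{}a)$, is most naturally checked after evaluation: by Lemma \ref{lem:evisrightAlin} and bijectivity of the currying maps (Lemma \ref{lem:curryingHAMAtmp}) it is enough to compare $\ev_{\bol{V},\bol{W}}^{A}\big((a\,L)\otimes_A^{}v\big)$ with the evaluation of the putative right-hand side, for all $v$. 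Using the compatibility (\ref{eqn:evcompcompatibilityA}) of evaluation with composition, the property $\ev_{\bol{V},\bol{W}}^{A}(\widehat{l}_{\bol{W}}^{}(a)\otimes_A^{}v)=a\,v$, and then the symmetry of $\bol{W}$ to move $a$ across $L(v)$, one reduces the identity to a purely $H$-algebraic statement involving $\phi$, $R$, $S$, $\alpha$ and $\beta$; this is discharged by the hexagon relations (\ref{eqn:Rmatrixaxioms2},\ref{eqn:Rmatrixaxioms3}), the $3$-cocycle condition (\ref{eqn:quasibialgebraaxioms3}) and the quasi-antipode axioms (\ref{eqn:quasiantipodeproperties}). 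The subtlety is that one must also use the symmetry of the source object $\bol{V}$ (entering through the $S(\cdot)\ra_{\bol{V}}^{}$ factors in the adjoint action (\ref{eqn:Hadjointaction})), so both hypotheses on $\bol{V}$ and $\bol{W}$ are genuinely needed.

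Finally, having shown that $\hom_A^{}(\bol{V},\bol{W})$ is symmetric whenever $\bol{V},\bol{W}$ are, I would note that the currying bijection $\zeta^{\bol{A}}_{\bol{V},\bol{W},\bol{X}}$ of Theorem \ref{theo:HAMAclosedmonoidal} restricts along the inclusion of full subcategories, since its source $\Hom_{({}^H_{}{}^{}_{A}\MMM^{}_{A})}^{}(\bol{V}\otimes_A^{}\bol{W},\bol{X})$ and target $\Hom_{({}^H_{}{}^{}_{A}\MMM^{}_{A})}^{}(\bol{V},\hom_A^{}(\bol{W},\bol{X}))$ are literally the same sets once all three objects lie in ${}^H_{}{}^{}_{A}\MMM^{\mathrm{sym}}_{A}$. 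Thus ${}^H_{}{}^{}_{A}\MMM^{\mathrm{sym}}_{A}$ inherits the closed monoidal structure from ${}^H_{}{}^{}_{A}\MMM^{}_{A}$, completing the proof. I expect the evaluation-based computation for $\hom_A^{}$ to be the crux, with the bookkeeping of associator and $R$-matrix insertions being the only real labor.
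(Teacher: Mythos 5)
Your proposal is correct and follows essentially the same route as the paper: check closure of the objects under $\otimes_A^{}$ and $\hom_A^{}$ (with the coherence data restricting automatically by fullness), verify the tensor product case by the explicit computation with (\ref{eqn:tensorAidentities}), (\ref{eqn:wcbimod}) and (\ref{eqn:Rmatrixaxioms2}), use Remark \ref{rem:wcsecondR} to halve the work, and reduce the $\hom_A^{}$ case to an identity of evaluations discharged by the left/right $A$-linearity of $\ev^A_{V,W}$ together with the symmetry of both $V$ and $W$ and the hexagon relation. The only cosmetic difference is that in the paper the symmetry of $V$ enters by flipping $a$ past $v$ inside the second tensor slot (via (\ref{eqn:wcbimodl})) rather than through the antipode factors of the adjoint action, but this does not affect the validity of your argument.
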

\begin{proof}
First, notice that the unit object $A$ (regarded as a free $A$-bimodule) in ${}^H_{}{}^{}_{A}\MMM^{}_{A}$
is an object in ${}^H_{}{}^{}_{A}\MMM^{\mathrm{sym}}_{A}$, cf.\ Example \ref{ex:wcbimodules} (i).
Next, we shall show that $V\otimes_A^{} W$ is a symmetric
$A$-bimodule for any two objects $V,W$ in ${}^H_{}{}^{}_{A}\MMM^{\mathrm{sym}}_{A}$.
We have
\begin{flalign}
\nn &a\,(v\otimes_A^{} w) = \big((\phi^{(-1)}\ra_A^{} a) \,(\phi^{(-2)}\ra_V^{} v) \big)\otimes_A^{} (\phi^{(-3)}\ra_W^{}w)\\[4pt] 
\nn&=\big((R^{(2)}\, \phi^{(-2)}\ra_V^{} v)\,(R^{(1)}\, \phi^{(-1)}\ra_A^{} a)  \big)\otimes_A^{} (\phi^{(-3)}\ra_W^{}w)\\[4pt] 
\nn &=(\, \widetilde{\phi}^{(1)}\, R^{(2)}\, \phi^{(-2)}\ra_V^{} v)\otimes_A^{} \big((\, \widetilde{\phi}^{(2)}\, R^{(1)}\, \phi^{(-1)}\ra_A^{} a)  \, (\, \widetilde{\phi}^{(3)}\, \phi^{(-3)}\ra_W^{}w)\big) \\[4pt] 
\nn&=(\, \widetilde{\phi}^{(1)}\, R^{(2)}\, \phi^{(-2)}\ra_V^{} v)\otimes_A^{} \big( (\widetilde{R}^{(2)}\, \widetilde{\phi}^{(3)}\, \phi^{(-3)}\ra_W^{}w)\, (\widetilde{R}^{(1)}\, \widetilde{\phi}^{(2)}\, R^{(1)}\, \phi^{(-1)}\ra_A^{} a)\big) \\[4pt] 
\nn&=\big((\check{\phi}^{(-1)}\, \widetilde{\phi}^{(1)}\, R^{(2)}\, \phi^{(-2)}\ra_V^{} v)\otimes_A^{}  (\check{\phi}^{(-2)}\, \widetilde{R}^{(2)}\, \widetilde{\phi}^{(3)}\, \phi^{(-3)}\ra_W^{}w) \big)\, (\check{\phi}^{(-3)}\, \widetilde{R}^{(1)}\, \widetilde{\phi}^{(2)}\, R^{(1)}\, \phi^{(-1)}\ra_A^{} a)\\[4pt] 
&= \big(R^{(2)}\ra_{V\otimes_A^{} W}^{}(v\otimes_A^{} w)\big) \,(R^{(1)}\ra_A^{} a)~,
\end{flalign}
for all $v\in V$, $w\in W$ and $a\in A$.
In the first, third and fifth equalities we used (\ref{eqn:tensorAidentities}),
and in the second and fourth equalities we used (\ref{eqn:wcbimod}); in the last equality we used (\ref{eqn:Rmatrixaxioms2}).
By Remark  \ref{rem:wcsecondR}, 
this calculation also implies that 
\begin{flalign}
(v\otimes_A^{} w)\,a = (R^{(2)}\ra_A^{}a)\,\big(R^{(1)}\ra_{V\otimes_A^{} W}^{}(v\otimes_A^{} w)\big) ~,
\end{flalign}
for all $v\in V$, $w\in W$ and $a\in A$, since we can replace the $R$-matrix $R$ in 
the above calculation by 
the other $R$-matrix $R^\prime = R_{21}^{-1}$.
Hence, ${}^H_{}{}^{}_{A}\MMM^{\mathrm{sym}}_{A}$  is a monoidal subcategory.
It remains to prove that $\hom_A^{}(V,W)$ is a symmetric $A$-bimodule
for any two objects $V,W$ in ${}^H_{}{}^{}_{A}\MMM^{\mathrm{sym}}_{A}$, for 
which by Remark  \ref{rem:wcsecondR} (i.e.\ replacing $R$ by $R^\prime = R_{21}^{-1}$)
it is enough to show that
\begin{flalign}\label{tmp:homAwc}
\ev^{A}_{V,W}\Big((a\,L)\otimes_A^{} v\Big) = \ev_{V,W}^{A}\Big(\big((R^{(2)}\ra_{\hom_A^{}(V,W)}^{} L)\,(R^{(1)}\ra_A^{} a)\big)\otimes_A^{} v\Big)~,
\end{flalign}
for all $L\in\hom_A^{}(V,W)$, $a\in A$ and $v\in V$. Using again (\ref{eqn:tensorAidentities}), (\ref{eqn:wcbimod})
and also the fact that $\ev_{V,W}^{A}$ is an ${}^H_{}{}^{}_{A}\MMM^{}_{A}$-morphism, 
the left-hand side of (\ref{tmp:homAwc}) can be simplified as 
\begin{flalign}
\nn &\ev^{A}_{V,W}\Big((a\,L)\otimes_A^{} v\Big)  =  \ev^{A}_{V,W}\Big((\phi^{(1)}\ra_A^{}a) \, \big( (\phi^{(2)}\ra_{\hom_A^{}(V,W)}^{} L) \otimes_A^{} (\phi^{(3)}\ra_V^{}v)\big)\Big)\\[4pt] 
\nn &\qquad~\quad=  (\phi^{(1)}\ra_A^{}a) ~ \ev^{A}_{V,W}\Big( (\phi^{(2)}\ra_{\hom_A^{}(V,W)}^{} L) \otimes_A^{} (\phi^{(3)}\ra_V^{}v)\Big)\\[4pt] 
&\qquad~\quad= \ev^{A}_{V,W}\Big( (R^{(2)}_{(1)}\, \phi^{(2)}\ra_{\hom_A^{}(V,W)}^{} L) \otimes_A^{} (R^{(2)}_{(2)}\, \phi^{(3)}\ra_V^{}v)\Big)~(R^{(1)}\, \phi^{(1)}\ra_A^{}a) ~,
\end{flalign}
while the right-hand side of (\ref{tmp:homAwc}) can be simplified as
\begin{flalign}
\nn &\ev_{V,W}^{A}\Big(\big((R^{(2)}\ra_{\hom_A^{}(V,W)}^{} L)\,(R^{(1)}\ra_A^{} a)\big)\otimes_A^{} v\Big)\\
\nn &= \ev_{V,W}^{A}\Big((\phi^{(1)}\, R^{(2)}\ra_{\hom_A^{}(V,W)}^{} L) \otimes_A^{} 
\big((\phi^{(2)}\, R^{(1)}\ra_A^{} a)\, (\phi^{(3)}\ra_V^{} v)\big)\Big)\\[4pt] 
 \nn &= \ev_{V,W}^{A}\Big((\phi^{(1)}\, R^{(2)}\ra_{\hom_A^{}(V,W)}^{} L) \otimes_A^{} 
 \big( (\widetilde{R}^{(2)}\, \phi^{(3)}\ra_V^{} v) \, (\widetilde{R}^{(1)}\, \phi^{(2)}\, R^{(1)}\ra_A^{} a)\big)\Big)\\[4pt] 
 &= \ev_{V,W}^{A}\Big((\, \widetilde{\phi}^{(-1)}\, \phi^{(1)}\, R^{(2)}\ra_{\hom_A^{}(V,W)}^{} L) \otimes_A^{} 
   (\, \widetilde{\phi}^{(-2)}\, \widetilde{R}^{(2)}\, \phi^{(3)}\ra_V^{} v)\Big) 
   \, (\, \widetilde{\phi}^{(-3)}\, \widetilde{R}^{(1)}\, \phi^{(2)}\, R^{(1)}\ra_A^{} a)~.
\end{flalign}
Both expressions agree because of (\ref{eqn:Rmatrixaxioms2}).
Hence, ${}^H_{}{}^{}_{A}\MMM^{\mathrm{sym}}_{A}$  is a closed monoidal subcategory
of the closed monoidal category ${}^H_{}{}^{}_{A}\MMM^{}_{A}$.
\end{proof}

\begin{theo}
Let $H$ be a quasitriangular quasi-Hopf algebra and $A$ any braided commutative
algebra in ${}^H_{}\MMM$. Then the braiding $\tau$ in the closed monoidal category ${}^H_{}\MMM$
descends to a braiding $\tau^A_{}$ in the closed monoidal category ${}^H_{}{}^{}_{A}\MMM^{\mathrm{sym}}_{A}$.
Explicitly,
\begin{flalign}\label{eqn:braidingA}
\tau^{A}_{V,W} : V\otimes_A^{} W\longrightarrow W\otimes_A^{} V~,~~
v\otimes_A^{} w\longmapsto (R^{(2)}\ra_W^{} w)\otimes_A^{} (R^{(1)}\ra_V^{} v)~,
\end{flalign}
for any two objects $V,W$ in ${}^H_{}{}^{}_{A}\MMM^{\mathrm{sym}}_{A}$.
As a consequence, ${}^H_{}{}^{}_{A}\MMM^{\mathrm{sym}}_{A}$ is a braided closed monoidal category.
\end{theo}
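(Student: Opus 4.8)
The plan is to show that the $H$-module braiding $\tau_{V,W}^{}$ of (\ref{eqn:taumap}) descends to a well-defined map $\tau^A_{V,W}$ on the quotient $V\otimes_A^{}W$, that this descended map is an ${}^H_{}{}^{}_{A}\MMM^{\mathrm{sym}}_{A}$-morphism which is natural in both arguments, and finally that it inherits the hexagon relations from $\tau$. Together with Lemma \ref{lem:HAMAwcisclosedmonoidal}, which already supplies the closed monoidal structure on ${}^H_{}{}^{}_{A}\MMM^{\mathrm{sym}}_{A}$, this will establish that ${}^H_{}{}^{}_{A}\MMM^{\mathrm{sym}}_{A}$ is a braided closed monoidal category.

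First I would prove well-definedness, which is the crux of the argument. Since $V\otimes_A^{}W = (V\otimes W)/\bol{N}_{V,W}^{}$ with quotient map $\pi_{V,W}^{}$, it suffices to check that the ${}^H_{}\MMM$-morphism $\pi_{W,V}^{}\circ\tau_{V,W}^{}$ sends $\bol{N}_{V,W}^{}$ to zero, so that $\tau^A_{V,W}$ is determined by $\tau^A_{V,W}\circ\pi_{V,W}^{}=\pi_{W,V}^{}\circ\tau_{V,W}^{}$. Concretely I would verify, on representatives, the identity
\[
\tau^A_{V,W}\big((v\,a)\otimes_A^{}w\big)=\tau^A_{V,W}\Big((\phi^{(1)}\ra_V^{}v)\otimes_A^{}\big((\phi^{(2)}\ra_A^{}a)\,(\phi^{(3)}\ra_W^{}w)\big)\Big)
\]
by expanding both sides with (\ref{eqn:taumap}), splitting $R^{(1)}\ra_V^{}(v\,a)=(R^{(1)}_{(1)}\ra_V^{}v)\,(R^{(1)}_{(2)}\ra_A^{}a)$ through the $H$-equivariance of the right action, and then using the symmetry relations (\ref{eqn:wcbimod}) of $V$ and $W$ together with the $R$-matrix axiom (\ref{eqn:Rmatrixaxioms3}) to carry the $A$-factor across the braiding onto the correct side of the tensor relation (\ref{eqn:tensorAidentities}) in $W\otimes_A^{}V$. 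This is exactly the point where braided commutativity of $A$ and symmetry of the bimodules are indispensable: they are what allow the $A$-action to be threaded through $\tau$ while respecting the relation defining $\otimes_A^{}$.

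Next, $H$-equivariance of $\tau^A_{V,W}$ is inherited from $\tau_{V,W}^{}$ (Proposition \ref{propo:HMisbraided}) through the quotient, so it remains to establish the $A$-bimodule property. I would check left $A$-linearity on representatives, $\tau^A_{V,W}\big(a\,(v\otimes_A^{}w)\big)=a\,\tau^A_{V,W}(v\otimes_A^{}w)$, using the bimodule structure (\ref{eqn:tensorAidentitiesb}), (\ref{eqn:tensorAidentitiesc}) on $\otimes_A^{}$, the symmetry relations (\ref{eqn:wcbimod}), and the axiom (\ref{eqn:Rmatrixaxioms2}); by Remark \ref{rem:wcsecondR} the right-linearity statement then follows from the same computation with $R$ replaced by the second $R$-matrix $R^\prime=R_{21}^{-1}$. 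Naturality of $\tau^A$ is inherited from naturality of $\tau$ in ${}^H_{}\MMM$ together with functoriality of $\otimes_A^{}$: for any ${}^H_{}{}^{}_{A}\MMM^{\mathrm{sym}}_{A}$-morphisms $f,g$ the naturality square for $\tau^A$ is the image under the epimorphisms $\pi$ of the corresponding commuting square for $\tau$.

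Finally, the hexagon relations for $\tau^A$ relative to the associator $\Phi^{\bol{A}}$ of (\ref{eqn:assocA}) follow by descending the hexagon relations for $\tau$ relative to $\Phi$: since $\Phi^{\bol{A}}$, the unitors, and $\tau^A$ are all obtained from their ${}^H_{}\MMM$-counterparts by passing to the quotients $\otimes_A^{}$, and the quotient maps $\pi$ are epimorphisms, commutativity of the hexagon diagrams in ${}^H_{}\MMM$ forces commutativity of the induced diagrams in ${}^H_{}{}^{}_{A}\MMM^{\mathrm{sym}}_{A}$. Invertibility of $\tau^A_{V,W}$ is obtained from the braiding built out of $R^\prime=R_{21}^{-1}$ (Remarks \ref{rem:2Rmatrices} and \ref{rem:wcsecondR}), exactly as for $\tau$ in the remark following Proposition \ref{propo:HMisbraided}. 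Combining all of this with Lemma \ref{lem:HAMAwcisclosedmonoidal} yields the claim. I expect the well-definedness step to be the main obstacle, since it is the only place where both the braided commutativity of $A$ and the symmetry of $V$ and $W$ enter nontrivially, and it requires carrying the $A$-action through the braiding compatibly with (\ref{eqn:tensorAidentities}), with the associator bookkeeping controlled by the $3$-cocycle condition and the $R$-matrix axioms.
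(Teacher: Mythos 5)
Your proposal is correct and follows essentially the same route as the paper: the paper reduces everything to showing that $\pi_{W,V}^{}\circ\tau_{V,W}^{}$ is an ${}^H_{}{}^{}_{A}\MMM^{}_{A}$-morphism vanishing on $\bol{N}_{V,W}^{}$, verified by a computation of the same type as in Lemma \ref{lem:HAMAwcisclosedmonoidal} using the symmetry relations (\ref{eqn:wcbimod}) and the hexagon axioms for $R$. Your additional remarks on naturality, the hexagon relations and invertibility via $R^\prime=R_{21}^{-1}$ are exactly the points the paper leaves implicit as descending from ${}^H_{}\MMM$.
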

\begin{proof}
We have to show that (\ref{eqn:braidingA}) is a well-defined ${}^H_{}{}^{}_{A}\MMM^{}_{A}$-morphism,
which is equivalent to proving that
\begin{flalign}
\pi_{W,V}^{} \circ \tau_{V,W}^{} : V\otimes W \longrightarrow W\otimes_A^{}V
\end{flalign}
is an ${}^H_{}{}^{}_{A}\MMM^{}_{A}$-morphism that vanishes on $N_{V,W}^{}$ (cf.\ (\ref{eqn:Nkernel})).
This can be shown by a calculation similar to the one
 in the proof of Lemma \ref{lem:HAMAwcisclosedmonoidal}, where here
 we only need to use the properties (\ref{eqn:wcbimod}) 
 to relate the left and right $A$-actions.
\end{proof}

Having established that ${}^H_{}{}^{}_{A}\MMM^{\mathrm{sym}}_{A}$ is a braided closed monoidal category,
Proposition \ref{propo:tensormorphism} implies that there are tensor product
morphisms for the internal $\hom$-objects $\hom_A^{}$ in ${}^H_{}{}^{}_{A}\MMM^{\mathrm{sym}}_{A}$.
We shall denote these ${}^H_{}{}^{}_{A}\MMM^{}_{A}$-morphisms by
\begin{flalign}
\obultimes^A_{V,W,X,Y} : \hom_A^{}(V,W)\otimes_A^{} \hom_A^{}(X,Y) \longrightarrow \hom_A^{}(V\otimes_A^{} X , W\otimes_A^{} Y)~,
\end{flalign}
for any four objects $V,W,X,Y$ in ${}^H_{}{}^{}_{A}\MMM^{\mathrm{sym}}_{A}$.
Recalling that the braided closed monoidal structures on ${}^H_{}{}^{}_{A}\MMM^{\mathrm{sym}}_{A}$ 
are canonically induced by those on ${}^H_{}\MMM$,
the  tensor product morphisms $\obultimes^A_{\text{--},\text{--},\text{--},\text{--}}$ 
enjoy the same properties as the tensor product morphisms $\obultimes^{}_{\text{--},\text{--},\text{--},\text{--}}$
for the internal $\hom$-objects $\hom$ in ${}^H_{}\MMM$, see Subsection \ref{subsec:tensorHMhom}.
In summary, we have
\begin{propo}\label{propo:braidedcompHAMA}
Let $H$ be a quasitriangular quasi-Hopf algebra and $A$ any braided commutative algebra in 
${}^H_{}\MMM$.
\begin{itemize}
\item[(i)] The tensor product morphisms $\obultimes_{\text{--},\text{--},\text{--},\text{--}}^{A}$
satisfy the braided composition property, i.e.\ 
for any  six objects $U,V,W,X,Y,Z$ in ${}^H_{}{}^{}_{A}\MMM^{\mathrm{sym}}_{A}$ the ${}^H_{}{}^{}_{A}\MMM^{}_{A}$-diagram 
in (\ref{eqn:braidedcompHAMA}) commutes.
\item[(ii)] The tensor product morphisms $\obultimes_{\text{--},\text{--},\text{--},\text{--}}^{A}$
 are weakly associative, i.e.\ for any
six objects $U,V,W,X,Y,Z$ in ${}^H_{}{}^{}_{A}\MMM^{\mathrm{sym}}_{A}$ the ${}^H_{}{}^{}_{A}\MMM^{}_{A}$-diagram
in (\ref{eqn:weakassobulHAMA}) commutes.
\end{itemize}
\end{propo}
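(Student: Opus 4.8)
The plan is to mirror, step by step, the arguments that established the corresponding statements in ${}^H_{}\MMM$, namely Propositions \ref{propo:braidedcompHM} and \ref{propo:weakassoobultimes}. The crucial structural observation, already emphasized before the proposition, is that every morphism entering the diagrams (\ref{eqn:braidedcompHAMA}) and (\ref{eqn:weakassobulHAMA}) --- the evaluations $\ev^A_{\text{--},\text{--}}$, the compositions $\bullet^A_{\text{--},\text{--},\text{--}}$, the tensor product morphisms $\obultimes^A_{\text{--},\text{--},\text{--},\text{--}}$, the braiding $\tau^A_{\text{--},\text{--}}$ and the associator $\Phi^A$ --- is canonically induced from its ${}^H_{}\MMM$ counterpart by descending to the coequalizer $\otimes_A^{}$ and restricting to the equalizer $\hom_A^{}$. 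Consequently the explicit element-wise formulas for $\ev^A$, $\bullet^A$ and $\tau^A$ coincide with those in (\ref{eqn:evaluationexplicit}), (\ref{eqn:compositionexplicit}) and (\ref{eqn:taumap}), with $\otimes$ replaced throughout by $\otimes_A^{}$.

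First I would establish the braided-bimodule analogues of the auxiliary results of Subsection \ref{subsec:tensorHMhom}. Concretely, I would prove the $A$-versions of the evaluation identities (\ref{eqn:obultimes1identity}), of the unit relation (\ref{eqn:obultimesidentities}), of the decomposition $L\obultimes^A L' = (L\obultimes^A 1_{\mathrm{end}(Y)})\bullet^A(1_{\mathrm{end}(V)}\obultimes^A L')$ (the analogue of Lemma \ref{lem:eqn:obultimesintermsofcirc}), and of the three composition-compatibility identities of Lemma \ref{lem:compobultimesprop}. Each of these is checked by evaluating both sides on a generic element $v\otimes_A^{} x$ and running the identical computation as in ${}^H_{}\MMM$: the only inputs are the $A$-analogue (\ref{eqn:evcompcompatibilityA}) of Proposition \ref{propo:evcompproperties}~(ii) recorded in Remark \ref{rem:propertiesevAcircA}, the braiding (\ref{eqn:braidingA}), and the same $3$-cocycle and $R$-matrix relations used before. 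Here one must take care that all manipulations respect the tensor-over-$A$ relations (\ref{eqn:tensorAidentities}); this is automatic, since these are precisely the moves already validated in showing that $\tau^A$ and $\obultimes^A$ are well defined on ${}^H_{}{}^{}_{A}\MMM^{\mathrm{sym}}_{A}$.

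With these lemmas in place, the two items follow verbatim from the ${}^H_{}\MMM$ proofs. For item (i) I would combine the $A$-decomposition of $\obultimes^A$ with the three $A$-composition identities and the weak associativity of $\bullet^A$ (from Remark \ref{rem:propertiesevAcircA}), exactly as in Proposition \ref{propo:braidedcompHM}. For item (ii) I would apply the $A$-decomposition lemma twice together with the $A$-unit relation, rewrite the resulting internal homomorphisms in terms of $\bullet^A$ using the $A$-composition identities, and then conclude using $H$-equivariance of $\Phi^A$ and weak associativity of $\bullet^A$, precisely as in Proposition \ref{propo:weakassoobultimes}. The main obstacle is the bookkeeping in the auxiliary step: verifying that the explicit evaluation formulas for $\obultimes^A$ reproduce (\ref{eqn:obultimes1identity}) on the quotient, since this is where the interplay between the $R$-matrix, the associator and the tensor-over-$A$ relation is most delicate. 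Once this is secured, everything else is a faithful transcription of Subsection \ref{subsec:tensorHMhom}.
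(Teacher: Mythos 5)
Your proposal is correct and follows the same route as the paper, which in fact gives no separate proof of this proposition: it simply observes that the braided closed monoidal structure on ${}^H_{}{}^{}_{A}\MMM^{\mathrm{sym}}_{A}$ is canonically induced from that on ${}^H_{}\MMM$ and asserts that $\obultimes^{A}_{\text{--},\text{--},\text{--},\text{--}}$ therefore inherits the properties established in Subsection \ref{subsec:tensorHMhom}. Your plan of explicitly transcribing the auxiliary lemmas (the $A$-analogues of Lemma \ref{lem:eqn:obultimesintermsofcirc} and Lemma \ref{lem:compobultimesprop}) and rerunning the proofs of Propositions \ref{propo:braidedcompHM} and \ref{propo:weakassoobultimes} is a faithful, and somewhat more careful, elaboration of exactly that argument.
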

\begin{sidewaystable}
{\small
\begin{flalign}\label{eqn:braidedcompHAMA}
\hspace{-0.3cm}\xymatrix{
\ar[d]_-{\Phi_{\hom_A^{}(V,Y),\hom_A^{}(X,Z), \hom_A^{}(U,V)\otimes_A^{} \hom_A^{}(W,X)}^{A}} 
\big(\hom_A^{}(V,Y) \otimes_A^{} \hom_A^{}(X,Z) \big)\otimes_A^{} \big(\hom_A^{}(U,V)\otimes_A^{} \hom_A^{}(W,X)\big)
\ar[rrrr]^-{\obultimes_{V,Y,X,Z}^{A}\otimes_A^{} \obultimes_{U,V,W,X}^{A}} &&&& 
\hom_A^{}(V\otimes_A^{} X,Y\otimes_A^{} Z)\otimes_A^{} \hom_A^{}(U\otimes_A^{} W, V\otimes_A^{} X)
\ar[dddddd]^-{\bullet_{U\otimes_{A}^{} W,V\otimes_{A}^{} X, Y\otimes_{A}^{} Z}^{A}}\\
\ar[d]_-{\id_{\hom_A^{}(V,Y)}^{} \otimes_A^{} (\Phi_{\hom_A^{}(X,Z),\hom_A^{}(U,V),\hom_A^{}(W,X)}^{A})^{-1}}
\hom_A^{}(V,Y)\otimes_A^{} \big(\hom_A^{}(X,Z) \otimes_A^{}\big(\hom_A^{}(U,V)\otimes_A^{} \hom_A^{}(W,X)\big)  \big) &&&&\\
\ar[d]_-{\id_{\hom_A^{}(V,Y)}^{} \otimes_A^{} (\tau_{\hom_A^{}(X,Z),\hom_A^{}(U,V)}^{A}\otimes_A^{} \id_{\hom_A^{}(W,X)}^{})} 
 \hom_A^{}(V,Y)\otimes_A^{} \big(\big(\hom_A^{}(X,Z) \otimes_A^{}\hom_A^{}(U,V) \big)\otimes_A^{} \hom_A^{}(W,X)  \big) &&&&\\
\ar[d]_-{\id_{\hom_A^{}(V,Y)}^{}\otimes_A^{}\Phi_{\hom_A^{}(U,V),\hom_A^{}(X,Z),\hom_A^{}(W,X)}^{A}}  
\hom_A^{}(V,Y)\otimes_A^{} \big(\big(\hom_A^{}(U,V) \otimes_A^{}\hom_A^{}(X,Z) \big)\otimes_A^{} \hom_A^{}(W,X)  \big) &&&&\\
\ar[d]_-{(\Phi^{A}_{\hom_A^{}(V,Y),\hom_A^{}(U,V),\hom_A^{}(X,Z) \otimes_A^{} \hom_A^{}(W,X) })^{-1}}
 \hom_A^{}(V,Y)\otimes_A^{} \big(\hom_A^{}(U,V) \otimes_A^{} \big(\hom_A^{}(X,Z) \otimes_A^{} \hom_A^{}(W,X) \big) \big)&&&& \\
\ar[d]_-{\bullet_{U,V,Y}^{A}\otimes_A^{} \bullet_{W,X,Z}^{A}} 
\big(\hom_A^{}(V,Y)\otimes_A^{}  \hom_A^{}(U,V)\big) \otimes_A^{} \big(\hom_A^{}(X,Z) \otimes_A^{} \hom_A^{}(W,X) \big)&&&&\\
\hom_A^{}(U,Y) \otimes_A^{} \hom_A^{}(W,Z) \ar[rrrr]_-{\obultimes_{U,Y,W,Z}^{A}}
&&&& \hom_A^{}(U\otimes_A^{} W,Y\otimes_A^{} Z)
}
\end{flalign}
\sk
\begin{flalign}\label{eqn:weakassobulHAMA}
\xymatrix{
\ar[d]_-{\Phi_{\hom_{A}^{}(U,V),\hom_{A}^{}(W,X),\hom_{A}^{}(Y,Z)}^{A}} 
\big(\hom_{A}^{}(U,V)\otimes_{A}^{}\hom_{A}^{}(W,X) \big)\otimes_{A}^{} \hom_{A}^{}(Y,Z) 
\ar[rrrr]^-{\obultimes_{U,V,W,X}^{A}\otimes_{A}^{} \id_{\hom_{A}^{}(Y,Z)}^{} }&& && 
\hom_{A}^{}(U\otimes_{A}^{} W,V\otimes_{A}^{} X)\otimes_{A}^{} \hom_{A}^{}(Y,Z) 
\ar[d]^-{\obultimes_{U\otimes_{A}^{} W,V\otimes_{A}^{} X,Y,Z}^{A}}\\
\ar[d]_-{\id_{\hom_{A}^{}(U,V)}^{}\otimes_{A}^{}\obultimes_{W,X,Y,Z}^{A}}
\hom_{A}^{}(U,V) \otimes_{A}^{}\big(\hom_{A}^{}(W,X)\otimes_{A}^{} \hom_{A}^{}(Y,Z)\big)&& &&
\hom_{A}^{}\big((U\otimes_{A}^{} W)\otimes_{A}^{} Y,(V\otimes_{A}^{} X)\otimes_{A}^{} Z\big)
\ar[d]^-{\Phi_{V,X,Z}^{A} \circ (\,\cdot\,)\circ (\Phi_{U,W,Y}^{A})^{-1} }\\
\hom_{A}^{}(U,V)\otimes_{A}^{}\hom_{A}^{}(W\otimes_{A}^{} Y,X\otimes_{A}^{} Z) 
\ar[rrrr]_-{\obultimes_{U,V,W\otimes_{A}^{} Y,X\otimes_{A}^{} Z}^{A}}&& &&
\hom_{A}^{}\big(U\otimes_{A}^{}(W\otimes_{A}^{} Y), V\otimes_{A}^{}(X\otimes_{A}^{} Z)\big)
}
\end{flalign}
}
\end{sidewaystable}

Let $H$ be a quasitriangular quasi-Hopf algebra, 
$A$ any braided commutative algebra in ${}^H_{}\MMM$
and $F\in H\otimes H$ any cochain twist based on $H$.
By Proposition \ref{propo:equivalencewccategories} 
the twisted algebra $A_F^{}$ is a braided commutative algebra in ${}^{H_F^{}}_{}\MMM$.
Recalling Theorem \ref{theo:equivalenceclosedmonoidalHAMA}, we have an
equivalence $\FF$ of closed monoidal categories between
${}^H_{}{}^{}_{A}\MMM^{}_{A}$ and ${}^{H_F^{}}_{}{}^{}_{A_F^{}}\MMM{}^{}_{A_F^{}}$,
which by Proposition \ref{propo:equivalencewccategories} and Lemma \ref{lem:HAMAwcisclosedmonoidal}
induces an equivalence $\FF$ of closed monoidal categories
between ${}^H_{}{}^{}_{A}\MMM^{\mathrm{sym}}_{A}$
 and ${}^{H_F^{}}_{}{}^{}_{A_F^{}}\MMM^{\mathrm{sym}}_{A_F^{}}$.
Since the braiding $\tau^A_{}$ on ${}^H_{}{}^{}_{A}\MMM^{\mathrm{sym}}_{A}$
 is canonically induced by the braiding $\tau$ on ${}^H_{}\MMM$, the 
 same argument as in Theorem \ref{theo:braidedequivalenceHM} shows
 \begin{theo}\label{theo:HAMAsymtwisting}
  For any quasitriangular quasi-Hopf algebra $H$, any braided commutative algebra
  $A$ in ${}^H_{}\MMM$ and any cochain twist $F\in H\otimes H$,
  the equivalence of closed monoidal categories in  Theorem \ref{theo:equivalenceclosedmonoidalHAMA}
  restricts to an equivalence of braided closed monoidal categories between
 ${}^H_{}{}^{}_{A}\MMM^{\mathrm{sym}}_{A}$
  and ${}^{H_F^{}}_{}{}^{}_{A_F^{}}\MMM^{\mathrm{sym}}_{A_F^{}}$.
 \end{theo}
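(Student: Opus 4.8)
The plan is to show that the closed monoidal equivalence $\FF$ of Theorem \ref{theo:equivalenceclosedmonoidalHAMA} is automatically compatible with the braidings, so that no new construction is required and everything reduces to a single coherence diagram. By Proposition \ref{propo:equivalencewccategories} and Lemma \ref{lem:HAMAwcisclosedmonoidal}, the functor $\FF : {}^H_{}{}^{}_{A}\MMM^{}_{A}\to {}^{H_F^{}}_{}{}^{}_{A_F^{}}\MMM^{}_{A_F^{}}$ restricts to a closed monoidal equivalence $\FF : {}^H_{}{}^{}_{A}\MMM^{\mathrm{sym}}_{A}\to {}^{H_F^{}}_{}{}^{}_{A_F^{}}\MMM^{\mathrm{sym}}_{A_F^{}}$, so the only remaining thing to verify is that $\FF$ is a braided monoidal functor, i.e.\ that the braiding $\tau^{A}$ on the source matches the braiding $\tau^{A_F^{}}$ on the target through the coherence isomorphism $\varphi^{\bol{A}}_{\text{--},\text{--}}$ of Theorem \ref{theo:twistingmonoidalHAMA}.

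Concretely, first I would state that it suffices to prove commutativity of the diagram
\begin{flalign}
\xymatrix{
\ar[d]_-{\varphi_{V,W}^{\bol{A}}}\FF(V)\otimes_{A_F^{}}^{} \FF(W) \ar[rr]^-{\tau^{A_F^{}}_{\FF(V),\FF(W)} } && \FF(W)\otimes_{A_F^{}}^{} \FF(V)\ar[d]^-{\varphi_{W,V}^{\bol{A}}}\\
\FF(V\otimes_A^{} W)\ar[rr]_-{\FF(\tau_{V,W}^{A})}&& \FF(W\otimes_A^{} V)
}
\end{flalign}
in ${}^{H_F^{}}_{}{}^{}_{A_F^{}}\MMM^{\mathrm{sym}}_{A_F^{}}$ for any two objects $V,W$ in ${}^H_{}{}^{}_{A}\MMM^{\mathrm{sym}}_{A}$. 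The key observation is that both $\tau^{A}$ and $\tau^{A_F^{}}$ are the descents to the quotients $\otimes_A^{}$ and $\otimes_{A_F^{}}^{}$ of the underlying braidings $\tau$ on ${}^H_{}\MMM$ and $\tau^{F}$ on ${}^{H_F^{}}_{}\MMM$ (cf.\ the explicit formula (\ref{eqn:braidingA})), and that $\varphi^{\bol{A}}_{\text{--},\text{--}}$ is the descent of the coherence map $\varphi_{\text{--},\text{--}}^{}$ of (\ref{eqn:coherencemapstensor}). Thus the square above is obtained by applying the quotient maps $\pi$ to the corresponding square for ${}^H_{}\MMM$ versus ${}^{H_F^{}}_{}\MMM$. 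The latter square is precisely the one already shown to commute in the proof of Theorem \ref{theo:braidedequivalenceHM}, where the computation used only the definition of the twisted $R$-matrix $R_F^{} = F_{21}^{}\,R\,F^{-1}$ from Theorem \ref{theo:deformationquasitriangular} together with the explicit forms (\ref{eqn:taumap}) and (\ref{eqn:coherencemapstensor}).

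The proof therefore amounts to noting that this earlier $\tau$-versus-$\tau^{F}$ computation is compatible with passing to the coequalizers defining $\otimes_A^{}$ and $\otimes_{A_F^{}}^{}$, which is guaranteed because all four morphisms in the square are well-defined ${}^{H_F^{}}_{}{}^{}_{A_F^{}}\MMM^{}_{A_F^{}}$-morphisms on the quotients. I expect the only nontrivial point — and hence the main obstacle, though a mild one — to be checking that the chain of equalities
\begin{flalign}
\nn \varphi_{W,V}^{\bol{A}}\big(\tau_{\FF(V),\FF(W)}^{A_F^{}}(v\otimes_{A_F}^{} w)\big)
&= (R^{(2)}\, F^{(-2)}\ra_W^{} w)\otimes_A^{} (R^{(1)}\, F^{(-1)}\ra_V^{} v)\\[4pt]
&=\tau_{V,W}^{A}\big(\varphi_{V,W}^{\bol{A}}(v\otimes_{A_F}^{} w)\big)
\end{flalign}
goes through verbatim at the level of equivalence classes, which follows immediately from the corresponding identity in ${}^H_{}\MMM$ already established in Theorem \ref{theo:braidedequivalenceHM} together with functoriality of the quotient maps. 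Since the braided closed monoidal structures on both symmetric bimodule categories are canonically induced from those on ${}^H_{}\MMM$ and ${}^{H_F^{}}_{}\MMM$, no further coherence conditions need to be verified, and the equivalence of Theorem \ref{theo:equivalenceclosedmonoidalHAMA} thereby upgrades to an equivalence of braided closed monoidal categories.
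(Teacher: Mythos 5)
Your proposal is correct and follows essentially the same route as the paper: the paper likewise reduces everything to Proposition \ref{propo:equivalencewccategories} and Lemma \ref{lem:HAMAwcisclosedmonoidal} for the restriction of the closed monoidal equivalence, and then observes that since $\tau^{A}$, $\tau^{A_F^{}}$ and $\varphi^{\bol{A}}_{\text{--},\text{--}}$ are all canonically induced on the quotients, the coherence computation of Theorem \ref{theo:braidedequivalenceHM} carries over verbatim. Your write-up merely makes the descent-to-coequalizers step more explicit than the paper does.
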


\begin{rem}
In complete analogy to Proposition \ref{propo:obultimesF},
for any four objects $V,W,X,Y$ in ${}^H_{}{}^{}_{A}\MMM^{\mathrm{sym}}_{A}$
the ${}^{H_F^{}}_{}{}^{}_{A_F^{}}\MMM_{A_F^{}}^{}$-diagram
in (\ref{eqn:twistedobulHAMA}) commutes.
\end{rem}
\begin{sidewaystable}
\begin{flalign}\label{eqn:twistedobulHAMA}
\xymatrix{
\ar[d]_-{\gamma_{V,W}^{A}\otimes_{A_F^{}}^{}\gamma_{X,Y}^{A}}
\hom_{A_F^{}}^{}(\FF(V),\FF(W))\otimes_{A_F^{}}^{} \hom_{A_F^{}}^{}(\FF(X),\FF(Y)) 
\ar[rrrr]^-{\obultimes_{\FF(V),\FF(W),\FF(X),\FF(Y)}^{A_F^{}}}&&&& 
\ar[d]^-{\varphi_{W,Y}^{A} \circ (\,\cdot\,)\circ (\varphi^A_{V,X})^{-1}}
 \hom_{A_F^{}}^{}(\FF(V)\otimes_{A_F^{}}^{}\FF(X),\FF(W)\otimes_{A_F^{}}^{}\FF(Y))\\
\ar[d]_-{\varphi_{\hom_{A}^{}(V,W),\hom_{A}^{}(X,Y)}^{}} 
\FF(\hom_A^{}(V,W))\otimes_{A_F^{}}^{}\FF(\hom_A^{}(X,Y)) &&&& 
\ar[d]^-{\gamma_{V\otimes_{A}^{} X,W\otimes_{A}^{} Y}^{}}
\hom_{A_F^{}}^{}(\FF(V\otimes_A^{} X) , \FF(W\otimes_A^{} Y))\\
\FF(\hom_A^{}(V,W)\otimes_A^{} \hom_A^{}(X,Y)) 
\ar[rrrr]_-{\FF(\obultimes^{A}_{V,W,X,Y})}&&&& 
\FF(\hom_A^{}(V\otimes_A^{} X,W\otimes_A^{} Y))
}
\end{flalign}
\end{sidewaystable}


\section{\label{sec:defquant}Quantization of equivariant vector bundles}

In this final section we shall construct some concrete examples for the categories ${}^H_{}\AAA$ and ${}^H_{}{}^{}_{A}\MMM{}^{}_{A}$
starting from ordinary differential geometry. In these examples the algebras $A$ and bimodules $V$
are commutative, i.e.\ braided commutative with respect to the trivial $R$-matrix $R=1\otimes 1$.
Deformation quantization by cochain twists then leads to examples of noncommutative and
also nonassociative algebras and bimodules.
\sk

In the following all manifolds are assumed to be 
$C^\infty$, finite-dimensional, Hausdorff and second countable.
Let us fix any (complex) Lie group $G$ and denote its Lie algebra by $\mathfrak{g}$.
We define the category  $G\text{-}\mathsf{Man}$ as follows:
The objects in $G\text{-}\mathsf{Man}$ are pairs $M = (\underline{M},\rho_{M}^{})$, where $\underline{M}$ is a manifold
and $\rho_M^{}: G\times \underline{M}\to \underline{M}$ is a smooth left $G$-action on $\underline{M}$.
The morphisms in  $G\text{-}\mathsf{Man}$ are $G$-equivariant smooth maps, i.e.\ 
a morphism $f: M\to N$ is a smooth map (denoted by the same symbol) $f:\underline{M}\to\underline{N}$,
such that the diagram
\begin{flalign}
\xymatrix{
\ar[d]_-{\id_G^{} \times f}G\times \underline{M} \ar[rr]^-{\rho_M^{}} && \ar[d]^-{f}\underline{M}\\
G\times \underline{N} \ar[rr]_-{\rho_N^{}} && \underline{N}
}
\end{flalign}
commutes. 
\sk

We shall now construct a functor $C^\infty :  G\text{-}\mathsf{Man}^\mathrm{op} \to {}^{U\mathfrak{g}}_{}\AAA$,
where $U\mathfrak{g}$ is the universal enveloping algebra of the Lie algebra $\mathfrak{g}$ which is a cocommutative
Hopf algebra with $R=1\otimes 1$ and
structure maps given by
\begin{flalign}
\Delta(\xi)=\xi\otimes1+1\otimes\xi \ , \quad \epsilon(\xi)=0 \ ,
\quad S(\xi)=-\xi \ ,
\end{flalign}
on primitive elements $\xi\in\mathfrak{g}$, and extended to all of
$U\mathfrak{g}$ as algebra (anti-)homomorphisms.
For any object $M$ in  $G\text{-}\mathsf{Man}$  we set $C^\infty(M) := (C^\infty(\underline{M}), \ra_{C^\infty(M)}^{})$,
where $C^\infty(\underline{M})$ is the $\mathbb{C}$-vector space of smooth complex-valued functions on $\underline{M}$
and the left $U\mathfrak{g}$-action is induced by the $G$-action as
\begin{flalign}\label{eqn:actionfunctions}
\xi\ra_{C^\infty(M)}^{} a :=\frac{\mathrm d}{\mathrm{d}t} \big(a\circ \rho_M^{}(\exp(-t\,\xi ),\,\cdot\,)\big)\Big\vert_{t=0}^{}~,
\end{flalign}
for all $\xi\in\mathfrak{g}$ and $a\in C^\infty(\underline{M})$.
By $\exp : \mathfrak{g}\to G$ we have denoted the exponential map of the Lie group $G$.
The product $\mu_{C^\infty(M)}^{} : C^\infty(M)\otimes C^\infty(M) \to C^\infty(M)$
is the usual pointwise multiplication of functions and the unit is $\eta_{C^\infty(M)}^{} : \mathbb{C}\to C^\infty(M)\,,~c\mapsto c\,1_{C^\infty(M)}^{}$ (the constant functions).
It is easy to check that $\mu_{C^\infty(M)}^{}$ and $\eta_{C^\infty(M)}^{}$ are ${}^{U\mathfrak{g}}_{}\MMM$-morphisms and 
that $C^\infty(M)$ is an object in ${}^{U\mathfrak{g}}_{}\AAA$.
For any morphism $ f^{\mathrm{op}} : M\to N $ in $G\text{-}\mathsf{Man}^\mathrm{op}$ (i.e.\ a smooth $G$-equivariant
map $f : N\to M$) we set 
\begin{flalign}
C^\infty(f) := f^\ast : C^\infty(M)\longrightarrow C^\infty(N)~, \quad a\longmapsto a\circ f
\end{flalign}
to be the pull-back of functions along $f$. Since $f$ is $G$-equivariant it follows that
$C^\infty(f) $ is $U\mathfrak{g}$-equivariant. Since pull-backs also preserve the products and units,
we find that $C^\infty(f) : C^\infty(M) \to C^\infty(N)$ is a ${}^{U\mathfrak{g}}_{}\AAA$-morphism.
In summary, we have shown 
\begin{propo}\label{propo:classicalalgebra}
There exists a functor $C^\infty: G\text{-}\mathsf{Man}^\mathrm{op} \to {}^{U\mathfrak{g}}_{}\AAA$.
Taking into account the triangular structure $R=1\otimes 1$ on $U\mathfrak{g}$,
the functor $C^\infty$ is valued in the full subcategory ${}^{U\mathfrak{g}}_{}\AAA_{}^{\mathrm{com}}$
of braided commutative algebras in ${}^{U\mathfrak{g}}_{}\MMM$.
\end{propo}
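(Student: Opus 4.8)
The plan is to assemble the verification in four steps, most of which are routine once the key geometric input — that each Lie algebra element acts as a vector field — is isolated. First I would check that (\ref{eqn:actionfunctions}) genuinely defines a left $U\mathfrak{g}$-module structure on $C^\infty(\underline{M})$ in the sense of (\ref{eqn:leftHmodule}). Differentiating the left $G$-action $\rho_M^{}$ at the identity produces, for each $\xi\in\mathfrak{g}$, a derivation (vector field) on $\underline{M}$, and the sign convention $\exp(-t\,\xi)$ in (\ref{eqn:actionfunctions}) ensures that $\xi\mapsto(\xi\ra_{C^\infty(M)}^{}\,\cdot\,)$ is a homomorphism of Lie algebras, i.e.\ $[\xi,\xi^\prime]\ra a = \xi\ra(\xi^\prime\ra a)-\xi^\prime\ra(\xi\ra a)$. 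By the universal property of the enveloping algebra this representation extends uniquely to an associative algebra homomorphism $U\mathfrak{g}\to\mathrm{End}_{\mathbb{C}}(C^\infty(\underline{M}))$, which is precisely the data of a left $U\mathfrak{g}$-module satisfying (\ref{eqn:leftHmodule}).

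Next I would promote $C^\infty(M)$ to an object of ${}^{U\mathfrak{g}}_{}\AAA$ by checking that $\mu_{C^\infty(M)}^{}$ and $\eta_{C^\infty(M)}^{}$ are ${}^{U\mathfrak{g}}_{}\MMM$-morphisms and that the diagrams of Definition \ref{defi:internalalgebra} commute. The essential point is $U\mathfrak{g}$-equivariance of the product: since each $\xi\in\mathfrak{g}$ acts as a derivation it obeys the Leibniz rule $\xi\ra(a\,a^\prime\,)=(\xi\ra a)\,a^\prime+a\,(\xi\ra a^\prime\,)$, which is exactly the equivariance condition (\ref{eqn:Hequivariance}) relative to the primitive coproduct $\Delta(\xi)=\xi\otimes1+1\otimes\xi$. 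Equivariance for all of $U\mathfrak{g}$ then follows from a standard argument: the set of $h\in U\mathfrak{g}$ for which $\mu_{C^\infty(M)}^{}$ is equivariant is a unital subalgebra (because $\Delta$ is an algebra homomorphism), and it contains $\mathfrak{g}$, hence all of $U\mathfrak{g}$. Equivariance of $\eta_{C^\infty(M)}^{}$ amounts to $\xi\ra 1_{C^\infty(M)}^{}=0=\epsilon(\xi)\,1_{C^\infty(M)}^{}$. Since $U\mathfrak{g}$ is an ordinary Hopf algebra the associator is trivial, so the weak associativity and unit diagrams of Definition \ref{defi:internalalgebra} reduce to the ordinary associativity and unitality of pointwise multiplication.

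For morphisms I would verify that for a $G$-equivariant map $f\colon N\to M$ the pull-back $C^\infty(f)=f^\ast$ is $U\mathfrak{g}$-equivariant — differentiating the $G$-equivariance square gives $f^\ast(\xi\ra a)=\xi\ra f^\ast(a)$ for $\xi\in\mathfrak{g}$, which extends to $U\mathfrak{g}$ by the same subalgebra argument — and that $f^\ast$ preserves the product and unit, so it is an ${}^{U\mathfrak{g}}_{}\AAA$-morphism. Functoriality is immediate from $\id^\ast=\id$ and the contravariant law $(f\circ g)^\ast=g^\ast\circ f^\ast$, which matches the composition in the opposite category $G\text{-}\mathsf{Man}^\mathrm{op}$. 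Finally, for the braided commutativity claim, with the trivial $R$-matrix $R=1\otimes1$ the braiding (\ref{eqn:taumap}) is the plain flip $\tau_{C^\infty(M),C^\infty(M)}^{}(a\otimes a^\prime\,)=a^\prime\otimes a$, so the diagram (\ref{eqn:wcalgdiag}) expresses nothing more than $a\,a^\prime=a^\prime\,a$; this holds because pointwise multiplication of functions is commutative, placing $C^\infty(M)$ in ${}^{U\mathfrak{g}}_{}\AAA^{\mathrm{com}}_{}$.

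I expect the only genuine subtlety to be the first step — correctly identifying the infinitesimal action as a Lie algebra homomorphism with the right sign and invoking the universal property of $U\mathfrak{g}$ to extend it — after which every remaining step is a bookkeeping verification that the coproduct, counit and trivial associator of $U\mathfrak{g}$ encode exactly the Leibniz rule, the constancy of $\xi\ra 1_{C^\infty(M)}^{}$, and ordinary associativity and commutativity of the pointwise product.
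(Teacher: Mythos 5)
Your proposal is correct and follows essentially the same route as the paper, which constructs the functor exactly as you do and leaves the verifications (Lie algebra homomorphism property of the infinitesimal action, equivariance of product, unit and pull-backs via the primitive coproduct and the subalgebra argument, and reduction of braided commutativity to ordinary commutativity for $R=1\otimes 1$) as routine checks. You have simply made those implicit checks explicit, with the correct sign convention in (\ref{eqn:actionfunctions}) identified as the one genuine point of care.
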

\sk

Fixing any object $M = (\underline{M},\rho_M^{})$ in $G\text{-}\mathsf{Man}$, we can consider the category
$G\text{-}\mathsf{VecBun}_M^{}$ of $G$-equivariant vector bundles over $M$.
The objects in $G\text{-}\mathsf{VecBun}_M^{}$ are pairs
$E = (\underline{E}\stackrel{\pi_E^{}}{\longrightarrow} \underline{M}, \rho_{E}^{} )$
consisting of a finite-rank complex vector bundle $\underline{E}\stackrel{\pi_E^{}}{\longrightarrow} \underline{M}$ 
over $\underline{M}$ and a smooth left $G$-action $\rho_E^{} : G\times
\underline{E}\to\underline{E}$ on $\underline{E}\, $,
such that the diagram
\begin{flalign}\label{eqn:Geqvecbund}
\xymatrix{
\ar[d]_-{\id_G^{} \times \pi_E^{}}G\times \underline{E} \ar[rr]^-{\rho_E^{}} &&\ar[d]^-{\pi_E^{}}\underline{E}\\
 G\times \underline{M} \ar[rr]_-{\rho_M^{}} && \underline{M}
}
\end{flalign}
commutes and such that $\rho_E^{}(g,\,\cdot\,) : \underline{E}\,_x \to \underline{E}\,_{\rho_{M}^{}(g,x)}$ is a linear map
on the fibres, for all $g\in G$ and $x\in \underline{M}$.
The morphisms in $G\text{-}\mathsf{VecBun}_M^{}$ are $G$-equivariant vector bundle maps covering the identity
$\id_{\underline{M}}^{}$, i.e.\ a morphism $f : E\to E^\prime$ is a vector bundle map (denoted by the same symbol)
$f: \underline{E}\to\underline{E^\prime}$ such that the diagrams
\begin{flalign}\label{eqn:Geqvecbundmorph}
\xymatrix{
\ar[dr]_-{\pi_{E}^{}} \underline{E} \ar[rr]^-{f} && \ar[dl]^-{\pi_{E^\prime}^{}} \underline{E^\prime} &&\ar[d]_-{\id_G^{}\times f} G\times\underline{E} \ar[rr]^{\rho_E^{}} && \ar[d]^-{f}\underline{E}\\
 &\underline{M}  & && G\times \underline{E^\prime}\ar[rr]_-{\rho_{E^\prime}^{}} && \underline{E^\prime}
}
\end{flalign}
commute.
\sk

We shall now construct a functor $\Gamma^\infty : G\text{-}\mathsf{VecBun}_M^{}
\to {}^{U\mathfrak{g}}_{}{}^{}_{C^\infty(M)}\MMM^{}_{C^\infty(M)}$. For any object $E$ in 
$G\text{-}\mathsf{VecBun}_M^{}$ we set $\Gamma^\infty(E) :=
(\Gamma^\infty(\underline{E} \stackrel{\pi_E^{}}{\longrightarrow} \underline{M}),\ra_{\Gamma^\infty(E)}^{})$,
where $\Gamma^\infty(\underline{E} \stackrel{\pi_E^{}}{\longrightarrow} \underline{M})$ is the $\mathbb{C}$-vector space
of smooth sections of $\underline{E} \stackrel{\pi_E^{}}{\longrightarrow} \underline{M}$ and
the left $U\mathfrak{g}$-action is induced by the $G$-actions as
\begin{flalign}\label{eqn:actionsections}
\xi \ra_{\Gamma^\infty(E)}^{} s := \frac{\mathrm d}{\mathrm{d} t}\big( \rho_E^{}(\exp(t\, \xi),\,\cdot\,)\circ s\circ 
\rho_M^{}(\exp(-t\, \xi),\,\cdot\,) \big)\Big\vert_{t=0}^{}~,
\end{flalign}
for all $\xi\in\mathfrak{g}$ and $s\in \Gamma^\infty(\underline{E} \stackrel{\pi_E^{}}{\longrightarrow} \underline{M})$.
Notice that $\xi \ra_{\Gamma^\infty(E)}^{} s $ is an element of
$\Gamma^\infty(\underline{E} \stackrel{\pi_E^{}}{\longrightarrow} \underline{M})$, i.e.\ it satisfies
$\pi_{E}^{} \circ (\xi \ra_{\Gamma^\infty(E)}^{} s) = \id_{\underline{M}}^{} $, since 
\begin{flalign}
\nn \pi_E^{} \circ \rho_E^{}(g,\,\cdot\,) \circ s\circ \rho_{M}^{}(g^{-1},\,\cdot\,)  &= \rho_{M}^{}(g,\,\cdot\,) \circ \pi_E^{}  
\circ s\circ \rho_{M}^{}(g^{-1},\,\cdot\,) \\[4pt]
\nn &= \rho_{M}^{}(g,\,\cdot\,) \circ \id_{\underline{M}}^{} \circ \rho_{M}^{}(g^{-1},\,\cdot\,) \\[4pt]
&= \id_{\underline{M}}^{}~,
\end{flalign}
for all $s\in \Gamma^\infty(\underline{E} \stackrel{\pi_E^{}}{\longrightarrow} \underline{M}) $ and $g\in G$.
In the first step we used the $G$-equivariance condition (\ref{eqn:Geqvecbund}) and in 
the second step the fact that $s$ is a section.
The left and right $C^\infty(M)$-actions $l_{\Gamma^\infty(E)}^{} : C^\infty(M)\otimes \Gamma^{\infty}(E)\to \Gamma^\infty(E)$
and $r_{\Gamma^\infty(E)}^{} : \Gamma^\infty(E)\otimes C^\infty(M) \to \Gamma^\infty(E)$
are defined as usual pointwise.  Using (\ref{eqn:actionfunctions}) and (\ref{eqn:actionsections}) 
it is easy to check that $l_{\Gamma^\infty(E)}^{}$ and
$r_{\Gamma^\infty(E)}^{}$ are ${}^{U\mathfrak{g}}_{}\MMM$-morphisms and that 
$\Gamma^\infty(E)$ is an object in $ {}^{U\mathfrak{g}}_{}{}^{}_{C^\infty(M)}\MMM^{}_{C^\infty(M)}$.
For any morphism $f: E\to E^\prime$ in  $G\text{-}\mathsf{VecBun}_M^{}$
we set
\begin{flalign}
\Gamma^\infty(f) : \Gamma^\infty(E) \longrightarrow \Gamma^\infty(E^\prime\, ) ~, \quad s\longmapsto f \circ s~.
\end{flalign}
By the first commutative diagram in (\ref{eqn:Geqvecbundmorph}) it follows that
$\Gamma^\infty(f)(s)$ is a section of $\underline{E^\prime}\stackrel{\pi_{E^\prime}^{}}{\longrightarrow} \underline{M}$
and the second diagram in (\ref{eqn:Geqvecbundmorph}) implies that $\Gamma^\infty(f)$
is $U\mathfrak{g}$-equivariant. One easily checks that $\Gamma^\infty(f)$ preserves the left and right $C^\infty(M)$-module structures,
hence we find that $\Gamma^\infty(f) : \Gamma^\infty(E)\to \Gamma^\infty(E^\prime\, )$ is a morphism
in $ {}^{U\mathfrak{g}}_{}{}^{}_{C^\infty(M)}\MMM^{}_{C^\infty(M)}$.
In summary, we have shown
\begin{propo}\label{propo:classicalvecbund}
There exists a functor $\Gamma^\infty: G\text{-}\mathsf{VecBun}_M^{}
\to {}^{U\mathfrak{g}}_{}{}^{}_{C^\infty(M)}\MMM^{}_{C^\infty(M)}$.
Taking into account the triangular structure $R=1\otimes 1$ on $U\mathfrak{g}$,
the functor $\Gamma^\infty$ is valued in the full subcategory 
${}^{U\mathfrak{g}}_{}{}^{}_{C^\infty(M)}\MMM^{\mathrm{sym}}_{C^\infty(M)}$
of symmetric $C^\infty(M)$-bimodules in ${}^{U\mathfrak{g}}_{}\MMM$.
\end{propo}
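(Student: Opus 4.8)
The plan is to verify the two assertions of Proposition \ref{propo:classicalvecbund}: that $\Gamma^\infty$ is a well-defined functor into ${}^{U\mathfrak{g}}_{}{}^{}_{C^\infty(M)}\MMM^{}_{C^\infty(M)}$, and that when the trivial $R$-matrix $R=1\otimes1$ is taken into account, its image lies in the full subcategory of symmetric bimodules. Since most of the bookkeeping for the first assertion has already been carried out in the paragraph preceding the statement (the $U\mathfrak{g}$-action is shown to preserve sections, the $C^\infty(M)$-actions are the pointwise ones, and equivariance of $\Gamma^\infty(f)$ is checked via the diagrams (\ref{eqn:Geqvecbundmorph})), the genuinely new content is the symmetry claim, so I would concentrate there.

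First I would recall that with $R=1\otimes1$ the braiding $\tau$ of (\ref{eqn:taumap}) reduces to the ordinary flip $v\otimes w\mapsto w\otimes v$, and likewise $\tau_{A,V}^{}$ and $\tau_{V,A}^{}$ become the bare flips of the left and right actions. Thus the two commuting triangles in (\ref{eqn:wcbimoddiag}) defining a symmetric bimodule collapse, in the short-hand notation of (\ref{eqn:wcbimod}), to the single requirement
\begin{flalign*}
a\,s = s\,a~,
\end{flalign*}
for all $a\in C^\infty(M)$ and $s\in\Gamma^\infty(E)$. Since $l_{\Gamma^\infty(E)}^{}$ and $r_{\Gamma^\infty(E)}^{}$ are both defined as the pointwise multiplication of a section by a function, this is simply commutativity of pointwise multiplication in each fibre: $(a\,s)(x) = a(x)\,s(x) = s(x)\,a(x) = (s\,a)(x)$ for every $x\in\underline M$, where $a(x)\in\mathbb C$ acts on the fibre $\underline E_x$ by scalar multiplication. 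I would also remark, in passing, that the algebra $C^\infty(M)$ itself is braided commutative with respect to $R=1\otimes1$ by Proposition \ref{propo:classicalalgebra}, which is the hypothesis needed before the notion of symmetric bimodule even makes sense.

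The key steps, in order, are therefore: (i) observe that $\Gamma^\infty(E)$ is already established to be an object of ${}^{U\mathfrak{g}}_{}{}^{}_{C^\infty(M)}\MMM^{}_{C^\infty(M)}$ by the discussion above the proposition, so no new bimodule axiom need be rechecked; (ii) substitute $R=1\otimes1$ into the symmetry conditions (\ref{eqn:wcbimod}) to reduce them to $a\,s=s\,a$; and (iii) verify this equality fibrewise from the pointwise definition of the two $C^\infty(M)$-actions. Functoriality into the full subcategory is then automatic, because a full subcategory inherits all morphisms and $\Gamma^\infty(f)$ is already a ${}^{U\mathfrak{g}}_{}{}^{}_{C^\infty(M)}\MMM^{}_{C^\infty(M)}$-morphism. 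I do not anticipate a serious obstacle here; the only point requiring a little care is to make explicit that the scalar action of $a(x)\in\mathbb C$ on the fibre $\underline E_x$ genuinely commutes with itself on both sides, i.e.\ that the left and right multiplications are literally the same pointwise operation, so that the symmetry is not merely up to the (now trivial) braiding but holds on the nose. This is the analogue at the level of sections of the classical fact that $C^\infty(M)$ is a commutative ring, and it is what makes the trivial triangular structure the correct one for modeling ordinary equivariant vector bundles.
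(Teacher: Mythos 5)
Your proposal is correct and follows essentially the same route as the paper: the first assertion is exactly the construction carried out in the paragraphs preceding the statement (the $U\mathfrak{g}$-action on sections, the pointwise $C^\infty(M)$-actions, and the checks on $\Gamma^\infty(f)$), and the symmetry assertion reduces, for $R=1\otimes 1$, to the identity $a\,s=s\,a$, which holds fibrewise since both actions are pointwise scalar multiplication. Your observation that fullness of the subcategory makes the functoriality of the restriction automatic is also exactly the implicit content of the paper's ``In summary, we have shown''.
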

\sk

The functor $\Gamma^\infty$ in Proposition \ref{propo:classicalvecbund}
is in fact a braided closed monoidal functor with respect to the braided closed monoidal structure
on $G\text{-}\mathsf{VecBun}_M^{}$ that we shall now describe.
Firstly, notice that $G\text{-}\mathsf{VecBun}_M^{}$ is a monoidal category: 
The (fibrewise) tensor product $E\otimes E^\prime$ of two $G$-equivariant vector bundles $E,E^\prime$
is again a $G$-equivariant vector bundle with respect to the diagonal
left $G$-action $\rho_{E\otimes E^\prime}^{} : G\times (E\otimes E^\prime\, )\to E\otimes E^\prime\,,~(g,e\otimes e^\prime\, ) 
\mapsto \rho_E^{}(g,e)\otimes \rho_{E^\prime}^{}(g,e^\prime\, )$.
We therefore have a functor $\otimes : 
G\text{-}\mathsf{VecBun}_M^{} \times G\text{-}\mathsf{VecBun}_M^{}\to G\text{-}\mathsf{VecBun}_M^{}$.
The trivial line bundle $\mathbb{C} \times M$ (with trivial $G$-action on the fibres) 
is the unit object in $G\text{-}\mathsf{VecBun}_M^{}$, the components of the associator are the identities
and the unitors are the obvious ones. Hence $G\text{-}\mathsf{VecBun}_M^{}$ is a monoidal category.
The (fibrewise) flip map $\tau_{E,E^\prime}^{} : E\otimes E^\prime \to E^\prime \otimes E$
turns $G\text{-}\mathsf{VecBun}_M^{}$ into a braided (and even symmetric) monoidal category.
Secondly, notice that $G\text{-}\mathsf{VecBun}_M^{}$ has an internal $\hom$-functor
which turns it into a braided closed monoidal category: For any two $G$-equivariant vector bundles $E,E^\prime$ we can form the homomorphism
bundle $\hom(E,E^\prime\, )$ which is a $G$-equivariant vector bundle with respect to the left adjoint $G$-action
$\rho_{\hom(E,E^\prime\, )}^{} : G\times \hom(E,E^\prime\, ) \to \hom(E,E^\prime\, )\,,~(g,L)\mapsto \rho_{E^\prime}^{}(g,\,\cdot\,)\circ L\circ
\rho_{E}^{}(g^{-1},\,\cdot\,)$. The currying maps $\zeta_{E,E^\prime,E^{\prime\prime}}^{} :
\Hom_{G\text{-}\mathsf{VecBun}_M^{}}^{}(E\otimes E^\prime,E^{\prime\prime}\, )\to
\Hom_{G\text{-}\mathsf{VecBun}_M^{}}^{}(E,\hom(E^\prime,E^{\prime\prime}\, ))$ are given by assigning
to any $G\text{-}\mathsf{VecBun}_M^{}$-morphism $f: E\otimes E^\prime\to E^{\prime\prime}$
the $G\text{-}\mathsf{VecBun}_M^{}$-morphism
\begin{flalign}
 \zeta_{E,E^\prime,E^{\prime\prime}}^{}(f) :  E \longrightarrow \hom(E^\prime,E^{\prime\prime}\, )~~,~~~e  \longmapsto f(e\otimes\,\cdot\,)~.
\end{flalign}
Making use now of the standard natural isomorphisms
\begin{subequations}
\begin{flalign}
\Gamma^\infty(E\otimes E^\prime\, ) &\simeq \Gamma^\infty(E)\otimes_{C^\infty(M)}^{}\Gamma^\infty(E^\prime\, )~,\\[4pt]
\Gamma^\infty(\mathbb{C}\times M)&\simeq C^\infty(M)~,\\[4pt]
\Gamma^\infty(\hom(E,E^\prime\, )) & \simeq \hom_{C^\infty(M)}^{}(\Gamma^\infty(E),\Gamma^\infty(E^\prime\, ))~,
\end{flalign}
\end{subequations}
we obtain
\begin{propo}
The functor $\Gamma^\infty : G\text{-}\mathsf{VecBun}_M^{}
\to {}^{U\mathfrak{g}}_{}{}^{}_{C^\infty(M)}\MMM^{\mathrm{sym}}_{C^\infty(M)}$ of Proposition \ref{propo:classicalvecbund}
is a braided closed monoidal functor.
\end{propo}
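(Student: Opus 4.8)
The plan is to verify in turn that $\Gamma^\infty$ is strong monoidal, that it respects the braidings, and that the canonical comparison morphism for the internal $\hom$-functors is invertible. The essential simplification is that $U\mathfrak{g}$ is a cocommutative Hopf algebra with trivial associator $\phi = 1\otimes 1\otimes 1$ and trivial $R$-matrix $R = 1\otimes 1$; consequently both the target category ${}^{U\mathfrak{g}}_{}{}^{}_{C^\infty(M)}\MMM^{\mathrm{sym}}_{C^\infty(M)}$ and the source category $G\text{-}\mathsf{VecBun}_M^{}$ are symmetric monoidal categories whose associators are identities and whose braidings are the ordinary flip maps. Hence all associator insertions drop out and the content of the statement reduces to the differential-geometric identification of the three coherence maps together with their equivariance and $C^\infty(M)$-linearity.

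First I would fix the coherence maps. For the monoidal structure set $\varphi_{E,E'}^{}(s\otimes_{C^\infty(M)}^{} s') := \big(x\mapsto s(x)\otimes s'(x)\big)$ and let $\psi : C^\infty(M)\to\Gamma^\infty(\mathbb{C}\times M)$ be the tautological assignment of a function to the corresponding section of the trivial line bundle. That these are $\mathbb{C}$-linear isomorphisms is the standard fact that, for finite-rank bundles, the pointwise tensor product of sections induces an isomorphism $\Gamma^\infty(E)\otimes_{C^\infty(M)}^{} \Gamma^\infty(E')\cong \Gamma^\infty(E\otimes E')$, proven locally via a partition of unity subordinate to a common trivialising cover. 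Equivariance of $\varphi_{E,E'}^{}$ under the $U\mathfrak{g}$-action follows from the Leibniz rule (\ref{eqn:actionsections}): the diagonal $G$-action $\rho_{E\otimes E'}^{}$ on $E\otimes E'$ differentiates, via the coproduct $\Delta(\xi)=\xi\otimes 1 + 1\otimes\xi$, precisely to the $U\mathfrak{g}$-action on $\Gamma^\infty(E)\otimes_{C^\infty(M)}^{}\Gamma^\infty(E')$; left and right $C^\infty(M)$-linearity are immediate from the pointwise definitions. The monoidal coherence diagrams (the hexagon relating the associators and the two unit triangles) then hold trivially because every associator appearing is an identity.

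Next I would check that $\Gamma^\infty$ preserves the braiding, following the template of Theorem \ref{theo:braidedequivalenceHM}. Here this is immediate: the braiding $\tau^{C^\infty(M)}_{}$ on the target is induced by $R=1\otimes 1$ and is therefore the flip $v\otimes_{C^\infty(M)}^{} w\mapsto w\otimes_{C^\infty(M)}^{} v$, while the braiding on $G\text{-}\mathsf{VecBun}_M^{}$ is the fibrewise flip; since $\varphi_{E,E'}^{}$ is defined pointwise, it visibly intertwines the two flip maps, so the relevant braiding square commutes.

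The main work, and the only genuinely nontrivial step, is the closed structure. I would show that the third natural isomorphism $\Gamma^\infty(\hom(E,E'))\cong \hom_{C^\infty(M)}^{}(\Gamma^\infty(E),\Gamma^\infty(E'))$ coincides with the canonical comparison morphism that any monoidal functor induces between internal $\hom$-objects, namely $\zeta^{C^\infty(M)}_{}\big(\Gamma^\infty(\ev_{E,E'}^{})\circ \varphi^{}_{\hom(E,E'),E}\big)$, built from the evaluation in $G\text{-}\mathsf{VecBun}_M^{}$, the coherence map $\varphi$, and the currying $\zeta^{C^\infty(M)}$ of Lemma \ref{lem:curryingHAMAtmp}; a short unwinding of the pointwise definitions shows this map sends a section $L$ of the homomorphism bundle $\hom(E,E')$ to the $C^\infty(M)$-linear operator $s\mapsto (x\mapsto L(x)(s(x)))$. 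Invertibility of this comparison map is exactly the Serre--Swan-type statement that every right $C^\infty(M)$-linear map $\Gamma^\infty(E)\to\Gamma^\infty(E')$ between section modules of finite-rank bundles arises from a unique smooth bundle homomorphism; this is where finiteness of the rank is essential, and it is the step I expect to require the most care, since one must verify that $C^\infty(M)$-linearity forces locality and hence pointwise-defined smoothness. Recalling that $\hom_{C^\infty(M)}^{}$ is defined by the equalizer (\ref{eqn:rightlinhom}) encoding right $C^\infty(M)$-linearity, and that for the symmetric bimodules in the image of $\Gamma^\infty$ this right-linearity condition coincides, via (\ref{eqn:wcbimod}) with $R=1\otimes 1$, with ordinary $C^\infty(M)$-linearity, this identifies the internal $\hom$-object with the full module of bundle homomorphisms. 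Finally, compatibility of the comparison map with evaluation and currying (so that it is genuinely a morphism of the closed structure) follows by again unwinding the pointwise formulas and using that the evaluation morphisms on both sides are the obvious pointwise evaluations; with these checks in place $\Gamma^\infty$ is a braided closed monoidal functor.
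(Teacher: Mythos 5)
Your proposal is correct and follows essentially the same route as the paper, which simply invokes the three standard natural isomorphisms $\Gamma^\infty(E\otimes E')\simeq\Gamma^\infty(E)\otimes_{C^\infty(M)}\Gamma^\infty(E')$, $\Gamma^\infty(\mathbb{C}\times M)\simeq C^\infty(M)$ and $\Gamma^\infty(\hom(E,E'))\simeq\hom_{C^\infty(M)}(\Gamma^\infty(E),\Gamma^\infty(E'))$ as the coherence data and leaves the verifications implicit. Your write-up correctly identifies these as the coherence maps, notes that triviality of $\phi$ and $R$ kills all associator and braiding subtleties, and isolates the Serre--Swan-type identification of the internal hom as the only substantive step, which is exactly where the content lies.
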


Before we deform the categories ${}^{U\mathfrak{g}}_{}\AAA_{}^{\mathrm{com}}$ and
${}^{U\mathfrak{g}}_{}{}^{}_{C^\infty(M)}\MMM^{\mathrm{sym}}_{C^\infty(M)}$ via cochain twists $F$,
we have to introduce formal power series extensions in a deformation parameter $\hbar$
of all $\mathbb{C}$-vector spaces involved, which then become $\mathbb{C}[[\hbar]]$-modules. 
For details on formal power series and the $\hbar$-adic topology see \cite[Chapter XVI]{Kassel}.
We shall denote the $\hbar$-adic topological tensor product by $\widehat{\otimes}$ and 
recall that it satisfies $V[[\hbar]]\, \widehat{\otimes}\, W[[\hbar]]\simeq (V\otimes W)[[\hbar]]$,
where $V,W$ are $\mathbb{C}$-vector spaces and $V[[\hbar]], W[[\hbar]]$ are the corresponding
topologically free $\mathbb{C}[[\hbar]]$-modules. 
Let us denote by $U\mathfrak{g}[[\hbar]]$ the formal power series extension of
the cocommutative Hopf algebra $U\mathfrak{g}$ (the product and coproduct 
here involves the topological tensor product $\widehat{\otimes}$) and by
${}^{U\mathfrak{g}[[\hbar]]}_{}\MMM$ the braided closed monoidal category
of left $U\mathfrak{g}[[\hbar]]$-modules over $\mathbb{C}[[\hbar]]$ (with monoidal structure given by
$\widehat{\otimes}$).
There is a braided closed monoidal functor $[[\hbar]] : {}^{U\mathfrak{g}}_{}\MMM\to {}^{U\mathfrak{g}[[\hbar]]}_{}\MMM$:
To any object $V$ in ${}^{U\mathfrak{g}}_{}\MMM$ we assign the object $V[[\hbar]]$ in ${}^{U\mathfrak{g}[[\hbar]]}_{}\MMM$
and to any ${}^{U\mathfrak{g}}_{}\MMM$-morphism 
$f: V\to W$ we assign the ${}^{U\mathfrak{g}[[\hbar]]}_{}\MMM$-morphism
(denoted by the same symbol)
\begin{flalign}
f: V[[\hbar]] \longrightarrow W[[\hbar]] ~, \quad v=\sum_{n=0}^{\infty} \, \hbar^{n} \,v_{n} \longmapsto f(v)= 
 \sum_{n=0}^\infty\, \hbar^n \, f(v_n)~.
\end{flalign}
The functor $[[\hbar]]$ is a braided closed monoidal functor due to the natural isomorphisms
\begin{subequations}
\begin{flalign}
V[[\hbar]] \, \widehat{\otimes} \, W[[\hbar]] &\simeq (V\otimes W)[[\hbar]]~,\\[4pt]
\hom_{[[\hbar]]}^{}(V[[\hbar]],W[[\hbar]]) &\simeq \hom(V,W)[[\hbar]]~,
\end{flalign}
\end{subequations}
where by $\hom_{[[\hbar]]}^{}$ we have denoted the internal homomorphisms in ${}^{U\mathfrak{g}[[\hbar]]}_{}\MMM$.
This functor induces a functor
$[[\hbar]] : {}^{U\mathfrak{g}}_{}\AAA^{\mathrm{com}}_{} \to {}^{U\mathfrak{g}[[\hbar]]}_{}\AAA^{\mathrm{com}}_{}$
and a braided closed monoidal functor
$[[\hbar]] : {}^{U\mathfrak{g}}_{}{}^{}_{C^\infty(M)}\MMM^{\mathrm{sym}}_{C^\infty(M)} \to 
 {}^{U\mathfrak{g}[[\hbar]]}_{}{}^{}_{C^\infty(M)[[\hbar]]}\MMM^{\mathrm{sym}}_{C^\infty(M)[[\hbar]]}$.
\sk

Given now any cochain twist $F\in U\mathfrak{g}[[\hbar]]\, \widehat{\otimes}\, U\mathfrak{g}[[\hbar]]$
based on $U\mathfrak{g}[[\hbar]]$, Proposition \ref{propo:equivalencewccategories} and 
Theorem \ref{theo:HAMAsymtwisting}
imply that there is a functor
\begin{subequations}
\begin{flalign}
\FF : {}^{U\mathfrak{g}[[\hbar]]}_{}\AAA^{\mathrm{com}}_{} \longrightarrow {}^{U\mathfrak{g}[[\hbar]]_F^{}}_{}\AAA^{\mathrm{com}}_{}
\end{flalign}
and a braided closed monoidal functor 
\begin{flalign}
\FF :  {}^{U\mathfrak{g}[[\hbar]]}_{}{}^{}_{C^\infty(M)[[\hbar]]}\MMM^{\mathrm{sym}}_{C^\infty(M)[[\hbar]]} \longrightarrow
 {}^{U\mathfrak{g}[[\hbar]]_F^{}}_{}{}^{}_{C^\infty(M)[[\hbar]]_F^{}}\MMM^{\mathrm{sym}}_{C^\infty(M)[[\hbar]]_F^{}}~.
 \end{flalign}
\end{subequations}
Precomposing these functors with the functors of Propositions \ref{propo:classicalalgebra} and \ref{propo:classicalvecbund} 
together with $[[\hbar]]$ yields the main result of this section.
\begin{cor}\label{cor:defquant}
Given any cochain twist $F\in U\mathfrak{g}[[\hbar]]\, \widehat{\otimes} \, U\mathfrak{g}[[\hbar]]$
there is the functor
\begin{subequations}
\begin{flalign}
\xymatrix{
\ar[d]_-{C^\infty} G\text{-}\mathsf{Man}^{\mathrm{op}} \ar[rr]^-{C^\infty_F} &&  {}^{U\mathfrak{g}[[\hbar]]_F^{}}_{}\AAA^{\mathrm{com}}_{}\\
 {}^{U\mathfrak{g}}_{}\AAA^{\mathrm{com}}_{} \ar[rr]_-{[[\hbar]]} && \ar[u]_-{\FF} {}^{U\mathfrak{g}[[\hbar]]}_{}\AAA^{\mathrm{com}}_{} 
}
\end{flalign}
and the braided closed monoidal functor
\begin{flalign}
\xymatrix{
\ar[d]_-{\Gamma^\infty} G\text{-}\mathsf{VecBun}_M^{}\ar[rr]^-{\Gamma^\infty_F} &&   {}^{U\mathfrak{g}[[\hbar]]_F^{}}_{}{}^{}_{C^\infty(M)[[\hbar]]_F^{}}\MMM^{\mathrm{sym}}_{C^\infty(M)[[\hbar]]_F^{}}\\
 \ar[rr]_-{[[\hbar]]} {}^{U\mathfrak{g}}_{}{}^{}_{C^\infty(M)}\MMM^{\mathrm{sym}}_{C^\infty(M)} \ar[rr]^-{}  && {}^{U\mathfrak{g}[[\hbar]]}_{}{}^{}_{C^\infty(M)[[\hbar]]}\MMM^{\mathrm{sym}}_{C^\infty(M)[[\hbar]]}\ar[u]_-{\FF}
}
\end{flalign}
\end{subequations}
describing the formal deformation quantization of $G$-manifolds and $G$-equivariant vector bundles.
\end{cor}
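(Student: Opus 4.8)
The plan is to construct both functors as composites of functors already in hand, exploiting the fact that the entire abstract machinery of Sections~\ref{sec:HM}--\ref{sec:quasitriangular} was set up over an arbitrary commutative ring $k$. Taking $k=\mathbb{C}[[\hbar]]$, the algebra $U\mathfrak{g}[[\hbar]]$ is a triangular quasi-Hopf algebra over $\mathbb{C}[[\hbar]]$ (with trivial associator and trivial $R$-matrix), and $F$ is a cochain twist based on it in the sense of the earlier definition, with the ordinary tensor product replaced throughout by the $\hbar$-adic topological tensor product $\widehat{\otimes}$. Hence Proposition~\ref{propo:equivalencewccategories} and Theorem~\ref{theo:HAMAsymtwisting} apply verbatim to $H=U\mathfrak{g}[[\hbar]]$ and this $F$, supplying the cochain-twisting functors $\FF$ that appear in the two diagrams.

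For the first diagram I would set $C^\infty_F := \FF\circ[[\hbar]]\circ C^\infty$. Each factor is a functor: $C^\infty: G\text{-}\mathsf{Man}^{\mathrm{op}}\to {}^{U\mathfrak{g}}_{}\AAA^{\mathrm{com}}_{}$ by Proposition~\ref{propo:classicalalgebra}, the power-series extension $[[\hbar]]: {}^{U\mathfrak{g}}_{}\AAA^{\mathrm{com}}_{}\to {}^{U\mathfrak{g}[[\hbar]]}_{}\AAA^{\mathrm{com}}_{}$ as constructed above, and $\FF: {}^{U\mathfrak{g}[[\hbar]]}_{}\AAA^{\mathrm{com}}_{}\to {}^{U\mathfrak{g}[[\hbar]]_F^{}}_{}\AAA^{\mathrm{com}}_{}$ by Proposition~\ref{propo:equivalencewccategories}. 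Commutativity of the triangle is then immediate, since $C^\infty_F$ is declared to be exactly the composite $\FF\circ[[\hbar]]$ applied to $C^\infty$.

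For the second diagram I would analogously set $\Gamma^\infty_F := \FF\circ[[\hbar]]\circ \Gamma^\infty$. Here the essential point is that each factor is not merely a functor but a \emph{braided closed monoidal} functor: $\Gamma^\infty$ by the proposition immediately preceding this corollary, $[[\hbar]]$ by the natural isomorphisms $V[[\hbar]]\,\widehat{\otimes}\,W[[\hbar]]\simeq(V\otimes W)[[\hbar]]$ and $\hom_{[[\hbar]]}^{}(V[[\hbar]],W[[\hbar]])\simeq\hom(V,W)[[\hbar]]$ together with compatibility of $[[\hbar]]$ with the trivial braiding, and $\FF$ by Theorem~\ref{theo:HAMAsymtwisting}. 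Since a composite of braided closed monoidal functors is again a braided closed monoidal functor (the coherence maps $\varphi$, $\psi$ and $\gamma$ of the composite are assembled from those of the factors), $\Gamma^\infty_F$ inherits this structure, and commutativity of the square is again definitional.

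The main point requiring care is not a computation but the verification that the results on cochain twisting genuinely transport to the topologically free setting over $k=\mathbb{C}[[\hbar]]$. This rests on the observation that all constructions in Sections~\ref{sec:HM}--\ref{sec:quasitriangular} were performed over a general commutative ring, and that the $\hbar$-adic tensor product obeys $V[[\hbar]]\,\widehat{\otimes}\,W[[\hbar]]\simeq(V\otimes W)[[\hbar]]$, so that the coherence data and the coequalizer/equalizer constructions underlying $\otimes_A^{}$ and $\hom_A^{}$ commute with the extension $[[\hbar]]$. Granting this, together with the standard fact that braided closed monoidal functors compose, the corollary follows with no further obstacle; all the genuine content has already been discharged in the earlier theorems.
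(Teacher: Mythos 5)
Your proposal is correct and follows exactly the paper's route: the corollary is obtained by defining $C^\infty_F$ and $\Gamma^\infty_F$ as the composites $\FF\circ[[\hbar]]\circ C^\infty$ and $\FF\circ[[\hbar]]\circ\Gamma^\infty$, with Propositions \ref{propo:classicalalgebra} and \ref{propo:classicalvecbund} supplying the classical functors, the $[[\hbar]]$-extension handled via $V[[\hbar]]\,\widehat{\otimes}\,W[[\hbar]]\simeq(V\otimes W)[[\hbar]]$, and Proposition \ref{propo:equivalencewccategories} together with Theorem \ref{theo:HAMAsymtwisting} supplying the twisting functors $\FF$. Your added remark that the earlier machinery applies verbatim over $k=\mathbb{C}[[\hbar]]$ with the topological tensor product is precisely the point the paper also makes in the paragraphs preceding the corollary.
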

\begin{ex}
\begin{itemize}
\item[(i)] Let $G=\mathbb{T}^{n}$ be the $n$-dimensional torus, with $n\in\mathbb{N}$.
Taking a basis $\{t_i\in\mathfrak{g}: i=1,\dots,n\}$ of the Abelian Lie algebra $\mathfrak{g}$ 
and a skew-symmetric real-valued $n\times n$-matrix $\Theta = \big(\Theta^{ij}\big)_{i,j=1}^{n}$,
we have the Abelian twist 
\begin{flalign}
F = \exp\big( \mbox{$-\frac{\mathrm{i}\, \hbar}2$} \, \Theta^{ij}\, t_i\otimes t_j\big)
\end{flalign}
based on $U\mathfrak{g}[[\hbar]]$ (with implicit sums over repeated
upper and lower indices). The twisted Hopf algebra
$U\mathfrak{g}[[\hbar]]_F^{}$ is cocommutative (in fact $\Delta^{}_F=\Delta$), and since $F$ is a cocycle twist the algebras and bimodules
obtained from the functors in Corollary \ref{cor:defquant} are
strictly associative; however in general they are not strictly
commutative as the
twisted triangular structure is given by $R^{}_F=F^{-2}$. This is the triangular Hopf algebra relevant to the standard noncommutative tori, and more generally to the toric noncommutative manifolds (or isospectral deformations) in the
sense of \cite{Connes:2000tj}.

\item[(ii)] Fix $n\in\mathbb{N}$ and let $\mathfrak{g}$ be the
  non-Abelian nilpotent Lie algebra over $\mathbb{C}$
  with generators $\{t_i,\tilde t\,^i,m_{ij} : 1\leq i < j \leq n\}$ 
  and Lie bracket relations given by
\begin{flalign}
[\, \tilde t\,^i,m_{jk}] = \delta^{i}{}_{j}\, t_k-\delta^{i}{}_{k}\, t_j ~,
\end{flalign}
and all other Lie brackets equal to zero.
Let us denote by $G$ the Lie group obtained 
by Lie-integration of $\mathfrak{g}$ and notice that
$G$ is a Lie subgroup of $\mathrm{ISO}(2n)$. 
We fix a rank-three skew-symmetric real-valued tensor
$\mathsf{R}=\big(\mathsf{R}^{ijk}\big)_{i,j,k=1}^n$ and introduce the non-Abelian
cochain twist (with implicit summation over repeated upper and lower indices)
\begin{flalign}
F=\exp\big(\mbox{$-\frac{\mathrm{i}\, \hbar}2$}\, \big(\mbox{$\frac14$} \, \mathsf{R}^{ijk}\,
(m_{ij}\otimes t_k-t_i\otimes m_{jk})+t_i\otimes\tilde t\,^i-\tilde
t\,^i\otimes t_i\big)\big) \ .
\end{flalign}
The twisted quasi-Hopf algebra
$U\mathfrak{g}[[\hbar]]_F^{}$ is non-cocommutative: the twisted
coproduct on primitive elements is given by
\begin{subequations}
\begin{flalign}
\Delta^{}_F(t_i) &= \Delta(t_i) \ ,  \\[4pt]
\Delta^{}_F(\, \tilde t\,^i) &= \Delta(\, \tilde
t\,^i)+\mbox{$\frac{\mathrm{i}\, \hbar}2$}\, \mathsf{R}^{ijk}\, t_j\otimes t_k
\ ,  \\[4pt]
\Delta^{}_F(m_{ij}) &= \Delta(m_{ij}) -\mathrm{i}\, \hbar\,
(t_i\otimes t_j-t_j\otimes t_i) \ .
\end{flalign}
\end{subequations}
Generally the algebras and bimodules
obtained from the functors in Corollary \ref{cor:defquant} are
noncommutative and nonassociative: the twisted triangular
structure is given by $R^{}_F=F^{-2}$, while a straightforward
calculation of (\ref{eqn:twistedassociator}) with
$\phi=1\otimes1\otimes1$ using the Baker-Campbell-Hausdorff formula
yields the associator
\begin{flalign}
\phi^{}_F=\exp\big(\mbox{$\frac{\hbar^2}2$}\, \mathsf{R}^{ijk}\, t_i\otimes
t_j\otimes t_k\big) \ .
\end{flalign}
This is the triangular quasi-Hopf algebra relevant in the phase space
formulation for the nonassociative deformations of geometry that arise
in non-geometric $\mathsf{R}$-flux backgrounds of string theory
\cite{Mylonas:2013jha}.
\end{itemize}
\end{ex}


\section*{Acknowledgements}

We thank Paolo Aschieri, Dieter L\"ust and Peter Schupp for helpful discussions. 
G.E.B.\ is a Commonwealth Scholar, funded by the UK Government. 
The work of A.S.\ is supported by a Research Fellowship of the Deutsche Forschungsgemeinschaft (DFG). 
The work of R.J.S.\ is supported in part by the Consolidated Grant ST/J000310/1 from the UK Science and Technology Facilities Council.


\end{document}